\tikzstyle{bsq}=[rectangle, draw, thick, minimum width=.5cm, minimum height=.5cm]
\tikzstyle{bver}=[rectangle, draw, thick, minimum width=1cm, minimum height=2cm]
\tikzstyle{bhor}=[rectangle, draw, thick, minimum width=2cm, minimum height=1cm]
\newtheorem{theorem}{Theorem}[section]
\newtheorem{definition}[theorem]{Definition}
\newtheorem{lemma}[theorem]{Lemma}
\newtheorem{corollary}[theorem]{Corollary}
\newtheorem{proposition}[theorem]{Proposition}
\newtheorem{varexample}[theorem]{Example}
\newtheorem*{SMRC}{Strong Maximal Rank Conjecture}
\newtheorem*{ShapeLemma}{Shape Lemma for Minima}
\theoremstyle{definition}
\newtheorem{remark}[theorem]{Remark}
\newcommand{\PP}{\mathbb{P}}
\newcommand{\RR}{\mathbb{R}}
\newcommand{\M}{\overline{M}}
\newcommand{\cL}{\mathcal{L}}
\newcommand{\cA}{\mathcal{A}}
\newcommand{\cB}{\mathcal{B}}
\newcommand{\ord}{\operatorname{ord}}
\newcommand{\Trop}{\operatorname{Trop}}
\newcommand{\trop}{\operatorname{trop}}
\newcommand{\ddiv}{\operatorname{div}}
\newcommand{\PL}{\operatorname{PL}}
\newcommand{\val}{\operatorname{val}}
\newcommand{\Sym}{\operatorname{Sym}}
\newcommand{\an}{\mathrm{an}}
\newenvironment{example}{\begin{varexample}
\begin{normalfont}}{\end{normalfont}
\end{varexample}}
\begin{document}
\title{On the Strong Maximal Rank Conjecture in genus 22 and 23}
\author{David Jensen}
\author{Sam Payne}
\date{}
\bibliographystyle{alpha}

\vspace{-10 pt}

\begin{abstract}
We develop new methods to study tropicalizations of linear series and show linear independence of sections.  Using these methods, we prove two new cases of the strong maximal rank conjecture for linear series of degree 25 and 26 on curves of genus 22 and 23, respectively.
\end{abstract}

\maketitle

\setcounter{tocdepth}{1}
\tableofcontents

\vspace{-15 pt}

\section{Introduction}

We use tropical independence on chains of loops to prove the following:

\begin{theorem}
\label{Thm:MainThm}
Let $\rho = 1$ or $2$ and let $X$ be a general curve of genus $21+\rho$.  Then the map
\[
\mu_2 : \Sym^2 H^0(X,D_X) \to H^0(X,2D_X)
\]
is injective for \emph{all} divisors $D_X$ of degree $24 + \rho$ and rank $6$ on $X$.
\end{theorem}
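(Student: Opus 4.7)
The plan is to use the tropical degeneration framework developed in our earlier work on the strong maximal rank conjecture. Consider a one-parameter family degenerating $X$ to a nodal curve whose dual graph is a chain $\Gamma$ of $g = 21+\rho$ loops with generic edge lengths. By the specialization lemma for linear series, if $\mu_2$ fails to be injective for some pair $(X, D_X)$ with $D_X$ of degree $24+\rho$ and rank $6$, then the tropicalizations of a basis $s_0, \ldots, s_6$ of $H^0(X, D_X)$ yield $\binom{8}{2} = 28$ piecewise-linear functions $\psi_{ij} = v_i + v_j$ in $R(2D)$ on $\Gamma$ that are tropically dependent, where $D = \trop(D_X)$ is an effective tropical divisor of rank at least $6$ and degree $24+\rho$. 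Proving the theorem therefore reduces to showing that no such tropical dependence can exist, for any class $[D]$ arising in this way.

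The first step is to enumerate the relevant divisor classes on $\Gamma$. By the combinatorial classification of special divisors on a chain of loops (Cools--Draisma--Payne--Robeva, Pflueger), every such class is represented by an explicit effective divisor indexed by a rectangular tableau. For $\rho = 1$ this yields finitely many classes, while for $\rho = 2$ one obtains finitely many one-parameter families of classes. For each class, one can write down a $P$-reduced representative together with a canonical basis of piecewise-linear tropical sections $v_0, \ldots, v_6 \in R(D)$ whose associated divisors $\ddiv(v_i) + D$ are supported in controlled positions on the loops and bridges.

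The second step is to certify the tropical independence of the $28$ sums $\psi_{ij} = v_i + v_j$ in $R(2D)$. For each pair $(i,j)$, the aim is to exhibit a point or region of $\Gamma$ where $\psi_{ij}$ achieves the minimum of the collection $\{\psi_{kl} + c_{kl}\}$ strictly, for any fixed constants $c_{kl}$. The Shape Lemma for Minima constrains the geometry of such loci in terms of the associated effective divisors, reducing the verification of independence to a chip-firing computation on $\Gamma$. When the canonical basis fails to produce a strictly achieved minimum at some pair, I would replace certain $v_i$ by a \new\ variant --- a piecewise-linear section whose effective divisor sits in a more favorable combinatorial configuration relative to the others.

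The main obstacle is the case analysis. In genera $22$ and $23$ the enumeration of rank-$6$ classes of the given degree is large, and a handful of combinatorial types --- typically those with symmetric or clustered torsion data --- resist a direct application of the Shape Lemma. For each exceptional type one must design a tailored \new\ construction that repositions the supports enough to force a strict minimum. The $\rho = 2$ case is additionally delicate: the verification must hold uniformly along each one-parameter family of classes, so the \new\ sections must depend continuously on the modular parameter. Exhibiting valid families of \new\ sections for every problematic type is, I expect, the technical heart of the argument.
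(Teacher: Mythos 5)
There is a genuine gap, and it sits exactly where Theorem~\ref{Thm:MainThm} goes beyond the $\rho=0$ case. Your plan assumes that for each divisor class $D$ on the chain of loops you can ``write down a canonical basis of piecewise-linear tropical sections $v_0,\ldots,v_6\in R(D)$'' and then certify independence of the $28$ sums. Such distinguished functions exist (and lift) only for vertex-avoiding classes. Since the theorem must treat \emph{all} divisors of the given degree and rank, you must also handle non-vertex-avoiding classes, and there the crux is not a chip-firing verification in $R(D)$ at all: the set $\Sigma=\trop(\cL(D_X))$ is in general a proper tropical submodule of $R(D)$, it is \emph{not determined by $D$} (it depends on $X$ and on the actual linear series, and varies in continuous families, e.g.\ the parameter recording where a tropicalized pencil bends), and the natural candidates $\varphi_i$ with prescribed slopes $s_k[i]$ may simply fail to lie in $\Sigma$ (this is what switching loops and switching bridges detect). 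Replacing a $v_i$ by a ``variant'' PL section begs the question of whether that variant is the tropicalization of an honest section of $\mathcal{O}(D_X)$. Any correct proof has to classify the possibilities for $\Sigma$ (slope vectors, multiplicities, ramification, switching loops/bridges, and the behavior of pencils as in Example~\ref{Ex:Interval}) and then produce independent functions that are genuinely pairwise sums of elements of $\Sigma$; your proposal never engages with this, which is the technical heart of the problem.

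Two further points would sink the execution even in the cases your outline does cover. First, the enumeration is off: $W^6_{24+\rho}(\Gamma)$ is a union of $\rho$-dimensional tori, so for $\rho=1$ the classes already form one-parameter families (not finitely many classes), and for $\rho=2$ two-parameter families; moreover the number of combinatorial types (standard tableaux on a $7\times(g-d+r)$ rectangle) is about $3.5\times 10^{8}$ in genus $23$, so ``a tailored construction for each exceptional type'' is not a feasible strategy --- one needs a uniform algorithm. Second, your certificate of independence is stated backwards: exhibiting a region where $\psi_{ij}$ achieves the minimum strictly ``for any fixed constants $c_{kl}$'' is the contradiction-style analysis that becomes unmanageable here; the workable positive certificate is a single well-chosen set of coefficients --- a maximally independent combination, in which every function attains the minimum uniquely somewhere (Proposition~\ref{Prop:Strategy}) --- and the proof constructs one by a left-to-right algorithm that works uniformly across all tableaux, first for a master template built from building blocks in $R(D)$ (which need not lie in $\Sigma$) and then approximated from above by actual pairwise sums of functions in $\Sigma$, with the dependence on the continuous parameters (such as the distance $t$ at a switching loop or bridge) entering only through finitely many combinatorial alternatives rather than through continuously varying auxiliary sections.
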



\noindent Along the way, we develop new techniques for understanding the tropicalization of a not necessarily complete linear series and an effective criterion for verifying tropical independence.




The condition on the curve is Zariski open, so Theorem~\ref{Thm:MainThm} is equivalent to the existence of a Brill-Noether general curve $X$ of genus $21 + \rho$ such that the image of every map $X \rightarrow \PP^6$ of degree $24 + \rho$ is not contained in a quadric.  The analogous statement for $\rho = 0$ follows from the maximal rank theorem, but the cases $\rho =1$ and $2$ are new.    The main additional difficulty is the need to deal with \emph{all} linear series of the given degree and rank,
which form a family of dimension $\rho$. 

As discussed in Section~\ref{sec:mrc}, this theorem confirms two cases of the strong maximal rank conjecture.  These particular cases are steps forward in a program initiated and developed by Farkas to show that $\M_{22}$ and $\M_{23}$ are of general type.  As a first step, Farkas computed the classes of one virtual divisor on each of these moduli spaces.  These are the virtual fundamental classes of loci of curves with a map to $\PP^6$ of degree $25$ and $26$, respectively, such that the image is contained in a quadric.  
In characteristic zero, $\M_{22}$ and $\M_{23}$ are of general type provided that these classes are represented by effective divisors \cite{Farkas09c, Farkas18}.  Our contribution is to show that each locus in question is not the whole moduli space, i.e., we prove the existence of a single curve in each genus that admits no such map.  To complete Farkas's program, it remains to show that the relevant loci of linear series giving rise to maps to $\PP^6$ with image contained in a quadric is generically finite over each divisorial component of its image in $\M_{22}$ and $\M_{23}$. 
 Note that, while the program envisioned by Farkas requires characteristic zero, our tropical proof of Theorem~\ref{Thm:MainThm} is characteristic free and works over any algebraically closed field.

\medskip

We follow a strategy based on tropical independence, as in our proof of the maximal rank conjecture for quadrics \cite{MRC}.  We let $\Gamma$ be a chain of loops with edge lengths satisfying conditions specified in Section~\ref{sec:chainsofloops} and consider a curve $X$ over a nonarchimedean field whose skeleton is $\Gamma$.  We then verify injectivity of $\mu_2$ by proving the existence of sufficiently many sections in the image whose tropicalizations are tropically independent on $\Gamma$.  The latter statement is new also in the case $\rho = 0$.

\begin{theorem} \label{thm:independence}
Let $\rho = 0$, $1,$ or $2$ and let $X$ be a curve of genus $21 + \rho$ over a nonarchimedean field whose skeleton is a chain of loops $\Gamma$ satisfying the edge length conditions specified in Section~\ref{sec:chainsofloops}.  Let $D_X$ be a divisor of degree $24 + \rho$ and rank $6$ on $X$.  Then there are 28 tropically independent functions on $\Gamma$ of the form $\trop(f) + \trop(g)$ with $f$ and $g$ in $\cL(D_X)$.
\end{theorem}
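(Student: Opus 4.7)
The plan is to apply the tropical-to-classical principle: tropical independence of 28 piecewise-linear functions of the form $\trop(f_k) + \trop(g_k)$ on $\Gamma$ implies linear independence of the 28 products $f_k g_k$ in $H^0(X, 2D_X)$, and since these products all lie in the image of $\mu_2 : \Sym^2 H^0(X, D_X) \to H^0(X, 2D_X)$ whose source has dimension $\binom{8}{2} = 28$, the image then has dimension $28$ and $\mu_2$ is injective. Thus producing such a tropically independent collection on $\Gamma$ would yield Theorem~\ref{Thm:MainThm} as well.

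The first step is to classify the possible specializations $D = \trop(D_X)$. For a chain of loops with suitably generic edge lengths, the Brill--Noether number is $\rho = g - (r+1)(g - d + r)$, and the tropical Brill--Noether locus $W^6_{24 + \rho}(\Gamma)$ is pure of dimension $\rho$; its points are parameterized by combinatorial data (displacement tableaux, or equivalently lattice paths in a $7 \times 3$ rectangle). For $\rho = 0$ there is essentially a single class, while for $\rho = 1, 2$ there is a continuous family that must be handled uniformly. For each combinatorial type one expects a natural collection of PL functions $\psi_0, \ldots, \psi_6$ in the tropical linear system $|D|$, indexed by the rows of the corresponding tableau and with prescribed slopes across the bridges and loops of $\Gamma$.

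The next step is to form the 28 sums $\psi_i + \psi_j$ for $0 \leq i \leq j \leq 6$, and to check that each actually arises as $\trop(f) + \trop(g)$ for some $f, g \in \cL(D_X)$, not merely as an element of $|2D|$. This is where the new techniques for tropicalization of not-necessarily-complete linear series enter: in general $\trop(\cL(D_X))$ is a proper subset of $|D|$, and one must verify that each $\psi_i$ lifts to a section on $X$ --- a lifting question that becomes genuinely nontrivial in the Brill--Noether special range being considered, and more so when $D$ itself varies in a positive-dimensional family.

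The last and most delicate step is to certify tropical independence of the 28 sums. I expect the effective criterion developed in the paper to reduce this to a finite combinatorial check of slopes and break divisors on each bridge and each loop of $\Gamma$, carried out case by case over the finitely many combinatorial types of $D$. I expect this to be the main obstacle, for two reasons. First, when $\rho \geq 1$ the construction must be uniform over a positive-dimensional family of divisor classes, so the independence check must be robust under continuous deformation of $D$ --- a difficulty that did not appear in the maximal rank conjecture for quadrics. Second, restricting to functions of the special form $\trop(f) + \trop(g)$ rather than arbitrary PL functions with enough poles gives very tight control: with 28 functions in a space itself of dimension near 28, every potential tropical dependence is a real threat that has to be individually ruled out.
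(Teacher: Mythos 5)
Your outline reproduces the general framework (tropical independence implies linear independence, $28 = \binom{8}{2}$, specialization to the chain of loops), but the two steps you flag as ``delicate'' are exactly where the proposal would fail as stated, and the paper's actual content is what replaces them. First, your plan to take the canonical functions $\psi_0,\ldots,\psi_6$ attached to a tableau and then ``verify that each $\psi_i$ lifts to a section on $X$'' is not merely nontrivial --- it is false in general. Those distinguished functions are tropicalizations of a basis of $\cL(D_X)$ only when the class of $D$ is vertex avoiding; for the non-vertex-avoiding classes that necessarily occur when $\rho\geq 1$ (and which must be handled, since the theorem concerns \emph{all} divisors of degree $24+\rho$ and rank $6$), determining which functions in $R(D)$ lie in $\Sigma=\trop(\cL(D_X))$ is described in the paper as intractable, and no lifting of the $\psi_i$ is ever proved. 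Instead the paper studies the structure of $\Sigma$ directly (exactly $r+1$ slopes at each tangent vector, slope vectors, multiplicities, switching loops and bridges), reduces to the tropicalization of suitable pencils modeled on the interval example, and produces \emph{substitute} functions (the $\varphi_A,\varphi_B,\varphi_C$, etc.) that genuinely lie in $\Sigma$, together with ``building blocks'' that need not lie in $\Sigma$ but whose tropical convex hull contains it.

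Second, your certification step --- a finite combinatorial check carried out ``case by case over the finitely many combinatorial types of $D$,'' ruling out every potential dependence --- misses the paper's key new idea and is also quantitatively hopeless: already the vertex-avoiding classes in genus $23$ fall into $23!/(9!\,8!\,7!) \approx 3.5\times 10^8$ combinatorial types, and the non-vertex-avoiding loci carry continuous parameters (such as the distance $t$ in the pencil analysis), so there is no finite list of types of $D$ to run through. The paper's replacement is the notion of a \emph{maximally independent combination}: a single tropical linear combination $\theta=\min\{\psi_i+c_i\}$ in which every function achieves the minimum uniquely somewhere, which by a short argument certifies tropical independence. The bulk of the proof is an algorithm (the master template, built from pairwise sums of building blocks satisfying properties $(\ast)$, $(\ast\ast)$, $(\dagger\dagger)$) that constructs such a combination uniformly across all tableaux and then, in a case analysis governed by the number and type of switching loops/bridges rather than by the type of $D$, fits $28$ pairwise sums of actual elements of $\Sigma$ to that template. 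Without this positive certificate and the analysis of $\Sigma$ for non-vertex-avoiding classes, the proposal has no mechanism to complete either of its last two steps.
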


\noindent  Here $28 = \binom{8}{2}$ is the dimension of $\Sym^2 H^0(X, D_X)$, and $\trop(f) + \trop(g)$ is the tropicalization of $\mu_2(f \otimes g)$.  Rational functions are linearly independent if their tropicalizations are tropically independent \cite[Lemma~3.2]{tropicalGP}, so Theorem~\ref{Thm:MainThm} follows as an immediate consequence.

\begin{remark}
The locus of curves $X$ that satisfy the conclusion of Theorem~\ref{Thm:MainThm} is Zariski open and defined over the integers.  Therefore, by standard arguments in algebraic geometry (as discussed, e.g., in \cite[Section~3]{tropicalBN}), to prove that this locus contains all sufficiently general points defined over a given algebraically closed field, it suffices to prove nonemptiness over some other field of the same characteristic.  In particular, even though we work over a specially chosen nonarchimedean field (of arbitrary characteristic) to prove Theorem~\ref{thm:independence}, we can immediately deduce Theorem~\ref{Thm:MainThm} over any algebraically closed field including, e.g., over the complex numbers or the algebraic closure of a finite field.\end{remark}

\noindent We briefly discuss how the arguments here differ from those in our proof of the maximal rank conjecture for quadrics.  For any divisor $D = \Trop(D_X)$ of rank $6$ on $\Gamma$ whose class is vertex-avoiding in the sense of \cite{LiftingDivisors}, we have a canonical and well-studied collection of functions $\{ \varphi_0, \ldots, \varphi_6 \}$ in $R(D)$ that is the tropicalization of a basis for $\cL(D_X)$.  In \cite{MRC}, we fix one particular  $D$, assume that there is a tropical dependence among the pairwise sums $\varphi_i + \varphi_j$, determine the degree of the divisor in $R(D)$ given by this dependence, and derive a contradiction.  There are several difficulties in extending this approach to \emph{all} divisors of degree $d$ and rank $r$.  One is sheer combinatorial complexity.  Our arguments in \cite{MRC} are specific to the combinatorial type of $D$.  When $\Gamma$ has genus 23, the number of combinatorial types of vertex avoiding divisor classes in $W^6_{26} (\Gamma)$ is
\[
\frac{23!}{9!8!7!} = 350,574,510.
\]
This difficulty is overcome primarily through a new method for proving tropical independence using special tropical linear combinations, which we call \emph{maximally independent combinations}, introduced in Section~\ref{sec:configurations}.  We present an algorithm for constructing maximally independent combinations for vertex avoiding divisor classes in Section~\ref{Sec:VertexAvoiding}.  Such classes are an open dense subset of $W^6_{26}(\Gamma)$.

In the non vertex-avoiding cases, we face the additional problem of understanding which functions in $R(D)$ are tropicalizations of functions in $\cL (D_X)$, and finding a suitable substitute for the distinguished functions $\varphi_i$.  For an arbitrary divisor $D$, this seems to be an intractable problem.  However, when $\rho$ is small, we find that in most cases it is enough to understand the tropicalizations of certain pencils in $\cL(D_X)$.  These, in turn, behave similarly to tropicalizations of pencils on $\PP^1$, which we analyze in Section~\ref{Sec:Pencil}.  The possibilities for the tropicalizations of $\cL(D_X)$ are then divided into cases, according to the combinatorial properties of these pencils.  We then construct maximally independent combinations case-by-case, in Sections~\ref{Sec:Construction}-\ref{Sec:Generic}, using a generalization of the algorithm that works for vertex avoiding divisors.  Only one subcase, treated in Section~\ref{sec:4d}, does not reduce to an analysis of pencils; the arguments in this subcase are nevertheless of a similar flavor, with just a few more combinatorial possibilities to consider.

\medskip

We note that the maximally independent combinations, which we use to prove tropical independence, have the following natural interpretation in terms of semi\-stable models.  Let $f_0, \ldots, f_r$ be sections of a line bundle $\mathcal{O}(D_X)$ on a curve $X$ over a discretely valued field with a regular semistable model $\mathcal X$.  Then there is a maximally independent combination of $\{ \trop(f_0), \ldots, \trop(f_r) \}$ on the dual graph of the special fiber of $\mathcal X$ if and only if there is a line bundle $\mathcal{L}$ on $\mathcal{X}$ extending $\mathcal{O}(D_X)$, scalars $a_0, \ldots, a_r$, and components $X_0, \ldots, X_r$ of the special fiber of $\mathcal{X}$ such that $a_i f_i$ extends to a regular section of $\mathcal{L}$, and $a_i f_i |_{X_j} \neq 0$  if and only if $i = j$.  Since each section $a_i f_i$ is nonvanishing on a component of the special fiber where all others vanish, this collection must be linearly independent on the special fiber, and hence also on the general fiber.  See Proposition~\ref{prop:ag-interpretation}.

\medskip

The notion that degeneration methods including tropical geometry and limit linear series could be used to prove these cases of the strong maximal rank conjecture 
has been circulating among experts for several years.  Indeed, this was among the central topics discussed at both a BIRS conference on specialization of linear series for tropical and algebraic curves in 2014 as well as an AIM workshop on degenerations in algebraic geometry in 2015.  We completed our proof in late 2017 and first announced the result publicly at GAGS in February 2018.  Jensen later presented this work at a Tufts conference on the maximal rank conjecture in March 2018, with audience members including Osserman and Teixidor.  In May 2018, Osserman informed us that, in collaboration with Liu, Teixidor, and Zhang, he had found an alternate proof in characteristic zero using limit linear series, to appear in \cite{LiuOssermanTeixidorZhang18}.

\medskip

\noindent{\textbf{Acknowledgments.}}  The work of DJ is partially supported by NSF DMS--1601896.  DJ would also like to thank Yale University for hosting him during the Summer and Fall of 2017, during which time the majority of the work on this paper was completed.  The work of SP is partially supported by NSF DMS--1702428.

\section{Preliminaries}
\label{Sec:Prelim}

\subsection{Maximal rank theorems and conjectures}  \label{sec:mrc} One natural invariant of a linear series $V \subset H^0(X,L)$ on a curve $X$ is the Hilbert function $h_V$.  Recall that $h_V (m)$ is defined to be the rank of the linear map
\[
\mu_m \colon \Sym^m V \rightarrow H^0 (X,L^{\otimes m}) .
\]
The Hilbert function of a general linear series on a general curve is predicted by a well-known conjecture known as the maximal rank conjecture, now a theorem of Larson \cite{Larson17}, which says that the multiplication map $\mu_m : \Sym^m H^0(X, D_X) \rightarrow H^0(X, mD_X)$ has maximal rank (i.e., is either injective or surjective) when $X$ is a general curve of genus $g$, and $D_X$ is a general divisor of degree $d$ and rank $r$ \cite{Harris82}.  

For $X$ general, and in the range of cases where the Brill-Noether number $\rho(g,r,d)$ is at most $r -2$, the strong maximal rank conjecture makes a further prediction for the dimension of the locus of divisor classes of degree $d$ and rank $r$ for which $\mu_m$ fails to have maximal rank.

\begin{SMRC} \cite{AproduFarkas11}
Fix positive integers $g$, $r$, and $d$ such that $g-d+r \geq 0$ and $0 \leq \rho (g,r,d) \leq r-2$.  For a general curve $X$ of genus $g$, the determinantal variety
\[
\Sigma^r_{d,m} (X) = \{ D \in W^r_d (X) \ \vert \ \mu_m \text{ does not have maximal rank} \}
\]
is of expected dimension
\[
\rho (g,r,d)-1 - \Big \lvert md-g+1-{{r+m}\choose{m}} \Big \rvert.
\]
\end{SMRC}

\noindent Our second main result Theorem~\ref{Thm:MainThm} verifies the strong maximal rank conjecture for $(g,r,d) = (22,6,25)$ and $(23,6,26)$.

\subsection{Tropical and nonarchimedean geometry}
We briefly recall basic facts to be used throughout the paper.  Let $X$ be a curve of positive genus over an algebraically closed nonarchimedean field $K$ with valuation ring $R$ and residue field $\kappa$.  For simplicity, we assume that $K$ is spherically complete with value group $\RR$, so every point in the nonarchimedean analytification $X^\mathrm{an}$ has type 1 or 2.  Here, a type 1 point is simply a $K$-rational point, and a type 2 point $v$ corresponds to a valuation $\val_v$ on the function field $K(X)$ whose associated residue field is a transcendence degree 1 extension of the residue field $\kappa$ of $K$.  See \cite{BPR-section5, BPR16} for details on the structure theory of curves over nonarchimedean fields, relations to tropical geometry, and proofs of the basic properties of analytification and tropicalization that we omit.

\subsubsection{Skeletons} The \emph{minimal skeleton} of $X^\an$ is the set of points with no neighborhood isomorphic to a ball, and carries canonically the structure of a finite metric graph.  More generally, a \emph{skeleton}  for $X^\an$ is the underlying set of a finite connected subgraph of $X^\an$ that contains this minimal skeleton.  Any skeleton $\Gamma$ is contained in the set of type 2 points, and any decomposition of a skeleton $\Gamma$ into vertices and edges determines a semistable model of $X$ over $R$.  The vertices correspond to the irreducible components of the special fiber, and the irreducible component $X_v$ corresponding to $v$ has function field $\kappa(X_v)$, the transcendence degree 1 extension of $\kappa$ given by the residue field of $K(X)$ with respect to $\val_v$.  The edges correspond to the nodes of the special fiber, with the length of each edge given by the thickness of the corresponding node.

\subsubsection{Tropicalizations and reductions of rational functions} \label{sec:slopesandreductions} Let $\Gamma$ be a skeleton for $X^\an$.   Since $\Gamma$ is contained in the set of type 2 points, for each nonzero rational function $f \in K(X)^*$ we get a real-valued function
\[
\trop(f):\Gamma \rightarrow \RR, \ \ v \mapsto \val_v(f).
\]
This function is piecewise linear with integer slopes, and the slopes along edges incident to $v$ are related to the divisors of the reduction of $f$ at $v$, via the slope formula, a nonarchimedean analogue of the Poincar\'e-Lelong formula, as follows.

Given a nonzero rational function $f \in K(X)$ and a type 2 point $v \in X^\an$, choose $c \in K^*$ whose valuation is equal to $\val_v(f)$.  Then $f/c$ has valuation zero, and the \emph{reduction} of $f$ at $v$, denoted $f_v$, is defined to be its image in $\kappa (X_v)^*/\kappa^*$.  This does not depend on the choice of $c$, so $f_v$ is well-defined.  Divisors of rational functions are invariant under multiplication by nonzero scalars, and we denote the divisor on $X_v$ of any representative of $f_v$ in $\kappa(X_v)^*$ by $\ddiv (f_v)$.  Each germ of an edge of $\Gamma$ incident to $v$ corresponds to a point of $X_v$ (a node in the special fiber of a semistable model with skeleton $\Gamma$, in which $X_v$ appears as a component).  The slope formula then says that the outgoing slope of $\trop(f)$ along this germ of an edge is equal to the order of vanishing of $f_v$ at that point \cite[Theorem~5.15(c)]{BPR-section5}.

\subsubsection{Complete linear series on graphs} Let $\PL(\Gamma)$ be the set of piecewise linear functions on $\Gamma$ with integer slopes.  Throughout, we will use both the additive group structure on $\PL(\Gamma)$, and the tropical module structure given by addition of real scalars and pointwise minimum.

Given $v \in \Gamma$ and $\varphi \in \PL (\Gamma)$, the order of $\varphi$ at $v$, denoted $\ord_v (\varphi)$, is the sum of the incoming slopes of $\varphi$ at $v$.  The principal divisor associated to $\varphi$ is then $\ddiv (\varphi) = \sum_{v \in \Gamma} \ord_v (\varphi) v$.  The \emph{complete linear series} of a divisor $D$ on $\Gamma$ is
\[
R(D) = \{ \varphi \in \PL (\Gamma) \, \vert \, \ddiv (\varphi) + D \geq 0 \}.
\]
Note that $R(D) \subset \PL(\Gamma)$ is a tropical submodule, i.e., it is closed under scalar addition and pointwise minimum.

By the Poincar\'{e}-Lelong formula, if $D_X$ is any divisor on $X$ tropicalizing to $D$ and $f$ is a section of $\mathcal{O}(D_X)$, then $\trop(f) \in R(D)$.  We refer the reader to \cite{BakerNorine07, Baker08} for further background on the divisor theory of graphs and metric graphs, and specialization from curves to graphs.

\subsubsection{Tropical independence}  A key idea for many of our arguments is that of tropical independence, defined in \cite{tropicalGP}.  We recall this definition here.

\begin{definition}
We say that $\{\psi_0 , \ldots \psi_r\} \subset \PL (\Gamma)$ is \emph{tropically dependent} if there are coefficients $c_0 , \ldots , c_r$ in $\RR$ such that the minimum of the functions $\psi_i + c_i$ is achieved at least twice at every point of $\Gamma$. If no such coefficients exist, we say that $\{\psi_0 , \ldots \psi_r\}$ is  \emph{tropically independent}.
\end{definition}

\noindent If a set of nonzero functions $\{f_0, \ldots , f_r\}$ is linearly dependent over $K$, then the set of tropicalizations $\{ \trop(f_0), \ldots , \trop (f_r) \}$ is tropically dependent \cite[Lemma~3.2]{tropicalGP}.  Therefore, tropical independence of the tropicalizations is a sufficient condition for linear independence of rational functions.

\section{Tropicalizations of linear series}
\label{Sec:Slopes}

In this section, we discuss properties of tropicalizations of not necessarily complete linear series, when the skeleton is an arbitrary tropical curve.  In the remainder of the paper, we will apply these results in the special case where the skeleton is a chain of loops, but the results of this section will be useful more generally.  We begin by discussing tropicalizations and reductions of linear series, in the spirit of \cite{AminiBaker15}.

\subsection{Tropicalizations and reductions of linear series}
Let $X$ be a curve over $K$ with a divisor $D_X$ and a skeleton $\Gamma \subset X^{\an}$.  Let $V \subseteq \cL (D_X)$ be a linear series of rank $r$, and let $\Sigma = \trop (V)$, the set of tropicalizations of nonzero rational functions in $V$.  Note that $\Sigma \subset \PL(\Gamma)$ is a tropical submodule.

A \emph{tangent vector} in $\Gamma$ is a germ of a directed edge.  Given a function $\psi \in \PL(\Gamma)$,  we write $s_\zeta(\psi)$ for the slope of $\psi$ along a tangent vector $\zeta$.

\begin{lemma}
\label{Lem:NumberOfSlopes}
For each tangent vector $\zeta$ in $\Gamma$ there are exactly $r+1$ different slopes $s_\zeta (\psi)$, as $\psi$ ranges over $\Sigma$.
\end{lemma}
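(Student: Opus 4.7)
The plan is to construct, for each $v \in \Gamma$, a canonical $(r+1)$-dimensional $\kappa$-subspace $\overline{V} \subset \kappa(X_v)$ representing the reduction of $V$ at $v$, in such a way that the slopes $s_\zeta(\psi)$ for $\psi \in \Sigma$ are exactly the orders of vanishing at $p_\zeta \in X_v$ of nonzero elements of $\overline{V}$. The lemma will then follow from the classical vanishing-sequence fact: a finite-dimensional $\kappa$-subspace of $\kappa(X_v)$ has exactly its own dimension many distinct vanishing orders at any smooth point. Fix a tangent vector $\zeta$ at $v$, with corresponding point $p_\zeta \in X_v$.

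Using spherical completeness of $K$, choose a basis $f_0, \ldots, f_r$ of $V$ adapted to $\val_v$, meaning $\val_v(\sum a_i f_i) = \min_i(\val(a_i) + \val_v(f_i))$ for all $a_i \in K$. Set $c_i := \val_v(f_i)$, pick $\pi_i \in K^*$ with $\val(\pi_i) = c_i$, and let $g_i := f_i/\pi_i$, so each $g_i$ lies in $V_R := \{f \in V : \val_v(f) \geq 0\}$ and satisfies $\val_v(g_i) = 0$. Define $\overline{V}$ as the image of the reduction map $V_R \to \kappa(X_v)$, whose kernel is $V_{>0} := \{f \in V : \val_v(f) > 0\}$. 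A short computation using adaptedness shows (i) that the reductions $\overline{g_0}, \ldots, \overline{g_r}$ are $\kappa$-linearly independent in $\kappa(X_v)$, and (ii) that every $\overline{f}$ for $f \in V_R$ is a $\kappa$-linear combination of these; hence $\dim_\kappa \overline{V} = r+1$.

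For any $f \in V \setminus \{0\}$, choose $a \in K^*$ with $\val(a) = -\val_v(f)$, so that $af \in V_R \setminus V_{>0}$. By the slope formula of \cite{BPR-section5}, $s_\zeta(\trop(af)) = \ord_{p_\zeta}(\overline{af})$, and since slopes are invariant under scalar multiplication, $s_\zeta(\trop(f)) = \ord_{p_\zeta}(\overline{af})$. Every nonzero element of $\overline{V}$ arises this way, so the set of slopes along $\zeta$ coincides with $\{\ord_{p_\zeta}(\xi) : \xi \in \overline{V} \setminus \{0\}\}$. Since $p_\zeta$ is a smooth point of $X_v$, the standard filtration argument applies: the subspaces $\overline{V}(c) := \{\xi \in \overline{V} : \ord_{p_\zeta}(\xi) \geq c\}$ form a decreasing filtration of $\overline{V}$, each successive quotient injects into $\kappa$ via the leading coefficient at $p_\zeta$, and the total of the one-dimensional jumps equals $\dim \overline{V} = r+1$. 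Hence there are exactly $r+1$ distinct vanishing orders, and therefore exactly $r+1$ distinct slopes.

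The main obstacle is establishing that $\overline{V}$ really has dimension $r+1$. The subtlety is that elements of $V$ with different valuations at $v$ can combine $K$-linearly to produce sections whose valuation is strictly greater than the minimum of the summands' valuations, in which case the naive reduction can collapse or misbehave. Adaptedness of the basis, which relies on spherical completeness of $K$, is precisely what rules out such cancellations, controls the dimension of $\overline{V}$, and simultaneously identifies its elements with the reductions of suitably rescaled members of $V$.
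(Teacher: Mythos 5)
Your proof is correct and follows essentially the same route as the paper's: translate slopes along $\zeta$ into orders of vanishing of reductions at the corresponding point of $X_v$ via the slope formula, and then count distinct vanishing orders by the dimension of the reduced space. The only difference is that where the paper cites \cite[Lemma~4.3]{AminiBaker15} for $\dim_\kappa \overline{V} = r+1$, you re-prove that ingredient directly with an adapted (orthogonal) basis argument using spherical completeness, which is a legitimate self-contained substitute under the paper's standing hypotheses on $K$.
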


\begin{proof}
Suppose $\zeta$ is based at the point $v \in \Gamma$. By the nonarchimedean Poincar\'{e}-Lelong formula, the slope $s_\zeta (\trop (f))$ is equal to the order of vanishing of $f_v$ at the point corresponding to the incoming tangent direction.  Since the reductions form a vector space of dimension $r+1$ over $\kappa$ \cite[Lemma~4.3]{AminiBaker15}, they have exactly $r+1$ different orders of vanishing.
\end{proof}

When $\Sigma$ is fixed and no confusion is possible, we write
\[
s_\zeta(\Sigma) = (s_\zeta [0], \ldots, s_\zeta [r])
\]
for the vector of slopes $s_\zeta(\psi)$, for $\psi \in \Sigma$, ordered so that  $s_\zeta [0] <  \cdots < s_\zeta[r]$.

\begin{remark}
If $D = \Trop(D_X)$ then the tropical complete linear series $R(D)$ often has far more than $r(D) + 1$ slopes along some tangent vectors.  In such cases, Lemma~\ref{Lem:NumberOfSlopes} shows that the tropicalization of the complete algebraic linear series $\cL (D_X)$ is properly contained in  $R(D)$.
\end{remark}

Given any two tangent vectors, there is a function in $\Sigma$ with complementary lower and upper bounds on its slopes in these directions, as follows.

\begin{lemma}  \label{Lemma:Existence}
For any pair of tangent vectors $\zeta$ and $\zeta'$, and for any $i$, there is a function $\psi \in \Sigma$ such that
\[
 s_\zeta (\psi) \leq s_\zeta [i] \mbox{ \ and \ } s_{\zeta'} (\psi) \geq s_{\zeta'} [i].
\]
\end{lemma}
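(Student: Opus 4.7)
The plan is to construct a $K$-linear subspace $W \subseteq V$ of dimension $r + 1 - i$ every nonzero element of which has $s_{\zeta'}$-slope at least $s_{\zeta'}[i]$, and then to apply Lemma~\ref{Lem:NumberOfSlopes} to $W$ to force some element of $W$ to have $\zeta$-slope at most $s_\zeta[i]$ by pigeonhole. The main subtlety is that the naive set $\{ f \in V : s_{\zeta'}(\trop f) \geq s_{\zeta'}[i] \} \cup \{0\}$ is typically \emph{not} closed under $K$-linear combinations: adding a small-valuation multiple of a function of low $\zeta'$-slope does not perturb the reduction at $v'$, and so preserves the high $\zeta'$-slope of any function it is added to, even though the difference of two such functions can have strictly smaller $\zeta'$-slope.

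For the construction of $W$, I would choose for each $k \in \{0, \ldots, r\}$ a function $g^{(k)} \in V$ with $s_{\zeta'}(\trop g^{(k)}) = s_{\zeta'}[k]$ (which exists by definition of the slope sequence), rescaled so that $\val_{v'}(g^{(k)}) = 0$, where $v'$ denotes the basepoint of $\zeta'$.  The reductions $(g^{(k)})_{v'} \in V_{v'} \subseteq \kappa(X_{v'})$ have pairwise distinct orders of vanishing $s_{\zeta'}[k]$ at $p_{\zeta'}$ and are therefore linearly independent over $\kappa$.  Passing to leading-order reductions at $v'$ of any hypothetical $K$-linear relation, one sees that $\{g^{(0)}, \ldots, g^{(r)}\}$ is a $K$-basis of $V$, and I set $W := \mathrm{span}_K\{g^{(i)}, \ldots, g^{(r)}\}$.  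For any nonzero $f = \sum_{k \geq i} \lambda_k g^{(k)} \in W$, after rescaling so that $\min_k \val(\lambda_k) = 0$, the same reduction argument shows $f_{v'} = \sum_{k \geq i} \bar\lambda_k (g^{(k)})_{v'}$ is nonzero and has order at $p_{\zeta'}$ equal to $\min \{ s_{\zeta'}[k] : \bar\lambda_k \neq 0 \} \geq s_{\zeta'}[i]$, yielding the required $\zeta'$-slope bound.

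To finish, I would apply Lemma~\ref{Lem:NumberOfSlopes} to $W$, which is a linear series of rank $r - i$, producing exactly $r + 1 - i$ distinct $\zeta$-slopes among the tropicalizations of its nonzero elements.  Each such slope is one of the $r + 1$ values $s_\zeta[0] < \cdots < s_\zeta[r]$ given by the same lemma applied to $V$.  Since $\{ s_\zeta[i+1], \ldots, s_\zeta[r] \}$ contains only $r - i$ values, pigeonhole forces some $f \in W$ to satisfy $s_\zeta(\trop f) \leq s_\zeta[i]$, and then $\psi := \trop(f)$ is the desired function.
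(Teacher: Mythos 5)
Your proof is correct, and it is essentially the paper's argument run in mirror image: the paper spans functions realizing the $i+1$ smallest slopes along $\zeta$ and applies Lemma~\ref{Lem:NumberOfSlopes} plus pigeonhole at $\zeta'$, whereas you span functions realizing the $r+1-i$ largest slopes along $\zeta'$ and apply the same lemma plus pigeonhole at $\zeta$. The only substantive difference is that you spell out, via reductions at the basepoint of $\zeta'$, the closure property that every nonzero element of $W$ has $\zeta'$-slope at least $s_{\zeta'}[i]$, which is the dual of a point the paper leaves implicit for its span $V'$.
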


\begin{proof}
Let $f_0 , \ldots , f_i \in V$ be functions satisfying $s_\zeta (\trop (f_i)) = s_\zeta [i]$.  Because the functions $\trop (f_i)$ have distinct slopes, the reductions of $f_i$ have distinct orders of vanishing at the point corresponding to $\zeta$, and are therefore linearly independent over $\kappa$.  It follows that the functions $f_i$ are themselves linearly independent over $K$ and span a linear subseries $V'$ of rank $i$.  The slopes $s_{\zeta'} (\trop (f))$ for $f \in V'$ therefore take on $i+1$ distinct values in $\{ s_{\zeta'}[0], \ldots, s_{\zeta'}[r] \}$, and some $f$ in $V'$ satisfies $s_{\zeta'} (\trop(f)) \geq s_{\zeta'} [i]$, as required.  \end{proof}

While Lemma~\ref{Lem:NumberOfSlopes} can be roughly interpreted as providing an upper bound for how large $\Sigma$ can be, Lemma~\ref{Lemma:Existence} provides a lower bound.  We use both of these lemmas extensively throughout the paper.

\medskip

The following proposition provides another natural lower bound for how small $\Sigma$ can be.  This result is not used in the proofs of our main theorems, but the statement is so natural, especially in view of \cite{BakerNorine07}, that it must be seen as one of the fundamental properties of the tropicalization of a linear series.

\begin{proposition}
For any effective divisor $E$ on $\Gamma$ of degree $r$, there is some $\psi \in \Sigma$ such that $\ddiv (\psi) + D - E$ is effective.
\end{proposition}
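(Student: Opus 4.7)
The plan is to reduce to the corresponding algebraic statement that any $r$ points impose at most $r$ linear conditions on $V$, and then invoke the nonarchimedean Poincar\'e-Lelong formula. Concretely, I will lift $E$ to an effective divisor $E_X$ of degree $r$ on $X$ whose specialization to $\Gamma$ is $E$, produce a nonzero $f \in V$ with $\ddiv(f) + D_X - E_X \geq 0$ by a dimension count, and take $\psi = \trop(f)$.

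After subdividing $\Gamma$ if necessary, I may assume that every point in the support of $E$ is a vertex, and write $E = \sum_i a_i v_i$. Each $v_i$ is a type~2 point of $X^{\an}$ corresponding to an irreducible component $X_{v_i}$ of the special fiber of the associated semistable model. Because $K$ is algebraically closed, so is its residue field $\kappa$, and the smooth locus of $X_{v_i}$ away from the nodes contains infinitely many $\kappa$-points, each of which lifts to a $K$-point of $X$ retracting to $v_i$. Choose, for each $i$, distinct $K$-points $p_{i,1}, \ldots, p_{i,a_i}$ retracting to $v_i$, and set
\[
E_X = \sum_{i,j} p_{i,j}.
\]
This is an effective divisor of degree $r$ on $X$ whose specialization $\tau_\ast(E_X)$ to $\Gamma$, along the retraction $\tau \colon X^{\an} \to \Gamma$, is exactly $E$.

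The subspace $V(-E_X) = \{ f \in V : \ddiv(f) + D_X - E_X \geq 0 \}$ is cut out of $V$ by at most $r$ linear conditions, one per point $p_{i,j}$, so $\dim V(-E_X) \geq (r+1) - r = 1$. Pick any nonzero $f \in V(-E_X)$ and set $\psi = \trop(f) \in \Sigma$. The slope formula recalled in Section~\ref{sec:slopesandreductions} gives $\ddiv(\psi) = \tau_\ast(\ddiv(f))$, and since specialization is linear and preserves effectivity, applying $\tau_\ast$ to the inequality $\ddiv(f) + D_X - E_X \geq 0$ yields $\ddiv(\psi) + D - E \geq 0$, as required.

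The only genuine subtlety is the lifting step. This is handled cleanly because $K$ is algebraically closed with algebraically closed residue field, and we have arranged each point of the support of $E$ to be a vertex of $\Gamma$, so that the corresponding irreducible component of the special fiber supplies the $K$-points we need. Everything else is a standard dimension count combined with the Poincar\'e-Lelong specialization already in play throughout the paper.
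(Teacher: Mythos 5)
Your proposal is correct and follows essentially the same route as the paper: lift $E$ to an effective divisor $E_X$ specializing to it, use the fact that $V$ has (projective) dimension $r$ so that the $r$ lifted points impose at most $r$ conditions, and push the resulting section's divisor back to $\Gamma$ via the slope formula. The paper compresses the lifting and dimension count into the phrase ``since $V$ has rank $r$,'' but the argument is the same; your extra care with subdividing $\Gamma$ and choosing $K$-points retracting to the support of $E$ is a correct elaboration of that standard step.
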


\begin{proof}
We follow the standard argument for showing that the rank of the tropicalization of a divisor is greater than or equal to its rank on the algebraic curve.  Let $E_X$ be a divisor of degree $r$ on $X$ that specializes to $E$.  Since $V$ has rank $r$, there is a function $f \in V$ such that $\ddiv (f) + D_X - E_X$ is effective.  Setting $\psi = \trop (f)$ yields the result.
\end{proof}

\subsection{Maximally independent combinations}  \label{sec:configurations}
Let $\{ \psi_i \ | \ i \in I\}$ be a finite collection of piecewise linear functions.  A \emph{tropical linear combination} is an expression
\[
\theta = \min \{ \psi_i + c_i \},
\]
for some choice of real coefficients $\{c_i\}$.  Note that different choices of coefficients may yield the same pointwise minimum, but we consider the coefficients $\{ c_i \}$ to be part of the data in a tropical linear combination.  This means that the tropical linear combinations of  $\{ \psi_i \}$  are naturally identified with $\RR^I$.

Given a tropical linear combination $\theta = \min \{ \psi_i + c_i \}$, we say that $\psi_i$ \emph{achieves the minimum} at $v \in \Gamma$ if $\theta(v) = \psi_i(v) + c_i$, and \emph{achieves the minimum uniquely} if, moreover, $ \theta(v)\neq \psi_j(v) + c_j $ for $j \neq i$.  We say that \emph{the minimum is achieved at least twice} at $v \in \Gamma$ if there are at least two distinct indices $i \neq j$ such that $\theta(v)$ is equal to both $\psi_i(v) + c_i$ and $\psi_j(v) + c_j$.  A tropical linear combination is a \emph{tropical dependence} if the minimum is achieved at least twice at every point.  Equivalently, $\theta = \min \{ \psi_i + c_i \}$ is a tropical dependence if  $\theta = \min_{j \neq i} \{ \psi_j + c_j \}$, for all $i$.

We will be most interested in tropical linear combinations that are as far from tropical dependence as possible, in the following sense.

\begin{definition}
A tropical linear combination $\theta = \min \{ \psi_i + c_i \}$ is \emph{maximally independent} if each $\psi_i$ achieves the minimum uniquely at some point $v \in \Gamma$.
\end{definition}

\noindent Equivalently, $\theta = \min \{ \psi_i + c_i \}$ is maximally independent if $\theta \neq \min_{j \neq i} \{ \psi_j + c_j \}$, for all $i \in I$.  Throughout, we will use the following proposition as a criterion for tropical independence.

\begin{proposition}
\label{Prop:Strategy}
Let $\{ \psi_i \, | \, i \in I \}$ be a finite subset of $\PL(\Gamma)$.  If there is a maximally independent tropical linear combination $\theta = \min \{ \psi_i + c_i \}$ then $\{ \psi_i \}$ is tropically independent.
\end{proposition}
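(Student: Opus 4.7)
The plan is to argue by contradiction: assume a tropical dependence $\min\{\psi_i+c'_i\}$ exists, and use the given maximally independent combination $\theta = \min\{\psi_i+c_i\}$ to manufacture a point where exactly one term of the dependence strictly achieves the minimum. The key observation is that tropical dependence is invariant under adding a constant to all coefficients simultaneously, so only the \emph{differences} $d_i = c'_i - c_i$ matter. This suggests looking at the index where these differences are smallest.

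Concretely, I would pick $i^\ast \in I$ minimizing $d_{i^\ast} = c'_{i^\ast} - c_{i^\ast}$, and let $v \in \Gamma$ be a point where $\psi_{i^\ast}$ achieves the minimum of $\theta$ uniquely (which exists by the definition of maximally independent). At this $v$, I have the strict inequality $\psi_{i^\ast}(v) + c_{i^\ast} < \psi_j(v) + c_j$ for all $j \neq i^\ast$. Adding $d_{i^\ast}$ to both sides, and then using $d_{i^\ast} \leq d_j$ on the right, I obtain
\[
\psi_{i^\ast}(v) + c'_{i^\ast} < \psi_j(v) + c_j + d_{i^\ast} \leq \psi_j(v) + c'_j
\]
for all $j \neq i^\ast$. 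Thus $\psi_{i^\ast}$ uniquely achieves the minimum of $\min\{\psi_j+c'_j\}$ at $v$, contradicting the assumption that this minimum is achieved at least twice at every point.

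There is no real obstacle here: the content is a short book-keeping argument, and the only point requiring any care is to notice that choosing $i^\ast$ to minimize the difference $c'_{i^\ast} - c_{i^\ast}$ is what converts the strict inequality coming from unique attainment of $\theta$ into a strict inequality for the hypothetical dependence. Once that trick is in place the proof is essentially a one-liner, and no hypotheses on $\Gamma$ (compactness, metric structure, etc.) are needed beyond those already in effect.
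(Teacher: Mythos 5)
Your proof is correct and is essentially the paper's own argument: the paper also compares the two coefficient vectors and picks the index extremizing $c_i - c'_i$ (equivalently minimizing $c'_i - c_i$), then uses the unique-attainment point to contradict the dependence. The only cosmetic difference is that the paper phrases it as showing no tropical linear combination can be maximally independent, while you derive the contradiction directly at the point where $\psi_{i^\ast}$ achieves $\theta$ uniquely.
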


\begin{proof}
Suppose that $\{ \psi_i \}$ is tropically dependent, and choose real coefficients $c'_i$ such that the minimum of  $\{\psi_i + c'_i\}$ occurs at least occurs at least twice at every point $v \in \Gamma$.  Now, consider an arbitrary tropical linear combination $\theta = \min \{ \psi_i + c_i \}$.  Choose $j \in I$ so that $c_j - c'_j$ is maximal.  Then $\psi_j + c_j \geq \min_{j \neq i} \{ \psi_i + c_i \}$ at every point $v \in \Gamma$, and hence $\theta = \min \{ \psi_i + c_i  \}$ is not maximally independent.
\end{proof}

\subsection{Main strategy}
Our strategy for proving Theorem~\ref{thm:independence} is based on maximally independent combinations and Proposition~\ref{Prop:Strategy}, as follows.  Let $D_X$ be a divisor of degree $24 + \rho$ and rank 6 on a curve $X$ with skeleton $\Gamma$, and let $\Sigma = \trop(\cL(D_X))$.  Note that $\Sym^2 (\cL(D_X))$ has dimension $8 \choose 2$ = 28, and any function of the form $\psi_i + \psi_j$, with both $\psi_i$ and $\psi_j$ in $\Sigma$, is in the tropicalization of $\mathrm{Im}(\mu_2) \subset \cL(2D_X)$.

Therefore, to show that $\mu_2$ is injective, it suffices to give a maximally independent combination of 28 functions $\psi_i + \psi_j$, with both $\psi_i$ and $\psi_j$ in $\Sigma$.  This strategy has a distinct advantage over the approach used in \cite{MRC, MRC2}.  Rather than ruling out the existence of a tropical dependence by considering all possible tropical linear combinations of a given set of functions and arguing by contradiction, we algorithmically construct a single maximally independent combination, and apply Proposition~\ref{Prop:Strategy} to conclude that this set is tropically independent.

Although not logically necessary for the proofs of our theorems, we include the following interpretation of maximally independent combinations in the language of algebraic geometry.

\begin{proposition}  \label{prop:ag-interpretation}
Let $X$ be a curve over a discretely valued field with a regular semistable model $\mathcal X$ whose skeleton is $\Gamma$.  Let $f_0, \ldots, f_r$ be sections of $\mathcal{O}(D_X)$.  Then the following are equivalent:
\begin{enumerate}
\item  There are integers $c_0, \ldots, c_r$ such that $\theta = \min \{ \trop(f_i) + c_i \}$ is maximally independent.
\item  There are scalars $a_0, \ldots, a_r$, irreducible components $X_0, \ldots, X_r$ in the special fiber of $\mathcal X$, and a line bundle $\mathcal{L}$ on $\mathcal X$ extending $\mathcal{O}(D_X)$, such that $a_i f_i$ extends to a regular section of $\mathcal{L}$ and vanishes on $X_j$ if and only if $i \neq j$.
\end{enumerate}
\end{proposition}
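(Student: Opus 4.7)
The plan is to translate between tropical data on $\Gamma$ and algebraic data on the regular semistable model $\mathcal{X}$, using the dictionary between vertices of $\Gamma$ and components of the special fiber of $\mathcal{X}$. Two standard facts about regular semistable models do the heavy lifting. First, every line bundle $\mathcal{L}$ on $\mathcal{X}$ extending $\mathcal{O}(D_X)$ can be written as $\mathcal{O}\bigl(\overline{D_X} + \sum_v n_v X_v\bigr)$, where $\overline{D_X}$ is the Zariski closure of $D_X$, the sum is over vertices of $\Gamma$, and the integers $n_v$ are determined by $\mathcal{L}$ up to adding a common integer (since $\mathcal{O}\bigl(\sum_v X_v\bigr) = \mathcal{O}(\ddiv(\pi))$ is trivial, where $\pi$ is a uniformizer). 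Second, $\mathcal{X}$ is regular hence normal, so a rational section of $\mathcal{L}$ extends to a regular section if and only if its order along every codimension-one prime is nonnegative.

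For $a_i \in K^*$ and $f_i \in H^0(X, \mathcal{O}(D_X))$, the horizontal orders of $a_i f_i$ measured in $\mathcal{L}$ are automatically nonnegative, while the order along the vertical component $X_v$ equals $\val(a_i) + \trop(f_i)(v) + n_v$. So $a_i f_i$ extends to a regular section of $\mathcal{L}$ if and only if $\val(a_i) + \trop(f_i)(v) + n_v \geq 0$ for every vertex $v$, and the extended section vanishes on $X_j$ precisely when this inequality is strict at the vertex $v_j$ corresponding to $X_j$.

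With this dictionary in hand, the equivalence is bookkeeping. For $(1) \Rightarrow (2)$, I would start with a maximally independent combination $\theta = \min\{\trop(f_i) + c_i\}$, refine $\mathcal{X}$ if necessary so that each $\trop(f_i) + c_i$ achieves the minimum uniquely at a vertex $v_i$, choose $a_i \in K^*$ with $\val(a_i) = c_i$, take $X_i$ to correspond to $v_i$, and let $\mathcal{L}$ be the extension determined by $n_v = -\theta(v)$ (an integer for each vertex, since $K$ is discretely valued). By the definition of $\theta$ as a pointwise minimum, $c_i + \trop(f_i)(v) + n_v \geq 0$ at every vertex $v$, with equality at $v_i$ and strict inequality at $v_j$ for $j \neq i$ (by uniqueness of the minimum there). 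Conversely, for $(2) \Rightarrow (1)$, I would set $c_i = \val(a_i)$, let $v_i$ be the vertex corresponding to $X_i$, and read the regularity and vanishing hypotheses as saying that $\trop(f_i) + c_i$ achieves the common value $-n_{v_i}$ uniquely at $v_i$; this is exactly the maximal independence of $\theta = \min\{\trop(f_i) + c_i\}$.

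The main technical content, as I see it, lies in the line bundle bookkeeping and the Hartogs reduction of regularity to finitely many vertex inequalities. A minor wrinkle is the possibility that a unique-minimum point lies in the interior of an edge; this is dealt with by subdividing $\Gamma$ and passing to the corresponding blow-up of $\mathcal{X}$, which preserves maximal independence.
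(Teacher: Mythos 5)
Your proposal is correct and takes essentially the same route as the paper: twist the closure $\overline D_X$ by $-\theta(v)$ times the vertical components, choose scalars with $\val(a_i) = c_i$, and read off the vanishing order of $a_i f_i$ along $X_j$ as $(\trop(f_i) + c_i - \theta)(v_j)$, with the converse obtained by writing an arbitrary extension as $\mathcal{O}_{\mathcal{X}}\big(\overline D_X - \sum_v b_v X_v\big)$ and setting $c_i = \val(a_i)$. The only divergence is your closing remark about subdividing at an interior unique-minimum point: the paper simply selects vertices where each minimum is achieved uniquely, and since a non-integer subdivision point is not realized by any blow-up of the fixed model $\mathcal{X}$ (and statement (2) refers to that fixed model), the blow-up remark should not be leaned on, though it does not affect the core argument.
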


\begin{proof}
Suppose there are integers $c_0, \ldots, c_r$ such that $\theta = \min \{ \trop(f_i) + c_i \}$ is maximally independent.  Choose vertices $v_0, \ldots, v_r$ such that $\trop(f_i)$ achieves the minimum uniquely at $v_i$, and label the remaining vertices $v_{r+1}, \ldots, v_s$.  Let $X_i$ be the irreducible component of the special fiber of $\mathcal X$ corresponding to $v_i$.  Consider the line bundle $$\mathcal{L} = \mathcal{O}_{\mathcal{X}}\big(\overline D_X - \theta(v_0) X_0 - \cdots - \theta(v_s) X_s\big),$$ where $\overline D_X$ is the closure of $D_X$ in the total space of $\mathcal X$.  Choose scalars $a_0, \ldots, a_r$ such that $\val(a_i) = \theta(v_i)$.  Then the order of vanishing of $a_i f_i$ along $X_j$ is $\trop(f_i) + c_i - \theta$ evaluated at $v_j$.  In particular, $a_i f_i$ is a regular section of $\mathcal L$, does not vanish on $X_i$, and vanishes on $X_j$ for $0 \leq j \leq r$ and $j \neq i$.

Conversely, given scalars $a_0, \ldots, a_r$, irreducible components $X_0, \ldots, X_r$, and an extension $\mathcal{L}$ of $\mathcal{O}(D_X)$ satisfying (2), set $c_i = \val(a_i)$.  Note that $\mathcal{L}$ must be of the form $\mathcal{O}_{\mathcal{X}} \big( \overline D_X - b_0 X_0 - \cdots - b_s X_s \big)$ for some integers $b_0, \ldots b_s$, and the order of vanishing of $a_j f_j$ along $X_i$ is $\trop(f_j) + c_j - b_i$, evaluated at $v_i$.  By assumption, $a_j f_j$ vanishes on $X_i$, for $j \neq i$, and $a_i f_i$ does not.  We conclude that $\trop(f_i) + c_i$ is strictly less than $\trop(f_j) + c_j$ at $v_i$, and hence $\theta = \min \{ \trop(f_i) + c_i \}$ is maximally independent.
\end{proof}

\begin{remark}
Proposition~\ref{prop:ag-interpretation} suggests some resemblance between our approach to proving linear independence of sections via maximally independent combinations and the technique used to prove cases of the maximal rank conjecture via limit linear series and linked Grassmannians on chains of elliptic curves in \cite{LiuOssermanTeixidorZhang}.  Osserman has also developed a notion of limit linear series for curves of pseudocompact type \cite{Osserman14}, a class of curves that includes the semistable reduction of the curve $X$ we study here, and relations to the Amini-Baker notion of limit linear series in tropical and nonarchimedean geometry are spelled out in \cite{Osserman17}.
\end{remark}

\subsection{The tropicalization of a pencil}  \label{Sec:Pencil}

The problem of understanding tropicalizations of arbitrary linear series seems hopelessly complicated.  In most cases that we can analyze, either the behavior is sufficiently similar to the vertex avoiding case, or the essential difference can be confined to a pencil.

Here we analyze the tropicalization of a pencil on $\PP^1$.  Although this is the simplest nontrivial example, it captures the features that will be essential for our purposes.  To understand the relation between this example and our later computations, imagine zooming out from the chain of loops, letting the bridges get long and the loops shrink to points.  Then the graph looks like an interval.  From this perspective, the pencils that we isolate and study in Section~\ref{Sec:Generic} behave exactly like the following example.

\begin{example}
\label{Ex:Interval}
Let $\Gamma$ be an interval with left endpoint $w$, viewed as a skeleton of $X = \PP^1$.  Let $D_X$ be a divisor specializing to $D = 2w$, let $V \subset \cL (D_X)$ be a rank 1 linear subseries, and let $\Sigma = \trop (V)$.

For each point $v$ in $\Gamma$, we consider the possibilities for the vector of slopes along the rightward pointing tangent vector $\zeta$ based at $v$.  Since each point $v$ has a unique rightward pointing tangent vector, for the purpose of this example we denote the vector of rightward slopes at $v$ by $s_v = (s_{\zeta}[0], s_\zeta[1])$.

Any function $\psi \in R(D)$ has rightward slope between 0 and 2 at every point, so $s_v$ is either $(1,2)$, $(0,2)$, or $(0,1)$.  We divide the interval $\Gamma$ into regions, according to these three possibilities.  Because the slopes of functions in $R(D)$ do not increase from left to right, the region where $s_v$ is equal to $(1,2)$ must be to the left of the region where it is equal to $(0,2)$, which in turn must be to the left of the region where it is equal to $(0,1)$.

First, consider the case where there is a non-empty region where $s_v$ is equal to $(0,2)$.  We can then identify functions in $\Sigma$ with specified slopes at every point of $\Gamma$, as follows.  Choose a function $\psi_1$ with maximal rightward slope at the right endpoint.  Because the slopes of functions in $R(D)$ cannot increase from left to right, we see that $s_v(\psi_1)$ must be positive at all rightward tangent vectors.  In particular $\psi_1$ must have rightward slope 2 in the region where $s_v$ is equal to $(0,2)$, and hence must also have rightward slope 2 to the left of this region.  Hence the rightward slope of $\psi_1$ is maximal among all functions in $\Sigma$, at all points of $\Gamma$.  By a similar argument, if we choose a function $\psi_0$ with minimal rightward slope at the left endpoint $w$ then it has minimal slope at every point of $\Gamma$.  The functions $\psi_0$ and $\psi_1$, with the three regions according to the vector $s_v$, are illustrated in Figure~\ref{Fig:EasyCase}.

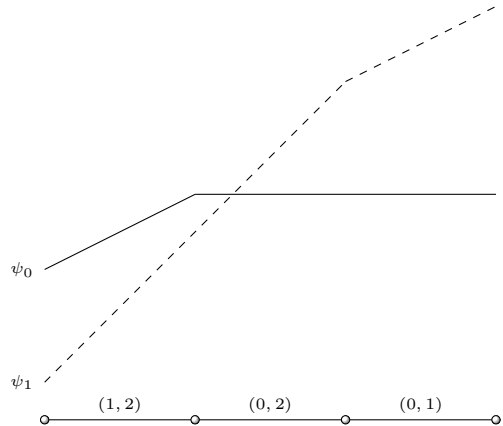
\begin{figure}[H]
\begin{center}
\begin{tikzpicture}

\draw (-0.3,2) node {{\tiny $\psi_1$}};
\draw[dashed] (0,2)--(4,6);
\draw[dashed] (4,6)--(6,7);

\draw (-0.3,3.5) node {{\tiny $\psi_0$}};
\draw (0,3.5)--(2,4.5);
\draw (2,4.5)--(6,4.5);

\draw (0,1.5)--(6,1.5);
\draw [ball color=white] (0,1.5) circle (0.55mm);
\draw [ball color=white] (6,1.5) circle (0.55mm);
\draw [ball color=white] (2,1.5) circle (0.55mm);
\draw [ball color=white] (4,1.5) circle (0.55mm);
\draw (1,1.7) node {{\tiny $(1,2)$}};
\draw (3,1.7) node {{\tiny $(0,2)$}};
\draw (5,1.7) node {{\tiny $(0,1)$}};
\end{tikzpicture}

\end{center}

\caption{The functions $\psi_0$ and $\psi_1$, when the $(0,2)$ region is nonempty.}
\label{Fig:EasyCase}
\end{figure}

A similar argument shows that, for any function $\psi \in \Sigma$, there is a region on the left where the slope of $\psi$ equals that of $\psi_1$ and a region to the right where it agrees with $\psi_0$.  Hence every function in $\Sigma$ is a tropical linear combination of $\psi_0$ and $\psi_1$.  So $\Sigma$ is completely determined by the regions where $s_v$ is $(1,2)$, $(0,2)$, and $(0,1)$, provided that the $(0,2)$ region is not empty.

\medskip

When there is no point $v$ with $s_v = (0,2)$, the possibilities for $\Sigma$ are more complicated.  In this case, there is a distinguished point $x \in \Gamma$ such that $s_v$ is equal to $(1,2)$ for all points $v$ to the left of $x$, and equal to $(0,1)$ for all points to the right of $x$.  Now, consider a function $\psi_A$ with minimal slope $1$ at the left endpoint.  We then see that $\psi_A$ has slope $1$ at all points to the left of $x$, because slopes of functions in $\Sigma$ do not increase,  but the slope of $\psi_A$ could be either $0$ or $1$ at any given point to the right of $x$.  More precisely, $\psi_A$ has slope 1 for some distance $t$ to the right of $x$, at which point its slope decreases to 0, and the slope is 0 the rest of the way.

Similarly, there is a function $\psi_B$ with maximal slope 1 at the right endpoint.  Then $\psi_B$ has slope 1 at all points to the right of $x$, and for some distance $t'$ to the left of $x$.  Then at all points of distance greater than $t'$ to the left of $x$, the slope of $\psi_B$ is 2.

Furthermore, by taking a tropical linear combination of two functions that agree at $x$, one with outgoing slope 0 and the other with incoming slope 2, we get a third function $\psi_C \in \Sigma$ with rightward slope $2$ everywhere to the left of $x$ and $0$ everywhere to the right of $x$.  The functions $\psi_A$, $\psi_B$, and $\psi_C$, with the distances $t$ and $t'$,  are illustrated schematically in Figure~\ref{Fig:HardCase}.

\begin{figure}[H]
\begin{center}
\begin{tikzpicture}

\draw (-0.3,4) node {{\tiny $\psi_A$}};
\draw (0,4)--(4,6);
\draw (4,6)--(6,6);

\draw (-0.3,3.35) node {{\tiny $\psi_B$}};
\draw[dashed] (0,3.5)--(1.5,5);
\draw[dashed] (1.5,5)--(6,7.25);

\draw (-0.3,2.7) node {{\tiny $\psi_C$}};
\draw[dotted] (0,2.9)--(3,5.9);
\draw[dotted] (3,5.9)--(6,5.9);

\draw (0,2.3)--(6,2.3);
\draw [ball color=white] (0,2.3) circle (0.55mm);
\draw [ball color=white] (6,2.3) circle (0.55mm);
\draw [ball color=white] (3,2.3) circle (0.55mm);
\draw [ball color=black] (1.4,2.3) circle (0.55mm);
\draw [ball color=black] (4,2.3) circle (0.55mm);
\draw (3,2.5) node {{\tiny $x$}};
\draw [<->] (2.95,2.1) -- (1.45,2.1);
\draw [<->] (3.05,2.1) -- (3.95,2.1);
\draw (2.25,1.9) node {{\tiny $t'$}};
\draw (3.5,1.9) node {{\tiny $t$}};
\end{tikzpicture}

\end{center}

\caption{The functions $\psi_A$, $\psi_B$, and $\psi_C$ when the $(0,2)$ region is empty.}
\label{Fig:HardCase}
\end{figure}
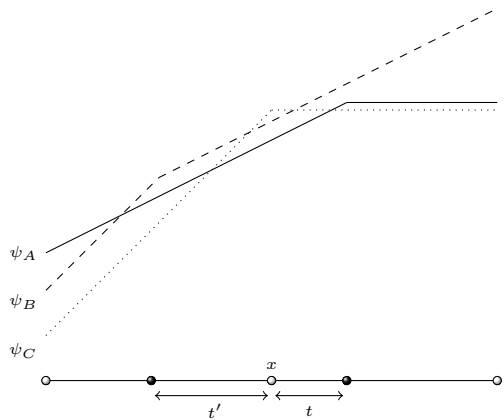

\noindent In this case, we may again define $\PL$ functions $\psi_0$ and  $\psi_1$ with rightward slopes $s_\zeta[0]$ and $s_\zeta[1]$, respectively, at every rightward tangent vector $\zeta$.  These functions are still contained in the complete tropical linear series $R(D)$, but may not be contained in $\Sigma$, the tropicalization of our algebraic pencil.  (With our notation above, if $t = 0$ then $\psi_0$ is in $\Sigma$, and if $t' = 0$, then $\psi_1$ is in $\Sigma$.)

We may also define a function $\psi^{\infty} \in R(D)$ with rightward slope $1$ at every point.  Again, the function $\psi^{\infty}$ is not necessarily contained in $\Sigma$.  (If $t$ or $t'$ is sufficiently large, equal to the distance from $x$ to the right or left endpoint of $\Gamma$, respectively, then $\psi^{\infty}$ is in $\Sigma$.)

It is important to notice, however, that each of $\psi_A$, $\psi_B,$ and $\psi_C$ is a tropical linear combination of $\psi_0$, $\psi_1,$ and $\psi^{\infty}$.

Most importantly, we claim that the distances $t$ and $t'$ \emph{must be equal}.  To see this, note that the functions $\psi_A$, $\psi_B$, and $\psi_C$ are tropicalizations of functions in a pencil.  Therefore the rational functions in $\cL(D_X)$ tropicalizing to these three functions are linearly dependent, and hence  $\psi_A$, $\psi_B$, and $\psi_C$ \emph{must be tropically dependent}.  On each region where all three functions are linear, there are exactly two with the same slope, and this determines the combinatorial type of the tropical dependence, i.e., which functions achieve the minimum on which regions, as shown in Figure~\ref{Fig:IntervalDependence}.

\begin{figure}[H]
\begin{center}
\begin{tikzpicture}

\draw (0,0)--(6,0);
\draw [ball color=white] (0,0) circle (0.55mm);
\draw [ball color=white] (6,0) circle (0.55mm);

\draw (-0.3,2.15) node {{\tiny $\psi_A$}};
\draw (0,2.15)--(4,4.15);
\draw (4,4.15)--(6,4.15);

\draw (-0.3,0.95) node {{\tiny $\psi_B$}};
\draw[dashed] (0,1)--(2.05,3.05);
\draw[dashed] (2.16,3.16)--(6,5.08);

\draw (-0.3,1.15) node {{\tiny $\psi_C$}};
\draw[dotted] (0,1.1)--(3,4.1);
\draw[dotted] (3,4.1)--(6,4.1);

\draw [ball color=black] (2,0) circle (0.55mm);
\draw [ball color=white] (3,0) circle (0.55mm);
\draw [ball color=black] (4,0) circle (0.55mm);
\draw (1,0.3) node {{\tiny $BC$}};
\draw (3,0.3) node {{\tiny $AB$}};
\draw (5,0.3) node {{\tiny $AC$}};
\draw (3,-0.2) node {{\tiny $x$}};

\end{tikzpicture}
\caption{The tropical dependence that shows $t=t'$.}
\label{Fig:IntervalDependence}
\end{center}
\end{figure}
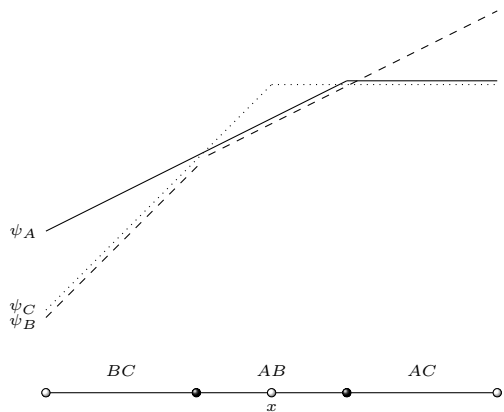

\noindent Note that all three functions achieve the minimum at two points:  the point at distance $t$ to the right of $x$, and the point at distance $t'$ to the left of $x$.   Comparing the slopes of $\psi_C$ to those of $\psi_A$ shows that $\psi_C - \psi_A$ is equal to $t'$ at $x$, and also equal to $t$.  This proves that $t = t'$, as claimed.
\end{example}

\section{Chains of loops}  \label{sec:chainsofloops}

We now focus attention, for the remainder of the paper, on the case where $\Gamma$ is a chain of loops with admissible edge lengths.

Let $\Gamma$ be a chain of loops with bridges.  It has $2g+2$ vertices, one on the lefthand side of each bridge, which we label $w_0, \ldots , w_g$, and one on the righthand side of each bridge, which we label $v_1, \ldots, v_{g+1}$.  There are two edges connecting the vertices $v_k$ and $w_k$, the top and bottom edges of the $k$th loop, whose lengths are denoted $\ell_k$ and $m_k$, respectively, as shown in Figure~\ref{Fig:TheGraph}.

\begin{center}
\begin{figure}[H]
\scalebox{.9}{
\begin{tikzpicture}
\draw (-1.75,0) node {\footnotesize $w_0$};
\draw (-1.5,0)--(-0.5,0);
\draw (0,0) circle (0.5);
\draw (-0.25,0) node {\footnotesize $v_1$};
\draw (0.25,0) node {\footnotesize $w_1$};
\draw (0.5,0)--(1.5,0);
\draw (2,0) circle (0.5);
\draw (1.75,0) node {\footnotesize $v_2$};
\draw (2.5,0)--(3.5,0);
\draw (4,0) circle (0.5);
\draw (4.5,0)--(5.5,0);
\draw (6,0) circle (0.5);
\draw (6.5,0)--(7.5,0);
\draw (8,0) circle (0.5);
\draw (7.75,0) node {\footnotesize $v_g$};
\draw (8.25,0) node {\footnotesize $w_g$};
\draw (8.5,0)--(9.5,0);
\draw (9.85,0) node {\footnotesize $v_{g+1}$};
\draw [<->] (4.65,0.15)--(5.35,0.15);
\draw [<->] (4.6,.25) arc[radius = 0.65, start angle=20, end angle=160];
\draw [<->] (4.61, -.15) arc[radius = 0.63, start angle=-9, end angle=-173];
\draw (5,0.4) node {\footnotesize$n_k$};
\draw (4,1) node {\footnotesize$\ell_k$};
\draw (4,-1) node {\footnotesize$m_k$};
\end{tikzpicture}
}
\caption{The chain of loops $\Gamma$.}
\label{Fig:TheGraph}
\end{figure}
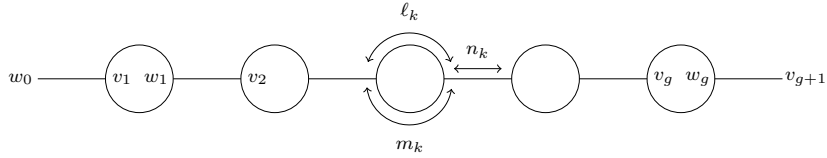
\end{center}

\noindent We denote by $\gamma_k$ the $k$th loop, which is formed by the two edges connecting $v_k$ and $w_k$, of length $\ell_k$ and $m_k$, for $1 \leq k \leq g$.  And we denote by $\beta_k$ the $k$th bridge, which connects $w_k$ and $v_{k+1}$, of length $n_k$, for $0 \leq k \leq g$.

Throughout, we assume that $\Gamma$ has admissible edge lengths in the following sense.

\begin{definition} \label{Def:Admissible}
The graph $\Gamma$ has \emph{admissible edge lengths} if
\[
\ell_{k+1} \ll m_k \ll \ell_k \ll n_k \ll n_{k-1} \mbox{ for all $k$}.
\]
\end{definition}

\begin{remark}
These conditions on edge lengths are more restrictive than those in \cite{MRC} and \cite{tropicalBN}.  Our arguments here, e.g., in Lemma~\ref{Lem:VANotToRight}, require not only that the bridges are much longer than the loops, but also that each loop is much larger than the loops that come after it.  For illustrative purposes, we will generally draw the loops and bridges as if they are the same size.
\end{remark}

\subsection{Special divisors on a chain of loops}  \label{sec:classification}
By the Riemann-Roch Theorem, every divisor class of degree $d$ on $\Gamma$ has rank at least $d - g$ \cite{BakerNorine07}.  The special divisor classes on $\Gamma$, i.e., the classes of degree $d$ and rank strictly greater than $d-g$, are classified in \cite{tropicalBN}.  We briefly recall the structure of this classification and refer the reader to the original paper for further details.

The Brill-Noether locus $W^r_d (\Gamma)$ parametrizing divisor classes of degree $d$ and rank $r$ is a union of $\rho$-dimensional tori.  These tori are in bijection with standard Young tableaux on a rectangle of size $(r+1) \times (g-d+r)$, with entries from $\{ 1, \ldots , g \}$.

An open dense subset of each torus consists of vertex avoiding divisor classes.  We refer the reader to \cite[Definition~2.3]{LiftingDivisors} for a definition.  Before proceeding to the general case, in Section~\ref{Sec:VertexAvoiding}, we prove Theorem~\ref{Thm:MainThm} in the case where the divisor class $D$ is vertex avoiding.  In this section, we review the combinatorics of vertex avoiding divisors.  For analogues of these combinatorial results in the case of not necessarily complete tropicalizations of linear series, we refer the reader to Section~\ref{Sec:Loops}.

Given a vertex avoiding divisor class of rank $r$ on $\Gamma$, there is a unique effective divisor $D_i$ in this class such that $\deg_{w_0}(D_i) = r-i$ and $\deg_{v_{g+1}}(D_i) = i$.  We define the functions $\psi_i$ such that $\ddiv(\psi_i) = D_i - D_0$.  Note that $\psi_i$ is uniquely determined up to an additive constant.  Because the divisors $D_i$ are unique, if $D_X$ is a divisor of rank $r$ that specializes to the given divisor class, then $D_i \in \Trop (\vert D_X \vert)$.  We may therefore choose $D_X$ so that it specializes to $D_0$, and we have $\psi_i \in \trop (\cL (D_X))$ for all $0 \leq i \leq r$.

\begin{remark}
Our notation here differs slightly from that used in \cite{MRC}.  In the earlier paper, the divisor denoted here by $D_i$ was denoted $D_{r-i}$.  We find the current choice more natural, as the function $\psi_i$ has slope $i$ along the bridge $\beta_0$.
\end{remark}

Although we have worked extensively with the functions $\psi_i$ in the past, here we work with a slightly different collection of functions, obtained by adding a fixed PL function $\varphi$ to each $\psi_i$.  The reason for this choice is explained in Remark~\ref{Rem:Breaks}.  The function $\varphi$ is chosen as follows.

Every divisor on $\Gamma$ is equivalent to a unique \emph{break divisor} $D$, with $d-g$ chips at $w_0$, and precisely one chip on each loop $\gamma_k$; see, for instance, \cite{ABKS}.  We then choose $\varphi$ so that
\[
\ddiv(\varphi) = D_0 - D,
\]
and define
\[
\varphi_i = \psi_i + \varphi.
\]
With this notation, we have $\ddiv(\varphi_i) = D_i - D$.  In other words, the functions $\psi_i$ move $D_0$ to $D_i$, whereas the functions $\varphi_i$ move $D$ to $D_i$.  So we are now using the break divisor $D$ as the starting point for our constructions.  Note that adding or subtracting $\varphi$ gives a natural bijection between tropical dependences (resp. maximally independent combinations) of $\{ \psi_i \}$ and those of $\{ \varphi_i \}$.  Similarly, adding or subtracting $2 \varphi$ gives a natural bijection between tropical dependences (resp. maximally independent combinations) of $\{ \psi_i + \psi_j \}$ and those of $\{ \varphi_i + \varphi_j \}$.

Unlike the divisors $D_i$, the break divisor $D$ may not be the specialization of any divisor in $\vert D_X \vert$.  However, we find that the combinatorial advantages of the functions $\varphi_i$, as explained in Remark~\ref{Rem:Breaks}, outweigh this minor inconvenience.  These advantages were also discovered and used earlier, by Pflueger, in his work classifying special divisors on arbitrary chains of loops, with non-generic edge lengths \cite{Pflueger17a}.

\subsection{Slopes along bridges}  The slopes of $\PL$ functions along the bridges  of $\Gamma$, and especially the incoming and outgoing slopes at each loop, play a special role in controlling which functions can achieve the minimum on which regions of the graph in a given tropical linear combination.  We use the following notation for these slopes.

Given $\psi \in \PL (\Gamma)$, let $s_k(\psi)$ be the incoming slope from the left at $v_k$, and let $s'_k(\psi)$ be the outgoing slope of $\psi$ to the right at $w_k$.

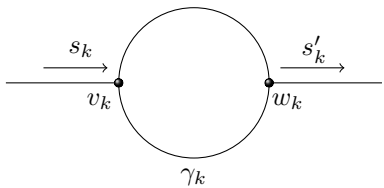
\begin{figure}[h!]
\begin{tikzpicture}
\begin{scope}[grow=right, baseline]
\draw (-1,0) circle (1);
\draw (-3.5,0)--(-2,0);
\draw (0,0)--(1.5,0);
\draw [ball color=black] (-2,0.) circle (0.55mm);
\draw [ball color=black] (0,0) circle (0.55mm);
\draw (-1,-1.25) node {{$\gamma_k$}};
\draw (-2.25,-.25) node {{$v_k$}};
\draw (.25,-.25) node {{$w_k$}};
\draw (-2.5,.45) node {{$s_k$}};
\draw [->] (-3,.2)--(-2.15,0.2);
\draw (.6,.45) node {{$s'_k$}};
\draw [->] (.15,.2)--(1,0.2);
\end{scope}
\end{tikzpicture}
\caption{The Slopes $s_k$ and $s'_k$.}
\label{Fig:Slopes}
\end{figure}

\noindent Note that $s'_k(\psi)$ and $s_{k+1}(\psi)$ are the rightward slopes of $\psi$ at the beginning and end, respectively, of the bridge $\beta_k$.  The functions $\varphi_i$ have constant slopes along each bridge, so $s'_k(\varphi_i) = s_{k+1}(\varphi_i)$.

\subsection{Slopes of the maximally independent combination}  \label{sec:slopes}

From this point onward, we assume that $r=6$, $g=21+\rho$, and $d=24+\rho$.  Our goal is to construct a maximally independent combination of functions in the tropicalization of the image of $\mu_2$.  In the vertex avoiding case, we follow the usual notational convention, writing
\[
\varphi_{ij} = \varphi_i + \varphi_j,
\]
for the pairwise sums of the distinguished functions $\varphi_0, \ldots, \varphi_6$, and construct a maximally independent combination
\[
\theta = \min_{ij}  \{ \varphi_{ij}+ c_{ij} \}.
\]

\begin{remark}
In the general case, we will give a similar formula for the slopes of an auxiliary $\PL$-function, which we call the \emph{master template}.  We construct the master template as a tropical linear combination of pairwise sums of \emph{building blocks} analogous to $\psi_0$, $\psi_1$, and $\psi^\infty$ in Example~\ref{Ex:Interval}.  These building blocks will not necessarily be in the tropicalization $\Sigma$ of our linear series, but $\Sigma$ is contained in the tropical convex hull of the building blocks, and we will use the master combination as a key step toward building the required maximally independent combination of pairwise sums of functions in $\Sigma$.  See Section~\ref{Sec:Construction}.
\end{remark}

Recall that each vertex avoiding class is contained in the torus of special divisor classes corresponding to a unique standard Young tableau of shape $(r+1) \times (g-d+r)$ with entries from $\{1, \ldots, g \}$.  In terms of this tableau, the slope of $\varphi_i$ along the bridge $\beta_k$ is
\[
s_k (\varphi_i) = i - (g-d+r) + \# \left\{ \text{entries} \leq k \text{ in column } r+1-i \right\} .
\]

Our algorithm for constructing the maximally independent tropical linear combination $\theta$ is easier to describe if we specify the slope of $\theta$ on each bridge in advance.  We do so as follows.

\begin{definition}\label{def:z}
We define $z$ to be the 6th smallest entry appearing in the union of the first two rows of the tableau, and choose $z'$ so that $z'+2$ is the 10th smallest entry appearing in the union of the second and third row.
\end{definition}

The incoming slopes of $\theta$ at $v_k$, the leftmost point on $\gamma_k$, will be:
\begin{displaymath}
s_k (\theta) = \left\{ \begin{array}{ll}
4 & \textrm{if $k \leq z$,} \\
3 & \textrm{if $z < k \leq z'$,}\\
2 & \textrm{if $z' < k \leq g$.}
\end{array} \right.
\end{displaymath}
The precise choice of $z$ and $z'$ will be important later; see, e.g.,  Lemmas~\ref{Lem:VAAtMostThree} and \ref{lem:nonew}, and Corollary~\ref{cor:counting}.  For now, what is important to see is that the graph $\Gamma$ is divided into three \emph{blocks}, the first from $\gamma_1$ to $\gamma_z$, the second from $\gamma_{z+1}$ to $\gamma_{z'}$ and the third from $\gamma_{z'+1}$ to $\gamma_g$.  Within each block, the slope of $\theta$ will be nearly constant on each bridge, equal to 4, 3, or 2, on bridges within the first, second, and third blocks, repectively.  By nearly constant, we mean that the slope of $\theta$ might be different (1 or 2 higher) for a short distance at the beginning of the bridge, but since the bridges are very long, the average slope over each bridge within a block will be very close to 4, 3, or 2, according to the block.  On the bridges between the blocks, the slope decreases by 1 at the midpoint of the bridge.

Constructing $\theta$ in this way, with a predetermined average slope on each bridge, constant over long blocks, gives us precise control over which functions $\varphi_{ij}$ are candidates to achieve the minimum (permissible functions, as defined in the following section) on each loop.

\subsection{Permissible functions}
\label{Sec:Permissible}
In \cite{MRC}, we introduced the notion of permissible functions.  These are functions that satisfy a natural necessary condition to achieve the minimum at some point of a given loop, provided that the divisor associated to the minimum has a particular specified degree distribution.  Slopes along bridges encode the same information as degree distributions, as explained, e.g., in Definition~3.1 and the proof of Proposition~3.3 in \cite{MRC2}.

We now restate the characterization of permissibility for functions with constant slopes along bridges, such as $\varphi_{ij}$, assuming that $\theta$ has slopes along bridges as specified above.

\begin{remark}
In the vertex avoiding case, we will use permissibility only for the functions $\varphi_{ij}$.  Nevertheless, we discuss permissibility for arbitrary functions with constant slopes along bridges, since this more general notion will be applied to building blocks in the general case.
\end{remark}

\begin{definition}
Let $\psi \in \PL (\Gamma)$ be a function with constant slope along each bridge.  We say that $\psi$ is \emph{permissible} on $\gamma_k$ if
\begin{enumerate}
\item  $s_{j} (\psi) \leq s_{j}(\theta)$ for all $j\leq k$,
\item $s_{k+1}(\psi) \geq s_{k}(\theta)$, and
\item  if $s_{\ell} (\psi) < s_{\ell}(\theta)$ for some $\ell > k$, then there exists $k'$, with $k < k' < \ell$, such that $s_{k'} (\psi) > s_{k'} (\theta)$.
\end{enumerate}
\end{definition}

\noindent To understand the motivation for the definition, keep in mind that $\theta$ has nearly constant slope on each bridge (with the exception of bridges between blocks, where the slope decreases by 1 at the midpoint).  Even if $\gamma_k$ is the last loop of a block, $\theta$ has slope $s_k(\theta)$ on the second half of the bridge $\beta_{k-1}$ and average slope very close to $s_k(\theta)$ on the first half of $\beta_k$.  Also, the bridges adjacent to a loop are much longer than the edges in the loop, and both bridges and loops get much smaller as we move from left to right across the graph.  This definition is therefore pointing to the simple fact that any $\psi$ with constant slopes along bridges that achieves the minimum on $\gamma_k$ must have smaller than or equal slope, when compared with the minimum $\theta$, on every bridge to the left and greater than or equal slope on the first half of the bridge immediately to the right.  Moreover, if it has smaller slope on a bridge further to the right, then it must have had strictly larger slope on some bridge in between.

In the vertex avoiding case, the condition for $\varphi_{ij}$ to be permissible on $\gamma_k$ simplifies as follows.

\begin{lemma}
\label{Lem:VAPermissible}
In the vertex avoiding case, $\varphi_{ij}$ is permissible on $\gamma_k$ if and only if
$$s_{k} (\varphi_{ij}) \leq s_{k}(\theta) \leq s_{k+1} (\varphi_{ij}).$$
\end{lemma}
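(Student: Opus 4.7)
The plan is to observe two monotonicity properties of the slope sequences and use them to collapse the three defining conditions of permissibility into the two displayed inequalities. In the vertex avoiding case the formula
\[
s_k(\varphi_i) = i - (g-d+r) + \#\{\text{entries} \leq k \text{ in column } r+1-i\}
\]
makes $s_k(\varphi_i)$ non-decreasing in $k$, since the count on the right can only grow with $k$; summing over the two indices, $s_k(\varphi_{ij})$ is non-decreasing in $k$ as well. On the other hand, by the choice made in Section~\ref{sec:slopes}, $s_k(\theta)$ takes the values $4, 3, 2$ on the three successive blocks and is therefore non-increasing in $k$. Thus, moving across the bridges from left to right, the slopes of $\varphi_{ij}$ and of $\theta$ move in opposite monotonic directions.

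The forward implication is immediate: if $\varphi_{ij}$ is permissible on $\gamma_k$, then specializing condition~(1) to index $k$ yields $s_k(\varphi_{ij}) \leq s_k(\theta)$, and condition~(2) is precisely $s_{k+1}(\varphi_{ij}) \geq s_k(\theta)$. For the reverse, assume both inequalities. For any $m \leq k$, the two monotonicities give
\[
s_m(\varphi_{ij}) \leq s_k(\varphi_{ij}) \leq s_k(\theta) \leq s_m(\theta),
\]
verifying condition~(1); condition~(2) is the right-hand hypothesis. To establish condition~(3), note that for any $\ell > k$,
\[
s_\ell(\varphi_{ij}) \geq s_{k+1}(\varphi_{ij}) \geq s_k(\theta) \geq s_\ell(\theta),
\]
so the hypothesis $s_\ell(\varphi_{ij}) < s_\ell(\theta)$ of (3) is never satisfied and (3) holds vacuously.

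No substantial obstacle arises in this argument; the entire content is the monotonicity observation, after which the three conditions of the general definition of permissibility become redundant because the two slope sequences in question run in opposite monotonic directions and can therefore interact only through the single comparison at $k$ and $k+1$.
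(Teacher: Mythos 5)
Your proposal is correct and uses exactly the argument the paper gives: in the vertex avoiding case $s_k(\varphi_{ij})$ is nondecreasing in $k$ while $s_k(\theta)$ is nonincreasing, and these two monotonicity facts make conditions (1) and (3) of permissibility follow from (and condition (3) vacuous under) the two displayed inequalities. You have simply written out in full the details the paper leaves implicit.
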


\begin{proof}
In the vertex avoiding case, the slopes $s_k(\varphi_{ij})$ are nondecreasing in $k$, while $s_k(\theta)$ is nonincreasing.
\end{proof}

\noindent We also note the following, which holds for any function with constant slopes along bridges, not just those of the form $\varphi_{ij}$.

\begin{lemma}
\label{Lem:EverythingIsPermissible}
For any function $\psi \in \PL (\Gamma)$ with constant slope along each bridge, one of the following is true:
\begin{enumerate}
\item  $s_1 (\psi) > s_1(\theta)$;
\item  $s_{g+1} (\psi) < s_{g+1}(\theta)$;
\item  there is a $k$ such that $\psi$ is permissible on $\gamma_k$.
\end{enumerate}
\end{lemma}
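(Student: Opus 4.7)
The plan is to argue by contrapositive: assume conditions (1) and (2) fail, so $s_1(\psi) \leq s_1(\theta)$ and $s_{g+1}(\psi) \geq s_{g+1}(\theta)$, and construct an index $k$ on which $\psi$ is permissible. I would take $k$ to be the largest index in $\{1,\ldots,g\}$ for which $s_j(\psi) \leq s_j(\theta)$ holds for every $j \leq k$. Such a $k$ exists because the failure of (1) gives precisely the inequality at $j=1$, and condition (i) of permissibility then holds for this $k$ by its very definition.

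The remaining work is to verify (ii) and (iii), and the argument rests on two structural features of $\theta$ recorded in Section~\ref{sec:slopes}: its incoming slopes lie in $\{4,3,2\}$ and drop by at most $1$ between consecutive loops (the only drops occur at the block boundaries $k = z$ and $k = z'$), and on the terminal bridge there is no further block transition, so $s_{g+1}(\theta) = s_g(\theta)$. For (ii), I would split on whether $k < g$ or $k = g$. If $k < g$, maximality of $k$ forces $s_{k+1}(\psi) > s_{k+1}(\theta)$; since slopes in $\PL(\Gamma)$ are integers, this upgrades to $s_{k+1}(\psi) \geq s_{k+1}(\theta) + 1 \geq s_k(\theta)$, as required. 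If $k = g$, the failure of (2) combined with $s_{g+1}(\theta) = s_g(\theta)$ yields $s_{g+1}(\psi) \geq s_g(\theta)$ directly.

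For (iii), suppose $s_\ell(\psi) < s_\ell(\theta)$ with $\ell > k$. When $k = g$, the only candidate is $\ell = g+1$, which contradicts the failure of (2), so the condition is vacuous. When $k < g$, maximality of $k$ rules out $\ell = k+1$, hence $\ell > k+1$, and $k' = k+1$ satisfies both $k < k' < \ell$ and $s_{k'}(\psi) > s_{k'}(\theta)$, verifying (iii).

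The only genuine subtlety is condition (ii) in the case $k < g$: the inequality that comes straight from maximality involves $s_{k+1}(\theta)$, whereas (ii) requires a bound in terms of $s_k(\theta)$, which can be $1$ larger. Closing this gap is where both the integrality of $\PL$ slopes and the fact that $\theta$'s slopes step down by at most $1$ are used, and neither input enters anywhere else; once the correct $k$ is isolated, the rest of the argument is a clean bookkeeping exercise.
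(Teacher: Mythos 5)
Your proof is correct and is essentially the paper's argument: the paper considers the smallest $k$ with $s_k(\psi) > s_k(\theta)$ and shows $\psi$ is permissible on $\gamma_{k-1}$ (or on $\gamma_g$ when no such $k$ exists), which is exactly the index you single out as the largest $k$ with $s_j(\psi)\leq s_j(\theta)$ for all $j\leq k$. Your verification of conditions (ii) and (iii), using integrality of slopes, the fact that $s_k(\theta)$ drops by at most $1$, and the reading $s_{g+1}(\theta)=s_g(\theta)$, just makes explicit what the paper leaves implicit.
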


\begin{proof}
Suppose that $s_1 (\psi) \leq s_1(\theta)$ and $s_{g+1} (\psi) \geq s_{g+1}(\theta)$.  If $s_k (\psi) \leq s_k(\theta)$ for all $k$, then $\psi$ is permissible on $\gamma_g$.  Otherwise, consider the smallest value of $k$ such that $s_k (\psi) > s_k(\theta)$.  Then $\psi$ is permissible on $\gamma_{k-1}$.
\end{proof}

\begin{remark} \label{rem:consecutive}
The set of loops on which a given function $\psi$ with constant slope along each bridge is permissible consists of \emph{consecutive} loops.  The last loop where a function is permissible is $\gamma_k$, where $k$ is the smallest value such that $s_{k+1} (\psi) > s_{k+1}(\theta)$.  The first loop where a function is permissible is $\gamma_{\ell}$, where $\ell$ is the largest value such that $\ell \leq k$ and $s_{\ell} (\psi) < s_{\ell}(\theta)$.
\end{remark}

\section{The vertex avoiding case}
\label{Sec:VertexAvoiding}

In this section, we prove Theorem~\ref{thm:independence} in the vertex avoiding case using maximally independent combinations, as follows.

\begin{theorem} \label{thm:vertexavoiding}
Let $D$ be a break divisor of degree $24 + \rho$ and rank $6$ on a chain of $21 + \rho$ loops whose class is vertex avoiding.  Then there is a maximally independent combination $\theta = \min \{ \varphi_{ij} + c_{ij} \}$.  In particular, $\{ \varphi_{ij} \ | \ 0  \leq i, j \leq 6 \}$ is tropically independent.
\end{theorem}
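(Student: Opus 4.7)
The plan is to apply Proposition~\ref{Prop:Strategy} by explicitly constructing a maximally independent combination $\theta = \min\{\varphi_{ij} + c_{ij}\}$ whose bridge slopes match the prescription in Section~\ref{sec:slopes}. The construction has three parts: determine for each of the $28$ pairs the range of loops on which $\varphi_{ij}$ is permissible, assign each pair to a specific loop (and either the top or bottom edge of that loop) within its permissible range, and choose the coefficients $c_{ij}$ so that $\varphi_{ij} + c_{ij}$ drops strictly below all other functions at the chosen edge.

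The permissibility analysis is the combinatorial backbone. In the vertex avoiding case, the slopes $s_k(\varphi_i)$ are read directly from the Young tableau classifying the torus containing the class of $D$, via the formula recalled in Section~\ref{sec:slopes}. By Lemma~\ref{Lem:VAPermissible}, $\varphi_{ij}$ is permissible on $\gamma_k$ precisely when
\[
s_k(\varphi_i) + s_k(\varphi_j) \leq s_k(\theta) \leq s_{k+1}(\varphi_i) + s_{k+1}(\varphi_j),
\]
and Lemma~\ref{Lem:EverythingIsPermissible} applied to each $\varphi_{ij}$ guarantees that every pair is permissible on at least one loop. The precise choice of $z$ and $z'$ from Definition~\ref{def:z} is calibrated so that the cumulative slope budget $\sum_k s_k(\theta)$ agrees with the total budget contributed by the $28$ pairs, distributing them across the three blocks: the slope-$4$ block is tailored for pairs with small $i+j$, the slope-$3$ block for pairs of intermediate sum, and the slope-$2$ block for pairs with large $i+j$.

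The heart of the proof is an algorithm that sweeps from left to right across $\Gamma$, placing pairs onto loops within their permissible ranges while maintaining an invariant that balances the unplaced pairs against the cumulative slope drop in $\theta$ relative to the envelope of the $\varphi_{ij}$. Because each loop $\gamma_k$ has two edges between $v_k$ and $w_k$, up to two distinct pairs can be made to achieve the minimum uniquely on $\gamma_k$---one on the top edge, one on the bottom. The coefficients $c_{ij}$ are then pinned down by propagating linear constraints along the bridges, and the admissibility conditions of Definition~\ref{Def:Admissible}, which force bridges, loops, and successive loops to differ drastically in scale, ensure that the minima so identified are genuinely unique.

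The main obstacle I anticipate is the purely combinatorial one: verifying that for every standard Young tableau of shape $(r+1) \times (g-d+r) = 7 \times 3$ with entries from $\{1, \ldots, g\}$, the induced permissibility ranges admit a matching placing all $28$ pairs with at most two per loop, such that the resulting $\theta$ has exactly the prescribed bridge slopes rather than merely the correct average slope. This requires a delicate counting argument exploiting the column-wise and row-wise monotonicity of entries in a standard tableau together with the calibration of $z$ and $z'$. Once such a matching exists, converting it into explicit coefficients is essentially mechanical, and the tropical independence conclusion then follows immediately from Proposition~\ref{Prop:Strategy}.
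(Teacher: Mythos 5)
Your high-level strategy coincides with the paper's (prescribed bridge slopes $s_k(\theta)$, permissibility via Lemma~\ref{Lem:VAPermissible}, a left-to-right assignment, and Proposition~\ref{Prop:Strategy}), but the proposal has real gaps exactly where the work lies. First, the claim that Lemma~\ref{Lem:EverythingIsPermissible} guarantees every $\varphi_{ij}$ is permissible on some loop is false: the lemma allows the alternatives $s_1(\psi) > s_1(\theta)$ or $s_{g+1}(\psi) < s_{g+1}(\theta)$, and these actually occur for four of the $28$ functions (e.g.\ $\varphi_{66}$ has $s_1 = 6 > 4$), which is why the paper places them on the leftmost and rightmost bridges. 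Second, and more seriously, the central combinatorial step --- that for every tableau the permissibility ranges admit an assignment exhausting all $28$ functions compatible with the prescribed slopes --- is exactly what you leave as an ``anticipated obstacle.'' In the paper this is the content of Lemmas~\ref{lem:onenew}, \ref{lem:onedeparts}, \ref{Lem:VAAtMostThree}, \ref{lem:nonew} and Corollary~\ref{cor:counting} (on each block the number of permissible $\varphi_{ij}$ is one more than the number of non-lingering loops), together with Proposition~\ref{prop:threeshape}, which uses the Shape Lemma for Minima to show that among the at most three non-departing permissible functions on a loop, one can be made strictly smallest somewhere. Your proposal contains no substitute for either ingredient: nothing shows that competing permissible functions do not tie along the whole loop, and nothing shows the per-block counts come out right.

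Moreover, your specific allocation scheme --- ``up to two distinct pairs per loop, one on the top edge and one on the bottom'' --- conflicts with the slope prescription you adopt. Within a block, $s_k(\theta) = s_{k+1}(\theta)$, so the divisor $2D + \ddiv(\theta)$ has degree $2$ on each loop-plus-following-bridge; by the Shape Lemma, the boundary of each region where a single function achieves the minimum uniquely is contained in this divisor. If two functions were each unique minimizers on disjoint arcs of $\gamma_k$ while the still-unassigned permissible functions (needed for later loops) take over at $w_k$ or on $\beta_k$, one would need at least three such boundary points, exceeding the degree budget; and if instead the two assigned functions occupy the entire loop and bridge, the unassigned functions can only rejoin the minimum by forcing $\theta$ to have a bridge slope different from the prescribed $s_{k+1}(\theta)$. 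This is why the paper's bookkeeping is one assigned function per non-lingering loop ($21$ of them) plus seven functions on bridges (four at the extremes, one after each block), giving $28$. To repair your argument you would need to abandon the two-per-loop idea, prove the per-block count of permissible functions, and supply the uniqueness statement of Proposition~\ref{prop:threeshape}; at that point you would essentially have reconstructed the paper's proof.
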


\begin{remark}
In the general case, which is detailed in Sections~\ref{Sec:Construction}-\ref{Sec:Generic}, we will follow a similar approach to prove tropical independence for certain collections of pairwise sums of functions in $\trop(\cL(D_X))$, but with additional arguments to deal with the different possibilities for what these functions might be.
\end{remark}

We proceed from left to right across the graph and give an algorithm for constructing this maximally independent combination $\theta$ as we go, with slopes $s_k(\theta)$ as specified in Section~\ref{sec:slopes}.  Recall that we have divided $\Gamma$ into three \emph{blocks}, containing loops $\gamma_1$ through $\gamma_z$, $\gamma_{z+1}$ through $\gamma_{z'}$, and $\gamma_{z'+1}$ through $\gamma_g$.  The slope of $\theta$ is nearly constant on each bridge within a given block, equal to 4 on the first block, 3 on the second block, and 2 on the third block.  The slope of $\theta$ changes from $4$ to $3$ and from $3$ to $2$ at the midpoints of the bridges $\beta_z$ and $\beta_{z'}$ between the blocks.

The values of $z$ and $z'$ and the slopes of $\theta$ are chosen so that, on each block, the number of functions $\varphi_{ij}$ that are permissible on some loop is one more than the number of lingering loops (Corollary~\ref{cor:counting}).  Within each block, we assign one of these permissible functions to achieve the minimum uniquely on each nonlingering loop, and the remaining permissible function achieves the minimum uniquely on the bridge following the block.  Since there are 21 nonlingering loops and three blocks, this gives us a maximally independent configuration of 24 functions.  The remaining 4 functions, with slopes too high or too low to be permissible on any block, achieve the minimum uniquely on the bridges to the left of the first loop or to the right of the last loop, respectively.  See Example~\ref{ex:randomtableau} for a schematic illustration of the output of this algorithm, for one randomly chosen tableau.

\begin{remark}
Note that the functions $\{ \varphi_{ij} \}$ may admit many different maximally independent combinations with different combinatorial properties.  There is no obvious reason to prefer one such combination over another.  We present one particular algorithm for constructing a maximally independent combination that works uniformly for all vertex avoiding divisors and generalizes naturally to the non-vertex avoiding case.
\end{remark}

\subsection{Counting permissible functions}
\label{sec:counting}

Recall that functions with constant slopes along bridges, such as $\varphi_{ij}$, may achieve the minimum only on loops where they are permissible, as discussed in Section~\ref{Sec:Permissible}.   Our algorithm is organized around keeping track of which functions $\varphi_{ij}$ are permissible at each step, as we move from left to right across the graph.

The set of indices $k$ such that $\varphi_{ij}$ is permissible on $\gamma_k$ is the set of integer points in an interval, as noted in Remark~\ref{rem:consecutive}, so we pay special attention to the first and last loops on which a function is permissible.  If $\gamma_k$ is the first loop or on which $\varphi_{ij}$ is permissible, then we say that $\varphi_{ij}$ is a \emph{new permissible function} on $\gamma_k$.  Similarly, if $\gamma_k$ is the last loop on which $\varphi_{ij}$ is permissible, then we say that $\varphi_{ij}$ is a \emph{departing permissible function} on $\gamma_k$.

The behavior of permissible functions is slightly different on the first and last loops of each of the three blocks where $s_k(\theta)$ is constant.  Although these are special cases, they end up being relatively simple.  The main part of our argument in this section is an algorithm for constructing a maximally independent combination of permissible functions on each block. We prepare for the proof of Theorem~\ref{thm:vertexavoiding} with the following lemmas controlling the new and departing permissible functions on loops within a given block.

\begin{lemma}
\label{lem:onenew}
If $\gamma_k$ is not the first loop of a block, then there is at most one new permissible function $\varphi_{ij}$  on $\gamma_k$.  Furthermore, if $\gamma_k$ is lingering then there are none.
\end{lemma}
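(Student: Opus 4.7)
My plan reduces the statement to a slope-comparison argument. By Lemma~\ref{Lem:VAPermissible}, in the vertex avoiding case $\varphi_{ij}$ is permissible on $\gamma_k$ precisely when $s_k(\varphi_{ij}) \leq s_k(\theta) \leq s_{k+1}(\varphi_{ij})$. Since $\gamma_k$ is not the first loop of a block, $s_{k-1}(\theta) = s_k(\theta)$, and since the slopes $s_k(\varphi_{ij})$ are non-decreasing in $k$, I will first reduce ``new permissible on $\gamma_k$'' to the strict condition
\[
s_k(\varphi_{ij}) < s_k(\theta) \leq s_{k+1}(\varphi_{ij}).
\]
In particular, $\varphi_{ij}$ must strictly gain slope across $\gamma_k$.

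The lingering statement is then immediate: across a lingering loop no $\varphi_i$ gains slope, so no pairwise sum $\varphi_{ij}$ does either, and no pair can satisfy the strict inequality above.

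For the ``at most one'' statement on a non-lingering loop, there is a unique index $c$ such that $s_{k+1}(\varphi_c) = s_k(\varphi_c) + 1$, while all other $\varphi_i$ have constant slope across $\gamma_k$. Thus $\varphi_{ij}$ has a positive slope change only when $c \in \{i,j\}$, leading to two cases: (A) $\{i,j\} = \{c,c\}$, with slope change $2$; and (B) $\{i,j\} = \{c,j'\}$ with $j' \neq c$ and slope change $1$. In each case, the slope $s_k(\varphi_{ij})$ required for new permissibility is pinned down by $s_k(\theta)$, and because Lemma~\ref{Lem:NumberOfSlopes} ensures that the seven values $s_k(\varphi_0), \ldots, s_k(\varphi_6)$ are all distinct, within each case the pair $\{i,j\}$ is uniquely determined. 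Hence each case produces at most one new permissible function.

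The main obstacle is ruling out that (A) and (B) occur simultaneously. A short slope calculation in the two subcases $2s_k(\varphi_c) = s_k(\theta) - 1$ and $2s_k(\varphi_c) = s_k(\theta) - 2$ of (A) shows that if (B) also held, then $s_k(\varphi_{j'})$ would equal either $s_k(\varphi_c)$ or $s_k(\varphi_c) + 1$. The first possibility forces $j' = c$, contradicting the definition of case (B). The second possibility is ruled out because $\varphi_c$ gains slope $1$ across $\gamma_k$ while $\varphi_{j'}$ does not, so $s_{k+1}(\varphi_c) = s_{k+1}(\varphi_{j'})$, contradicting the distinctness of slopes along $\beta_k$. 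This incompatibility completes the proof.
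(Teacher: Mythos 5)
Your proof is correct and follows essentially the same route as the paper: reduce ``new'' to the strict inequality $s_k(\varphi_{ij}) < s_k(\theta) \leq s_{k+1}(\varphi_{ij})$ using $s_{k-1}(\theta)=s_k(\theta)$, dispose of lingering loops, and then exploit the unique slope-gaining index together with distinctness of the slopes $s_k(\varphi_i)$ and $s_{k+1}(\varphi_i)$. The only difference is that you spell out explicitly the case analysis (the $j=c$ versus $j\neq c$ cases and their incompatibility) that the paper compresses into its final sentence.
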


\noindent Note that the first loops of the blocks are $\gamma_1$, $\gamma_{z+1}$, and $\gamma_{z'+1}$, so the conclusion of the lemma holds for $k \not \in \{ 1, z+1, z'+1 \}$.

\begin{proof}
Recall that, by Lemma~\ref{Lem:VAPermissible}, $\varphi_{ij}$ is permissible on $\gamma_k$ if and only if $s_{k} (\varphi_{ij}) \leq s_{k}(\theta) \leq s_{k +1}(\varphi_{ij})$.  Suppose $\gamma_k$ is not the first loop of a block.  Then $s_{k-1}(\theta) = s_{k}(\theta)$.  If $\varphi_{ij}$ is a new permissible function, we must have $s_k(\varphi_{ij}) < s_k(\theta) \leq s_{k+1}(\varphi_{ij})$.  Hence the outgoing slope of $\varphi_i$ or $\varphi_j$ must be strictly greater than the incoming slope.  If $\gamma_k$ is lingering then there is no such function, and hence there is no new permissible function.

Otherwise, assume $\gamma_k$ is nonlingering.  Then there is exactly one index $i$ such that $s_{k+1}(\varphi_i) > s_k(\varphi_i)$, and the increase in slope is exactly 1.  Note that the slopes of all other $\varphi_j$ are unchanged, and different from both $s_{k}(\varphi_i)$ and $s_{k+1}(\varphi_i)$.  It follows that there is at most one $j$ (possibly equal to $i$) such that $s_{k}(\varphi_{ij}) < s_k (\theta)$ and $s_{k+1}(\varphi_{ij}) \geq s_k (\theta)$, and hence there is at most one new permissible function $\varphi_{ij}$.
\end{proof}

\begin{lemma}
\label{lem:onedeparts}
If $\gamma_k$ is not the last loop of a block, then there is at most one departing permissible function $\varphi_{ij}$  on $\gamma_k$.  Furthermore, if $\gamma_k$ is lingering then there are none.
\end{lemma}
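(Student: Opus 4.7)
The plan is to mirror the proof of Lemma~\ref{lem:onenew} almost verbatim, exchanging the role of $\gamma_{k-1}$ for that of $\gamma_{k+1}$ and reversing a few inequalities accordingly. The hypothesis that $\gamma_k$ is not the last loop of a block translates to the identity $s_k(\theta) = s_{k+1}(\theta)$, which is the essential input.

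First, I would translate ``departing permissible'' into slope data. By Lemma~\ref{Lem:VAPermissible}, $\varphi_{ij}$ is permissible on $\gamma_k$ iff $s_k(\varphi_{ij}) \leq s_k(\theta) \leq s_{k+1}(\varphi_{ij})$, and fails to be permissible on $\gamma_{k+1}$ iff either $s_{k+1}(\varphi_{ij}) > s_{k+1}(\theta)$ or $s_{k+2}(\varphi_{ij}) < s_{k+1}(\theta)$. Since the slopes of $\varphi_{ij}$ are nondecreasing in $k$ in the vertex avoiding case, and permissibility on $\gamma_k$ already forces $s_{k+1}(\varphi_{ij}) \geq s_k(\theta) = s_{k+1}(\theta)$, the second alternative cannot occur. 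So the departing condition collapses to $s_k(\varphi_{ij}) \leq s_k(\theta) < s_{k+1}(\varphi_{ij})$, meaning that the slope of $\varphi_i$ or of $\varphi_j$ strictly increases across $\gamma_k$. When $\gamma_k$ is lingering, no $\varphi_i$ experiences any increase in slope, so there is no departing permissible function, establishing the second claim.

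Next, assuming $\gamma_k$ is nonlingering, let $p$ be the unique index with $s_{k+1}(\varphi_p) = s_k(\varphi_p) + 1$. Any departing $\varphi_{ij}$ must have at least one of $i,j$ equal to $p$, and the departing condition yields either $2 s_k(\varphi_p) \leq s_k(\theta) < 2 s_k(\varphi_p) + 2$ (when $i = j = p$) or $s_k(\varphi_j) = s_k(\theta) - s_k(\varphi_p)$ (when exactly one of $i,j$ equals $p$). Since the slopes $s_k(\varphi_0), \ldots, s_k(\varphi_6)$ are pairwise distinct, the latter equation has at most one solution $j$.

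The main obstacle is ruling out coexistence, i.e., ensuring that one cannot have both $\varphi_{pp}$ and some $\varphi_{pj}$ with $j \neq p$ simultaneously departing. As in the proof of Lemma~\ref{lem:onenew}, this is exactly the point at which the key observation intervenes: because $s_{k+1}(\varphi_0), \ldots, s_{k+1}(\varphi_6)$ are pairwise distinct and only $\varphi_p$ jumps, the slope $s_k(\varphi_j)$ for $j \neq p$ can equal neither $s_k(\varphi_p)$ nor $s_k(\varphi_p) + 1 = s_{k+1}(\varphi_p)$. In the two values $s_k(\theta) \in \{ 2 s_k(\varphi_p),\, 2 s_k(\varphi_p) + 1 \}$ for which $\varphi_{pp}$ is departing, the required value $s_k(\varphi_j) = s_k(\theta) - s_k(\varphi_p)$ falls in exactly this forbidden set, so no competing $\varphi_{pj}$ can exist. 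Combined with the uniqueness of $j$ noted above, this yields at most one departing permissible function on $\gamma_k$.
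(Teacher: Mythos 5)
Your proposal is correct and follows essentially the same route as the paper: reduce the departing condition to $s_k(\varphi_{ij}) \leq s_k(\theta) < s_{k+1}(\varphi_{ij})$ using $s_k(\theta)=s_{k+1}(\theta)$, conclude the lingering case immediately, and in the nonlingering case use the unique index whose slope jumps by $1$ together with the distinctness of the slopes $s_k(\varphi_j)$ and $s_{k+1}(\varphi_j)$ to get at most one such $\varphi_{ij}$. Your case analysis simply spells out the step the paper compresses into ``the rest of the proof is similar to that of the previous lemma.''
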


\noindent Note that the last loops of the blocks are $\gamma_z$, $\gamma_{z'}$, and $\gamma_{g}$, so the conclusion of the lemma holds for $k \not \in \{ z, z', g \}$.

\begin{proof}
Suppose $\gamma_k$ is not the last loop of a block.  Then $s_{k}(\theta) = s_{k+1}(\theta)$.  If $\varphi_{ij}$ is a departing permissible function, we must have $s_{k}(\varphi_{ij}) \leq s_k(\theta) < s_{k+1}(\varphi_{ij})$.  Hence the slope of $\varphi_i$ or $\varphi_j$ must increase from $\beta_{k-1}$ to $\beta_k$.  If $\gamma_k$ is lingering then there is no such function, and hence there is no departing permissible function.  The rest of the proof is similar to that of the previous lemma.
\end{proof}

\begin{lemma}
\label{Lem:VAAtMostThree}
For any loop $\gamma_k$, there are at most 3 non-departing permissible functions on $\gamma_k$.  Moreover, there are at most 3 permissible functions on each of the loops $\gamma_1$ and $\gamma_{z'+1}$.
\end{lemma}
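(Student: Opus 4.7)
The plan is to bound, in each case, the number of pairs $(i,j)$ with $i \le j$ whose function $\varphi_{ij}$ satisfies the slope conditions for permissibility on $\gamma_k$, exploiting that the seven slopes $s_{k+1}(\varphi_0), \ldots, s_{k+1}(\varphi_6)$ are distinct integers and that their sum equals the total number of tableau entries at most $k$. The combinatorial input from the definitions of $z$ and $z'$ enters through sharp bounds on this total.

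For the first statement, I would first dispatch the easy case: if $\gamma_k$ is the last loop of a block, then $s_{k+1}(\theta) < s_k(\theta)$, so the permissibility inequality $s_{k+1}(\varphi_{ij}) \ge s_k(\theta)$ immediately obstructs permissibility on $\gamma_{k+1}$, forcing every permissible function to depart. Otherwise $s_k(\theta) = s_{k+1}(\theta) = S \in \{2, 3, 4\}$, and Lemma~\ref{Lem:VAPermissible} together with the definition of non-departing reduce the question to counting pairs with $s_{k+1}(\varphi_i) + s_{k+1}(\varphi_j) = S$. A short counting argument on distinct integers shows that four such pairs with $i \le j$ would force a diagonal pair $(t,t)$ with $s_{k+1}(\varphi_t) = S/2$ together with three off-diagonal pairs reflected around $S/2$, making $\sum_i s_{k+1}(\varphi_i) = \tfrac{7S}{2}$. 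This is immediately impossible for $S = 3$. For $S = 4$ (block~1 interior, $k<z$), the cap of $5$ rows-$1$-$2$ entries at most $k$ coming from the definition of $z$ gives a total of at most $12 < 14$; for $S = 2$ (block~3 interior, $k \ge z'+1$), the lower bound of $9$ rows-$2$-$3$ entries at most $k$ coming from the definition of $z'$ gives a total at least $9 > 7$.

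For the second statement, at $\gamma_1$ the standard-tableau constraint forces entry $1$, if present, to sit at row~$1$, column~$1$, so at most $\varphi_6$ can experience a slope jump; a direct enumeration of pairs satisfying $s_1(\varphi_{ij}) \le 4 \le s_2(\varphi_{ij})$ gives the bound in both the lingering and nonlingering cases. At $\gamma_{z'+1}$, the lingering case reduces at once to the non-departing bound. In the nonlingering case, with entry $z'+1$ at column $c^*$, row $r^*$, and $i^* = r + 1 - c^*$, I would classify permissible pairs by $\delta = [i = i^*] + [j = i^*]$, which forces the slope sum to equal $2$ when $\delta = 0$, to lie in $\{1, 2\}$ when $\delta = 1$, or, for the diagonal pair $(i^*, i^*)$, to satisfy $s_{z'+1}(\varphi_{i^*}) \in \{0, 1\}$. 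Combining the entry-count lower bound from the definition of $z'$ with the nondecreasing structure of $\bigl(b_i^{(z')}\bigr)$, the identity $b_{i^*}^{(z')} = r^* - 1$, and the addability condition $b_{i^*+1}^{(z')} > b_{i^*}^{(z')}$, one would verify case by case on $(r^*, i^*)$ that the combined count stays at most three.

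The hardest part is the nonlingering case at $\gamma_{z'+1}$: a diagonal contribution of type $\delta = 2$ and an off-diagonal of type $\delta = 1$ with slope sum $1$ can in principle appear alongside three pairs summing to $2$, so ruling out a total of four requires combining the lower bound from $z'$ with a finer analysis of the possible shapes of $b^{(z')}$ near position $i^*$, using both the row-$1$ and rows-$2$-and-$3$ entry counts rather than just the total $\sum_i b_i^{(z')}$ alone.
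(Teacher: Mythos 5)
Your argument for the first assertion is correct and complete, and it takes a genuinely (if mildly) different route from the paper's. Both proofs start from the observation that a non-departing permissible $\varphi_{ij}$ on $\gamma_k$ must satisfy $s_{k+1}(\varphi_{ij}) = s_k(\theta) = s_{k+1}(\theta) = S$, and that four unordered pairs of the seven distinct slopes with common sum $S$ force three matched pairs plus one diagonal. The paper then rules out the diagonal by bounding the middle slope $s_{k+1}(\varphi_3)$ via the center column of the tableau, while you instead sum all seven slopes, note that four pairs force $\sum_i s_{k+1}(\varphi_i) = 7S/2$, and kill this by parity when $S=3$ and by the entry-count bounds coming from the definitions of $z$ and $z'$ when $S=4$ or $S=2$. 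Your version is a clean alternative and uses the same combinatorial input. The $\gamma_1$ count is the same finite check as in the paper and is fine.

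The gap is the second assertion at $\gamma_{z'+1}$ in the nonlingering case: your trichotomy by $\delta$ is set up correctly, but the decisive exclusion is deferred to a ``finer analysis'' that you never carry out, so as written this case is a plan, not a proof. Worse, the configuration you flag cannot in fact be excluded. Take $g=21$ (so $\rho=0$, every class vertex avoiding) and the standard tableau with rows $(1,2,3,4,5,6,16)$, $(7,8,9,10,11,12,17)$, $(13,14,15,18,19,20,21)$. The 10th smallest entry of the union of rows two and three is $17$, so $z'=15$ and $\gamma_{z'+1}=\gamma_{16}$, with $s_{16}(\theta)=2$. The slope vectors are $(s_{16}(\varphi_0),\ldots,s_{16}(\varphi_6))=(-3,0,1,2,4,5,6)$ and $(s_{17}(\varphi_0),\ldots,s_{17}(\varphi_6))=(-2,0,1,2,4,5,6)$, since the entry $16$ sits in row one of the last column. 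By Lemma~\ref{Lem:VAPermissible}, the four functions $\varphi_{04}$, $\varphi_{05}$, $\varphi_{13}$, $\varphi_{22}$ are all permissible on $\gamma_{16}$: their incoming slope sums are $1,2,2,2\le 2$ and their outgoing sums are $2,3,2,2\ge 2$. Three of these are non-departing and $\varphi_{05}$ is departing, so the bound that your first-part argument together with Lemma~\ref{lem:onedeparts} actually yields, namely four, is attained here.

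So the obstruction is not that you lack a trick: unless we are both misreading Definition~\ref{def:z}, the bound of three on $\gamma_{z'+1}$ fails for such tableaux, and the paper's own one-line justification (``enumerate the possibilities for $s_{z'+2}$ as in Lemma~\ref{lem:nonew}'') glosses over exactly the vectors $(-2,0,1,2,4,5,6)$, $(-2,-1,1,3,4,5,6)$, $(-3,-1,1,3,4,5,6)$ with the incremented index at the small end. The right response is not to keep searching for a finer case analysis but to identify what additional constraint (if any) is implicitly assumed, or to work with the weaker but provable bound of four (three non-departing plus at most one departing) and check how the downstream counting in Lemma~\ref{lem:nonew} and Corollary~\ref{cor:counting} must be adjusted; note that in the example above the block containing $\gamma_{16}$ has eight permissible functions but only six loops, so the discrepancy propagates.
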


\begin{proof}
By definition, if $\varphi_{ij}$ is a non-departing permissible function on $\gamma_k$, then $s_{k+1} (\varphi_{ij}) = s_{k+1} (\theta)$.  Now, if $i \neq i'$ and $s_{k+1} (\varphi_i) + s_{k+1} (\varphi_j) = s_{k+1} (\varphi_{i'}) + s_{k+1} (\varphi_{j'})$, then $j \neq j'$.  It follows that there are most $\lceil \frac{r+1}{2} \rceil$ non-departing permissible functions $\varphi_{ij}$, with equality only in the case where $s_{k+1} (\varphi_3) = \frac{s_{k+1}(\theta)}{2}$.  Note, however, that $s_{k}(\theta)$ is even only if ${k} \leq z$ or $k > z'$.  By the definition of $z$, the center column of the corresponding tableau contains at most 1 symbol less than or equal to $z$, so if $k+1 \leq z$ we have
\[
s_{k+1} (\varphi_3) \leq 1 < 2 = \frac{s_{k+1}(\theta)}{2} .
\]
A similar argument applies in the case that $k > z'$.  So we see that the number of permissible functions is at most $\lceil \frac{6+1}{2} \rceil -1 = 3$.

The fact that there are at most 3 permissible functions on $\gamma_1$ follows directly from the fact that $s_0 (\varphi_i) = i-3$ for all $i$.  The fact that there are at most 3 permissible functions on $\gamma_{z'+1}$ follows by enumerating the possibilities for the vector $(s_{z'+2} (\varphi_0), \ldots , s_{z'+2} (\varphi_6))$ as in Lemma~\ref{lem:nonew} below.
\end{proof}

\begin{remark}
Lemma~\ref{Lem:VAAtMostThree} and its generalization Lemma~\ref{Lem:AtMostThree} are key places where we use the assumption that $r=6$.  Extending our method to prove further cases of the strong maximal rank conjecture for larger $r$ would require new ideas at these steps.
\end{remark}

Let $b$ be the 7th smallest entry appearing in the first two rows of the tableau and let $b'$ be the 8th smallest symbol appearing in the union of the first and third row.  We note that $z < b < b' \leq z'$.  The first two inequalities are straightforward.  To see the last inequality, recall from Definition~\ref{def:z} that $z' + 2$ is the 10th smallest entry that appears in the union of the second and third row.  Therefore, the 9th smallest symbol appearing in the union of the second and third row must be strictly between $b'$ and $z'+2$.  From these inequalities, it follows that $\gamma_b$ and $\gamma_{b'}$ are in the second block.

\begin{lemma}  \label{lem:nonew}
If $\gamma_b$ is not the first loop in the second block, then the non-lingering loops with no new permissible functions are exactly $\gamma_z$, $\gamma_b$, $\gamma_{b'}$, and $\gamma_{z'+2}$.  Otherwise, the non-lingering loops with no new permissible functions are exactly $\gamma_z$, $\gamma_{b'}$, and $\gamma_{z'+2}$, and there are only 3 permissible functions on $\gamma_b$.
\end{lemma}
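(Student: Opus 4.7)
The plan is to characterize the non-lingering loops at which no new permissible function arises and to verify directly that these are the four loops $\gamma_z$, $\gamma_b$, $\gamma_{b'}$, and $\gamma_{z'+2}$. The starting point is the criterion from the proof of Lemma~\ref{lem:onenew}: on a non-lingering loop $\gamma_k$ that is not the first loop of a block, a new permissible $\varphi_{ij}$ is in bijection with an index $j$ (possibly equal to the unique lingering index $i^*$) satisfying $s_k(\varphi_{ij}) < s_k(\theta) \leq s_{k+1}(\varphi_{ij})$. Since the sum $s_k(\varphi_{ij})$ can jump across $s_k(\theta)$ only by way of $i^*$, this will be rephrased as requiring $s_k(\varphi_j) = s_k(\theta) - 1 - s_k(\varphi_{i^*})$ when exactly one of $i,j$ equals $i^*$, and $s_k(\varphi_{i^*}) \in \{\tfrac{s_k(\theta)-2}{2}, \tfrac{s_k(\theta)-1}{2}\}$ when $i = j = i^*$.

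I will then compute the slope vector $(s_k(\varphi_0),\ldots,s_k(\varphi_6))$ at each candidate loop using the formula $s_k(\varphi_i) = i - 3 + \#\{\text{entries} \leq k-1 \text{ in column } 7-i\}$. The definitions of $z$, $b$, $b'$, and $z'$ from Definition~\ref{def:z} control the tableau counts at the relevant thresholds: for instance, the condition that $z$ is the $6$th smallest entry in the first two rows pins down exactly which columns have contributed slope increases by the time we reach $\gamma_z$, and similarly for the others. I will verify case by case that at each of $\gamma_z$, $\gamma_b$, $\gamma_{b'}$, and $\gamma_{z'+2}$, no partner index $j$ exists with the required slope value, and conversely that at every other non-lingering, non-first-of-block loop, such a $j$ does exist. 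Together with the at-most-one bound from Lemma~\ref{lem:onenew}, this establishes the first assertion.

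Finally, the second assertion concerns the case $b = z+1$, when $\gamma_b$ is the first loop of the second block. Here $s_{b-1}(\theta) = 4 \neq 3 = s_b(\theta)$, so the criterion of Lemma~\ref{lem:onenew} no longer applies and several $\varphi_{ij}$ may become newly permissible at the block transition. I will enumerate directly the pairs $(i,j)$ satisfying $s_b(\varphi_{ij}) \leq 3 \leq s_{b+1}(\varphi_{ij})$, using the fact that exactly seven entries of rows 1--2 are at most $b$ (the six accounted for by $z$ together with $b$ itself), and show that precisely three such pairs exist. This block-boundary enumeration is the main obstacle, since it is the only step where the handy Lemma~\ref{lem:onenew} bound is unavailable and the analysis must be redone from scratch, balancing the contributions of the block transition with the lingering behavior at $\gamma_b$ itself.
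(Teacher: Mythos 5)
Your treatment of the four exceptional loops follows the same route as the paper: use the defining properties of $z$, $b$, $b'$, $z'$ to pin down the possible slope vectors there and check that no partner index $j$ yields a sum equal to $s_k(\theta)-1$ (and that the $i=j=i^*$ case is excluded); the paper does exactly this at $\gamma_z$ by listing the four possible vectors $s_{z+1}(\Sigma)$, and your $b=z+1$ analysis is likewise in the spirit of the paper's (``no permissible function with $s_b(\varphi_{ij})<3$, then a count as in Lemma~\ref{Lem:VAAtMostThree}'').

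The gap is in the converse half, which is the actual content of the word ``exactly.'' You propose to show, loop by loop, that at every non-lingering, non-first-of-block loop outside $\{z,b,b',z'+2\}$ a partner index exists, but you give no argument for why this local case analysis closes -- and it is not a routine check. To run it you would have to redo, at an arbitrary loop $\gamma_k$, an enumeration of the type the paper performs only at $\gamma_z$: all three blocks, all three possible rows of the entry $k$, all possible values of $s_k(\varphi_{i^*})$, with the hypotheses ``$\gamma_k$ lies in block 1/2/3 and $k\notin\{z,b,b',z'+2\}$'' translated into counts of tableau entries in the row sets rows 1--2, rows 1 and 3, and rows 2--3 respectively; nothing in your sketch indicates how the absence of a partner forces one of these defining counts to be met, and the middle block (where both $b$ and $b'$ live, governed by different row sets) is exactly where this bookkeeping is delicate. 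The paper avoids all of this with a short global counting argument: of the $28$ functions $\varphi_{ij}$, four are never permissible (slopes too high at $\beta_0$ or too low at $\beta_g$); at most $3$, $4$ (resp.\ $3$ when $b=z+1$), and $3$ can be new on the block-initial loops $\gamma_1$, $\gamma_{z+1}$, $\gamma_{z'+1}$; every other loop carries at most one new permissible function by Lemma~\ref{lem:onenew}, and none on lingering loops; since the number of remaining non-lingering loops is exactly $14$ (resp.\ $15$), every one of them must carry a new permissible function. Your proposal has no substitute for this step, so as written the ``exactly'' direction is asserted rather than proved. (A smaller omission: your converse explicitly excludes the first loops of blocks, but the lemma's statement also requires that $\gamma_1$, $\gamma_{z+1}$, $\gamma_{z'+1}$, when non-lingering, do have new permissible functions; this needs at least a sentence, e.g.\ exhibiting a pair whose slope sum at the incoming bridge equals the new, lower value of $s_k(\theta)$.)
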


\begin{proof}
We begin by showing that there are no new permissible functions on $\gamma_z$.    Suppose $\varphi_{ij}$ is a new permissible function on $\gamma_z$.   This exactly means that $s_{z} (\varphi_{ij}) < 4 \leq s_{z+1} (\varphi_{ij})$.  We will show that this is impossible.

Recall (from Definition~\ref{def:z}) that $z$ is the 6th smallest entry appearing in the first two rows of the tableau.  There are 4 possibilities for the location of these entries, corresponding to the partitions of 6 with no part larger than 2.  These give rise to the following four ranges of possibilities for the vector $(s_{z+1} (\varphi_0), \ldots , s_{z+1} (\varphi_6))$.  Here, an entry $\geq n$ for $s_{z+1}(\varphi_i)$ indicates that $s_{z+1} (\varphi_i) \geq n$ and also that $s_{z+1} (\varphi_i) = s_{z}(\varphi_i)$.
\[
(-3,-1,0,1,2,3,4)
\]
\[
(-3,-2,0,1,2,3,5)
\]
\[
(-3,-2,-1,1,2,4,\geq 5)
\]
\[
(-3,-2,-1,0,3,\geq 4, \geq 5)
\]
Recall that there is exactly one value of $i$ such that $s_{z+1} (\varphi_i) > s_{z} (\varphi_i)$.  Specifically, if $z$ appears in column $r+1-i$ of the tableau, then $s_{z+1} (\varphi_i) = s_{z} (\varphi_i)+1 > s_{z+1} (\varphi_{i-1})+1$.  Considering each of the 4 vectors above, we see that for each $i$ satisfying $s_{z+1} (\varphi_i) > s_{z+1} (\varphi_{i-1})+1$, there is no $j$ such that $s_{z+1} (\varphi_{ij}) =4$.  The result follows.

The proofs that $\gamma_{b'}$ and $\gamma_{z'}$ have no new permissible functions are similar, as is the proof that $\gamma_b$ has no new permissible functions if $b$ is not the first loop in the second block, i.e., if $b \neq z+1$.  If $b = z+1$, then a similar argument shows that there is no permissible function $\varphi_{ij}$ with $s_{b} (\varphi_{ij}) < 3$.  By an argument similar to the proof of Lemma~\ref{Lem:VAAtMostThree}, we then see that there are only 3 permissible functions on $\gamma_b$.

It remains to show that these are the only loops with no new permissible functions.  We note that if $\gamma_k$ is not the first loop in a block, then there is at most one new permissible function on $\gamma_k$.  Furthermore, there are at most 3 permissible functions on $\gamma_1$, at most 4 permissible functions on $\gamma_{z+1}$ (by Lemma~\ref{Lem:VAAtMostThree}), and at most 3 permissible functions on $\gamma_{z'+1}$.  This means that there must be at least $28-(2 \cdot 3 + 2 \cdot 4) = 14$ non-lingering loops apart from these three on which there is a new permissible function.  But if $b \neq z+1$, then the number of non-lingering loops other than $\gamma_1$, $\gamma_z$, $\gamma_{z+1}$, $\gamma_b$, $\gamma_{b'}$, $\gamma_{z'+1}$, and $\gamma_{z'+2}$ is $21-7=14$.  It follows that, on every one of these loops, there is a new permissible function.  If $b=z+1$, then there are 15 non-lingering loops, and at least $28-(3 \cdot 3 + 4) = 15$ non-lingering loops apart from those listed on which there is a new permissible function.  Hence every other non-lingering loop has a new permissible function, as required.
\end{proof}

\begin{corollary}
\label{cor:counting}
On each of the three blocks, the number of permissible functions is 1 more than the number of non-lingering loops.
\end{corollary}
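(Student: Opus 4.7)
The plan is to count the permissible functions on each block separately, by identifying which new permissible functions appear on each loop as we traverse the block.

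First, I would observe that no function $\varphi_{ij}$ can be permissible on both sides of a block boundary. If $\varphi_{ij}$ is permissible on $\gamma_z$, then condition (2) in the definition of permissibility forces $s_{z+1}(\varphi_{ij}) \geq s_z(\theta) = 4$; if it is permissible on $\gamma_{z+1}$, then condition (1) forces $s_{z+1}(\varphi_{ij}) \leq s_{z+1}(\theta) = 3$, a contradiction. The same obstruction holds at $\gamma_{z'}/\gamma_{z'+1}$. Combined with Remark~\ref{rem:consecutive}, this shows that each function's permissible interval lies entirely inside a single block. Hence the number of permissible functions on block $B$ equals $P_{k_0} + \sum_{k_0 < k \leq k_1} N_k$, where $\gamma_{k_0}$ and $\gamma_{k_1}$ are the first and last loops of $B$, $P_{k_0}$ is the number of permissible functions on $\gamma_{k_0}$, and $N_k$ is the number of new permissible functions on $\gamma_k$.

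By Lemma~\ref{lem:onenew}, one has $N_k \leq 1$ for $k_0 < k \leq k_1$ with $\gamma_k$ non-lingering, and $N_k = 0$ when $\gamma_k$ is lingering. By Lemma~\ref{lem:nonew}, the non-first non-lingering loops contributing $N_k = 0$ are exactly $\gamma_z$ in block 1; $\gamma_{b'}$ alone if $b = z+1$, or $\gamma_b$ and $\gamma_{b'}$ if $b > z+1$, in block 2; and $\gamma_{z'+2}$ in block 3. All other non-first non-lingering loops contribute $N_k = 1$. Since $\gamma_1$, $\gamma_z$, $\gamma_b$, $\gamma_{b'}$, and $\gamma_{z'+2}$ are all non-lingering, the block totals reduce to $Q_1 = P_1 + L_1 - 2$, $Q_2 = P_{z+1} + L_2 - [\gamma_{z+1}\ \mathrm{non\text{-}lingering}] - e$ with $e = 1$ if $b = z+1$ and $e = 2$ otherwise, and $Q_3 = P_{z'+1} + L_3 - [\gamma_{z'+1}\ \mathrm{non\text{-}lingering}] - 1$, where $L_B$ denotes the number of non-lingering loops in block $B$.

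Finally, I would compute $P_1$, $P_{z+1}$, and $P_{z'+1}$ by direct slope analysis. At $\gamma_1$, using that the tableau entry $1$ must occupy the top-left cell, one has $s_1(\varphi_i) = i - 3$ and $s_2(\varphi_i) = s_1(\varphi_i) + [i = 6]$; enumerating the pairs satisfying $s_1(\varphi_{ij}) \leq 4 \leq s_2(\varphi_{ij})$ gives $P_1 = 3$, realized by $(3,6)$, $(4,6)$, $(5,5)$. A parallel enumeration, treating the possible column-height profiles of the tableau entries $\leq z$ (respectively $\leq z'$) as in the four-case analysis inside the proof of Lemma~\ref{lem:nonew}, yields $P_{z+1} = 3$ when $b = z+1$ (recovering the second clause of Lemma~\ref{lem:nonew}) and $P_{z+1} = 3 + [\gamma_{z+1}\ \mathrm{non\text{-}lingering}]$ when $b > z+1$, as well as $P_{z'+1} = 2 + [\gamma_{z'+1}\ \mathrm{non\text{-}lingering}]$, consistent with the upper bound of $3$ from Lemma~\ref{Lem:VAAtMostThree}. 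Plugging these values into the three block totals makes each simplify to $L_B + 1$. The main obstacle is the explicit slope enumeration at $\gamma_{z+1}$ and $\gamma_{z'+1}$, which is analogous to but slightly more delicate than the four-case argument in Lemma~\ref{lem:nonew}, since we must also track how the row-3 entries interact with the transition in $s_k(\theta)$ at the block boundary.
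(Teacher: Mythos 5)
Your reduction is sound as far as it goes: the observation that no $\varphi_{ij}$ can be permissible on loops in two different blocks is correct (and genuinely needed, since the slope of $\theta$ drops across $\beta_z$ and $\beta_{z'}$), the decomposition of each block count as the first-loop count plus the number of new permissible functions on the later loops is right, and your enumeration giving $P_1=3$, realized by $(3,6)$, $(4,6)$, $(5,5)$, is correct. The gap is that for the second and third blocks the entire content of Corollary~\ref{cor:counting} is the exact values of $P_{z+1}$ and $P_{z'+1}$, and these you only assert. Lemma~\ref{Lem:VAAtMostThree}, Lemma~\ref{lem:onedeparts}, and the second clause of Lemma~\ref{lem:nonew} give only the upper bounds (at most $4$ on $\gamma_{z+1}$, at most $3$ on $\gamma_{z'+1}$, exactly $3$ on $\gamma_{z+1}$ when $b=z+1$); nothing stated in the paper supplies the matching lower bounds, and your formulas $P_{z+1}=3+1$ or $3$ and $P_{z'+1}=2+1$ or $2$ according to whether the first loop of the block is non-lingering are precisely the statements that would have to be proved. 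You explicitly defer the required slope enumeration at $\gamma_{z+1}$ and $\gamma_{z'+1}$ as ``the main obstacle,'' so the argument is incomplete exactly where it needs to be an argument; moreover the lingering-boundary cases of your formulas are not checked at all.

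You can avoid those local enumerations altogether, which is how the paper obtains the corollary. Since $s_k(\varphi_{ij})$ is nondecreasing in $k$ while $s_k(\theta)$ is nonincreasing, Lemma~\ref{Lem:EverythingIsPermissible} shows that a function $\varphi_{ij}$ is permissible on no loop exactly when $s_1(\varphi_{ij})>4$ or $s_{g+1}(\varphi_{ij})<2$, i.e.\ exactly for $\varphi_{56},\varphi_{66},\varphi_{00},\varphi_{01}$; hence exactly $24$ of the $28$ functions are permissible somewhere. The per-block upper bounds you already have --- first-loop count at most $3$, $4$ (or $3$ if $b=z+1$), and $3$, plus at most one new function on each subsequent non-lingering loop and none on lingering loops or on $\gamma_z,\gamma_b,\gamma_{b'},\gamma_{z'+2}$, by Lemmas~\ref{lem:onenew} and~\ref{lem:nonew} --- sum over the three blocks to exactly $21+3=24$, so every inequality is an equality and each block carries exactly one more permissible function than it has non-lingering loops. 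This global pigeonhole is the counting already carried out inside the proof of Lemma~\ref{lem:nonew}; it delivers the exact first-loop counts as a consequence rather than requiring them as an input, and it also dispenses with your unproved claims about what happens when $\gamma_{z+1}$ or $\gamma_{z'+1}$ is lingering (note that the paper's count treats all of $\gamma_1,\gamma_z,\gamma_{z+1},\gamma_b,\gamma_{b'},\gamma_{z'+1},\gamma_{z'+2}$ as non-lingering). If you prefer your local route, you must actually carry out the four-case analysis of the slope vectors $s_{z+1}(\Sigma)$ and $s_{z'+1}(\Sigma)$ and produce the lower bounds; as submitted, the proof is not complete.
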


The following proposition will be most useful when working with three non-departing permissible functions on a loop.  Its proof relies on the following lemma about the divisor of a piecewise linear function on $\Gamma$ obtained as the minimum of several functions in $R(D)$, from our previous work on tropical independence.

\begin{ShapeLemma} \cite[Lemma~3.4]{tropicalGP}
Let $D$ be a divisor on a metric graph $\Gamma$, with $\psi_0, \ldots, \psi_r$ piecewise linear functions in $R(D)$, and let
\[
\theta = \min \{ \psi_0, \ldots, \psi_r \}.
\]
Let $\Gamma_j \subset \Gamma$ be the closed set where $\theta$ is equal to $\psi_j$.  Then $\ddiv( \theta ) +D$ contains a point $v \in \Gamma_j$ if and only if $v$ is in either
\begin{enumerate}
\item  the divisor $\ddiv( \psi_j ) + D$, or
\item  the boundary of $\Gamma_j$.
\end{enumerate}
\end{ShapeLemma}

\begin{proposition}  \label{prop:threeshape}
Consider a set of at most three non-departing permissible functions from the set $\{ \varphi_{ij} \}$ on a loop $\gamma_k$ and assume that all of the functions take the same value at $w_k$.  Then there is a point of $\gamma_k$ at which one of these functions is strictly less than the others.
\end{proposition}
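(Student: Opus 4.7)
The plan is to proceed by contradiction: suppose that at every point of $\gamma_k$ the minimum of $\{f_1,f_2,f_3\}$ is achieved by at least two of them, where $f_a=\varphi_{i_a j_a}$. Since they all agree at $w_k$, this is equivalent to saying $\{f_1,f_2,f_3\}$ is tropically dependent on $\gamma_k$ with coefficients $c_1=c_2=c_3=0$. I will derive a contradiction via the Shape Lemma.

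First I would record the structural ingredients. On $\gamma_k$ the function $\varphi_i$ is piecewise linear with its only interior corner at the chip $x_i^k$ of $D_i$, and these chips are pairwise distinct across $i$ in the vertex-avoiding case; consequently $f_a$ has corners only at $x_{i_a}^k$ and $x_{j_a}^k$. Non-departing permissibility gives $s_{k+1}(f_a)=s_{k+1}(\theta)$ for each $a$, so all three $f_a$ have the same outgoing slope along $\beta_k$ at $w_k$, which pins down their top- and bottom-edge slopes at $w_k$ in terms of which of $x_{i_a}^k, x_{j_a}^k$ lies on which edge. From the permissibility condition $s_k(f_a)\le s_k(\theta)$ one also controls the incoming slopes at $v_k$.

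Next I would apply the Shape Lemma to $\eta=\min\{f_1,f_2,f_3\}$ on $\gamma_k$. The loop decomposes into closed arcs $\Gamma_a$ on which $\eta=f_a$, and the effective divisor $\ddiv(\eta)+2D$ restricted to $\gamma_k$ is supported on the arc boundaries together with the interior chips $x_{i_a}^k, x_{j_a}^k$ that lie in $\Gamma_a$. Slope conservation at $v_k$ and $w_k$ then determines the total degree of this restricted divisor; combined with the fact that each $f_a$ has only two interior corners on the loop, this leaves only finitely many cyclic configurations of the partition $\{\Gamma_a\}$ to rule out.

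Finally I would eliminate each configuration. In every case, the requirement that each chip $x_{i_a}^k, x_{j_a}^k$ lies inside $\Gamma_a$ conflicts either with the common outgoing slope at $w_k$ or with the distinctness of the chips $x_i^k$, and the remaining borderline cases are closed off using the bound on $s_k(f_a)$ from permissibility. The main obstacle is the combinatorial case analysis, which is cleanest when organized by the overlap pattern of the multiset $\{i_1,j_1,i_2,j_2,i_3,j_3\}$; the hardest case is when two of the $f_a$ share an index, since they then share a corner on the loop, and one must use the corners of the third function to locate a point of strict inequality.
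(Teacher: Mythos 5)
Your overall skeleton is the same as the paper's: bound the degree of $2D + \ddiv(\min_a f_a)$ on $\gamma_k$ using the slope data at $v_k$ and $w_k$, invoke the Shape Lemma to see that each $f_a$ achieves the minimum on a union of the (at most two) components of the complement of the support, and then finish by a case analysis. But the proposal stops exactly where the content is. Under your contradiction hypothesis, the only configuration that is not immediately excluded is the one in which two \emph{different} pairs of the functions agree on the two different regions, with $w_k$ lying in the boundary of both; and the tools you name for the elimination step (the chips $x_{i_a}^k$ lying in the correct arcs, pairwise distinctness of chips, the permissibility bound on $s_k(f_a)$) do not obviously dispose of it. The argument that actually kills this configuration is different in character: since all three functions are non-departing they have the same outgoing slope on $\beta_k$, so the single function $f_a$ that achieves the minimum on all of $\gamma_k$ also achieves it on the outgoing bridge, hence $\vartheta=\min_a f_a$ coincides with $f_a$ in a neighborhood of $w_k$; but the Shape Lemma forces $w_k$ (a boundary point of both regions) into the support of $2D+\ddiv(\vartheta)$, while $2D+\ddiv(f_a)=D_{i_a}+D_{j_a}$ does not contain $w_k$ precisely because the class is vertex avoiding. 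Nothing in your sketch plays the role of this step, and "conflicts with the common outgoing slope at $w_k$" is an assertion, not an argument.

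Two smaller inaccuracies would also need repair if you pushed the sketch through. First, the corner structure you ascribe to $\varphi_i$ on $\gamma_k$ is wrong as stated: $\ddiv(\varphi_i)=D_i-D$, so every $\varphi_i$ also bends at the break-divisor chip of $D$ on $\gamma_k$, and $D_i$ need not place a chip on every loop at all (it has only $d-r$ chips spread over $g>d-r$ loops); what the argument really needs is control of $2D+\ddiv(\vartheta)$, not of the individual chips. Second, "slope conservation determines the total degree" should be the inequality $\deg\big(2D+\ddiv(\vartheta)\big)\big|_{\gamma_k}\leq 2$, which requires $s_k(f_a)\leq s_{k+1}(f_a)=s_{k+1}(\theta)$; this uses that the slopes $s_k(\varphi_{ij})$ are nondecreasing in $k$ in the vertex-avoiding case, not just the permissibility bound $s_k(f_a)\leq s_k(\theta)$, since $s_k(\theta)$ may exceed $s_{k+1}(\theta)$ at the end of a block.
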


\begin{proof}
We will consider the case where there are exactly three functions $\varphi$, $\varphi'$ and $\varphi''$.  The cases where there are one or two functions follows from a similar, but simpler, argument.  Since $\varphi$ is permissible on $\gamma_k$, we have $s_{k} (\varphi) \leq s_{k}(\theta)$, and since $\varphi$ is non-departing, we have $s_{k+1} (\varphi) = s_{k+1} (\theta)$.  The same holds for $\varphi'$ and $\varphi''$.

Let $\vartheta$ be the pointwise minimum of the three functions.  Since the slope of $\vartheta$ along any tangent direction agrees with that of one of the three, the incoming slope from the left at $v_k$ is at most the outgoing slope to the right at $w_k$, which is equal to $s_{k+1}(\theta)$.

It follows that the restriction $(D + \ddiv \vartheta)|_{\gamma_k}$ has degree at most 2.  Hence $\gamma_k \smallsetminus \mathrm{Supp} (D + \ddiv \vartheta)$ consists of at most two connected components.  By the Shape Lemma for Minima, the boundary points of a region where a function $\varphi$ achieves the minimum are contained in the support.  Therefore, the region where any one of the functions $\varphi$ achieves the minimum is either one of these connected components, or the union of both.

Since all three functions agree at $w_k$, and no two functions agree on the whole loop $\gamma_k$, we can narrow down the combinatorial possibilities as follows:  either all three functions agree on one region which contains $w_k$ and one of the three achieves the minimum uniquely on the other region, or two different pairs of functions agree on the two different regions, and $w_k$ is in the boundary of both.  These two possibilities are illustrated in Figure~\ref{Fig:ThreeFunctions}.

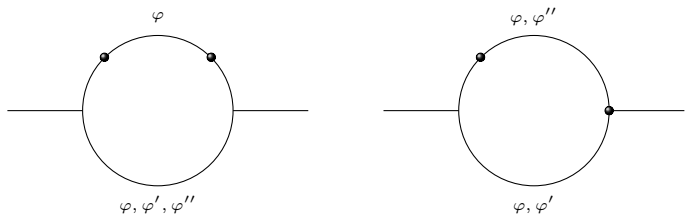
\begin{figure}[H]
\begin{tikzpicture}
\matrix[column sep=0.5cm] {

\begin{scope}[grow=right, baseline]
\draw (-1,0) circle (1);
\draw (-3,0)--(-2,0);
\draw (0,0)--(1,0);
\draw [ball color=black] (-0.29,0.71) circle (0.55mm);
\draw [ball color=black] (-1.71,0.71) circle (0.55mm);
\draw (-1,-1.25) node {{\tiny $\varphi, \varphi', \varphi''$}};
\draw (-1,1.25) node {{\tiny $\varphi$}};

\draw (4,0) circle (1);
\draw (2,0)--(3,0);
\draw (5,0)--(6,0);
\draw [ball color=black] (5,0) circle (0.55mm);
\draw [ball color=black] (3.29,0.71) circle (0.55mm);
\draw (4,-1.25) node {{\tiny $\varphi, \varphi'$}};
\draw (4,1.25) node {{\tiny $\varphi, \varphi''$}};

\end{scope}

\\};
\end{tikzpicture}

\caption{Two possibilities for where 3 functions may achieve the minimum.}
\label{Fig:ThreeFunctions}
\end{figure}
We now rule out the possibility illustrated on the right in Figure~\ref{Fig:ThreeFunctions}, where $w_k$ is in the boundary of both regions.  Note that one function, which we may assume to be $\varphi$, achieves the minimum on all of $\gamma_k$.  Furthermore, all three functions have the same slope along the outgoing bridge $\beta_k$, so $\varphi$ also achieves the minimum on the bridge.  Therefore $\vartheta$ is equal to $\varphi$ in a neighborhood of $w_k$.  However, $D+ \ddiv \vartheta$ contains $w_k$ and $D+ \ddiv \varphi$ does not, a contradiction.

We conclude that the minimum is achieved as depicted on the left in Figure~\ref{Fig:ThreeFunctions}, with all three functions achieving the minimum on a region that includes $w_k$, and one function achieving the minimum uniquely on the other region.  This proves the lemma.
\end{proof}

\subsection{Algorithm for constructing a maximally independent combination}
We now sketch the overall procedure that we will use to build a maximally independent combination $\theta = \min_{ij} \{ \varphi_{ij} + c_{ij} \}$ with slopes as specified in Section~\ref{sec:slopes}.  In this algorithm, we move from left to right across each of the three blocks where $s_k(\theta)$ is constant, assigning one function $\varphi_{ij}$ to each non-lingering loop and adjusting the coefficients $c_{ij}$ so that each function achieves the minimum on the loop to which it is assigned. At the end of each block, we start the next block by choosing coefficients such that $\theta$ bends at the midpoint of the bridge between blocks; the main interesting part in the algorithm is what happens within each block.

We now list a few of the key properties of the algorithm:
\renewcommand{\theenumi}{\roman{enumi}}
\begin{enumerate}
\item Once a function has been assigned to a loop, it always achieves the minimum uniquely at some point on that bridge or loop  (Lemma~\ref{Lem:VAConfig}).
\item A function never achieves the minimum on any loop to the right of the loop to which it is assigned  (Lemma~\ref{Lem:VANotToRight}).
\item Coefficients are initialized to $\infty$, and functions are assigned a finite coefficient at the first loop or bridge where they become permissible.
\item After the initial assignment of a finite coefficient, subsequent adjustments to this coefficient are smaller and smaller perturbations.  This is related to the fact that the edges get shorter and shorter as we move from left to right across the graph.  See the inequalities on edge lengths in Definition~\ref{Def:Admissible}.
\item Coefficients are only adjusted upward.  This ensures that once a function is assigned and achieves the minimum uniquely on a loop, it always achieves the minimum uniquely on that loop.
\item Exactly one function is assigned to each of the 21 non-lingering loops, and the remaining seven functions achieve the minimum on either the leftmost bridge, the rightmost bridge, or the two bridges between blocks, as described above.
\end{enumerate}

\noindent The algorithm terminates when we reach the rightmost bridge, at which point each of the 28 functions $\varphi_{ij} + c_{ij}$ achieves the minimum uniquely at some point on the graph. We now sketch the main steps.

\medskip

\noindent \textbf{Start at the first bridge.}
Start at the leftmost bridge $\beta_0$ and initialize $c_{66} = 0$.  Initialize $c_{56}$ so that $\varphi_{56} + c_{56}$ equals $\varphi_{66}$ at a point one third of the way from $w_0$ to $v_1$ on the first bridge $\beta_0$.   Initialize $c_{55}$ and $c_{46}$ so that $\psi_{55} + c_{55}$ and $\psi_{46} + c_{46}$ agree with $\varphi_{56} + c_{56}$ at a point two thirds of the way from $w_0$ to $v_1$.  Initialize all other coefficients $c_{ij}$ to $\infty$.  Note that $\varphi_{66}$ and $\varphi_{56}$ achieve the minimum uniquely on the first and second half of $\beta_0$, respectively.  Proceed to the first loop.

\medskip

\noindent \textbf{Loop subroutine.}
Each time we arrive at a loop $\gamma_k$, apply the following steps.

\medskip

\noindent \textbf{Loop subroutine, step 1:  Re-initialize unassigned coefficients.}
Suppose $\gamma_k$ is non-lingering.  Note that there are at least two unassigned permissible functions, by Lemma~\ref{lem:nonew}.  Find the unassigned permissible function $\varphi_{ij}$ that maximizes $\varphi_{ij}(w_k) + c_{ij}$.   Initialize the coefficients of the new permissible functions (if any) and adjust the coefficients of the other unassigned permissible functions upward so that they all agree with $\varphi_{ij}$ at $w_k$.    (The algorithm will be constructed so that the unassigned permissible functions are strictly less than all other functions, at every point in $\gamma_k$, even after this upward adjustment.)

\medskip

\noindent \textbf{Loop subroutine, step 2:  Assign departing functions.}
If there is a departing function, assign it to the loop.  (There is at most one, by Lemma~\ref{lem:onedeparts}.) Adjust the coefficients of the other permissible functions upward so that all of the functions agree at a point on the following bridge a short distance to the right of $w_k$, but far enough so that the departing function achieves the minimum uniquely on the whole loop.  This is possible because the bridge is much longer than the edges in the loop.  Proceed to the next loop.

\medskip

\noindent \textbf{Loop subroutine, step 3: Skip lingering loops.}
If $\gamma_k$ is a lingering loop, do nothing and proceed to the next loop.

\medskip

\noindent \textbf{Loop subroutine, step 4: Otherwise, use Proposition~\ref{prop:threeshape}.}
By Lemma~\ref{Lem:VAAtMostThree}, there are at most 3 non-departing functions.  By Proposition~\ref{prop:threeshape}, there is one that achieves the minimum uniquely at some point of $\gamma_k$.  We assign this function to the loop and adjust its coefficient upward slightly, enough so that it will never achieve the minimum on any loops to the right, but not so much that it does not achieve the minimum uniquely on this loop.  (Specifically, we increase the coefficient of this function by $\frac{1}{3}m_k$; see Lemma~\ref{Lem:VANotToRight}, below.)  Proceed to the next loop.

\medskip

\noindent \textbf{Proceeding to the next loop.}
If the next loop is contained in the same block, then move right to the next loop and apply the loop subroutine.  Otherwise, the current loop is the last loop in its block.  In this case, proceed to the next block.

\medskip

\noindent \textbf{Proceeding to the next block.}
After applying the loop subroutine to the last loop in a block, there is exactly one unassigned permissible function.  This follows from Corollary~\ref{cor:counting}.  As we shall see, the unassigned permissible function already achieves the minimum uniquely on the outgoing bridge, without any further adjustments of coefficients.

If we are at the last loop $\gamma_g$, then proceed to the last bridge.  Otherwise, there are several new permissible functions on the first loop of the next block, as detailed in Lemma~\ref{Lem:VAAtMostThree}, above.  Initialize the coefficient of each new permissible function so that it is equal to $\theta$ at the midpoint of the bridge between the blocks, and then apply the loop subroutine.

\medskip

\noindent \textbf{The last bridge.}
Initialize the coefficient $c_{01}$ so that it equals $\theta$ at the midpoint of the last bridge.  Initialize $c_{00}$ so that it equals $\theta$ halfway between the midpoint and the rightmost endpoint of the last bridge.  Note that both of these functions now achieve the minimum uniquely at some point on the second half of the final bridge.  Output the coefficients $\{ c_{ij} \}$.

\bigskip

\subsection{Verifying the algorithm}
We now prove that the output
\[
\theta =  \min_{ij} \{ \varphi_{ij} + c_{ij} \}
\]
is maximally independent.

\begin{lemma}
\label{Lem:VANotToRight}
Suppose that $\varphi_{ij}$ is assigned to the loop $\gamma_k$.  Then $\varphi_{ij}$ does not achieve the minimum at any point to the right of $v_{k+1}$.
\end{lemma}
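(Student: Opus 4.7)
The plan is to show that after $\varphi_{ij}$ is assigned to $\gamma_k$, the gap $g(p) := (\varphi_{ij}(p) + c_{ij}) - \theta(p)$ is strictly positive for every $p$ to the right of $v_{k+1}$. The argument proceeds in two stages: first, establish a positive lower bound on $g(v_{k+1})$ from the construction at $\gamma_k$; then show this bound is not erased by subsequent loops and bridges. The case split is whether $\varphi_{ij}$ was assigned via Step~2 (as a departing permissible function) or Step~4 (as a non-departing permissible function).

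If $\varphi_{ij}$ is departing, then in Step~2 the coefficients of the other unassigned permissible functions are raised so that all of them agree with $\varphi_{ij}$ at a crossover point on $\beta_k$ just to the right of $w_k$. Since $\varphi_{ij}$ is departing, $s_{k+1}(\varphi_{ij}) \geq s_{k+1}(\theta) + 1$, so past the crossover point $g$ grows at integer rate at least $1$, giving $g(v_{k+1})$ on the order of $n_k$. If instead $\varphi_{ij}$ is non-departing and assigned in Step~4 with upward correction $+\frac{1}{3}m_k$, then by Proposition~\ref{prop:threeshape} and the Shape Lemma for Minima the non-departing permissible functions all agree at $w_k$, and $\varphi_{ij}$'s unique-minimum region on $\gamma_k$ lies away from $w_k$. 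A local slope analysis shows that after the correction $g(v_{k+1})$ is bounded below by a positive constant multiple of $m_k$; the choice $\frac{1}{3}$ is tuned so that this beats the error estimates from later loops below.

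Next, for $p$ strictly to the right of $v_{k+1}$, I would estimate $g(v_{k+1}) - g(p)$ by summing contributions from each subsequent loop and bridge. On each loop $\gamma_\ell$ with $\ell > k$, the fluctuation of $g$ is crudely bounded by $2d(\ell_\ell + m_\ell)$, using that slopes of functions in the complete linear series of a degree-$2d$ divisor are integers bounded by $2d$ in absolute value. On each bridge $\beta_\ell$ with $\ell > k$, the slope difference $s_{\ell+1}(\varphi_{ij}) - s_{\ell+1}(\theta)$ may be negative, but condition~(iii) in the definition of permissibility, applied to $\varphi_{ij}$ at $\gamma_k$, forces any shortfall at $\beta_{\ell-1}$ to be preceded by a strict excess on an earlier bridge $\beta_{k'-1}$ with $k < k' < \ell$. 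Because $n_{k'-1} \gg n_{\ell-1}$ by the admissibility hierarchy, the earlier excess dominates the later shortfall in the running sum, so the cumulative change of $g$ over bridges is non-negative.

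Combining everything, Definition~\ref{Def:Admissible} gives $\sum_{\ell > k}(\ell_\ell + m_\ell) \ll m_k$, so the total possible decrease of $g$ beyond $v_{k+1}$ is a negligible fraction of $g(v_{k+1})$, and $g(p) > 0$ throughout, which is equivalent to the conclusion of the lemma. The main obstacle will be the slope bookkeeping on later bridges when $\varphi_{ij}$ has exited its permissible range: here condition~(iii) of permissibility together with the strict hierarchy of bridge lengths is the decisive input, ensuring that any later slope shortfall is absorbed by an earlier, much longer bridge before $g$ can reach zero.
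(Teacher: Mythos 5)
Your argument is correct and is essentially the paper's proof: in the non-departing case both rest on the $\tfrac{1}{3}m_k$ margin created at assignment beating the total size $d\sum_{t>k} m_t$ of all later loops, and in the departing case your quantitative growth of the gap along $\beta_k$ merely replaces the paper's one-line observation that a departing function is no longer permissible, hence cannot achieve the minimum, on any loop to the right. The only superfluous piece is your appeal to condition (iii) of permissibility and the bridge-length hierarchy to control slope shortfalls on later bridges: in the vertex-avoiding case this situation never occurs, since the slopes $s_\ell(\varphi_{ij})$ are nondecreasing while $s_\ell(\theta)$ is nonincreasing (cf.\ Lemma~\ref{Lem:VAPermissible}), so the gap is automatically nondecreasing along every bridge to the right of $\gamma_k$.
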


\begin{proof}
If, when running the algorithm, there is an unassigned departing function $\varphi_{ij}$ on $\gamma_k$, then $\varphi_{ij}$ is not permissible on loops $\gamma_{k'}$ for $k'>k$.  Thus, it cannot achieve the minimum on any of the loops to the right of $\gamma_k$.

Otherwise, note that since $\varphi_{ij}$ is permissible, the difference $\varphi_{ij} (v) - \theta (v)$ for any point $v \in \gamma_{k'}$ with $k'>k$ is at least
\[
\frac{1}{3}m_k - d \sum_{t = k+1}^{k'} m_t .
\]
By our assumptions on edge lengths, this expression is always positive.
\end{proof}

We now show that each function achieves the minimum uniquely at some point of the loop to which it is assigned.  The analogous statement about functions assigned to bridges will follow once we see that there is only one function assigned to each bridge between blocks, which we will see in the proof of Theorem~\ref{thm:vertexavoiding}.

\begin{lemma}
\label{Lem:VAConfig}
Suppose that $\varphi_{ij}$ is assigned to the loop $\gamma_k$.  Then there is a point $v \in \gamma_k$ where $\varphi_{ij}$ achieves the minimum, and no other function $\varphi_{i'j'}$ achieves the minimum at $v$.
\end{lemma}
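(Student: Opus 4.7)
The plan is to split into two cases depending on whether $\varphi_{ij}$ was assigned in step~2 (as a departing function) or in step~4 (as the winner produced by Proposition~\ref{prop:threeshape}). In each case I will first pinpoint a witness point $v\in\gamma_k$ at which $\varphi_{ij}+c_{ij}$ is strictly below every competing $\varphi_{i'j'}+c_{i'j'}$ at the moment $\varphi_{ij}$ is assigned, and then verify that this strict inequality at $v$ persists through all subsequent coefficient adjustments until the algorithm terminates.

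In the departing case, step~2 re-initializes the coefficients of the remaining unassigned permissible functions so that they all agree with $\varphi_{ij}$ at a point $p$ on the outgoing bridge $\beta_k$ at distance $d$ to the right of $w_k$. By Lemma~\ref{lem:onedeparts}, $\varphi_{ij}$ is the unique departing permissible function, and so has strictly greater outgoing slope on $\beta_k$ than every other non-departing permissible function. Moving leftward from $p$ back to $w_k$, $\varphi_{ij}$ therefore drops faster than its contenders, so at $w_k$ it lies below each of them by at least $d$, while the oscillation of $\varphi_{ij}-\varphi_{i'j'}$ along $\gamma_k$ is bounded by a constant multiple of $\ell_k+m_k$. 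The inequality $\ell_k+m_k\ll n_k$ of Definition~\ref{Def:Admissible} permits a choice of $d$ for which this oscillation is negligible compared to $d$, and I take $v=w_k$ as the witness.

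In the non-departing case, step~1 has already arranged for the at most three non-departing permissible functions on $\gamma_k$ (Lemma~\ref{Lem:VAAtMostThree}) to agree at $w_k$, so Proposition~\ref{prop:threeshape} applies and produces a point $v\in\gamma_k$ at which $\varphi_{ij}$ is strictly less than the others. Inspecting the proof of that proposition, the region on which $\varphi_{ij}$ achieves the minimum uniquely is a whole connected component of $\gamma_k\setminus\mathrm{Supp}(D+\ddiv\vartheta)$, so at a suitable interior $v$ the gap is bounded below by a positive multiple of $m_k$, comfortably exceeding the upward adjustment $\frac{1}{3}m_k$ applied to $c_{ij}$ in step~4. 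Functions assigned to earlier loops cannot contend with $\varphi_{ij}$ on $\gamma_k$ by Lemma~\ref{Lem:VANotToRight}, and functions that are not yet permissible still carry coefficients equal to $\infty$.

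The main obstacle will be the persistence statement: once the witness $v$ has been fixed, I must verify that none of the coefficient adjustments carried out while processing $\gamma_{k+1},\gamma_{k+2},\ldots$ narrows the gap at $v$ to zero. Since the algorithm only ever raises coefficients, the task reduces to bounding the cumulative upward shift received at $v$ by each contending function. This bookkeeping will rest on the strict hierarchy $\ell_{k+1}\ll m_k\ll\ell_k\ll n_k$ of Definition~\ref{Def:Admissible}: each later re-initialization at $w_{k'}$ and each step~4 perturbation of size $\frac{1}{3}m_{k'}$ with $k'>k$ is much smaller than the corresponding adjustment at the previous loop, so the total cumulative shift decays geometrically and stays strictly below the initial gap. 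This dependence on the strengthened admissibility hypothesis, in contrast with the weaker conditions used in \cite{MRC}, is precisely where the new edge-length inequalities pay off.
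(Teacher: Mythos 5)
Your case split (departing vs.\ non-departing) and choice of witness points match the paper's proof, but the non-departing case has a genuine gap at its quantitative heart. Proposition~\ref{prop:threeshape} only produces a point where the assigned function is \emph{strictly} smallest among the (at most three) competing permissible functions; to survive the subsequent increase of $c_{ij}$ by $\frac{1}{3}m_k$ you must know that the gap at the witness point strictly exceeds $\frac{1}{3}m_k$. Your justification --- that the uniqueness region is a whole connected component of $\gamma_k\smallsetminus\mathrm{Supp}(2D+\ddiv\vartheta)$, so the gap is ``a positive multiple of $m_k$, comfortably exceeding $\frac{1}{3}m_k$'' --- does not deliver this: nothing in that argument pins the constant above $\frac{1}{3}$, and a priori the component, or the crossing pattern of the competitors inside it, could make the best available gap small. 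The paper closes exactly this point with a discreteness fact specific to the vertex-avoiding combinatorics: any two permissible functions differ by an \emph{integer multiple of} $\frac{1}{2}m_k$ at points whose distance from $w_k$ is a half-integer multiple of $m_k$, and the witness can be taken at such a lattice point, so a strict gap is automatically at least $\frac{1}{2}m_k>\frac{1}{3}m_k$. Without some such quantization argument, ``comfortably exceeding'' is an assertion rather than a proof.

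The persistence paragraph is also misdirected, though its conclusion is true. After $\varphi_{ij}$ is assigned, its coefficient is never adjusted again (adjustments apply only to unassigned permissible functions), and all other coefficients move only upward; raising a competitor's coefficient increases its value at $v$ and therefore \emph{widens} the gap. So persistence is immediate, and there is nothing to bound: your proposed geometric-decay bookkeeping of cumulative upward shifts has the inequality pointing the wrong way --- controlling how much the competitors rise cannot be what protects the gap, and no control is needed. The strengthened edge-length hierarchy of Definition~\ref{Def:Admissible} is what powers Lemma~\ref{Lem:VANotToRight} (ensuring the assigned function, after its $\frac{1}{3}m_k$ bump, never re-enters the minimum to the right of $\gamma_k$), not the persistence of unique minimality at the witness point.
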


\begin{proof}
If there is an unassigned departing function $\varphi_{ij}$ on $\gamma_k$, then by construction this is the only function that achieves the minimum at $w_k$.  Otherwise, any two permissible functions differ by an integer multiple of $\frac{1}{2}m_k$ at points whose distance from $w_k$ is a half-integer multiple of $m_k$.  By construction, there is such a point where the assigned function $\varphi_{ij}$ achieves the minimum and no other permissible function does.  Thus, by increasing the coefficient by $\frac{1}{3}m_k$, the assigned function still achieves the minimum at this point.
\end{proof}

\begin{proof}[Proof of Theorem~\ref{thm:vertexavoiding}]
By construction, the functions $\varphi_{66}$ and $\varphi_{56}$ achieve the minimum on the bridge $\beta_0$, and the functions $\varphi_{00}$ and $\varphi_{01}$ achieve the minimum on the bridge $\beta_g$.

We first show that every non-lingering loop has an assigned function.  To see this, suppose that there is a non-lingering loop with no assigned function, and let $\gamma_k$ be the first such loop.  If $\varphi_{ij}$ is a function that was permissible on an earlier loop but not permissible on $\gamma_k$, then there is a $k'<k$ such that $\varphi_{ij}$ is a departing permissible function on $\gamma_{k'}$.  By construction, this function must be assigned to loop $\gamma_{k'}$, or an earlier loop.  It follows that the number of such functions is at most the number of non-lingering loops $\gamma_{k'}$ with $k'<k$.  Now, for every other function $\varphi_{ij}$ that is permissible on the block containing $\gamma_k$, there must exist a non-lingering loop $\gamma_{k'}$, with $k'>k$, such that $\varphi_{ij}$ is a new permissible function on $\gamma_{k'}$.  Since there is at most one new permissible function per loop, we see that the number of permissible functions on the block containing $\gamma_k$ is fewer than the number of non-lingering loops.  By Corollary~\ref{cor:counting}, however, this is impossible.

Indeed, by Corollary~\ref{cor:counting}, the number of permissible functions on each of the 3 blocks is exactly one more than the number of non-lingering loops in that block.  Every non-lingering loop has an assigned function, and the remaining function achieves the minimum at the bridge immediately to the right of the block.  The result then follows from Lemma~\ref{Lem:VAConfig}.
\end{proof}

\begin{example}  \label{ex:randomtableau}
We now illustrate the maximally independent tropical linear combination of $ \{  \varphi_{ij} \}$ for the randomly generated tableau pictured in Figure~\ref{Fig:RandomTableau}.

\begin{center}
\begin{figure}[H]
\begin{tikzpicture}
\matrix[column sep=0.7cm, row sep = 0.7cm] {
\begin{scope}[node distance=0 cm,outer sep = 0pt]
	      \node[bsq] (11) at (2.5,0) {01};
	      \node[bsq] (21) [below = of 11] {02};
	      \node[bsq] (31) [below = of 21] {04};
	      \node[bsq] (12) [right = of 11] {03};
	      \node[bsq] (22) [below = of 12] {05};
	      \node[bsq] (32) [below = of 22] {08};
	      \node[bsq] (13) [right = of 12] {06};
	      \node[bsq] (23) [below = of 13] {07};
	      \node[bsq] (33) [below = of 23] {11};
	      \node[bsq] (14) [right = of 13] {09};
	      \node[bsq] (24) [below = of 14] {12};
	      \node[bsq] (34) [below = of 24] {14};
	      \node[bsq] (15) [right = of 14] {10};
	      \node[bsq] (25) [below = of 15] {16};
	      \node[bsq] (35) [below = of 25] {17};
	      \node[bsq] (16) [right = of 15] {13};
	      \node[bsq] (26) [below = of 16] {18};
	      \node[bsq] (36) [below = of 26] {20};
	      \node[bsq] (17) [right = of 16] {15};
	      \node[bsq] (27) [below = of 17] {19};
	      \node[bsq] (37) [below = of 27] {21};
\end{scope}
\\};
\end{tikzpicture}
\caption{A randomly generated $3 \times 7$ tableau.}
\label{Fig:RandomTableau}
\end{figure}
\end{center}

\smallskip

\noindent In this example, $z=7$ and $z'=15$.  The three rows of Figure~\ref{Fig:Config} are the blocks, with $7$, $8$, and $6$ loops.  The 45 black circles indicate the support of the divisor $D' = 2D + \ddiv \theta$, and each point in the support appears with coefficient 1, aside from points on the bridges $\beta_3$, $\beta_6$, and $\beta_{12}$ a short distance to the right of the previous loops, which appear with coefficient 2, as marked.  Each of the 28 functions $\varphi_{ij}$ achieves the minimum uniquely on one of the connected components of the complement of $\mathrm{Supp}(D')$.  Since $\rho=0$ in this example, there are no lingering loops.  Each loop is labeled by the indices $ij$ of the fuction $\varphi_{ij}$ assigned to that loop.

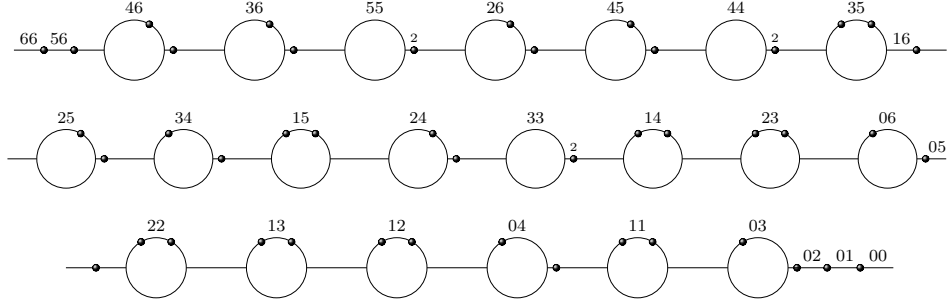
\begin{figure}[H]

\begin{center}
\scalebox{.8}{
\begin{tikzpicture}
\draw (-1.5,12)--(0,12);
\draw (-1.25,12.2) node {\footnotesize $66$};
\draw [ball color=black] (-1,12) circle (0.55mm);
\draw (-0.75,12.2) node {\footnotesize $56$};
\draw [ball color=black] (-0.5,12) circle (0.55mm);

\draw (0.5,12) circle (0.5);
\draw (0.5,12.7) node {\footnotesize $46$};
\draw [ball color=black] (0.75,12.43) circle (0.55mm);

\draw (1,12)--(2,12);
\draw [ball color=black] (1.15,12) circle (0.55mm);
\draw (2.5,12) circle (0.5);
\draw (2.5,12.7) node {\footnotesize $36$};
\draw [ball color=black] (2.75,12.43) circle (0.55mm);

\draw (3,12)--(4,12);
\draw [ball color=black] (3.15,12) circle (0.55mm);
\draw (4.5,12) circle (0.5);
\draw (4.5,12.7) node {\footnotesize $55$};

\draw (5,12)--(6,12);
\draw [ball color=black] (5.15,12) circle (0.55mm);
\draw (5.15,12.2) node {\tiny $2$};
\draw (6.5,12) circle (0.5);
\draw (6.5,12.7) node {\footnotesize $26$};
\draw [ball color=black] (6.75,12.43) circle (0.55mm);

\draw (7,12)--(8,12);
\draw [ball color=black] (7.15,12) circle (0.55mm);
\draw (8.5,12) circle (0.5);
\draw (8.5,12.7) node {\footnotesize $45$};
\draw [ball color=black] (8.75,12.43) circle (0.55mm);
\draw (9,12)--(10,12);

\draw [ball color=black] (9.15,12) circle (0.55mm);
\draw (10.5,12) circle (0.5);
\draw (10.5,12.7) node {\footnotesize $44$};

\draw (11,12)--(12,12);
\draw [ball color=black] (11.15,12) circle (0.55mm);
\draw (11.15,12.2) node {\tiny $2$};
\draw (12.5,12) circle (0.5);
\draw (12.5,12.7) node {\footnotesize $35$};
\draw (13.25,12.2) node {\footnotesize $16$};
\draw [ball color=black] (12.75,12.43) circle (0.55mm);
\draw [ball color=black] (12.25,12.43) circle (0.55mm);
\draw (13,12)--(14,12);
\draw [ball color=black] (13.5,12) circle (0.55mm);
\end{tikzpicture}
}

\bigskip

\scalebox{.78}{
\begin{tikzpicture}
\draw (-6.5,6)--(-6,6);
\draw (-5.5,6) circle (0.5);
\draw (-5.5,6.7) node {\footnotesize $25$};
\draw [ball color=black] (-5.25,6.43) circle (0.55mm);

\draw (-5,6)--(-4,6);
\draw [ball color=black] (-4.85,6) circle (0.55mm);
\draw (-3.5,6) circle (0.5);
\draw (-3.5,6.7) node {\footnotesize $34$};
\draw [ball color=black] (-3.75,6.43) circle (0.55mm);

\draw (-3,6)--(-2,6);
\draw [ball color=black] (-2.85,6) circle (0.55mm);
\draw (-1.5,6) circle (0.5);
\draw (-1.5,6.7) node {\footnotesize $15$};
\draw [ball color=black] (-1.75,6.43) circle (0.55mm);
\draw [ball color=black] (-1.25,6.43) circle (0.55mm);

\draw (-1,6)--(0,6);
\draw (0.5,6) circle (0.5);
\draw (0.5,6.7) node {\footnotesize $24$};
\draw [ball color=black] (0.75,6.43) circle (0.55mm);

\draw (1,6)--(2,6);
\draw [ball color=black] (1.15,6) circle (0.55mm);
\draw (2.5,6) circle (0.5);
\draw (2.5,6.7) node {\footnotesize $33$};

\draw (3,6)--(4,6);
\draw [ball color=black] (3.15,6) circle (0.55mm);
\draw (4.5,6) circle (0.5);
\draw (4.5,6.7) node {\footnotesize $14$};
\draw (3.15,6.2) node {\tiny $2$};
\draw [ball color=black] (4.75,6.43) circle (0.55mm);
\draw [ball color=black] (4.25,6.43) circle (0.55mm);

\draw (5,6)--(6,6);
\draw (6.5,6) circle (0.5);
\draw (6.5,6.7) node {\footnotesize $23$};
\draw [ball color=black] (6.25,6.43) circle (0.55mm);
\draw [ball color=black] (6.75,6.43) circle (0.55mm);

\draw (7,6)--(8,6);
\draw (8.5,6) circle (0.5);
\draw (8.5,6.7) node {\footnotesize $06$};
\draw [ball color=black] (8.25,6.43) circle (0.55mm);
\draw (9,6)--(9.5,6);
\draw [ball color=black] (9.15,6) circle (0.55mm);
\draw (9.35,6.2) node {\footnotesize $05$};
\end{tikzpicture}
}

\bigskip

\scalebox{.8}{
\begin{tikzpicture}
\draw (-2,3)--(-1,3);
\draw [ball color=black] (-1.5,3) circle (0.55mm);
\draw (-0.5,3) circle (0.5);
\draw (-0.5,3.7) node {\footnotesize $22$};
\draw [ball color=black] (-0.75,3.43) circle (0.55mm);
\draw [ball color=black] (-0.25,3.43) circle (0.55mm);

\draw (0,3)--(1,3);
\draw (1.5,3) circle (0.5);
\draw (1.5,3.7) node {\footnotesize $13$};
\draw [ball color=black] (1.25,3.43) circle (0.55mm);
\draw [ball color=black] (1.75,3.43) circle (0.55mm);

\draw (2,3)--(3,3);
\draw (3.5,3) circle (0.5);
\draw (3.5,3.7) node {\footnotesize $12$};
\draw [ball color=black] (3.25,3.43) circle (0.55mm);
\draw [ball color=black] (3.75,3.43) circle (0.55mm);

\draw (4,3)--(5,3);
\draw (5.5,3) circle (0.5);
\draw (5.5,3.7) node {\footnotesize $04$};
\draw [ball color=black] (5.25,3.43) circle (0.55mm);

\draw (6,3)--(7,3);
\draw [ball color=black] (6.15,3) circle (0.55mm);
\draw (7.5,3) circle (0.5);
\draw (7.5,3.7) node {\footnotesize $11$};
\draw [ball color=black] (7.25,3.43) circle (0.55mm);
\draw [ball color=black] (7.75,3.43) circle (0.55mm);

\draw (8,3)--(9,3);
\draw (9.5,3) circle (0.5);
\draw (9.5,3.7) node {\footnotesize $03$};
\draw [ball color=black] (9.25,3.43) circle (0.55mm);
\draw [ball color=black] (10.15,3) circle (0.55mm);

\draw (10,3)--(11.75,3);
\draw [ball color=black] (10.65,3) circle (0.55mm);
\draw [ball color=black] (11.2,3) circle (0.55mm);
\draw (10.4,3.2) node {\footnotesize $02$};
\draw (10.95,3.2) node {\footnotesize $01$};
\draw (11.5,3.2) node {\footnotesize $00$};
\end{tikzpicture}
}
\end{center}
\caption{Configuration of the functions $\varphi_{ij}$ for the tableau depicted in Figure~\ref{Fig:RandomTableau}.}
\label{Fig:Config}
\end{figure}

\noindent On the loops without unassigned departing permissible functions, the assigned permissible function achieves the minimum on only part of the loop.  These are the loops $\gamma_k$ for $k \in \{ 7, 10, 13, 14, 16, 17, 18, 20 \}$. On the other loops, the departing permissible function achieves the minimum on the whole loop.

The unassigned permissible functions at the end of the first, second, and third blocks are $\varphi_{16}$, $\varphi_{05}$, and $\varphi_{02}$, respectively.  Note that each of these functions achieves the minimum uniquely at the beginning of the bridge to the right of the corresponding block.

Note that the restriction of $D'$ to $\gamma_k \cup \beta_k$ has degree 3 for $k \in \{7, 15, 21\}$ and degree $2$ otherwise.  Since the restriction of $2D$ to $\gamma_k \cup \beta_k$ is $2$ for all $1 \leq k \leq 21$, this reflects the fact that $s_k(\theta)$ is constant on each block and decreases by 1 when passing from one block to the next.
\end{example}

\section{Tropicalization of linear series on chains of loops}
\label{Sec:Loops}

In this section, we make first steps toward classifying linear series of degree $d$ and rank $r$ on a chain of $g$ loops with admissible edge lengths, extending the combinatorial classification of divisor classes of degree $d$ and rank $r$ in \cite{tropicalBN}.

We maintain the notation from Sections~\ref{Sec:Slopes}-\ref{sec:chainsofloops}.  In particular, $X$ is a curve of genus $g$ whose skeleton $\Gamma$ is a chain of $g$ loops with admissible edge lengths, and $D_X$ is a divisor of degree $d$ on $D_X$, with a linear series $V \subset \cL(D_X)$ of rank $r$.  Replacing $D_X$ by a linearly equivalent (and not necessarily effective) divisor, we can and do assume $D = \Trop(D_X)$ is a break divisor.  This means that the restriction of $D$ to each loop is effective of degree 1, and we write $x_k$ for the distance from $v_k$, in the counterclockwise direction, to the point of $D$ on $\gamma_k$.

\subsection{Slope vectors and ramification}
By Lemma~\ref{Lem:NumberOfSlopes}, for each tangent vector in $\Gamma$, there are exactly $r+1$ distinct slopes of functions in $\Sigma = \trop(V)$.  The incoming and outgoing (rightward) slopes at each loop play a special role in our analysis.  Recall that we write $s_k(\varphi)$ for the incoming slope of $\varphi$ at $v_{k}$.  We will write $s'_k(\varphi)$ for the outgoing slope of $\varphi$ at $w_k$.

As in Section~\ref{Sec:Slopes}, we write $s_k(\Sigma) = (s_k[0], \ldots, s_k[r])$ for the $r+1$ slopes that occur as $s_k (\varphi)$ for $\varphi \in \Sigma$, written in increasing order.  Similarly, we write $s'_k(\Sigma) = (s'_k[0], \ldots, s'_k[r])$ for the $r+1$ possible slopes $s'_k(\varphi)$ for $\varphi \in \Sigma$.

We keep track of how these vectors of slopes change as one proceeds from left to right across the graph, as follows.

\begin{proposition} \label{Prop:LingeringLatticePath}
The difference between the vectors of incoming and outgoing slopes at $\gamma_k$ are bounded as follows:
\begin{displaymath}
s'_k[i] - s_{k}[i] \leq \left\{ \begin{array}{ll}
1 & \textrm{if $x_k \equiv (s_{k}[i]+1)m_k \ (\mathrm{mod} \  \ell_k + m_k)$} \\
  & \textrm{and $s_k [i+1] \neq s_k[i]+1$},\\
0 & \textrm{otherwise.}
\end{array} \right.
\end{displaymath}
Furthermore, the difference between incoming and outgoing slopes at $\beta_k$ are bounded by $s_{k+1}[i] - s'_k[i] \leq 0$ for all $i$ and $k$.
\end{proposition}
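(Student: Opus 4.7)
My plan is to handle both inequalities via the same two-stage scheme: first prove a pointwise inequality for each $\psi \in \Sigma$ coming from the non-archimedean Poincar\'e--Lelong formula, then upgrade it to a statement about the sorted slope vectors by passing to an $(i+1)$-dimensional subseries $V' \subset V$ spanned by sections that realize the first $i+1$ slopes, and applying Lemma~\ref{Lem:NumberOfSlopes} to $V'$.

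The bridge bound is the easier half. Since the interior of $\beta_k$ carries no support of $D$, every $\psi \in \Sigma \subset R(D)$ is concave on $\beta_k$, so $s_{k+1}(\psi) \leq s'_k(\psi)$. To upgrade this to the vector statement, I would pick $f_0,\ldots,f_i \in V$ with $s'_k(\trop f_j) = s'_k[j]$; these are linearly independent because their reductions at the node on $X_{w_k}$ corresponding to $\beta_k$ have distinct orders of vanishing. Applying Lemma~\ref{Lem:NumberOfSlopes} to $V' = \operatorname{span}(f_0,\ldots,f_i)$, the $i+1$ distinct $s'_k$-values on $\trop(V')$ must be exactly $\{s'_k[0],\ldots,s'_k[i]\}$, so every $\psi \in \trop(V')$ satisfies $s'_k(\psi) \leq s'_k[i]$ and hence $s_{k+1}(\psi) \leq s'_k[i]$ by concavity. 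A second application of Lemma~\ref{Lem:NumberOfSlopes} produces $i+1$ distinct $s_{k+1}$-values on $\trop(V')$, all bounded by $s'_k[i]$, forcing $s_{k+1}[i] \leq s'_k[i]$.

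The loop inequality follows the same template but requires a finer edge-by-edge analysis. For each $\psi \in \Sigma$ I would first establish the pointwise bound $s'_k(\psi) \leq s_k(\psi) + 1$ by summing slope contributions on $\gamma_k$: $\psi$ is concave on each of the top and bottom edges away from the chip of $D$, the chip allows a total upward slope-jump of at most $1$, and the constraints $\ord_{v_k}(\psi), \ord_{w_k}(\psi) \geq 0$ bound the slope sums at the loop's endpoints. The equality case $s'_k(\psi) = s_k(\psi)+1$ then forces the slopes to be piecewise constant on each edge with a single $+1$ jump at the chip and, crucially, the two computations of $\psi(w_k) - \psi(v_k)$ along the top and bottom edges to match as integers; solving this integrality condition in terms of the counterclockwise distance $x_k$ translates it exactly into the congruence $x_k \equiv (s_k(\psi)+1)\,m_k \pmod{\ell_k + m_k}$. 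The same span-and-Lemma~\ref{Lem:NumberOfSlopes} argument then upgrades the pointwise bound to the ``if''-clause bound $s'_k[i] \leq s_k[i]+1$.

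To refine this to the sharper ``otherwise'' bound $s'_k[i] \leq s_k[i]$, suppose $s'_k[i] = s_k[i]+1$. Tracing through the spanning argument, the subseries $V' \subset V$ spanned by lifts of $\psi_0,\ldots,\psi_i$ with $s_k(\psi_j) = s_k[j]$ must contain some $\psi$ with $s_k(\psi) = s_k[i]$ and $s'_k(\psi) = s_k[i]+1$: for any other $\psi \in \trop(V')$ we would have $s_k(\psi) \leq s_k[i-1]$, and the pointwise bound would force $s'_k(\psi) \leq s_k[i-1]+1 < s_k[i]+1$. The equality analysis of the previous paragraph therefore forces $x_k \equiv (s_k[i]+1)m_k \pmod{\ell_k+m_k}$. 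If in addition $s_k[i+1] = s_k[i]+1$, then the strict ordering of the $r+1$ distinct outgoing slopes together with the pointwise bound squeezes $s'_k[i+1] = s_k[i+1]+1$, and applying the same equality analysis at the index $i+1$ yields the incompatible congruence $x_k \equiv (s_k[i]+2)m_k \pmod{\ell_k+m_k}$; subtracting gives $m_k \equiv 0 \pmod{\ell_k+m_k}$, which is impossible since $0 < m_k < \ell_k+m_k$. I expect the main obstacle to be the book-keeping in the loop equality analysis, where one must match the counterclockwise-distance convention defining $x_k$ with the integer-slope consistency condition in the two subcases, according to whether the chip lies on the top or bottom edge of $\gamma_k$.
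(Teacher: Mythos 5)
Your proposal is correct and follows essentially the same route as the paper: the paper reduces the vector statement to a single-function statement by invoking Lemma~\ref{Lemma:Existence} (whose proof is precisely your span-and-count argument via Lemma~\ref{Lem:NumberOfSlopes} and the distinct vanishing orders of reductions), handles the bridge case by the same concavity observation, and cites \cite[Example~2.1]{tropicalBN} for the pointwise loop bound and its equality condition. The only difference is packaging: you re-derive the cited ingredients inline and spell out the congruence-contradiction showing that $s_k[i+1]=s_k[i]+1$ is incompatible with $s'_k[i]=s_k[i]+1$, a point the paper leaves implicit in its citation.
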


\begin{proof}
To prove the inequality for loops, first note that, by Lemma~\ref{Lemma:Existence}, there is a function $\varphi_i \in \Sigma$ such that $s_{k} (\varphi_i) \leq s_{k} [i]$ and $s'_k (\varphi_i) \geq s'_k [i]$.   It now suffices to show that $s'_k(\varphi_i) - s_{k-1}(\varphi_i)$ is bounded by 1 if $x_k = (s_{k-1}(i) +1)m_k$ mod $(\ell_k + m_k)$, and by $0$ otherwise.  The fact that these bounds hold for any function in $R(D)$ is the essential content of \cite[Example 2.1]{tropicalBN}.

The proof of the inequality for bridges is even simpler.  There is a function $\varphi_i \in \Sigma$ such that $s_{k+1}(\varphi_i) \geq s_{k+1} [i]$ and $s'_k (\varphi_i) \leq s'_k [i]$.  Since the support of $D$ is disjoint from the interior of the bridge $\beta_k$, the slope of $\varphi_i$ can only decrease along the bridge, and hence $s_{k+1} (\varphi_i) \leq s'_k(\varphi_i)$, as required.
\end{proof}

\begin{remark}
\label{Rem:Breaks}
In \cite{tropicalBN} and several subsequent papers, the divisor was chosen to be $w_0$-reduced, rather than a break divisor.  This convention had the mildly inelegant consequence that the inequalities analogous to those in Proposition~\ref{Prop:LingeringLatticePath} differ depending on whether or not $D$ has a point on $\gamma_k$.  By working with break divisors, which have exactly one point on every loop, we avoid this inelegance.
\end{remark}

By Proposition~\ref{Prop:LingeringLatticePath}, for a fixed $k$ we have
\[
s'_k [i] - s_{k} [i] \leq 1 ,
\]
with equality for at most one value of $i$.  We define the \emph{multiplicity} of the $k$th loop to be the total amount by which the coordinatewise difference $s'_k(\Sigma) - s_k(\Sigma)$ deviates from this bound.  More precisely, the multiplicity of the $k$th loop is
\[
\mu (\gamma_k) = 1 + \sum_{i=0}^r \left( s_{k} [i] - s'_k [i] \right) .
\]
By Proposition~\ref{Prop:LingeringLatticePath}, the multiplicity of each loop is nonnegative.  Similarly, we define the \emph{multiplicity} of the $k$th bridge to be
\[
\mu (\beta_k) = \sum_{i=0}^r \left( s'_k [i] - s_{k+1} [i] \right).
\]
A given loop $\gamma_k$ may have positive multiplicity in several ways.  We provide the following definition.

\begin{definition}
We say that $\gamma_k$ is a \emph{decreasing loop} if there is a value $h$ such that $s_{k} [h] > s'_k [h]$.
Similarly, we say that $\beta_k$ is a \emph{decreasing bridge} if there is a value $h$ such that $s'_k [h] > s_{k+1}[h]$.
\end{definition}

Note that all bridges with positive multiplicity are decreasing bridges, but not all loops with positive multiplicity are decreasing loops.  For example, in the case where the divisor $D$ is vertex avoiding, the loops with positive multiplicity are precisely the lingering loops, and these satisfy $s_{k}[i] = s'_k [i]$ for all $i$.

The multiplicities of loops and bridges record where, and by how much, the rightward slopes of $\Sigma$ fail to increase or stay the same, as expected.  We also keep track of the extent to which the rightward slopes of $\Sigma$ may be lower than expected at the leftmost point $w_0$, or higher than expected at the rightmost point $v_{g+1}$.

For a vertex avoiding divisor of degree $d$ and rank $r$, the rightward slopes at $w_0$ are $(d-g-r, d-g-r+1, \ldots, d-g)$.  We define the \emph{ramification weight at} $w_0$ to be $$\alpha (v_0) = \sum_{i=0}^r (d-g -r + i - s'_0 [i]).$$
Similarly, for a vertex avoiding divisor, the rightward slopes at $v_{g+1}$ are $(0, \ldots, r)$, and we define the \emph{ramification weight at} $v_{g+1}$ to be $$\alpha (v_{g+1}) = \sum_{i=0}^r (s_{g+1} [i] - i).$$    Our choice of terminology reflects the fact that these are ramification weights of the reductions of $V$ to $\kappa (X_{w_0})$ and $\kappa (X_{v_{g+1}})$, respectively.

The main theorem of \cite{tropicalBN} says that the space of divisor classes of degree $d$ and rank $r$ on $\Gamma$ has dimension $\rho = g - (r+1)(g-d+r)$.  We have the following analogue for tropicalizations of linear series.

\begin{theorem}
\label{Thm:BNThm}
The sum of the multiplicities of all loops and bridges plus the ramification weights at $w_0$ and $v_{g+1}$ is the Brill-Noether number $\rho = g-(r+1)(g-d+r)$.
\end{theorem}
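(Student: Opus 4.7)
The plan is to prove Theorem~\ref{Thm:BNThm} by a direct telescoping computation; no new geometric input is required beyond the definitions of multiplicity and ramification weight just set up. First I would expand the left-hand side by substituting the definitions of $\mu(\gamma_k)$, $\mu(\beta_k)$, $\alpha(v_0)$, and $\alpha(v_{g+1})$. The constant term $1$ appearing in each loop multiplicity contributes $g$ in total. What remains is a double sum over $i \in \{0,\ldots,r\}$ and over loops and bridges, together with the boundary terms from the two ramification weights.

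Next, for each fixed index $i$, I would collect the incoming and outgoing slope contributions along the entire chain. The loop contributions $\sum_{k=1}^g (s_k[i] - s'_k[i])$ and the bridge contributions $\sum_{k=0}^g (s'_k[i] - s_{k+1}[i])$ fit together as a telescoping sum whose only surviving terms are $s'_0[i]$ at the left endpoint and $-s_{g+1}[i]$ at the right endpoint. Combining with the slope terms in $\alpha(v_0) = \sum_i (d-g-r+i - s'_0[i])$ and $\alpha(v_{g+1}) = \sum_i (s_{g+1}[i] - i)$, all the $s'_0[i]$ and $s_{g+1}[i]$ terms cancel.

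After cancellation, what remains from the ramification weights is $\sum_{i=0}^r ((d-g-r+i) - i) = (r+1)(d-g-r)$. Adding the $g$ from the loop multiplicities gives
\[
g + (r+1)(d-g-r) = g - (r+1)(g-d+r) = \rho,
\]
which is exactly the desired formula.

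There is really no hard step here: the statement is essentially a bookkeeping identity, and the setup of the definitions in the preceding paragraphs (in particular, using break divisors so that the bound in Proposition~\ref{Prop:LingeringLatticePath} is $1$ uniformly, and counting multiplicity as deviation from this uniform bound) was chosen precisely to make this telescope cleanly. The only thing to watch is the index conventions—that $\beta_k$ runs over $0 \leq k \leq g$ while $\gamma_k$ runs over $1 \leq k \leq g$, and that the ramification weight at $w_0$ is expressed in terms of $s'_0[i]$ rather than some $s_0[i]$—but these are bookkeeping, not obstacles.
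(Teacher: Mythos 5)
Your proposal is correct and is essentially the paper's own argument: expand the definitions, telescope the loop and bridge sums down to $\sum_i \bigl( s'_0[i] - s_{g+1}[i] \bigr)$ plus the constant $g$, and cancel against the slope terms in the two ramification weights. The paper organizes the bookkeeping with explicit $\binom{r+1}{2}$ terms rather than cancelling $s'_0[i]$ and $s_{g+1}[i]$ directly, but the computation is the same.
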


\begin{proof}
Starting from the definitions of the multiplicities of the loops and bridges and then collecting and canceling terms, we have
\[
\sum_{k=1}^g \mu (\gamma_k) + \sum_{k=0}^{g} \mu (\beta_k) =
g + \sum_{i=0}^r s'_0 [i] - s_{g+1} [i] .
\]
Moreover,
\[
\alpha (w_0) = (r+1)(d-g) - {{r+1}\choose{2}} - \sum_{i=0}^r (s'_0 [i] + i) .
\]
and
\[
\alpha (v_{g+1}) = \sum_{i=0}^r (s_{g+1}[i]) - {{r+1}\choose{2}}.
\]
Adding these together and again collecting and canceling terms gives
\[
g - (r+1)(g-d) - 2{{r+1}\choose{2}} = g-(r+1)(g-d+r),
\]
as required.
\end{proof}

\noindent  Note that the multiplicity of each loop and each bridge is nonnegative, as are the ramification weights.  In the vertex avoiding case, the lingering loops have multiplicity one, and the multiplicities of bridges and ramification weights are all zero.  In general, the distribution of multiplicities and ramification weights is a useful indication of how and where the tropicalization of a given linear series differs essentially from the vertex avoiding case.

\subsection{Switching loops and bridges}

We begin with the following definition.

\begin{definition}
A loop $\gamma_k$ is a \emph{switching loop} if there is some $\varphi \in \Sigma$ such that
\[
s_{k} (\varphi) \leq s_{k} [h] \mbox{ and } s'_k (\varphi) \geq s'_k [h+1] .
\]
Similarly, $\beta_k$ is a \emph{switching bridge} if there is some $\varphi \in \Sigma$ such that
\[
s'_k (\varphi) \leq s'_k [h] \mbox{ and } s_{k+1} (\varphi) \geq s_{k+1} [h+1] .
\]
\end{definition}

\noindent The terminology is chosen to emphasize that, on a switching loop, it is possible for a function $\varphi$ with slope $s_{k} [h]$ to the left to ``switch'' to having slope $s'_k [h+1]$ to the right.  Similarly, on a switching bridge, it is possible for a function to have slope $s'_k [h]$ at the beginning of the bridge and $s_{k+1} [h+1]$ at the end of the bridge.

\subsubsection{Classifying switching loops}
Figure~\ref{Fig:SwitchingLoops} schematically depicts a classification of switching loops of multiplicity at most 2.  The dots on the left of each picture represent the values of $s_k [h]$ and $s_k [h+1]$, while the dots on the right represent the values of $s'_k [h]$ and $s'_k [h+1]$.  For instance, in the second picture, $s_{k} [h+1] = s'_k [h+1]$ and $s_k [h] = s'_k [h] + 1$.  The arrow from $s_k[h]$ to $s'_k[h+1]$ indicates that there is a function $\varphi \in R(D)$ with
\[
s_k (\varphi) = s_k [h] \mbox{ and } s'_k (\varphi) = s'_k [h + 1].
\]
When the arrow has positive slope, this implies that $x_k = (s_k [h] +1)m_k$.  The first picture depicts a switching loop of multiplicity 1, and the other 3 depict switching loops of multiplicity 2.

\begin{figure}[h!]
\begin{tikzpicture}

\draw [ball color=black] (0,0) circle (0.55mm);
\draw [ball color=black] (0,0.5) circle (0.55mm);
\draw [ball color=black] (1,0) circle (0.55mm);
\draw [ball color=black] (1,0.5) circle (0.55mm);
\draw [->] (0.25,0)--(0.75,0.5);

\draw [ball color=black] (3,0.5) circle (0.55mm);
\draw [ball color=black] (3,1) circle (0.55mm);
\draw [ball color=black] (4,0) circle (0.55mm);
\draw [ball color=black] (4,1) circle (0.55mm);
\draw [->] (3.25,0.5)--(3.75,1);

\draw [ball color=black] (6,0) circle (0.55mm);
\draw [ball color=black] (6,1) circle (0.55mm);
\draw [ball color=black] (7,0) circle (0.55mm);
\draw [ball color=black] (7,0.5) circle (0.55mm);
\draw [->] (6.25,0)--(6.75,0.5);

\draw [ball color=black] (9,0.5) circle (0.55mm);
\draw [ball color=black] (9,1) circle (0.55mm);
\draw [ball color=black] (10,0) circle (0.55mm);
\draw [ball color=black] (10,0.5) circle (0.55mm);
\draw [->] (9.25,.5)--(9.75,0.5);

\end{tikzpicture}
\caption{Switching Loops of Multiplicity at Most Two.}
\label{Fig:SwitchingLoops}
\end{figure}
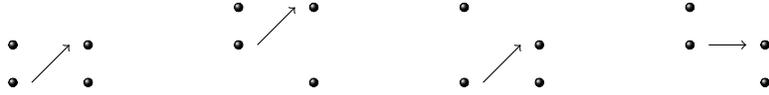

\noindent In this classification we see that, if $\mu (\gamma_k) \leq 2$, then there is at most one value of $h$ such that there is a function $\varphi \in R(D)$ with
\[
s_{k} (\varphi) \leq s_{k} [h] \mbox{ and } s'_k (\varphi) \geq s'_k [h+1] .
\]
We will say that such a loop \emph{switches slope $h$}.

\subsubsection{Classification of switching bridges}
We can similarly classify switching bridges of multiplicity at most 2.  There is only one possibility, depicted in Figure~\ref{Fig:SwitchingBridge}, and it has multiplicity 2.

\begin{figure}[h!]
\begin{tikzpicture}

\draw [ball color=black] (9,0.5) circle (0.55mm);
\draw [ball color=black] (9,1) circle (0.55mm);
\draw [ball color=black] (10,0) circle (0.55mm);
\draw [ball color=black] (10,0.5) circle (0.55mm);
\draw [->] (9.25,.5)--(9.75,0.5);

\end{tikzpicture}
\caption{A Switching Bridge of Multiplicity Two.}
\label{Fig:SwitchingBridge}
\end{figure}
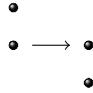

\noindent  The figure indicates that $s'_k[h] = s_{k+1}[h] + 1$ and $s'_k[h+1] = s'_{k+1} [h+1] + 1$.  The arrow indicates that there is a function $\varphi \in R(D)$ with
\[
s'_k (\varphi) = s'_k [h] \mbox{ and } s_{k+1} (\varphi) = s_{k+1} [h + 1].
\]
We will say that such a bridge \emph{switches slope $h$}.  The behavior of a switching bridge of multiplicity 2 is very closely analogous to the behavior of the pencil in Example~\ref{Ex:Interval}.  We will use the following lemma in Section~\ref{Sec:SwitchBridgeCase}.

\begin{lemma}
\label{Lem:BridgeSwitch}
Let $\beta_k$ be a switching bridge of multiplicity $2$ that switches slope $h$.  Then there is a point $x \in \beta_k$ such that, if we write $s_x, s'_x$ for the incoming and outgoing slopes, respectively, we have
\[
s_x [i] = s'_k [i] \mbox{ and } s'_x [i] = s_{k+1} [i] \mbox{ for all } i.
\]
\end{lemma}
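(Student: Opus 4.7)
The plan is to show that the entire multiplicity $2$ of the bridge $\beta_k$ is concentrated at a single point $x$, at which the slope vector jumps directly from $s'_k$ to $s_{k+1}$. The key rigidity input is that the switching function $\varphi$ has constant slope $a := s'_k[h]$ throughout $\beta_k$, which I would establish first.

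Setting $a = s'_k[h]$, note that by the definition of switching bridge of multiplicity two we have $s'_k[h] - s_{k+1}[h] = 1$ and $s'_k[h+1] - s_{k+1}[h+1] = 1$, so $s_{k+1}[h] = a - 1$. The switching function satisfies $s'_k(\varphi) = a$ and $s_{k+1}(\varphi) = s_{k+1}[h+1]$, and slopes are non-increasing along a bridge, so $s_{k+1}[h+1] \leq a$; combined with $s_{k+1}[h+1] > s_{k+1}[h] = a - 1$, this forces $s_{k+1}[h+1] = a$ and $s'_k[h+1] = a + 1$. Since $\varphi$ has slope $a$ at both endpoints of $\beta_k$ and slopes can only decrease along the bridge, the slope of $\varphi$ is identically $a$ on all of $\beta_k$.

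Next I would rule out the split case by contradiction. Suppose two distinct points $x_1 < x_2$ in $\beta_k$ each contribute multiplicity one, meaning at each $x_i$ exactly one coordinate of the slope vector drops by one. Since only positions $h$ and $h+1$ can contribute any multiplicity on $\beta_k$, the drop at $x_1$ must occur at one of these. If position $h+1$ drops from $a + 1$ to $a$, then the slope vector immediately past $x_1$ has two entries equal to $a$, contradicting Lemma~\ref{Lem:NumberOfSlopes}. If instead position $h$ drops from $a$ to $a - 1$, then either this creates a duplicate with $s'_k[h-1]$ in the case $s'_k[h-1] = a - 1$, or else the slope vector on $(x_1, x_2)$ equals $(\ldots, a - 1, a + 1, \ldots)$ at positions $h, h + 1$ and, in particular, does not contain $a$; this contradicts the fact that $\varphi \in \Sigma$ has slope $a$ everywhere on $\beta_k$, so $a$ must appear in every slope vector along the bridge. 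In each case we reach a contradiction, so the entire multiplicity of $\beta_k$ is concentrated at a single interior point $x$.

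At this single point $x$ of multiplicity $2$, a brief case check on the drop patterns that pass from an increasing slope vector $s'_k$ to another increasing slope vector $s_{k+1}$ shows that the only valid possibility is for positions $h$ and $h+1$ to simultaneously drop by one: a $(2,0)$ drop at position $h$ produces a slope $a - 2$ inconsistent with $s_{k+1}[h] = a - 1$, and a $(0,2)$ drop at position $h+1$ violates the strict ordering of the slope vector. Since no other changes occur along $\beta_k$, the incoming slope vector at $x$ equals $s'_k$ and the outgoing vector equals $s_{k+1}$, exactly as required. The main obstacle is the casework ruling out the split case, where the constancy of $\varphi$'s slope on the bridge, together with the distinctness of slopes guaranteed by Lemma~\ref{Lem:NumberOfSlopes}, provides the decisive rigidity.
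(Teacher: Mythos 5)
Your proof is correct and takes essentially the same approach as the paper: both arguments hinge on the switching function having constant slope $a = s'_k[h] = s_{k+1}[h+1]$ along the entire bridge, and then use the strict ordering (equivalently, the exact count of $r+1$ distinct slopes from Lemma~\ref{Lem:NumberOfSlopes}) to rule out the two slope drops occurring at different points, forcing a single point $x$ where the slope vector passes from $s'_k$ to $s_{k+1}$. The final case check on $(2,0)$ and $(0,2)$ drop patterns is redundant, since the endpoint relations already fix the total drop at each of the positions $h$ and $h+1$ to be exactly $1$, but it does no harm.
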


\begin{proof}
By assumption, there is a function $\varphi \in \Sigma$ that has constant slope $s'_k[h] = s_{k+1} [h+1]$ along the entire bridge $\beta_k$.  Thus, as in Example~\ref{Ex:Interval}, there cannot be a rightward tangent vector $\zeta$ in $\beta_k$ where
\[
s_{\zeta} [h+1] = s'_k [h+1] \mbox{ and } s_{\zeta} [h] = s_{k+1} [h],
\]
and there must exist a point $x \in \beta_k$ such that $s_x [h] = s'_k [h]$ and $s'_x [h+1] = s_{k+1} [h+1]$.  Since we must have
\[
s_x [h] < s_x [h+1] \leq s'_k [h+1] = s'_k [h] + 1,
\]
we see that $s_x [h+1] = s'_k [h+1]$.  Similarly, we see that $s'_x [h] = s_{k+1} [h]$.
\end{proof}

\subsubsection{Existence of distinguished functions} In the vertex avoiding case, there are distinguished functions $\varphi_i \in R(D)$, for all $0 \leq i \leq r$, unique up to adding a constant, such that $s_k(\varphi) = s_k[i]$ and $s'_k(\varphi_i) = s'_k[i]$ for all $k$, and these functions are always in $\Sigma$.  We now identify the subset of indices in $\{ 0, \ldots, r\}$ for which such distinguished functions exist, in the general case.

\begin{lemma}
\label{Lem:GenericFns}
For each $0 \leq i \leq r$, either there is a function $\varphi_i \in \Sigma$ such that
\[
s_k (\varphi_i) = s_k [i] \mbox{ and } s'_k (\varphi_i) = s'_k [i] \mbox{ for all } k,
\]
or there is one of the following:
\begin{enumerate}
\item  a loop that switches slope $i$
\item  a loop that switches slope $i-1$
\item  a bridge that switches slope $i$, or
\item  a bridge that switches slope $i-1$.
\end{enumerate}
\end{lemma}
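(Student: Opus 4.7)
The plan is to prove the contrapositive. Assuming no loop and no bridge switches slope $i-1$ or $i$, I will construct $\varphi_i \in \Sigma$ with $s_k(\varphi_i) = s_k[i]$ and $s'_k(\varphi_i) = s'_k[i]$ for all $k$. The starting point will be Lemma~\ref{Lemma:Existence}, applied to tangent vectors corresponding to the leftmost and rightmost rightward slopes of $\Gamma$: this should produce a single $\varphi \in \Sigma$ simultaneously satisfying
\[
s'_0(\varphi) \leq s'_0[i] \qquad \text{and} \qquad s_{g+1}(\varphi) \geq s_{g+1}[i].
\]
The whole content of the proof will then be to promote these two one-sided extremal bounds into the equality $s(\varphi) = s[i]$ at every bridge-endpoint tangent vector, after which we can simply set $\varphi_i = \varphi$.

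The key step will be a reinterpretation of the switching conditions in terms of indices. For each tangent vector $\zeta$, let $J_\zeta(\varphi)$ be the unique index $j$ with $s_\zeta(\varphi) = s_\zeta[j]$. Then ``loop $\gamma_k$ switches slope $h$'' is precisely the assertion that some $\psi \in \Sigma$ has $J_{v_k}(\psi) \leq h$ and $J_{w_k}(\psi) \geq h+1$, and similarly for bridges with $w_k$ and $v_{k+1}$. Consequently the non-switching hypothesis at slope $i$ directly forbids \emph{our} $\varphi$ from having $J \leq i$ on the left and $J \geq i+1$ on the right of any single loop or bridge, and non-switching at slope $i-1$ similarly forbids $\varphi$ from crossing upward across the $(i-1)/i$ threshold.

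With this translation in place, the proof concludes by two one-sided inductions along $\Gamma$. Starting from $J(\varphi) \leq i$ at $w_0$ and traversing $\beta_0, \gamma_1, \beta_1, \gamma_2, \ldots$ in order from left to right, non-switching at slope $i$ forces $J(\varphi) \leq i$ at every bridge-endpoint tangent vector. Starting from $J(\varphi) \geq i$ at $v_{g+1}$ and moving from right to left, non-switching at slope $i-1$ forces $J(\varphi) \geq i$ at every bridge-endpoint tangent vector. Combining gives $J(\varphi) = i$ at every $v_k$ and $w_k$, which is exactly the required conclusion. The main point requiring care is that switching is defined existentially, so the hypothesis applies uniformly to our particular $\varphi$ at each inductive step; beyond that, no control over slopes on loop interiors is needed, matching the absence of such constraints in the statement.
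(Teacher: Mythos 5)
Your proposal is correct and follows essentially the same route as the paper: both start from Lemma~\ref{Lemma:Existence} to obtain $\varphi$ with $s'_0(\varphi) \leq s'_0[i]$ and $s_{g+1}(\varphi) \geq s_{g+1}[i]$, then propagate the upper bound left-to-right using non-switching at slope $i$ and the lower bound right-to-left using non-switching at slope $i-1$, concluding equality at every $v_k$ and $w_k$. Your index reformulation $J_\zeta(\varphi)$ is just a more explicit bookkeeping of the paper's inductive argument, so no substantive difference.
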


\begin{proof}
By Lemma~\ref{Lemma:Existence}, there is a function $\varphi_i \in \Sigma$ with $s'_0 (\varphi_i) \leq s'_0 [i]$ and $s_{g+1} (\varphi_i) \geq s_{g+1} [i]$.  By induction, if there are no loops or bridges that switch slope $i$, we see that $s_k (\varphi_i) \leq s_k [i]$ and $s'_k (\varphi_i) \leq s'_k [i]$ for all $k$.  Similarly, if there are no loops or bridges that switch slope $i-1$, we see that $s_k (\varphi_i) \geq s_k [i]$ and $s'_k (\varphi_i) \geq s'_k [i]$ for all $k$.
\end{proof}

\begin{corollary}
\label{Cor:GenericFns}
If there are no switching loops or switching bridges, then for all $i$ there is a function $\varphi_i \in \Sigma$ such that
\[
s_k (\varphi_i) = s_k [i] \mbox{ and } s'_k (\varphi_i) = s'_k [i] \mbox{ for all } k.
\]
\end{corollary}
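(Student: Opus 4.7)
The plan is to read the corollary as the immediate contrapositive of Lemma~\ref{Lem:GenericFns}. Fix an index $i \in \{0, 1, \ldots, r\}$. Lemma~\ref{Lem:GenericFns} gives a dichotomy: either the desired distinguished function $\varphi_i$ exists in $\Sigma$, or at least one of the four obstructions (1)--(4) holds, namely the existence of a loop or bridge that switches slope $i$ or $i-1$. Under the hypothesis of the corollary, namely that no loop or bridge in $\Gamma$ is a switching loop or a switching bridge, all four of the obstructions are vacuously ruled out. Consequently the first alternative of the lemma must be realized for this particular $i$.

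Since this reasoning applies uniformly for every $i \in \{0, \ldots, r\}$, we obtain the desired family $\varphi_0, \ldots, \varphi_r \in \Sigma$, each satisfying $s_k(\varphi_i) = s_k[i]$ and $s'_k(\varphi_i) = s'_k[i]$ for all $k$. No additional work is required; the corollary follows in one line once Lemma~\ref{Lem:GenericFns} is invoked.

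There is essentially no substantive obstacle to overcome, since the hard content (the existence of $\varphi_i$ built by induction from the boundary function produced by Lemma~\ref{Lemma:Existence}, together with a careful analysis of when its slopes can fail to match $s_k[i]$ and $s'_k[i]$) has already been carried out in the proof of Lemma~\ref{Lem:GenericFns}. The only thing worth emphasizing in the write-up is that the four alternatives in that lemma are precisely the possible local mechanisms by which the slope profile of $\varphi_i$ can drift away from $(s_k[i], s'_k[i])_k$; ruling out all four mechanisms forces agreement at every $k$.
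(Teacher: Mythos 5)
Your proposal is correct and matches the paper's (implicit) argument: the corollary is stated without proof precisely because it is the immediate consequence of Lemma~\ref{Lem:GenericFns}, obtained by ruling out alternatives (1)--(4) under the hypothesis that no loop or bridge switches any slope, for each $i$ separately. Nothing further is needed.
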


\noindent There do exist cases that are not vertex avoiding that nevertheless have no switching loops or bridges.  In these cases, we will produce a maximally independent combination of the distinguished functions $\varphi_i$ given by Corollary~\ref{Cor:GenericFns} in a manner similar to that of Section~\ref{Sec:VertexAvoiding}.  In cases with switching loops and bridges, the analysis becomes more difficult, and requires careful consideration of the possibilities for $\Sigma$, identifying functions analogous to $\psi_A$, $\psi_B$ and $\psi_C$ in Example~\ref{Ex:Interval} which can be used as substitutes in the construction of a maximally independent combination.  These cases occupy most of Section~\ref{Sec:Generic}.

\section{Building blocks and the master template}
\label{Sec:Construction}

Throughout the rest of the paper, we assume that $\rho \leq 2$.
We now begin our construction of a maximally independent combination in the case where the divisor $D$ is not vertex avoiding.  Our goal is to produce a set of ${{r+2}\choose{2}}$ functions of the form $\varphi + \varphi'$ with $\varphi, \varphi' \in \Sigma$, and a maximally independent combination of these functions.  Because it is difficult to identify a suitable collection of functions in $\Sigma$ when $D$ is not vertex avoiding, we first produce an intermediary tropical linear combination $\theta$ of pairwise sums of certain simpler functions in $R(D)$ that may or may not themselves be contained in $\Sigma$.  We refer to these functions in $R(D)$ as \emph{building blocks}, and we call the tropical linear combination $\theta$ that we construct from pairwise sums of building blocks the \emph{master template}.

\subsection{Building Blocks}
\label{Sec:Extremals}

Roughly speaking, the building blocks are the functions in $R(D)$ that have constant slopes along bridges and behave as much as possible like the functions $\varphi_i$ in the vertex avoiding case, while respecting the constraints on functions in $\Sigma$ imposed by the slope vectors.  The simplest are those functions $\varphi$ whose incoming and outgoing slopes at each loop $\gamma_k$ satisfy $s_k[\varphi] = s_k[i]$ and $s'_k[\varphi] = s_k[i]$, for some fixed $i$.  Our definition is motivated by the extremals of \cite{HMY12} and by Example~\ref{Ex:Interval}.

Before giving the general definition, we introduce the useful auxiliary notion of \emph{incoming and outgoing slope indices}.  These indices account for how the incoming and outgoing slopes of a function $\varphi \in R(D)$ relate to the slope vectors $s_k$ and $s'_k$ determined by $\Sigma$, adjusted for any bends at $v_k$ and $w_k$.  Let $d_{v_k}(\varphi) = \deg_{v_k}(D + \ddiv(\varphi))$ and $d_{w_k}(\varphi) = \deg_{w_k}(D + \ddiv(\varphi))$.  Then the \emph{incoming slope index} $\tau_k(\varphi)$ is
\[
\tau_k(\varphi) := \min \{ i \, \vert \, s_k[i] \geq s_k(\varphi) - d_{v_k}(\varphi) \}.
\]
Similarly, the \emph{outgoing slope index} $\tau'_k(\varphi)$ is
\[
\tau'_k(\varphi) := \max \{ i \, \vert \, s'_k[i] \leq s'_k(\varphi) + d_{w_k}(\varphi) \}.
\]
Intuitively, one may think that a function with incoming (resp. outgoing) slope index $i$ behaves most like a typical function in $\Sigma$ with incoming slope $s_k[i]$ (resp. a typical function in $\Sigma$ with outgoing slope $s'_k[i]$), near the left hand side (resp. right hand side) of the loop $\gamma_k$.

\begin{remark}
\label{Rem:Equivalent}
The integer $s_k (\varphi) - d_{v_k}(\varphi)$ is equal to the sum of the slopes of $\varphi$ along the two rightward pointing tangent vectors based at $v_k$.  Because of this, if the restriction of two functions $\varphi$ and $\phi$ to $\gamma_k$ differ by a constant, then by definition we have $\tau_k (\varphi) = \tau_k (\phi)$.  Similarly, if the restriction of $\varphi$ and $\phi$ to $\gamma_k$ differ by a constant, then $\tau'_k (\varphi) = \tau'_k (\phi)$.
\end{remark}

Functions whose slope indices decrease when moving from left to right across the graph can be expressed as tropical linear combinations of functions with constant slopes along bridges whose slope indices do not decrease.  Moreover, since we restrict our attention to the case $\rho \leq 2$, the classification of switching loops and bridges discussed in the previous section ensures that the slope indices of functions in $\Sigma$ never increase by more than 1 when crossing any loop or bridge, and only when that loop or bridge switches the relevant slope.  For this reason, we only consider building blocks whose slope indices satisfy this condition.

\begin{definition} \label{def:buildingblock}
A building block is a function $\varphi \in R(D)$ with constant slope along each bridge, whose slope index sequence $\tau'_0(\varphi), \tau_1(\varphi), \tau'_1(\varphi), \ldots, \tau'_g, \tau_{g+1}(\varphi)$ is nondecreasing and satisfies
\begin{enumerate}
\item  $\tau'_k \leq \tau_k + 1$, with $\tau'_k = \tau_k$ if $\gamma_k$ does not switch slope $\tau_k$;
\item  $\tau_{k+1} \leq \tau'_{k} + 1$, with $\tau_{k+1} = \tau'_k$ if $\beta_k$ does not switch slope $\tau'_k$.
\end{enumerate}
\end{definition}

\noindent  In the vertex avoiding case, the building blocks are precisely the distinguished functions $\varphi_i$, which have $\tau_k(\varphi_i) = \tau'_k(\varphi_i) = i$ for all $k$.   In general, for any $0 \leq i \leq r$, there is a (not necessarily unique) building block $\phi$ with constant slope index sequence $\tau_k(\phi) = \tau'_k (\phi) = i$.  If there are no switching loops or bridges, then the slope index sequence of any building block is constant.

\begin{remark}
If $\varphi$ is a building block and $s_k [\tau_k(\varphi)] \leq s'_k [\tau'_k(\varphi)]$ for all $k$, then $\ddiv (\varphi) + D$ does not contain a smooth cut set, and it follows that $\varphi \in R(D)$ is an \emph{extremal}, as defined in \cite{HMY12}.  On the other hand, if $s_k [\tau_k(\varphi)] > s'_k [\tau'_k(\varphi)]$ for some $k$, then $\varphi$ is not necessarily an extremal.  In such cases, $\gamma_k$ must be a decreasing loop.  In any case, every function in $R(D)$ can be written as a tropical linear combination of extremals, and hence our main constructions could be rephrased in terms of extremals.  We find it simpler to work with building blocks, as defined above, since they are more closely tailored to the properties of $\Sigma$.
\end{remark}

\begin{example}
We may think of Example~\ref{Ex:Interval} as a chain of one loop, where the loop is located at the point $x$ and its edges have length zero.  This loop is a switching loop, and there are 3 possibilities for the slope index sequence of a building block, namely
\[
(\tau'_0,\tau_1,\tau'_1,\tau_2) = (0,0,0,0), (0,0,1,1), (1,1,1,1).
\]
There is a unique building block with each of these slope index sequences.  These building blocks are $\psi_0 , \psi_1,$ and $\psi^{\infty}$, respectively.  Note that, since $\Sigma$ is the tropicalization of a pencil and these three functions are tropically independent, it is impossible for all of them to be in $\Sigma$.  Nevertheless, every function in $\Sigma$ can be written as a tropical linear combination of these three functions.
\end{example}

\begin{example}
Suppose that $\varphi$ is a building block with $s'_{k} (\varphi) = s_k(\varphi)-1$, i.e., the outgoing slope at $\gamma_k$ is one less than the incoming slope.  There are three ways that this could happen.  Either the bridge $\beta_{k-1}$ is a decreasing bridge, the bridge $\beta_k$ is a decreasing bridge, or the loop $\gamma_k$ is a decreasing loop; these three possibilities are illustrated in Figure~\ref{Fig:Decrease}.  If $\beta_{k-1}$ is a decreasing bridge, then $\ddiv (\varphi) + D$ contains $v_k$, and the restriction of $\varphi$ to the loop $\gamma_k$ is then uniquely determined.  Similarly, if $\beta_k$ is a decreasing bridge, then $\ddiv (\varphi) + D$ contains $w_k$, and the restriction of $\varphi$ to the loop $\gamma_k$ is again uniquely determined.  If, however, $\gamma_k$ is a decreasing loop, then there may be  infinitely many possibilities for the restriction of $\varphi$ to the loop $\gamma_k$.  In this case, it is possible for $D+\ddiv(\varphi)$ to contain a smooth cut set.

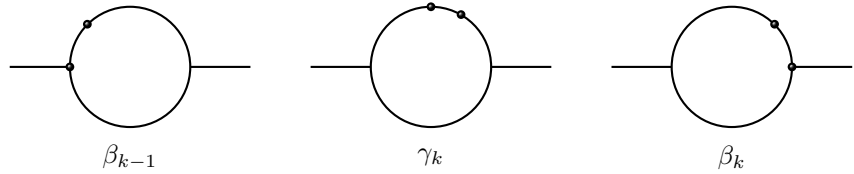
\begin{figure}[H]
\begin{tikzpicture}[thick, scale=0.8]

\begin{scope}[grow=right, baseline]
\draw (-1,0) circle (1);
\draw (-3,0)--(-2,0);
\draw (0,0)--(1,0);
\draw [ball color=black] (-1.71,0.71) circle (0.55mm);
\draw [ball color=black] (-2,0) circle (0.55mm);
\draw (-1,-1.5) node {$\beta_{k-1}$};

\draw (4,0) circle (1);
\draw (2,0)--(3,0);
\draw (5,0)--(6,0);
\draw [ball color=black] (4,1) circle (0.55mm);
\draw [ball color=black] (4.5,0.87) circle (0.55mm);
\draw (4,-1.5) node {$\gamma_k$};

\draw (9,0) circle (1);
\draw (7,0)--(8,0);
\draw (10,0)--(11,0);
\draw [ball color=black] (10,0) circle (0.55mm);
\draw [ball color=black] (9.71,0.71) circle (0.55mm);
\draw (9,-1.5) node {$\beta_k$};

\end{scope}
\end{tikzpicture}

\caption{Three different building blocks with $s'_k = s_k -1$, labeled by the bridge or loop that is decreasing.}
\label{Fig:Decrease}
\end{figure}
\end{example}

\noindent In our constructions, we will always start from a finite set of building blocks, chosen so that no two have both the same slope index sequences and also the same slopes along bridges.

\subsection{Equivalence on loops}

The main difference between our construction in this section and that of Section~\ref{Sec:VertexAvoiding} is that now we may assign more than one function to a given loop, as long as all of the functions that we assign agree on $\gamma_k$, in the following sense.

\begin{definition}
We say that functions \emph{agree} on a subgraph of $\Gamma$ if their restrictions to that subgraph differ by an additive constant.
\end{definition}

We will most often consider agreement on one loop at a time, but in a few key places, such as Definition~\ref{def:properties}, we also consider agreement on larger subgraphs.  We make a few preliminary observations about sufficient conditions for two functions to agree on a loop, starting with the following analogue of Lemmas~\ref{lem:onenew} and~\ref{lem:onedeparts}.

\begin{lemma}
\label{Lem:OneNewFunction}
If $\gamma_k$ is not the first loop in a block, then any two new permissible functions agree on $\gamma_k$.  Similarly, if $\gamma_k$ is not the last loop in a block, then any two departing permissible functions agree on $\gamma_k$.
\end{lemma}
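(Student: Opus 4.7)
The two statements are proved by symmetric arguments, so I will sketch the new permissible case and indicate the modification for the departing case.

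First I will characterize new permissible functions precisely. Suppose $\gamma_k$ is not the first loop of a block, so $s_{k-1}(\theta) = s_k(\theta)$. Let $\psi$ have constant slope along bridges and be permissible on $\gamma_k$ but not on $\gamma_{k-1}$. Comparing the three conditions of permissibility at $\gamma_{k-1}$ versus $\gamma_k$: condition (i) at $\gamma_{k-1}$ is strictly weaker (the range of $j$ is smaller), and condition (iii) at $\gamma_{k-1}$ follows from (iii) at $\gamma_k$ as soon as $s_k(\psi) \geq s_k(\theta)$. So the failure at $\gamma_{k-1}$ must come from condition (ii), namely $s_k(\psi) < s_{k-1}(\theta) = s_k(\theta)$. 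Combined with permissibility on $\gamma_k$, this forces
\[
s_k(\psi) < s_k(\theta) \leq s_{k+1}(\psi).
\]

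Next I pin down the slopes on the bridges adjacent to $\gamma_k$. Since $\rho \leq 2$, Theorem~\ref{Thm:BNThm} bounds the total multiplicity of loops and bridges by $2$, so Proposition~\ref{Prop:LingeringLatticePath} together with the enumeration in Figure~\ref{Fig:SwitchingLoops} tells us exactly how the vectors $s_k(\Sigma), s'_k(\Sigma), s_{k+1}(\Sigma)$ are related. Any function $\psi$ whose slope crosses $s_k(\theta)$ strictly when passing through $\gamma_k$ must realize a switch at a specific slope index $h$, where $h$ is determined by $s_k(\theta)$; this forces $s_k(\psi)$ and $s_{k+1}(\psi)$ to take the unique allowed values below and above $s_k(\theta)$, respectively. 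With incoming and outgoing slopes on the bridges now fixed, the admissibility of $\psi \in R(D)$ on $\gamma_k$, combined with the fact that $D$ contributes exactly one chip on $\gamma_k$ (the break divisor condition), determines the shape of $\psi|_{\gamma_k}$ up to an additive constant: the single chip of $D$ at $x_k$ is used in exactly the way permitted by the switching configuration, and all other slopes and bend positions along the two edges of $\gamma_k$ are then dictated. Thus any two new permissible $\psi_1,\psi_2$ on $\gamma_k$ agree on $\gamma_k$.

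For the departing case, the analogous comparison at $\gamma_k$ versus $\gamma_{k+1}$ uses $s_{k+1}(\theta) = s_k(\theta)$. Conditions (ii) and (iii) at $\gamma_{k+1}$ hold whenever the conditions at $\gamma_k$ hold and $s_{k+1}(\psi) \leq s_{k+1}(\theta)$, so failure of permissibility at $\gamma_{k+1}$ must come from (i), giving $s_{k+1}(\psi) > s_{k+1}(\theta) = s_k(\theta)$. Together with permissibility on $\gamma_k$ this yields $s_k(\psi) \leq s_k(\theta) < s_{k+1}(\psi)$, and the same switching analysis then forces agreement of two departing permissible functions on $\gamma_k$.

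The main obstacle is the shape-uniqueness step on the loop: verifying, by a case-by-case check through the switching configurations of Figure~\ref{Fig:SwitchingLoops} (for loops) and the switching bridge configuration of Figure~\ref{Fig:SwitchingBridge}, that prescribed boundary slopes of $\psi$ together with $\psi \in R(D)$ determine $\psi|_{\gamma_k}$ up to a constant. The hypothesis $\rho \leq 2$ is essential here: it caps the multiplicity of any individual loop or bridge at $2$, so the list of configurations to be checked is the short list given in Section~\ref{Sec:Loops}, and each one admits a direct verification.
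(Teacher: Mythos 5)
Your first step---showing that a new permissible function on a non-first loop of a block must satisfy $s_k(\psi) < s_k(\theta) \leq s_{k+1}(\psi)$, and the symmetric inequality in the departing case---matches the paper. But the second step, where the agreement on $\gamma_k$ is actually proved, does not work as you describe. First, the permissible functions in this lemma are pairwise sums of building blocks, i.e.\ elements of $R(2D)$, not of $R(D)$; the relevant restriction to $\gamma_k$ carries two chips of the break divisor, so the degree bookkeeping in your argument is off. Second, and more seriously, the claim that a strict slope increase across $\gamma_k$ ``must realize a switch'' and therefore pins $s_k(\psi)$ and $s_{k+1}(\psi)$ to unique values is false: switching loops and bridges are a statement about the slope \emph{indices} of $\Sigma$, whereas the slope of a building block routinely increases by one across any non-lingering loop with no switching whatsoever (this is the generic lattice-path behavior of Proposition~\ref{Prop:LingeringLatticePath}). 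Moreover, the boundary slopes are genuinely not determined: Corollary~\ref{cor:equivtonew} manufactures a second new permissible function with incoming slope at $v_k$ one less than the first, yet agreeing with it on $\gamma_k$. So a strategy of ``fix the adjacent bridge slopes, then the restriction is determined'' cannot succeed, and the deferred ``case-by-case check through the switching configurations'' is checking the wrong thing; neither $\rho\le 2$ nor the classification of switching loops is relevant here.

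The missing idea is an elementary degree count on the loop. For a new permissible $\psi\in R(2D)$ one only needs the \emph{inequalities} $s_k(\psi)\le s-1$ and $s_{k+1}(\psi)\ge s$ with $s=s_{k+1}(\theta)$: they show that $\psi|_{\gamma_k}$ lies in $R(E)$ with
\[
E = 2D|_{\gamma_k} + (s-1)\,v_k - s\,w_k,
\]
a divisor of degree $2+(s-1)-s=1$ on the circle $\gamma_k$. Since on a loop every degree-one divisor class has a unique effective representative, $R(E)$ consists of a single function up to an additive constant, so all new permissible functions agree on $\gamma_k$; the departing case is identical with $s_k(\psi)\le s < s_{k+1}(\psi)$. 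Without this (or an equivalent) reduction, your proof has a gap exactly at the uniqueness step.
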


\begin{proof}
Recall that permissible functions have constant slope on bridges.  If $\psi \in R(2D)$ is a new permissible function and $\gamma_k$ is not the first loop in a block, then $s_{k+1} (\psi) \geq s_{k+1} (\theta) > s_{k}(\psi)$.  If we write $s=s_{k+1} (\theta)$, then the restriction $\psi \vert_{\gamma_k}$ is in the linear series $R(E)$, where
\[
E = 2D \vert_{\gamma_k} + (s-1) v_k - s w_k .
\]
The divisor $E$ has degree 1.  On a loop, every divisor of degree 1 is equivalent to a unique effective divisor, so this determines $\psi \vert_{\gamma_k}$ up to an additive constant.

If $\psi$ is departing and $\gamma_k$ is not the last loop in a block, then $s_{k+1} (\psi) > s_{k+1} (\theta) \geq s_{k}(\psi)$, and the rest of the proof is similar.
\end{proof}

We note the following corollary.

\begin{corollary} \label{cor:equivtonew}
Let $\varphi$ and $\varphi'$ be building blocks.  Assume that $\varphi + \varphi'$ is permissible on $\gamma_k$, which is not the first loop in a block, and that $s_k (\varphi) > s_k [\tau_k (\varphi)]$.  Then there are new permissible functions on $\gamma_k$, and they all agree with $\varphi+\varphi'$.
\end{corollary}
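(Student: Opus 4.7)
The plan is to produce an explicit new permissible function on $\gamma_k$ that agrees with $\varphi + \varphi'$ on $\gamma_k$, and then to invoke Lemma~\ref{Lem:OneNewFunction}. I set $i = \tau_k(\varphi)$ and translate the hypothesis: since $s_k[i] \geq s_k(\varphi) - d_{v_k}(\varphi)$ by definition of the incoming slope index, the inequality $s_k(\varphi) > s_k[i]$ gives $d_{v_k}(\varphi) \geq s_k(\varphi) - s_k[i] \geq 1$. So $\ddiv(\varphi) + D$ has enough slack at $v_k$ to absorb a decrease in the incoming slope along $\beta_{k-1}$ of size $s_k(\varphi) - s_k[i]$.

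Next I build a modified function $\tilde\varphi$ by keeping $\varphi$ unchanged on the subgraph from $v_k$ to the right, requiring $\tilde\varphi$ to be linear of slope $s_k[i]$ along $\beta_{k-1}$, and using a constant shift of $\varphi$ on the subgraph from $w_{k-1}$ to the left so that the values at $w_{k-1}$ match. By construction $\tilde\varphi$ has constant slopes on all bridges, and $\ddiv(\tilde\varphi) + D$ differs from $\ddiv(\varphi) + D$ only at $v_k$ (where the degree drops by $s_k(\varphi) - s_k[i]$, absorbed by the slack) and at $w_{k-1}$ (where it rises by the same amount). Hence $\tilde\varphi \in R(D)$. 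The remaining verification, that $\tilde\varphi$ is a building block in the sense of Definition~\ref{def:buildingblock}, reduces to checking $\tau_j(\tilde\varphi) = \tau_j(\varphi)$ and $\tau'_j(\tilde\varphi) = \tau'_j(\varphi)$ for all $j$. This follows by direct substitution, using that the change in incoming slope at $v_k$ equals the change in $d_{v_k}$, and similarly at $w_{k-1}$, so the defining sets for $\tau_k$ and $\tau'_{k-1}$ are the same for $\tilde\varphi$ and $\varphi$, while no other slopes or degrees change.

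Having built $\tilde\varphi$, I verify that $\tilde\varphi + \varphi'$ is a new permissible function on $\gamma_k$ that agrees with $\varphi + \varphi'$ there. Agreement on $\gamma_k$ is immediate since $\tilde\varphi = \varphi$ from $v_k$ rightward. Permissibility of $\tilde\varphi + \varphi'$ on $\gamma_k$ transfers from that of $\varphi + \varphi'$: conditions (1) and (2) use the inequality $s_k(\tilde\varphi + \varphi') = s_k[i] + s_k(\varphi') < s_k(\varphi + \varphi') \leq s_k(\theta)$ together with $s_{k+1}(\tilde\varphi + \varphi') = s_{k+1}(\varphi + \varphi')$, while condition (3) only refers to slopes from $\beta_k$ onward, which are unchanged. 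Because $\gamma_k$ is not the first loop of a block, $s_{k-1}(\theta) = s_k(\theta)$, and the strict inequality $s_k(\tilde\varphi + \varphi') < s_{k-1}(\theta)$ violates condition (2) of permissibility on $\gamma_{k-1}$, so $\tilde\varphi + \varphi'$ is new on $\gamma_k$. Lemma~\ref{Lem:OneNewFunction} then gives that every new permissible function on $\gamma_k$ agrees with $\tilde\varphi + \varphi'$, and hence with $\varphi + \varphi'$ on $\gamma_k$.

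I expect the building-block verification in the second paragraph to be the main point requiring care, since it involves detailed bookkeeping of $d_{v_k}$, $d_{w_{k-1}}$, and bridge slopes under the modification, including the inequalities $\tau'_{k-1}(\tilde\varphi) \leq \tau_k(\tilde\varphi) \leq \tau'_{k-1}(\tilde\varphi) + 1$ from Definition~\ref{def:buildingblock}. Everything else is a controlled transfer of properties that already hold for $\varphi$ and $\varphi + \varphi'$.
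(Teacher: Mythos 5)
Your proposal is correct and follows essentially the same route as the paper: the paper also uses the positivity of $\deg_{v_k}(D+\ddiv(\varphi))$ to build a modified building block with the same restriction to every loop and the same slope index sequence but smaller slope on $\beta_{k-1}$ (it lowers the slope by exactly $1$, whereas you lower it to $s_k[\tau_k(\varphi)]$, an immaterial difference), then observes that its sum with $\varphi'$ is a new permissible function agreeing with $\varphi+\varphi'$ on $\gamma_k$ and concludes via Lemma~\ref{Lem:OneNewFunction}. Your explicit bookkeeping showing $\tau_k$ and $\tau'_{k-1}$ are unchanged is exactly what the paper leaves implicit, so there is no gap.
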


\begin{proof}
Since $s_k (\varphi)$ is strictly greater than $s_k [\tau_k (\varphi)]$, the degree of $D+\ddiv(\varphi)$ at $v_k$ is strictly positive.
Consider a function $\phi \in \PL(\Gamma)$ that has the same slope index sequence as $\varphi$, and the same restriction to every loop, but with
\[
s_k (\phi) = s_k (\varphi) -1.
\]
By construction, $\deg_{v_k} (D + \ddiv(\phi)) = \deg_{v_k} (D + \ddiv(\varphi)) -1 \geq 0$.  It follows that $\phi \in R(D)$ is a building block.  Since
\[
s_k (\phi + \varphi') < s_k (\varphi + \varphi') \leq s_k (\theta),
\]
we see that $\phi + \varphi'$ is new on $\gamma_k$.  It follows that $\varphi + \varphi'$ agrees with a new permissible function on $\gamma_k$.  By Lemma~\ref{Lem:OneNewFunction}, $\varphi + \varphi'$ agrees with every new permissible function on $\gamma_k$.
\end{proof}

With this in mind, we make the following definition.

\begin{definition}
We say that a sum of two building blocks $\varphi + \varphi'$ is \emph{shiny} on $\gamma_k$ if it is permissible on $\gamma_k$, and
\[
s_k [\tau_k (\varphi)] + s_k [\tau_k (\varphi')] < s_k (\theta).
\]
\end{definition}

By Corollary~\ref{cor:equivtonew}, if $\gamma_k$ is not the first loop in a block, then any new function on $\gamma_k$ is shiny, as the terminology suggests.
However, a shiny function is not necessarily new.  Moreover, if $\gamma_k$ is the first loop in a block, then there may be new functions that do not agree with each other on $\gamma_k$, but all shiny functions do agree.   The following proposition examines the structure of shiny functions in a little more detail.

\begin{proposition}
\label{Prop:Shiny}
If $\psi = \varphi + \varphi'$ is shiny on $\gamma_k$, then the restriction of either $D+\ddiv (\varphi)$ or $D+\ddiv (\varphi')$ to $\gamma_k \smallsetminus \{ v_k \}$ has degree 0.  Moreover, either
\[
s_{k+1} (\varphi) > s_k [\tau_k (\varphi)] \mbox{ or } s_{k+1} (\varphi') > s_k [\tau_k (\varphi')].
\]
\end{proposition}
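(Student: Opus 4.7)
The plan is to establish the second (``moreover'') assertion first and then derive the degree assertion from it. Since $\varphi$ and $\varphi'$ are building blocks, they have constant slopes along the bridges, so $s_{k+1}(\varphi) = s'_k(\varphi)$ and similarly for $\varphi'$. By condition (2) in the definition of permissibility, $s_{k+1}(\psi) \geq s_k(\theta)$, and the shiny hypothesis gives $s_k[\tau_k(\varphi)] + s_k[\tau_k(\varphi')] < s_k(\theta)$. Chaining these inequalities yields
\[
s'_k(\varphi) + s'_k(\varphi') \;=\; s_{k+1}(\psi) \;\geq\; s_k(\theta) \;>\; s_k[\tau_k(\varphi)] + s_k[\tau_k(\varphi')],
\]
so at least one of $s'_k(\varphi) > s_k[\tau_k(\varphi)]$ or $s'_k(\varphi') > s_k[\tau_k(\varphi')]$ must hold. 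Since $s_{k+1}(\varphi) = s'_k(\varphi)$, this establishes the second assertion.

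For the first assertion, assume without loss of generality that $s'_k(\varphi) > s_k[\tau_k(\varphi)]$. By the definition of the incoming slope index, $s_k[\tau_k(\varphi)] \geq s_k(\varphi) - d_{v_k}(\varphi)$, and hence $s'_k(\varphi) \geq s_k(\varphi) - d_{v_k}(\varphi) + 1$. The next step is to compute the degree of $D + \ddiv(\varphi)$ on the closed loop $\gamma_k$ by summing $\ord_v(\varphi)$ over $v \in \gamma_k$: using the fact that the total bend of $\varphi|_{\gamma_k}$ around the circle is zero, the interior contributions and the purely in-loop parts of $\ord_{v_k}(\varphi)$ and $\ord_{w_k}(\varphi)$ cancel, and only the contributions from the two adjacent bridges $\beta_{k-1}$ and $\beta_k$ survive, yielding
\[
\deg\bigl((D + \ddiv(\varphi))|_{\gamma_k}\bigr) \;=\; 1 + s_k(\varphi) - s'_k(\varphi),
\]
where the $1$ comes from the unique chip of the break divisor $D$ on $\gamma_k$, together with the fact that for $k \geq 1$ no chip of $D$ lies at $v_k$ or $w_k$. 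Subtracting $d_{v_k}(\varphi)$ to restrict to $\gamma_k \smallsetminus \{v_k\}$ and applying the slope inequality above gives
\[
\deg\bigl((D + \ddiv(\varphi))|_{\gamma_k \smallsetminus \{v_k\}}\bigr) \;=\; 1 + s_k(\varphi) - s'_k(\varphi) - d_{v_k}(\varphi) \;\leq\; 0.
\]
Since $D + \ddiv(\varphi)$ is effective, this degree is also nonnegative, so it equals zero, as required.

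The main obstacle will be the degree bookkeeping on $\gamma_k$: one has to carefully separate, in $\ord_{v_k}(\varphi)$ and $\ord_{w_k}(\varphi)$, the contributions from the attached bridges from those coming from within the loop, so that the sum telescopes cleanly to $1 + s_k(\varphi) - s'_k(\varphi)$. After that identity is in hand, the proof reduces to the short inequality chase above, and the asymmetric dichotomy produced by the ``moreover'' step ensures that one of $\varphi, \varphi'$ exhausts its degree contribution on $\gamma_k$ entirely at $v_k$.
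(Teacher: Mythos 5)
Your proof is correct, but it runs the argument in the opposite direction from the paper, and the route is genuinely different. The paper proves the degree statement first: using Lemma~\ref{Lem:OneNewFunction} and Corollary~\ref{cor:equivtonew} it shows that $2D+\ddiv(\psi)$ has degree at most $2$ on $\gamma_k$ and contains $v_k$ whenever some incoming slope exceeds the indexed value, so one summand has degree-$0$ restriction to $\gamma_k\smallsetminus\{v_k\}$; it then deduces the slope inequality for that summand by checking whether $v_k$ lies in $D+\ddiv(\varphi)$. You instead get the ``moreover'' part immediately from the definitions, via $s_{k+1}(\varphi)+s_{k+1}(\varphi') = s_{k+1}(\psi) \geq s_k(\theta) > s_k[\tau_k(\varphi)]+s_k[\tau_k(\varphi')]$ (using constancy of building-block slopes on bridges), and then derive the degree statement for the same summand from the identity $\deg\bigl((D+\ddiv(\varphi))|_{\gamma_k}\bigr) = 1 + s_k(\varphi) - s'_k(\varphi)$ together with $s_k[\tau_k(\varphi)] \geq s_k(\varphi) - d_{v_k}(\varphi)$ and effectivity; this second implication is exactly the converse direction recorded in Remark~\ref{Rem:Shiny}, so your argument in effect proves that remark as well. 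What each approach buys: yours is more self-contained and elementary (no appeal to new permissible functions or Corollary~\ref{cor:equivtonew}), while the paper's route additionally produces, en passant, the equality $s_{k+1}(\varphi) = s'_k[\tau'_k(\varphi)]$ and situates shiny functions relative to new permissible ones, which is convenient for the surrounding lemmas; both versions yield the stronger linked conclusion that a single summand satisfies both assertions, which is what later proofs (e.g., Lemma~\ref{Lem:ShinyEq}) actually use. One small inaccuracy, harmless to the argument: your parenthetical claim that no chip of $D$ lies at $v_k$ or $w_k$ is neither needed nor guaranteed for a break divisor; all that matters is that $\deg\bigl(D|_{\gamma_k}\bigr)=1$ and that $d_{v_k}(\varphi)$ is by definition the coefficient of $v_k$ in $D+\ddiv(\varphi)$, so the subtraction step is valid regardless of where the chip sits.
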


\begin{proof}
If $s_k (\varphi) = s_k [\tau_k (\varphi)]$ and $s_k (\varphi') = s_k [\tau_k (\varphi')]$, then as in the proof of Lemma~\ref{Lem:OneNewFunction}, we see that the restriction of $2D+\ddiv(\psi)$ to $\gamma_k$ has degree at most 1.  Otherwise, by Corollary~\ref{cor:equivtonew}, $\psi$ agrees with a new permissible function, and $2D+\ddiv(\psi)$ contains $v_k$.  Since $\psi$ is permissible on $\gamma_k$, the restriction of $2D+\ddiv(\psi)$ to $\gamma_k$ has degree at most 2, hence the restriction of $2D+\ddiv(\psi)$ to $\gamma_k \smallsetminus \{ v_k \}$ has degree at most 1.  It follows that the restriction of either $D+\ddiv(\varphi)$ or $D+\ddiv(\varphi')$ to $\gamma_k \smallsetminus \{ v_k \}$ must therefore have degree 0.  Without loss of generality we may assume that the restriction of $D + \ddiv(\varphi)$ to $\gamma_k \smallsetminus \{ v_k \}$ has degree 0.  Since $\deg_{w_k} (D+\ddiv(\varphi)) = 0$, we see that $s_{k+1} (\varphi) = s'_k [\tau'_k (\varphi)]$.

If $D+\ddiv(\varphi)$ does not contain $v_k$, then
\[
s_{k+1} (\varphi) > s_k (\varphi) = s_k [\tau_k (\varphi)].
\]
On the other hand, if $D+\ddiv(\varphi)$ contains $v_k$, then
\[
s_{k+1} (\varphi) = s_k (\varphi) > s_k [\tau_k (\varphi)].
\]
In either case, we see that $s_{k+1} (\varphi) > s_k [\tau_k (\varphi)]$, as required.
\end{proof}

\begin{remark}
\label{Rem:Shiny}
In the argument above, we assume that the restriction of $D + \ddiv(\varphi)$ to $\gamma_k \smallsetminus \{ v_k \}$ has degree 0, and show that $ s_{k+1}(\varphi) > s_k[\tau_k (\varphi)]$.  The converse is also true; if a building block $\varphi$ satisfies $s_{k+1} (\varphi) > s_k [\tau_k (\varphi)]$, then the restriction of $D+\ddiv(\varphi)$ to $\gamma_k \smallsetminus \{ v_k \}$ has degree 0.
\end{remark}

\begin{lemma}
\label{Lem:ShinyEq}
Let $\psi = \varphi + \varphi'$ and $\psi' = \phi + \phi'$ be shiny functions on $\gamma_k$.  Then, after possibly reordering $\phi$ and $\phi'$, we have
\[
\tau_k (\varphi) = \tau_k (\phi) \mbox{ and } \tau_k (\varphi') = \tau_k (\phi').
\]
\end{lemma}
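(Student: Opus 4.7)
The plan is to apply Proposition~\ref{Prop:Shiny} to both $\psi$ and $\psi'$ to normalize the summands, then use the building block constraints together with permissibility and shininess to show that the unordered pair of incoming slope indices is uniquely determined by the data of $\Sigma$ near $\gamma_k$.

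First, I would apply Proposition~\ref{Prop:Shiny} to each shiny sum. After possibly reordering, we may assume that $\varphi$ and $\phi$ are the distinguished summands so that $(D + \ddiv(\varphi))|_{\gamma_k \smallsetminus \{v_k\}}$ and $(D + \ddiv(\phi))|_{\gamma_k \smallsetminus \{v_k\}}$ both have degree zero. By Remark~\ref{Rem:Shiny}, this means $s_{k+1}(\varphi) > s_k[\tau_k(\varphi)]$ and $s_{k+1}(\phi) > s_k[\tau_k(\phi)]$. Since $d_{w_k}(\varphi) = 0$, the definition of outgoing slope index forces $s_{k+1}(\varphi) = s'_k(\varphi) = s'_k[\tau'_k(\varphi)]$, and similarly for $\phi$, so the outgoing slopes of these two summands are pinned down by their slope indices.

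Next, I would combine the permissibility inequality $s_{k+1}(\psi) \geq s_k(\theta)$ with the shininess inequality $s_k[\tau_k(\varphi)] + s_k[\tau_k(\varphi')] < s_k(\theta)$ to see that $s_{k+1}(\varphi) + s_{k+1}(\varphi')$ strictly exceeds $s_k[\tau_k(\varphi)] + s_k[\tau_k(\varphi')]$, so a genuine upward jump of slope index must occur in the summands between $v_k$ and $\beta_k$. The building block conditions (slope index sequences nondecreasing, with jumps of at most one at each switching loop or bridge) and Proposition~\ref{Prop:LingeringLatticePath} ($s'_k[i] \leq s_k[i] + 1$, only under exceptional equality conditions) restrict this jump severely. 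Because $\rho \leq 2$, the loop $\gamma_k$ has multiplicity at most $2$, and the classification of switching loops in Figure~\ref{Fig:SwitchingLoops} implies that $\gamma_k$ switches at most one slope index $h$; the required jump from $s_k(\psi)$ to $s_{k+1}(\psi)$ must be realized by this unique switch, and an analogous statement controls the contribution of $\varphi'$ on $\gamma_k$ (and of the adjacent bridge $\beta_{k-1}$, if $v_k$ receives degree through a bridge switch).

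The hardest step will be the case analysis pinning down where the switched index $h$ sits in the pair $\{\tau_k(\varphi), \tau_k(\varphi')\}$, and bounding $\tau_k(\varphi')$ using the shininess inequality as an upper bound on $s_k[\tau_k(\varphi')]$. I would split according to whether $\tau'_k(\varphi) = \tau_k(\varphi)$ or $\tau_k(\varphi)+1$, and whether the summand $\varphi'$ has strictly positive degree at $v_k$; in each subcase, the conditions $s_k(\psi) \leq s_k(\theta) \leq s_{k+1}(\psi)$ together with the switching data uniquely determine the unordered pair $\{\tau_k(\varphi), \tau_k(\varphi')\}$. Applying the identical argument to $\psi' = \phi + \phi'$ yields the same multiset, which after reordering $\phi$ and $\phi'$ gives the stated equalities and completes the proof.
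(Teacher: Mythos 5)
Your opening move matches the paper: invoking Proposition~\ref{Prop:Shiny} to arrange that $\varphi$ and $\phi$ are the summands whose divisors $D+\ddiv(\varphi)$, $D+\ddiv(\phi)$ have degree $0$ on $\gamma_k \smallsetminus \{v_k\}$. But after that the argument has a genuine gap. Your central claim is that permissibility plus shininess, i.e.\ $s_{k+1}(\psi) \geq s_k(\theta) > s_k[\tau_k(\varphi)] + s_k[\tau_k(\varphi')]$, forces ``a genuine upward jump of slope index'' that must be realized by the (at most one) slope switched by $\gamma_k$ or the adjacent bridge. That premise is false: the slopes of a building block can increase across $\gamma_k$ while its slope indices stay constant, simply because the slope vector itself can increase, $s'_k[i] = s_k[i]+1$, when the loop is non-lingering at index $i$. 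This is in fact the generic mechanism producing shiny functions (it is exactly what happens for new permissible functions in the vertex-avoiding case), and it involves no switching at all. So the case analysis you build on ``the unique switched index $h$'' does not get off the ground, and the cases that remain --- contributions with $d_{v_k}(\varphi')>0$, decreasing loops, a switching bridge at $\beta_{k-1}$ --- are precisely the ones where a purely numerical, local-at-$\gamma_k$ argument for uniqueness of the unordered pair $\{\tau_k(\varphi),\tau_k(\varphi')\}$ is not carried out; asserting that ``in each subcase the pair is uniquely determined'' is exactly the content that needs proof.

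The ingredient you are missing is the agreement of the sums themselves. The paper's proof is short: by Proposition~\ref{Prop:Shiny}, $\varphi$ and $\phi$ both have no points of $D+\ddiv$ on $\gamma_k \smallsetminus \{v_k\}$, hence they agree on $\gamma_k$; any two shiny functions agree on $\gamma_k$ (this is the degree-one argument behind Lemma~\ref{Lem:OneNewFunction} and Corollary~\ref{cor:equivtonew}: the restriction of such a sum to $\gamma_k$ lies in $R(E)$ with $\deg E = 1$, which pins it down up to an additive constant), so $\psi$ agrees with $\psi'$, and subtracting the agreement of $\varphi$ with $\phi$ gives that $\varphi'$ agrees with $\phi'$ on $\gamma_k$. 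Remark~\ref{Rem:Equivalent} then converts agreement on $\gamma_k$ into equality of the slope indices $\tau_k$ (and $\tau'_k$). If you want to salvage your route, you would have to either import this agreement statement for the sums --- at which point you are back to the paper's proof --- or genuinely prove the uniqueness of the index pair in all the degenerate configurations, which your sketch does not do.
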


\begin{proof}
By Proposition~\ref{Prop:Shiny}, we may assume after possibly reordering that the restrictions of both $D+\ddiv(\varphi)$ and $D+\ddiv(\phi)$ to $\gamma_k \smallsetminus \{ v_k \}$ have degree zero.  It follows that $\varphi$ and $\phi$ agree on $\gamma_k$.  By Lemma~\ref{Lem:OneNewFunction}, $\varphi + \varphi'$ agrees with $\phi + \phi'$ on $\gamma_k$, hence $\varphi'$ agrees with to $\phi'$ on $\gamma_k$ as well.  The equality of slope indices follows from Remark~\ref{Rem:Equivalent}.
\end{proof}

In Section~\ref{sec:master} below, we state Theorem~\ref{Thm:ExtremalConfig}, which gives the essential properties of the master template, constructed as a tropical linear combination of a collection $\cB$ of pairwise sums of building blocks.  We should stress that the hypotheses of this theorem are as important as the conclusions; we need several technical conditions on the collection $\cB$ of pairwise sums of building blocks in order to successfully run the algorithm to construct the master template that appears in the next section.  In Section~\ref{Sec:Generic}, we will consider several cases depending on the properties of $\Sigma$, and in each case we will choose a set $\cB$ and show that it satisfies these properties.

We begin with a technical property on the collection of building blocks to be used as summands.

\begin{definition} \label{def:property*}
Let $\cA$ be a subset of the building blocks.  We say that $\cA$ satisfies property $(\ast)$ if any two functions $\varphi , \varphi' \in \cA$ with $\tau'_k (\varphi) = \tau'_k (\varphi')$ agree on $\gamma_k$, and no two functions in $\cA$ differ by a constant.
\end{definition}

\noindent Note that there are only finitely many possibilities for the slope of a building block on each bridge.  It follows that any collection of building blocks that satisfies $(\ast)$ is necessarily finite.

Before stating the other technical properties, we note that this definition has the following important consequences.

\begin{lemma}
\label{Lem:Rho2}
Let $\cA$ be a subset of the building blocks satisfying property $(\ast)$.  Let $\varphi, \varphi' \in \cA$, and suppose that $s_{k+1} (\varphi) = s_{k+1} (\varphi')$ and $\mu (\beta_k) \leq 1$.  Then $\varphi$ and $\varphi'$ agree on $\gamma_k$.
\end{lemma}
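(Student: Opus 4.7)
My plan is to reduce the lemma to the equality $\tau'_k(\varphi) = \tau'_k(\varphi')$, at which point property~$(\ast)$ gives the conclusion directly.  The first observation is that since $\mu(\beta_k) \leq 1$ while every switching bridge has multiplicity exactly $2$ (Figure~\ref{Fig:SwitchingBridge}), the bridge $\beta_k$ is not a switching bridge.  By condition~(ii) in Definition~\ref{def:buildingblock}, this forces $\tau_{k+1}(\psi) = \tau'_k(\psi)$ for every building block $\psi$, so it suffices to show $\tau_{k+1}(\varphi) = \tau_{k+1}(\varphi')$.

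I would argue this by contradiction.  Setting $s := s_{k+1}(\varphi) = s_{k+1}(\varphi')$ and assuming $\tau_{k+1}(\varphi) < \tau_{k+1}(\varphi')$, the definition $\tau_{k+1}(\psi) = \min\{i : s_{k+1}[i] \geq s - d_{v_{k+1}}(\psi)\}$ yields the chain
\[
d_{v_{k+1}}(\varphi) \;\geq\; s - s_{k+1}[\tau_{k+1}(\varphi)] \;\geq\; s - s_{k+1}[\tau_{k+1}(\varphi')-1] \;>\; d_{v_{k+1}}(\varphi'),
\]
so $d_{v_{k+1}}(\varphi) \geq d_{v_{k+1}}(\varphi') + 1 \geq 1$.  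In particular, $v_{k+1}$ lies in the support of $D + \ddiv(\varphi)$, and $\varphi$ bends at $v_{k+1}$ strictly more than $\varphi'$ does.

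The main obstacle will be converting this forced discrepancy in vertex multiplicities into an actual contradiction.  I expect this to use two global constraints simultaneously: the bound $\mu(\beta_k) \leq 1$ ties $s'_k$ and $s_{k+1}$ together coordinatewise (they differ in at most one coordinate, by at most $1$), and the Brill--Noether bound $\rho \leq 2$ from Theorem~\ref{Thm:BNThm} tightly restricts the total multiplicity available anywhere on $\Gamma$.  Combined with the nondecreasing slope-index constraint built into the definition of a building block and the second clause of $(\ast)$ forbidding $\varphi$ and $\varphi'$ from differing by a constant, these should rule out the existence of two distinct building blocks in $\cA$ with equal slope on $\beta_k$ but strictly different values of $\tau_{k+1}$.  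Once $\tau'_k(\varphi) = \tau'_k(\varphi')$ is established, property~$(\ast)$ completes the proof.
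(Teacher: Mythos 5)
Your reduction is fine as far as it goes: since $\mu(\beta_k)\leq 1$, the bridge $\beta_k$ cannot be a switching bridge, so $\tau_{k+1}=\tau'_k$ for any building block, and by property $(\ast)$ it suffices to prove that the slope indices of $\varphi$ and $\varphi'$ at $\beta_k$ coincide. This is exactly the frame of the paper's proof. But your argument stops where the real work begins. From $\tau_{k+1}(\varphi)<\tau_{k+1}(\varphi')$ you deduce only that $d_{v_{k+1}}(\varphi)>d_{v_{k+1}}(\varphi')\geq 0$, and this is not in tension with anything: building blocks can perfectly well have $v_{k+1}$ or $w_k$ in the support of $D+\ddiv(\varphi)$ (see, e.g., Figure~\ref{Fig:Decrease}), so the statement ``$\varphi$ bends at $v_{k+1}$ more than $\varphi'$ does'' rules nothing out by itself. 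Moreover, your stated expectation that the contradiction will come from combining $\mu(\beta_k)\leq 1$ with the global Brill--Noether bound of Theorem~\ref{Thm:BNThm} is a hope rather than an argument, and it points in the wrong direction: the lemma is purely local to the bridge $\beta_k$, and its proof uses only the hypothesis $\mu(\beta_k)\leq 1$ together with property $(\ast)$ and the definitions of the slope indices; the bound $\rho\leq 2$ plays no role.

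The missing idea is to use the defining inequalities of the slope indices at \emph{both} ends of the bridge, for \emph{both} functions, and turn them into a statement about the coordinatewise drop of the slope vectors across $\beta_k$. Write $s=s_{k+1}(\varphi)=s_{k+1}(\varphi')$ for the common constant slope on $\beta_k$, and suppose the functions do not agree on $\gamma_k$, so that $j=\tau'_k(\varphi)\neq j'=\tau'_k(\varphi')$ by property $(\ast)$. The slope-index conditions sandwich $s$ on both sides of the bridge:
\[
s'_k[j]\;\geq\; s\;\geq\; s_{k+1}[j] \quad\text{and}\quad s'_k[j']\;\geq\; s\;\geq\; s_{k+1}[j'].
\]
Assuming $j<j'$, one gets $s'_k[j']>s'_k[j]\geq s\geq s_{k+1}[j']>s_{k+1}[j]$, so both the $j$th coordinate and the $j'$th coordinate satisfy $s'_k[i]>s_{k+1}[i]$, whence $\mu(\beta_k)=\sum_i\bigl(s'_k[i]-s_{k+1}[i]\bigr)\geq 2$, contradicting the hypothesis. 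Your one-sided computation with $d_{v_{k+1}}$ never brings the vector $s'_k$ into play, so it cannot detect the bridge multiplicity, which is precisely the quantity the hypothesis constrains; without that step the proposal does not prove the lemma.
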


\begin{proof}
If $\varphi$ and $\varphi'$ agree on $\gamma_k$, then $\tau'_k(\varphi) \neq \tau'_k(\varphi')$.  Therefore, there must be $j \neq j'$ such that
\[
s'_k [j] \geq s_{k+1} (\varphi) \geq s_{k+1} [j] \mbox{ and } s'_k [j'] \geq s_{k+1} (\varphi') \geq s_{k+1} [j'].
\]
Assume without loss of generality that $j'>j$.  Then
\[
s'_k [j'] > s'_k [j] \geq s_{k+1} [j'] > s_{k +1} [j],
\]
which implies that the bridge $\beta_k$ has multiplicity at least 2.
\end{proof}

\begin{lemma}
\label{Lem:SameSlope}
Let $\cA$ be a subset of the building blocks satisfying property $(\ast)$, and let $\varphi, \varphi' \in \cA$ satisfy
\[
\tau_k(\varphi) = \tau_k(\varphi') \mbox{ and } \tau_{k+1}(\varphi) = \tau_{k+1}(\varphi').
\]
Then
\[
\tau'_k(\varphi) = \tau'_k(\varphi')
\]
and $\varphi$ agrees with $\varphi'$ on $\gamma_k$.
\end{lemma}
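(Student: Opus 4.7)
The strategy is to first prove that $\tau'_k(\varphi) = \tau'_k(\varphi')$, and then invoke property $(\ast)$ directly to conclude agreement on $\gamma_k$. All of the work is in establishing equality of the outgoing slope indices, and the main ingredient will be the multiplicity accounting provided by Theorem~\ref{Thm:BNThm} together with the classification of switching loops and bridges from Section~\ref{Sec:Loops}.

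From Definition~\ref{def:buildingblock}, both slope index sequences are nondecreasing, so $\tau_k \leq \tau'_k \leq \tau_{k+1}$ for each of $\varphi$ and $\varphi'$. If $\tau_k(\varphi) = \tau_{k+1}(\varphi)$ (the common value, by hypothesis), then $\tau'_k$ is forced to equal this value for both functions, and we are done. So the only interesting case is $\tau_{k+1} = \tau_k + 1$, where a priori $\tau'_k \in \{\tau_k, \tau_k+1\}$ and we must rule out the possibility that $\varphi$ and $\varphi'$ take different choices.

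The plan here is to observe that having $\tau'_k(\varphi) = \tau_k + 1$ for some building block $\varphi$ forces $\gamma_k$ to switch slope $\tau_k$ by condition (1) of Definition~\ref{def:buildingblock}, while having $\tau_{k+1}(\varphi') = \tau'_k(\varphi') + 1 = \tau_k + 1$ for some other block $\varphi'$ with $\tau'_k(\varphi') = \tau_k$ forces $\beta_k$ to switch slope $\tau_k$ by condition (2). But any switching bridge has multiplicity $2$ (see Figure~\ref{Fig:SwitchingBridge}) and a switching loop has multiplicity at least $1$ (see Figure~\ref{Fig:SwitchingLoops}), so the coexistence of a switching loop at $\gamma_k$ and a switching bridge at $\beta_k$ would force the total in Theorem~\ref{Thm:BNThm} to be at least $3$, contradicting $\rho \leq 2$. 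Consequently, at least one of $\gamma_k$, $\beta_k$ is non-switching, and whichever side is non-switching pins $\tau'_k$ to a single value (either to $\tau_k$ or to $\tau_{k+1}$) uniformly for every building block. Hence $\tau'_k(\varphi) = \tau'_k(\varphi')$.

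Once this equality is in hand, property $(\ast)$ immediately gives that $\varphi$ and $\varphi'$ agree on $\gamma_k$, completing the proof. The only subtlety I anticipate is making sure the multiplicity bookkeeping is correctly invoked: specifically, that the classifications in Figures~\ref{Fig:SwitchingLoops}--\ref{Fig:SwitchingBridge} are complete enough to guarantee the minimum multiplicities quoted above, and that Theorem~\ref{Thm:BNThm} gives a sum (over all loops and bridges, plus ramification at the endpoints) rather than only a count at $\gamma_k$ and $\beta_k$ themselves. Both of these are already established in the excerpt, so the argument should fit in a short paragraph.
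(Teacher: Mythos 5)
Your proposal is correct and follows essentially the same route as the paper: the paper's proof also observes that a switching loop has multiplicity at least $1$ and a switching bridge multiplicity at least $2$, so by the multiplicity bound ($\rho \leq 2$, via Theorem~\ref{Thm:BNThm}) the loop $\gamma_k$ and bridge $\beta_k$ cannot both switch, forcing $\tau'_k$ to agree with either $\tau_k$ or $\tau_{k+1}$ uniformly for both functions, after which property $(\ast)$ gives agreement on $\gamma_k$. Your extra case split on whether $\tau_{k+1} = \tau_k$ is harmless and the multiplicity bookkeeping you flag is exactly what the paper relies on.
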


\begin{proof}
Because a switching loop has multiplicity at least 1 and a switching bridge has multiplicity at least 2, we cannot have both that $\gamma_k$ is a switching loop and $\beta_k$ is a switching bridge.  It follows that either
\[
\tau'_k(\varphi) = \tau_k(\varphi) \mbox{ and } \tau'_k(\varphi') = \tau_k(\varphi')
\]
or
\[
\tau'_k(\varphi) = \tau_{k+1}(\varphi) \mbox{ and } \tau'_k(\varphi') = \tau_{k+1}(\varphi').
\]
Thus,
\[
\tau'_k(\varphi) = \tau'_k(\varphi').
\]
By property $(\ast)$, it follows that $\varphi$ agrees with $\varphi'$ on $\gamma_k$.
\end{proof}

\subsection{Properties $(\ast\ast)$ and $(\dagger\dagger)$}

We now introduce two key technical properties for collections of pairwise sums of building blocks.  These conditions will be essential for our construction of the master template.

\begin{definition}  \label{def:properties}
Let $\cA$ be a subset of the building blocks, and let
\[
\cB \subset \{ \varphi + \varphi' \, \vert \, \varphi, \varphi' \in \cA \}.
\]
We consider the following two properties:
\begin{itemize}
\item[($\ast\ast$)]  Whenever there is $\varphi + \varphi' \in \cB$ such that $2D+\ddiv(\varphi + \varphi')$ contains $w_k$, and either  $\gamma_k$ switches slope $\tau_k (\varphi)$ or $s_{k+1} (\varphi) < s'_k [\tau'_k (\varphi)]$, then there is some $\psi \in \cB$ that agrees with $\varphi + \varphi'$ on $\Gamma_{[1,k]}$ such that $s_{k+1} (\psi) > s_{k+1} (\varphi + \varphi')$.
\item[($\dagger\dagger$)]  Whenever the are permissible functions in $\cB$ that agree on $\gamma_k$ with different slopes on $\beta_k$, and either $\gamma_k$ is a switching loop or $\beta_k$ is a switching bridge, then no permissible function is shiny on $\gamma_k$.
\end{itemize}
\end{definition}
Theorem~\ref{Thm:ExtremalConfig}, below, says that we can construct a master template starting from any set of building blocks $\cA$ that satisfies ($\ast$), and any collection $\cB$ of pairwise sums of functions in $\cA$ that satisfies $(\ast\ast)$ and $(\dagger\dagger)$.  In Section~\ref{Sec:Generic}, we will choose such $\cA$ and $\cB$ on a case-by-case basis, according to the properties of $\Sigma$.

\subsection{The master template}
\label{sec:master}

The master template $\theta$ is a tropical linear combination of pairwise sums of building blocks.  As in the vertex avoiding case, we will construct $\theta$ so that it has nearly constant slope on every bridge, with perhaps slope 1 or 2 higher for a short distance at the beginning of the bridge, and we find it useful to specify the nearly constant slopes $s_k(\theta)$ in advance.  These slopes are most easily described in terms of partitions associated to the slope vectors $s_k(\Sigma)$, as follows.

We associate to $\Sigma$ a sequence of partitions
\[
\lambda'_0, \lambda_1, \lambda'_1 \ldots, \lambda'_g, \lambda_{g+1},
\]
each with at most $r + 1$ columns, numbered from 0 to $r$.  The $(r-i)$th column of $\lambda_k$ contains $(g-d+r)+ s_k[i]-i$ boxes.  Similarly, the $(r-i)$th column of $\lambda'_k$ contains $(g-d+r) + s'_k[i]-i$ boxes.  Note that $\lambda_k$ is a subset of $\lambda'_{k-1}$, and $\lambda'_k$ contains at most one box that is not contained in $\lambda_k$.  Moreover, $\lambda_{g+1}$ contains the $(r+1) \times (g-d+r)$ rectangle.

We then choose $z$ to be the largest integer such that $\lambda'_z$ contains exactly 6 boxes in the union of the first two rows, and $\lambda_z$ does not.  Similarly, we let $z'$ be the largest integer such that $\lambda'_{z'+2}$ contains exactly 10 boxes in the union of the second and third row, and $\lambda_{z'+2}$ does not.  Note that, since each partition in the sequence contains at most 1 box not contained in the previous partition, such $z$ and $z'$ exist.

With this new definition of $z$ and $z'$, we define the slopes of $\theta$ as in Section~\ref{sec:slopes}:

\begin{displaymath}
s_k (\theta) = \left\{ \begin{array}{ll}
4 & \textrm{if $k \leq z$} \\
3 & \textrm{if $z < k \leq z'$}\\
2 & \textrm{if $z' < k \leq g$.}
\end{array} \right.
\end{displaymath}

\begin{theorem}
\label{Thm:ExtremalConfig}
Let $\cA$ be a subset of the building blocks satisfying property $(\ast)$ and let
\[
\cB \subset \{ \varphi + \varphi' \, \vert \, \varphi, \varphi' \in \cA \}
\]
be a subset satisfying property $(\ast\ast)$ and property $(\dagger\dagger)$.  Then there is a tropical linear combination $\theta$ of the functions in $\cB$ with $s_k(\theta)$ as specified above, such that
\begin{enumerate}
\item  each function $\psi \in \cB$ is assigned to some loop $\gamma_k$ or bridge $\beta_k$ and achieves the minimum at some point $v$ on the loop or bridge to which it is assigned,
\item  any other function $\psi'$ that achieves the minimum at $v$ agrees with $\psi$ on the loop $\gamma_k$.
\end{enumerate}
\end{theorem}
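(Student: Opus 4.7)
The plan is to adapt the algorithm from Section~\ref{Sec:VertexAvoiding} to the present setting, with the key difference that multiple functions in $\cB$ may be assigned to the same loop, namely those that agree on it. We construct $\theta$ by sweeping from left to right across the three blocks determined by $z$ and $z'$, maintaining coefficients $c_\psi$ (initialized to $+\infty$) and updating them via a loop subroutine. On the initial bridge $\beta_0$, we assign pairwise sums of building blocks having the smallest admissible incoming slopes, using the initialization step from the vertex avoiding algorithm with appropriate modifications.

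At each loop $\gamma_k$, the subroutine proceeds as follows. First, identify the shiny functions; by Lemma~\ref{Lem:ShinyEq}, all shiny functions on $\gamma_k$ share the same (unordered) pair of slope indices, and by property $(\ast)$ they pairwise agree on $\gamma_k$. Re-initialize the coefficients of unassigned permissible functions so they all equal $\theta$ at $w_k$, and raise the coefficients of non-shiny permissible functions accordingly. If there is a departing function, assign it (together with everything agreeing with it on $\gamma_k$, by Lemma~\ref{Lem:OneNewFunction}) to $\gamma_k$, and raise coefficients of competitors just enough to guarantee it achieves the minimum uniquely somewhere on the loop. If $\gamma_k$ is lingering, proceed to the next loop. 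Otherwise, invoke an analog of Proposition~\ref{prop:threeshape}, applied to the at-most-three equivalence classes of non-departing permissible functions, to locate a class that achieves the minimum uniquely at some point of $\gamma_k$; assign that class. Block transitions are handled exactly as in the vertex avoiding case: at the end of each block, Corollary~\ref{cor:counting} (with a generalization to this setting, counted in terms of slope indices $\tau_k$ via the partitions $\lambda_k$) guarantees that one unassigned permissible function remains, and it is assigned to achieve the minimum on the bridge between blocks.

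The main obstacle is the occurrence of switching loops and switching bridges, where building blocks need not lie in $\Sigma$ and where slope index sequences may change. This is precisely what properties $(\ast\ast)$ and $(\dagger\dagger)$ are designed to control. Property $(\ast\ast)$ guarantees that whenever a candidate assignment $\varphi+\varphi'$ has pathological behavior at $w_k$ on a switching loop or bridge, there is a substitute $\psi \in \cB$ agreeing with $\varphi+\varphi'$ on $\Gamma_{[1,k]}$ with strictly larger outgoing slope, so that we can reassign while preserving conclusion (1). Property $(\dagger\dagger)$ rules out the degenerate situation in which permissible functions agreeing on $\gamma_k$ carry different outgoing slopes to $\beta_k$ alongside a shiny function on the same loop; this is the case in which an uncontrolled mixture of slope profiles would defeat the Shape Lemma argument in the analog of Proposition~\ref{prop:threeshape}.

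Finally, conclusion (2) requires showing that every function achieving the minimum at the distinguished point of the assigned loop or bridge actually agrees with the assigned function. For loops, this follows by combining the Shape Lemma for Minima with Lemma~\ref{Lem:OneNewFunction}, Corollary~\ref{cor:equivtonew}, and Lemma~\ref{Lem:ShinyEq} to force any such coincidence to share the slope index pair, and then applying property $(\ast)$. For bridges, functions assigned to a bridge are those still permissible on all loops to the right, and the argument of Lemma~\ref{Lem:VANotToRight} (together with property $(\ast)$) ensures uniqueness up to agreement. The hardest step of the verification, analogous to Lemma~\ref{Lem:VAAtMostThree} in the vertex avoiding case, is controlling the total count of permissible functions within each block in terms of the partitions $\lambda_k, \lambda'_k$; this is where the specific definitions of $z$ and $z'$ via the partition sequence are crucial, and where the assumption $\rho \leq 2$, bounding the multiplicities of switching phenomena, enters essentially.
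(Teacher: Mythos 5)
Your overall architecture matches the paper's: sweep left to right, assign whole agreement classes rather than single functions, use property $(\ast)$ to identify classes, and invoke an analog of Proposition~\ref{prop:threeshape} on non-skipped loops. But the heart of the theorem is conclusion (1) — that \emph{every} function in $\cB$ ends up assigned — and at exactly that point your proposal substitutes a hand-wave ("Corollary~\ref{cor:counting} with a generalization to this setting\ldots guarantees that one unassigned permissible function remains") for the argument that actually has to be made. In the general case the vertex-avoiding count breaks down: functions can be shiny without being new, slope index sequences can jump at switching loops and bridges so that a single function migrates between would-be slope classes, and the loops one must skip are not the lingering loops but the \emph{skippable} loops of Definition~\ref{Def:Skippable}, a notion that depends on the divisors $2D+\ddiv(\psi)$ and on which functions have already been assigned. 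The paper replaces the count of permissible functions by a count of \emph{cohorts}: one tracks the loop where each function "leaves its shine," groups unassigned permissible functions into cohorts, and proves (Proposition~\ref{Prop:Counting}, via Lemma~\ref{lem:slopeindexchange}) that on any loop where a function leaves its shine, and on the special loops $\gamma_z,\gamma_b,\gamma_{b'},\gamma_{z'+2}$, an entire cohort is assigned at once; combined with Propositions~\ref{Prop:Lingering} and~\ref{Prop:GammaA} (no shiny functions on skippable loops away from block starts, nor on the four special loops), this keeps the number of active cohorts at most two across a block and leaves exactly one for the following bridge. None of this bookkeeping, nor the separate argument handling a non-skippable loop that receives no assignment, appears in your proposal, and without it the claim that the algorithm terminates with all of $\cB$ assigned is unsupported.

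Two smaller but genuine misidentifications: first, skipping "lingering" loops is not correct here — a non-lingering loop can still be skippable (e.g.\ when $2D+\ddiv(\psi)$ contains $w_k$), and on such a loop the Shape Lemma argument of Proposition~\ref{Prop:NewThreeShape} genuinely fails, so your subroutine as stated would get stuck. Second, property $(\dagger\dagger)$ is not there to protect the Shape Lemma step (that step only needs non-skippability); it enters in the cohort count, ensuring that when a departing function is assigned while an agreeing non-departing one is not, no shiny function can start a new cohort on that loop (Lemma~\ref{lem:slopeindexchange} and the proof of Proposition~\ref{Prop:Counting}). Property $(\ast\ast)$, likewise, is used concretely in Lemma~\ref{Lem:AtMostThree} and in the skippable-loop lemmas (Lemmas~\ref{Lem:Skippable1} and~\ref{Lem:Skippable2}), not merely as a generic "substitute function" device at switching loci. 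So while your outline points in the right direction, the counting machinery that makes the theorem true is missing, and two of the pivots you do name are attached to the wrong steps.
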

\noindent Note that, in the case where $\psi$ and $\psi'$ are assigned to the bridge $\beta_k$, the second condition says that they agree on the preceding loop $\gamma_k$.

\subsection{Algorithm for constructing the master template}
\label{Sec:Alg}

Throughout this section, and for the remainder of the paper, we assume that the hypotheses of Theorem~\ref{Thm:ExtremalConfig} are satisfied.  In particular, we let $\cA$ be a subset of the building blocks satisfying property $(\ast)$, and $\cB$ a subset of pairwise sums of functions in $\cA$ satisfying properties $(\ast\ast)$ and $(\dagger\dagger)$.

We now sketch the overall procedure that we will use to build the master template
\[
\theta = \min \{ \psi + c_{\psi} \, \vert \, \psi \in \cB \}
\]
with the slopes $s_k(\theta)$ specified above.  The algorithm is in many ways similar to that presented in Section~\ref{Sec:VertexAvoiding}, and we will highlight the differences when they appear.  As in Section~\ref{Sec:VertexAvoiding}, we move from left to right across each of the three blocks where $s_k(\theta)$ is constant, assigning functions to loops, and adjusting the coefficients so that each function achieves the minimum on the loop to which it is assigned.  At the end of each block, we start the next block by choosing coefficients such that $\theta$ bends at the midpoint of the bridge between blocks.

In the special case where the divisor class is vertex avoiding, the building blocks are the functions $\varphi_i$, the set $\cB$ of all pairwise sums $\varphi_i + \varphi_j$ satisfies properties $(\ast \ast)$ and $(\dagger\dagger)$ vacuously, and the template we construct is precisely the maximally independent combination constructed in Section~\ref{Sec:VertexAvoiding}.

In the general case, our algorithm for constructing the master template is as follows.

\medskip

\noindent \textbf{Start at the First Bridge.}
Since $\rho \leq 2$, the ramification weight at $w_0$ is at most 2, so the slope vector $s'_0(\Sigma)$ is one of the following:
\[
(-3,-2, -1, 0, 1, 2, 3),
\]
\[
(-4,-2, -1, 0, 1, 2, 3),
\]
\[
(-5,-2, -1, 0, 1, 2, 3),
\]
\[
(-4,-3, -1, 0, 1, 2, 3).
\]
In particular, $s'_0 [6] = 3$ and $s'_0 [5] = 2$.  For every $\psi \in \cB$ with $s'_0 (\psi) = 2s'_0 [6] = 6$, initialize the coefficient of $\psi$ to be 0.  For every $\psi' \in \cB$ satisfying $s'_0 (\psi') = s'_0 [6] + s'_0 [5] = 5$, initialize the coefficient of $\psi'$ so that it agrees with $\psi$ at the midpoint of the first bridge $\beta_0$.  There are several new permissible functions on the first loop.  Initialize the coefficient of each new permissible function so that it equals $\theta$ halfway between the midpoint and the rightmost endpoint of the bridge $\beta_0$, and then apply the loop subroutine.  Initialize all other coefficients to $\infty$.  Proceed to the first loop.

\medskip

\noindent \textbf{Loop Subroutine.}
Each time we arrive at a loop $\gamma_k$, apply the following steps.

\medskip

\noindent \textbf{Loop Subroutine, Step 1:  No unassigned permissible functions.}
If there are no unassigned permissible functions on $\gamma_k$, skip this loop and proceed to the next loop.

\medskip

\noindent \textbf{Loop Subroutine, Step 2:  All unassigned permissible functions are new and in the same equivalence class.}
If every unassigned permissible function on $\gamma_k$ is new and in the same equivalence class, assign all the permissible functions in this equivalence class.  Note that, by Lemma~\ref{Lem:OneNewFunction}, if $\gamma_k$ is not the first loop in a block, then all new functions are automatically in the same equivalence class.  Set their coefficients so that they agree with the minimum of the other functions at $v_k$.  Note that the sum of the slopes of any new function along the non-bridge edges adjacent to $v_k$ is $s_k (\theta)-1$, smaller than the corresponding sum of slopes for a non-new function.  It follows that the new functions are the only functions to achieve the minimum on a subinterval of one of these two edges, a short distance to the right of $v_k$.  Proceed to the next loop.

\medskip

\noindent \textbf{Loop Subroutine, Step 3:  Re-initialize unassigned coefficients.}
Otherwise, there is at least one unassigned permissible function $\psi$ on $\gamma_k$ such that $c_{\psi}$ is finite.  Find the unassigned, permissible function $\psi \in \cB$ that maximizes $\psi (w_k) + c_{\psi}$, among finite values of $c_{\psi}$.   Initialize the coefficients of the new permissible functions (if any) and adjust the coefficients of the other unassigned permissible functions upward so that they all agree at $w_k$.  (The unassigned permissible functions are strictly less than all other functions, at every point in $\gamma_k$, even after this upward adjustment.)

\medskip

\noindent \textbf{Loop Subroutine, Step 4:  Assign departing functions.}
If there is a departing function, assign it to the loop.  Note that any two departing functions agree on this loop, by Lemma~\ref{Lem:OneNewFunction}. Adjust the coefficients of the non-departing unassigned permissible functions upward so that they all agree with the departing function of smallest slope at a point on the following bridge a short distance to the right of $w_k$, but far enough so that the departing functions are the only functions to achieve the minimum at any point of the loop.  This is possible because the building blocks have constant slopes along the bridges, and the bridges are much longer than the loops.  Note that the departing functions achieve the minimum on the whole loop, no other functions achieve the minimum on this loop, and any two departing functions agree on the loop.  Proceed to the next loop.

\medskip

\noindent \textbf{Loop Subroutine, Step 5: Skip skippable loops.}
In the vertex avoiding case, the loops with positive multiplicity are precisely the lingering loops, and they have the property that $s_{k}[i] = s_{k+1}[i]$ for all $i$.  These are the loops that we skipped in the algorithm, without assigning a function.

In the general case, the loops with unassigned permissible functions that we skip are characterized as follows.

\begin{definition}
\label{Def:Skippable}
We will say that the loop $\gamma_k$ is \emph{skippable} if not all unassigned permissible functions are new and agree with each other, there are no unassigned departing permissible functions on $\gamma_k$, and there is an unassigned permissible function $\psi = \varphi + \varphi' \in \cB$ satsifying one of the following:
\begin{enumerate}
\item  $2D + \ddiv (\psi)$ contains a point whose shortest distance to $w_k$ is a non-integer multiple of $m_k$, or
\item  $2D + \ddiv (\psi)$ contains $w_k$, or
\item  $D + \ddiv (\varphi')$ contains two points of $\gamma_k \smallsetminus \{ v_k \}$.
\end{enumerate}
\end{definition}
\noindent In the vertex avoiding case, conditions (ii) and (iii) of Definition~\ref{Def:Skippable} are never satisfied, and condition (i) is satisfied precisely on the lingering loops.  In general, condition (i) can be satisfied even on non-lingering loops.  If $\gamma_k$ is skippable, then either $\gamma_k$ or $\beta_k$ has positive multiplicity.  Note also that whether a loop is skippable depends on which functions have been previously assigned.  In particular, if there is an unassigned departing function, then the loop is not skippable.

If $\gamma_k$ is skippable, then do not assign any functions.  Proceed to the next loop.

\medskip

\noindent \textbf{Loop Subroutine, Step 6: Otherwise, use Proposition~\ref{Prop:NewThreeShape}.}  In the remaining cases, when there are unassigned permissible functions, but not all are new, none are departing, and the loop is not skippable, we assign an entire equivalence class of permissible functions, chosen using the following lemma and proposition, which are close analogues of Lemma~\ref{Lem:VAAtMostThree} and Proposition~\ref{prop:threeshape}.

\begin{lemma}
\label{Lem:AtMostThree}
If there is no unassigned departing permissible function on $\gamma_k$, then the number of equivalence classes of permissible functions on $\gamma_k$ is at most 3.
\end{lemma}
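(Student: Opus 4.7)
The strategy is to mirror the proof of Lemma~\ref{Lem:VAAtMostThree}, counting equivalence classes of pairwise sums of building blocks rather than individual functions $\varphi_{ij}$.  First I would observe that, under the hypothesis, every unassigned permissible function $\psi = \varphi + \varphi'$ on $\gamma_k$ is non-departing, so $\psi$ is also permissible on $\gamma_{k+1}$.  Combined with condition~(2) of permissibility on $\gamma_k$, this forces $s_{k+1}(\psi) = s_{k+1}(\theta) \in \{2,3,4\}$.  Any previously assigned departing permissible function that remains permissible on $\gamma_k$ was selected under the same bridge-slope constraint, so its equivalence class on $\gamma_k$ is also of this form.

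Next, I would invoke Lemma~\ref{Lem:SameSlope} to identify the equivalence class of $\psi$ on $\gamma_k$ with the unordered pair of slope index pairs $\{(\tau_k(\varphi), \tau_{k+1}(\varphi)), (\tau_k(\varphi'), \tau_{k+1}(\varphi'))\}$ of the two summands in $\cA$.  Since building blocks have constant slope along bridges, the constraint $s_{k+1}(\varphi) + s_{k+1}(\varphi') = s_{k+1}(\theta)$ restricts $\{\tau_{k+1}(\varphi), \tau_{k+1}(\varphi')\}$ to the set of unordered index pairs $\{i,j\}$ with $s_{k+1}[i] + s_{k+1}[j] = s_{k+1}(\theta)$.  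Exactly as in Lemma~\ref{Lem:VAAtMostThree}, the number of such unordered pairs is at most $\lceil (r+1)/2 \rceil = 4$, with equality only in the case $s_{k+1}[3] = s_{k+1}(\theta)/2$.

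Finally, I would rule out this equality case using the definition of $z$ and $z'$ in Section~\ref{sec:master} via the partition sequence $\{\lambda_k\}$.  The slope $s_{k+1}(\theta)$ is even only when $k \leq z - 1$ or $k \geq z' + 1$, and in these ranges the partition conditions defining $z$ and $z' + 2$ respectively force $s_{k+1}[3] < 2$ in the first block and $s_{k+1}[3] > 1$ in the third block, so that $s_{k+1}[3] \neq s_{k+1}(\theta)/2$.  This reduces the count from 4 to 3.  The main obstacle I anticipate is on switching loops and switching bridges, where several building blocks may share a bridge slope but have different slope-index sequences; here I would rely on property $(\ast)$, Lemma~\ref{Lem:Rho2}, and the switching classifications of Section~\ref{Sec:Loops} to ensure that such building blocks either collapse to a single equivalence class on $\gamma_k$ or yield sums that are ruled out by permissibility, so that no additional equivalence classes appear beyond the ones counted above.
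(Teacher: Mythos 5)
There is a genuine gap, and it sits exactly at the step your whole count rests on. When you say that the constraint $s_{k+1}(\varphi)+s_{k+1}(\varphi') = s_{k+1}(\theta)$ restricts $\{\tau_{k+1}(\varphi),\tau_{k+1}(\varphi')\}$ to unordered pairs $\{i,j\}$ with $s_{k+1}[i]+s_{k+1}[j]=s_{k+1}(\theta)$, you are implicitly assuming that each summand satisfies $s_{k+1}(\varphi)=s_{k+1}[\tau_{k+1}(\varphi)]$ (equivalently $s_{k+1}(\varphi)=s'_k[\tau'_k(\varphi)]$). Building blocks need not satisfy this: by Definition~\ref{def:buildingblock} and the definition of the slope indices, $D+\ddiv(\varphi)$ may contain $w_k$ (or $v_{k+1}$), i.e.\ $d_{w_k}(\varphi)>0$, in which case the slope of $\varphi$ on $\beta_k$ is strictly smaller than $s'_k[\tau'_k(\varphi)]$. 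This phenomenon is not confined to switching loops and bridges (it occurs, e.g., at decreasing loops), so your closing appeal to property $(\ast)$, Lemma~\ref{Lem:Rho2}, and the switching classification does not cover it. Once such a summand is allowed, the bridge-slope equation no longer pins the index pairs to those whose slope-vector entries sum to $s_{k+1}(\theta)$, and the bound of $3$ (indeed even the preliminary bound of $4$) does not follow from the count you propose.

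The paper's proof isolates precisely this configuration as a separate case and kills it using property $(\ast\ast)$ together with the hypothesis of the lemma: if some unassigned permissible $\varphi+\varphi'$ has $s_{k+1}(\varphi)<s'_k[\tau'_k(\varphi)]$, then $(\ast\ast)$ supplies $\psi\in\cB$ agreeing with $\varphi+\varphi'$ on $\Gamma_{[1,k]}$ with $s_{k+1}(\psi)>s_{k+1}(\varphi+\varphi')\geq s'_k(\theta)$; such a $\psi$ is departing, hence (since no unassigned departing function exists) was assigned to an earlier loop, and then $\varphi+\varphi'$, agreeing with it there, would have been assigned as well -- a contradiction. With the problematic summands ruled out, the remaining case is handled as in Lemma~\ref{Lem:VAAtMostThree}, using property $(\ast)$ applied to $\tau'_k$ (your detour through Lemma~\ref{Lem:SameSlope} and $(\tau_k,\tau_{k+1})$ pairs is workable but only in this good case). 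Your proposal never invokes $(\ast\ast)$, which is the ingredient that makes the ``no unassigned departing function'' hypothesis do real work, so the argument does not close. A smaller inaccuracy: a previously assigned function that is departing on $\gamma_k$ has $s_{k+1}>s_{k+1}(\theta)$, not $s_{k+1}=s_{k+1}(\theta)$ as you assert, though this side remark is not where the count fails.
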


\begin{proof}
Consider the set of building blocks $\varphi \in \cA$ such that there exists $\varphi' \in \cA$ with $\varphi + \varphi' \in \cB$ an unassigned permissible function on $\gamma_k$.  If every such building block satisfies $s_{k+1} (\varphi) = s'_k [\tau'_k(\varphi)]$, then the proof of Lemma~\ref{Lem:VAAtMostThree} essentially goes through.  Specifically, any two functions in $\cA$ with the same $s_{k+1}$ agree on $\gamma_k$ by property $(\ast)$.  Thus, the number of equivalence classes of permissible functions on $\gamma_k$ is bounded above by the number of pairs $(i,j)$ such that $s'_k[i]+s'_k[j] = s_k(\theta)$.  This number of pairs is at most 3, exactly as in the proof of Lemma~\ref{Lem:VAAtMostThree}.

On the other hand, suppose that there is an unassigned permissible function $\varphi + \varphi' \in \cB$ such that $s_{k+1} (\varphi) < s'_k [\tau'_k(\varphi)]$.  By property $(\ast\ast)$, there is a function $\psi \in \cB$ that agrees with $\varphi + \varphi'$ on $\Gamma_{[1,k]}$, with the property that
\[
s_{k+1} (\psi) > s_{k+1} (\varphi + \varphi') \geq s'_k (\theta).
\]
Since $\psi$ is a departing function, it must have been assigned to a previous loop.  But $\varphi + \varphi'$ agrees with $\psi$ on this previous loop, so $\varphi + \varphi'$ must be assigned to this previous loop as well.  This contradicts our assumption that $\varphi + \varphi'$ was unassigned.
\end{proof}

\begin{proposition}
\label{Prop:NewThreeShape}
Consider a set of at most three equivalence classes of functions in $\cB$ on a non-skippable loop $\gamma_k$.  If all of the functions take the same value at $w_k$, then there is a point of $\gamma_k$ at which one of these equivalence classes is strictly less than the others.
\end{proposition}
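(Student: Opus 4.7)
My plan is to adapt the proof of Proposition~\ref{prop:threeshape}, combining a Shape Lemma degree bound with the non-skippable hypothesis to rule out the one bad configuration. I would focus on the case of three distinct equivalence classes; the subcases with one or two classes follow by easier variants of the same analysis.

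First I would choose representatives $\psi_1, \psi_2, \psi_3 \in \cB$ of the three equivalence classes, all taking the same value at $w_k$, and form the pointwise minimum $\vartheta = \min\{\psi_1, \psi_2, \psi_3\}$. Since each $\psi_i$ is permissible on $\gamma_k$, one has $s_k(\psi_i) \leq s_k(\theta)$. Moreover, because the summands of each $\psi_i$ are building blocks, which have constant slope along each bridge, condition~(2) of permissibility gives $s'_k(\psi_i) = s_{k+1}(\psi_i) \geq s_k(\theta)$. Taking the minimum, and using that the $\psi_i$ agree at $w_k$, the incoming slope of $\vartheta$ at $v_k$ is at most $s_k(\theta)$ while the outgoing slope at $w_k$ is at least $s_k(\theta)$. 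Since $2D$ has degree $2$ on $\gamma_k$, a direct computation of boundary slope flux gives
\[
\deg_{\gamma_k}\!\bigl(2D + \ddiv(\vartheta)\bigr) \;=\; 2 + s_k(\vartheta) - s'_k(\vartheta) \;\leq\; 2,
\]
so $\gamma_k \setminus \mathrm{Supp}(2D + \ddiv(\vartheta))$ has at most two connected components.

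Next I would repeat the Shape Lemma case analysis from Proposition~\ref{prop:threeshape}. Since the three classes do not all agree on the whole loop but tie at $w_k$, the only possibilities are (a) all three classes agree on one region containing $w_k$ and a single class achieves the minimum uniquely on the other region, or (b) two different pairs of classes tie on the two different regions, with $w_k$ a boundary point common to both. Case~(a) yields the conclusion immediately; configurations with fewer components either force all three classes to agree on $\gamma_k$ or already exhibit a class that is strictly smallest on an arc, and so are handled by the same reasoning.

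The crux, and the only place where the hypothesis on $\gamma_k$ is genuinely needed, is ruling out case~(b). In that configuration one class, say that of $\psi_1$, achieves the minimum on all of $\gamma_k$, and $w_k \in \mathrm{Supp}(2D + \ddiv(\vartheta))$. Proposition~\ref{Prop:NewThreeShape} is invoked at Step~6 of the algorithm in Section~\ref{Sec:Alg}, applied to non-departing permissible functions on a loop that is not the last loop of a block; in this regime every $\psi_i$ has slope exactly $s_k(\theta)$ along $\beta_k$, so the three functions actually agree in a full neighborhood of $w_k$ extending into $\beta_k$. Consequently $\vartheta$ coincides with $\psi_1$ up to an additive constant on this neighborhood, so $\ord_{w_k}(\vartheta) = \ord_{w_k}(\psi_1)$ and hence $w_k \in \mathrm{Supp}(2D + \ddiv(\psi_1))$. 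This violates clause~(ii) of Definition~\ref{Def:Skippable} for the unassigned permissible function $\psi_1$ on the non-skippable loop $\gamma_k$, yielding the required contradiction.
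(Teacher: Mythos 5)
Your proof is correct and follows essentially the same route as the paper: rerun the Shape Lemma case analysis from Proposition~\ref{prop:threeshape} on equivalence classes, and use non-skippability (clause~(ii) of Definition~\ref{Def:Skippable}, in the Step~6 context where no unassigned permissible function is departing or shiny-new) to guarantee that $2D+\ddiv(\psi)$ does not contain $w_k$, which rules out the bad two-region configuration. The paper simply compresses this by observing that the earlier proof only used the restrictions to $\gamma_k$ and that one fact, which you spell out explicitly.
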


\begin{proof}
The proof of Proposition~\ref{prop:threeshape} depends only on the restrictions of the functions to the loop $\gamma_k$, and the fact that, if $\psi$ is one of these functions, then $2D+\ddiv(\psi)$ does not contain $w_k$.  This latter fact is guaranteed by our assumption that $\gamma_k$ is not skippable.  The conclusion therefore continues to hold if we replace the functions with equivalence classes of functions.
\end{proof}

Combining Lemma~\ref{Lem:AtMostThree} and Proposition~\ref{Prop:NewThreeShape}, we see that there is an equivalence class of unassigned permissible functions that achieves the minimum uniquely at some point of $\gamma_k$.  We assign all of the permissible functions in this equivalence class to the loop, and adjust their coefficients upward slightly---enough so that they will never achieve the minimum on any loops to the right, but not so much that they do not achieve the minimum uniquely on this loop.  (Specifically, we increase the coefficient of these functions by $\frac{1}{3}m_k$, as in Lemma~\ref{Lem:VANotToRight}.)  Proceed to the next loop.

\medskip

\noindent \textbf{Proceeding to the Next Loop.}
If $\gamma_k$ is not the last in its block, then apply the loop subroutine on $\gamma_{k+1}$.  Otherwise, apply the following subroutine for proceeding to the next block.

\medskip

\noindent \textbf{Proceeding to the Next Block.}
After applying the loop subroutine to the last loop in a block, we will see that there is at most one equivalence class of unassigned permissible function, and these functions already achieve the minimum on the outgoing bridge, without any further adjustments of the coefficients.

If the current block is not the last one, then proceed to the first loop of the next block.  There are several new permissible functions.  Initialize the coefficient of each new permissible function so that it is equal to $\theta$ at the midpoint of the bridge between the blocks, and then apply the loop subroutine.  Otherwise, we are at the last loop $\gamma_g$, and proceed to the last bridge.

\medskip

\noindent \textbf{The Last Bridge.}
Just as we enumerated the possible ramification sequences at $w_0$, we may also enumerate the possible ramification sequences at $v_{g+1}$.  By an argument symmetric to that applied to the first bridge, we see that $s_{g+1} [1] = 1$ and $s_{g+1} [0] = 0$.  For every $\psi \in \cB$ with $s_{g+1} (\psi) = s_{g+1} [0] + s_{g+1} [1] = 1$, initialize the coefficient of $\psi$ so that it equals $\theta$ at the midpoint of the last bridge.  For every $\psi' \in \cB$ satisfying $s_{g+1} (\psi') = 2s_{g+1} [0] = 0$, initialize the coefficient of $\psi'$ so that it equals $\theta$ halfway between the midpoint and the rightmost endpoint of the last bridge.  Output the coefficients $\{ c_{\psi} \}$.

Note that all functions assigned to a given loop agree on that loop.  It is possible, however, for a function $\psi \in \cB$ to be assigned to the loop $\gamma_k$ while another function $\psi'$ that agrees with it on $\gamma_k$ is not.  This is the case, for example, if $\psi$ is departing, but $\psi'$ is not.  It is also the case if $\psi$ is permissible on $\gamma_k$, but $\psi'$ is not.

\subsection{Verifying the master template}

In this section, we prove Theorem~\ref{Thm:ExtremalConfig}, by verifying that the master template constructed via the algorithm presented in the previous section has the claimed properties.  We assume the hypotheses of the theorem; in particular, $\cA$ is a set of building blocks that satisfies property $(\ast)$ and $\cB$ is a collection of pairwise sums of functions in $\cA$ that satisfies $(\ast\ast)$ and $(\dagger\dagger)$.  We begin by checking that every function that is assigned to a loop or bridge achieves the minimum at some point of that loop or bridge.

\begin{lemma}
\label{Lem:NotToRight}
Suppose that $\psi$ is assigned to the loop $\gamma_k$ or the bridge $\beta_k$, and let $k' > k$ be the smallest value such that there is a function assigned to $\gamma_{k'}$.  Then $\psi$ does not achieve the minimum at any point to the right of $v_{k'}$.
\end{lemma}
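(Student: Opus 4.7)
The plan is to follow the structure of Lemma~\ref{Lem:VANotToRight}, splitting into cases according to how $\psi$ was assigned in the algorithm of Section~\ref{Sec:Alg}. The essential point is that every $\psi \in \cB$ is a sum of two building blocks, and building blocks have constant slope along each bridge (Definition~\ref{def:buildingblock}); hence the permissibility criterion of Section~\ref{Sec:Permissible} applies verbatim to $\psi$, and it governs which loops $\psi + c_\psi$ can achieve the minimum on.  In the first case, if $\psi$ was assigned to $\gamma_k$ in Step~4 as a departing permissible function, then by definition $\psi$ is not permissible on $\gamma_{k''}$ for any $k'' > k$.  Since $\psi$ has constant slope along each bridge and $\theta$ has a prescribed nearly constant slope on each bridge, permissibility is precisely the slope condition needed for $\psi + c_\psi$ to meet $\theta$ on a later loop, and the failure of permissibility combined with the coefficient gap installed in Step~4 prevents $\psi + c_\psi$ from achieving the minimum on any subsequent bridge or loop.

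In the second case, $\psi$ was assigned to $\gamma_k$ in Steps~2 or~6, or else to a bridge.  In each of these sub-cases the algorithm first equates $\psi + c_\psi$ with the competing unassigned permissible classes at a distinguished point $v^\ast$, and then bumps $c_\psi$ upward by $\tfrac{1}{3} m_k$ (in Step~6; in Step~2 the strict minimum already follows from the slope drop of new functions at $v_k$; in the bridge case an analogous bridge-length perturbation appears).  Because every subsequent coefficient adjustment in the algorithm is upward-only, the running pointwise minimum is non-decreasing in the step index, so the final $\theta$ still satisfies $\psi(v^\ast) + c_\psi \geq \theta(v^\ast) + \tfrac{1}{3} m_k$.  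For any $v \in \gamma_{k''}$ with $k'' \geq k'$, constant slope along bridges together with permissibility on the intervening loops ensures that $\psi + c_\psi - \theta$ is nondecreasing along each bridge between $v^\ast$ and $v$ and drops by at most a uniform constant times $m_t$ across each intervening loop $\gamma_t$.  The total decrease is therefore bounded by a constant multiple of $\sum_{t > k} m_t$, which by admissibility (Definition~\ref{Def:Admissible}) is strictly less than $\tfrac{1}{3} m_k$.  Hence $\psi + c_\psi > \theta$ at $v$, which is to say that $\psi$ does not achieve the minimum at $v$.

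The hard part will be the bookkeeping across the branches of the loop subroutine: verifying that the initial gap of $\tfrac{1}{3} m_k$ at $v^\ast$ really is present regardless of whether $\psi$ was initialized through new permissible functions, through the re-initialization of Step~3, or through the bridge subroutines that begin each block, and confirming that property $(\ast\ast)$ rules out the pathological scenario in which an unassigned function agreeing with $\psi$ on $\Gamma_{[1,k]}$ forces $\psi$ to re-achieve the minimum farther to the right.  The classification of switching loops and switching bridges from Section~\ref{Sec:Loops}, together with the bound $\rho \leq 2$, ensures that only finitely many combinatorial possibilities arise at each loop, so the implicit constant in the per-loop bound depends only on $r$ and $d$, and admissibility of edge lengths takes care of the rest.
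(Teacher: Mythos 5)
Your proposal is correct and follows the paper's own route: the published proof simply transfers the argument of Lemma~\ref{Lem:VANotToRight}, citing the definition of permissibility and the fact that building blocks have constant slopes along bridges, exactly as you do (a departing function is not permissible further right; otherwise the $\tfrac{1}{3}m_k$ bump at assignment, together with admissible edge lengths, dominates the total variation over the much shorter later edges). The one claim to tighten is that $\psi + c_\psi - \theta$ is nondecreasing along every later bridge: in the general (non vertex-avoiding) case $\psi$ may have strictly smaller slope than $\theta$ on some later bridge, and it is condition~(3) of the permissibility definition --- a strictly larger slope on an intervening bridge, which by admissibility is much longer than everything after it --- that compensates; this is precisely the ``definition of permissibility'' ingredient the paper's one-line proof is invoking.
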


\begin{proof}
This follows by the same argument as Lemma~\ref{Lem:VANotToRight}, using the definition of permissibilty and the fact that the building blocks have constant slopes along bridges.
\end{proof}

We now show that, on each non-skippable loop, there is a point where the function assigned to that loop achieves the minimum, and the other functions that achieve the minimum agree on the loop.  The analogous statement about functions assigned to bridges will follow by a counting argument similar to that of Section~\ref{Sec:VertexAvoiding}, which we will see in the proof of Theorem~\ref{Thm:ExtremalConfig}.

\begin{lemma}
\label{Lem:Config}
Suppose that $\psi$ is assigned to the loop $\gamma_k$.  Then there is a point $v \in \gamma_k$ where $\psi$ achieves the minimum.  Moreover, any other function $\psi'$ that achieves the minimum at $v$ agrees with $\psi$ on $\gamma_k$.
\end{lemma}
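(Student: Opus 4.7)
The plan is to proceed by case analysis on the step of the loop subroutine in which $\psi$ was assigned to $\gamma_k$, exhibiting in each case a specific point $v \in \gamma_k$ where $\psi$ achieves the minimum and then verifying that any other function $\psi'$ achieving the minimum at $v$ must agree with $\psi$ on $\gamma_k$. This parallels the strategy of Lemma~\ref{Lem:VAConfig} from the vertex avoiding case, with the extra wrinkle that one must track equivalence classes of functions on $\gamma_k$ rather than individual functions.

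First, I would handle the case where $\psi$ was assigned in Step~2, so every unassigned permissible function on $\gamma_k$ is new and lies in a single equivalence class containing $\psi$. The algorithm sets coefficients so these new functions agree with the minimum of the other (already-assigned or non-new) functions at $v_k$, while the sum of slopes of any new function along the two non-bridge edges meeting $v_k$ equals $s_k(\theta)-1$, strictly smaller than that sum for any non-new function. Hence on a short initial subinterval of one edge emerging from $v_k$, the new functions in the assigned class are the unique minimizers; pick $v$ in that subinterval. By Lemma~\ref{Lem:OneNewFunction} any other function achieving the minimum at $v$ is a new permissible function in the same class as $\psi$, hence agrees with $\psi$ on $\gamma_k$.

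Next, when $\psi$ was assigned in Step~4 as a departing function, the algorithm adjusts the coefficients of the non-departing permissible functions upward so that the departing functions achieve the minimum throughout $\gamma_k$; take $v = w_k$ or any interior point. By Lemma~\ref{Lem:OneNewFunction}, any two departing permissible functions on $\gamma_k$ agree on $\gamma_k$, so every departing function at $v$ agrees with $\psi$. In the remaining case, $\psi$ was assigned in Step~6, and by Lemma~\ref{Lem:AtMostThree} together with Proposition~\ref{Prop:NewThreeShape} the equivalence class of $\psi$ achieves the strict minimum, among the at most three candidate equivalence classes, at some point $v^* \in \gamma_k$. After the upward perturbation of size $\tfrac{1}{3}m_k$, the class of $\psi$ still achieves a strict minimum at a nearby point $v$, and any other function achieving the minimum at $v$ either lies in the equivalence class of $\psi$, or is not among the currently unassigned permissible functions on $\gamma_k$.

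The main obstacle, common to Cases~2 and~3, is ruling out that a function $\psi'$ outside the current pool of candidate minimizers achieves the minimum at $v$. There are two subcases. If $\psi'$ was previously assigned to an earlier loop $\gamma_{k'}$, then the algorithm's invariants (coefficients are adjusted only upward and in progressively smaller increments controlled by the edge-length hierarchy of Definition~\ref{Def:Admissible}) yield a quantitative version of Lemma~\ref{Lem:NotToRight} which pushes $\psi' + c_{\psi'}$ above $\theta$ by more than the $\tfrac{1}{3} m_k$ margin by the time we reach $\gamma_k$. If instead $\psi'$ is non-permissible on $\gamma_k$, then some slope of $\psi'$ violates the permissibility inequalities on a bridge adjacent to $\gamma_k$ or to the right of it without being compensated as in condition~(3) of permissibility, and integrating this slope deficit along the comparatively long bridges forces $\psi'(v) + c_{\psi'}$ to exceed $\theta(v)$. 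Carefully stating and bundling these quantitative estimates, together with invoking property~$(\ast\ast)$ to ensure that finite initial coefficients were available for all the relevant $\psi'$, is the principal technical work.
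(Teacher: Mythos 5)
Your overall skeleton matches the paper's intent: the paper proves this lemma by deferring to the argument of Lemma~\ref{Lem:VAConfig}, i.e.\ the departing case (the departing class is arranged to be the unique minimizer on the whole loop, and departing functions agree by Lemma~\ref{Lem:OneNewFunction}), the new-function case (which is justified inside Step~2 of the algorithm by the slope count $s_k(\theta)-1$ at $v_k$, exactly as you say), and the Proposition~\ref{Prop:NewThreeShape} case. However, in the last case your argument has a genuine gap at the decisive step. You write that after the upward perturbation by $\frac{1}{3}m_k$ the class of $\psi$ ``still achieves a strict minimum at a nearby point,'' but this is asserted, not proved: Proposition~\ref{Prop:NewThreeShape} only gives a point where the assigned class is strictly smaller than the other (at most two) classes, and a priori that strict inequality could be by a margin smaller than $\frac{1}{3}m_k$, in which case the bump would destroy it. The missing ingredient -- which is precisely the content of the proof of Lemma~\ref{Lem:VAConfig} that the paper reuses here -- is a quantitative spacing fact: after Step~3 the competing unassigned permissible classes all take the same value at $w_k$ and are piecewise linear with integer slopes on $\gamma_k$, so at any point whose distance from $w_k$ is a half-integer multiple of $m_k$, two functions from distinct classes differ by an integer multiple of $\frac{1}{2}m_k$. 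One then checks that the point of unique minimality can be taken among such points, so the gap is at least $\frac{1}{2}m_k > \frac{1}{3}m_k$ and survives the adjustment \emph{at the same point}. Without this integrality argument the perturbation step of your Case~3 does not close.

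A secondary remark: the issue you single out as ``the principal technical work'' -- ruling out minimizers outside the current candidate pool -- is not where the real content lies. Previously assigned functions are excluded by Lemma~\ref{Lem:NotToRight} (this is exactly what the $\frac{1}{3}m_k$ bump is calibrated for, given the edge-length hierarchy of Definition~\ref{Def:Admissible}), and non-permissible or not-yet-relevant functions are excluded by the invariant stated in Step~3 of the loop subroutine, that unassigned permissible functions remain strictly below all other functions at every point of $\gamma_k$; no appeal to property $(\ast\ast)$ is needed for this lemma. So your plan would be correct once the $\frac{1}{2}m_k$ spacing argument is supplied, and that argument, rather than the exclusion estimates, is the heart of the proof.
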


\begin{proof}
This follows by the same argument as Lemma~\ref{Lem:VAConfig}.
\end{proof}

The remainder of this section is devoted to showing that every function in $\cB$ is assigned to some loop or bridge.  Ultimately, this is a counting argument similar to that of Section~\ref{sec:counting}, but the details are more subtle in the general case.  We proceed via a sequence of lemmas and propositions.  First, let $b$ be the largest integer such that $\lambda'_b$ contains exactly 7 boxes in its first two rows, and $\lambda_b$ does not.  Similarly, let $b'$ be the largest integer such that $\lambda'_{b'}$ contains exactly 8 boxes in the union of its first and third row, and $\lambda_{b'}$ does not.  Note that, in the case where $D$ is vertex avoiding, $b$ and $b'$ are as defined in Section~\ref{Sec:VertexAvoiding}.

The following two propositions are analogues of Lemmas~\ref{lem:onenew} and~\ref{lem:nonew}, respectively, with shiny functions in place of new permissible functions.

\begin{proposition}
\label{Prop:Lingering}
If $\gamma_k$ is skippable and not the first loop in a block, then no permissible function is shiny on $\gamma_k$.
\end{proposition}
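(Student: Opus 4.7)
The plan is to argue by contradiction: assume $\eta = \phi + \phi' \in \cB$ is a shiny permissible function on $\gamma_k$, and derive a contradiction from property $(\dagger\dagger)$. Since $\gamma_k$ is not the first loop of a block, Corollary~\ref{cor:equivtonew} ensures that $\eta$ agrees on $\gamma_k$ with every new permissible function, and Lemma~\ref{Lem:OneNewFunction} tells us all new permissible functions on $\gamma_k$ agree with each other. By Proposition~\ref{Prop:Shiny}, after possibly swapping $\phi$ and $\phi'$, the restriction of $D + \ddiv(\phi)$ to $\gamma_k \setminus \{v_k\}$ has degree $0$, and $s_{k+1}(\phi) > s_k[\tau_k(\phi)]$.

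To activate $(\dagger\dagger)$, I would verify its two hypotheses: (a) there exist permissible functions in $\cB$ that agree on $\gamma_k$ but have different slopes along $\beta_k$, and (b) either $\gamma_k$ is a switching loop or $\beta_k$ is a switching bridge. Let $\psi = \varphi + \varphi' \in \cB$ be the unassigned permissible function witnessing skippability, so one of the conditions (i), (ii), (iii) of Definition~\ref{Def:Skippable} holds. The skippability requirement that not all unassigned permissible functions on $\gamma_k$ are new and in one equivalence class, combined with Corollary~\ref{cor:equivtonew} applied to $\psi$, already produces candidate pairs of permissible functions whose slope indices $\tau_k$ differ; by Lemma~\ref{Lem:SameSlope} (applied to the corresponding pairs of building blocks), any such pair that agrees on $\gamma_k$ must have different $s_{k+1}$.

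Next I would split into the three skippability cases. In case (i), a chip of $2D + \ddiv(\psi)$ at a point of $\gamma_k$ whose distance to $w_k$ is not a multiple of $m_k$ forces both $\varphi$ and $\varphi'$ to contribute chips to the interior of $\gamma_k$; comparing with the classification of switching loops in Figure~\ref{Fig:SwitchingLoops} produces $\gamma_k$ as a switching loop of the required type. In case (ii), $w_k \in 2D + \ddiv(\psi)$ means $D + \ddiv(\varphi)$ or $D + \ddiv(\varphi')$ contains $w_k$, so replacing the corresponding summand by a building block that sends its chip at $w_k$ down the bridge produces a second permissible function agreeing with $\psi$ on $\gamma_k$ but with larger $s_{k+1}$; by Lemma~\ref{Lem:Rho2} this bridge must then have multiplicity at least $2$, hence $\beta_k$ is a switching bridge. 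In case (iii), $\varphi'$ has two chips on $\gamma_k \setminus \{v_k\}$, and a parallel construction combined with Proposition~\ref{Prop:Shiny} applied to $\eta$ (which only puts one chip of each summand off $v_k$) shows the slope indices $\tau_k(\varphi')$ and $\tau_k(\phi')$ must differ, forcing $\gamma_k$ to be a switching loop via Figure~\ref{Fig:SwitchingLoops}.

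With hypotheses (a) and (b) in hand, property $(\dagger\dagger)$ rules out the existence of the shiny function $\eta$, the desired contradiction. The main obstacle I expect is the bookkeeping in case (ii), where one must simultaneously juggle all four building blocks $\varphi, \varphi', \phi, \phi'$ and use property $(\ast)$ together with the classification of switching configurations to line up their slope indices in the unique allowed switching pattern. The other two cases are more rigid because a chip in the interior of $\gamma_k$ (cases (i) and (iii)) pins down the combinatorial type of the switching loop directly.
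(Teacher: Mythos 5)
There is a genuine gap: your plan presupposes exactly the hard part of the argument. The hypothesis of $(\dagger\dagger)$ that you label (a) — two permissible functions in $\cB$ that agree on $\gamma_k$ but have \emph{different} slopes on $\beta_k$ — cannot be extracted from skippability the way you suggest. By Definition~\ref{Def:Skippable} a skippable loop has no unassigned departing permissible functions, and Lemma~\ref{Lem:Skippable1} shows every unassigned permissible function on it has slope exactly $s'_k(\theta)$ on $\beta_k$; since permissibility already forces slope at least $s_k(\theta)$ there, any pair you can readily exhibit (in particular the skippability witness $\varphi+\varphi'$ and the shiny $\phi+\phi'$, whose bridge slopes the paper shows are equal) agrees on $\gamma_k$ with the \emph{same} slope on $\beta_k$. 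The needed partner with larger slope is a departing function agreeing with the shiny one, and producing it is precisely the content of the paper's proof: one must first show that $2D+\ddiv(\phi+\phi')$ contains $w_k$ and that $\gamma_k$ switches the relevant slope, so that property $(\ast\ast)$ applies; this in turn requires comparing the summands of the shiny function with those of the skippability witness via Proposition~\ref{Prop:Shiny}, Lemmas~\ref{Lem:Skippable1} and~\ref{Lem:Skippable2}, property $(\ast)$, and the chip computation $[D+\ddiv(\varphi')]|_{\gamma_k}=v_k+w_k$, yielding $\tau_k(\varphi)\leq\tau_k(\varphi')\leq\tau'_k(\varphi')<\tau'_k(\varphi)$ and hence switching by Definition~\ref{def:buildingblock}(i). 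Your invocation of Lemma~\ref{Lem:SameSlope} does not substitute for this.

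Your case-by-case verification of hypothesis (b) is also not sound. Condition (i) of skippability (a chip at distance from $w_k$ that is not a multiple of $m_k$) occurs at ordinary lingering loops, which are never switching, so it cannot by itself ``produce $\gamma_k$ as a switching loop''; the classification in Figure~\ref{Fig:SwitchingLoops} describes slope vectors of switching loops but does not detect switching from the location of a single chip. In case (ii), Lemma~\ref{Lem:Rho2} runs in the wrong direction: it deduces agreement on $\gamma_k$ from \emph{equal} slopes on $\beta_k$ when $\mu(\beta_k)\leq 1$; it does not let you conclude $\mu(\beta_k)\geq 2$ from two agreeing functions with different slopes — that configuration is exactly what happens at a multiplicity-one decreasing bridge (subcase 1b), and even multiplicity $2$ would not force $\beta_k$ to be a switching bridge. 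Moreover the ``replacement'' building block must exist in $\cA$ with the sum lying in $\cB$, which is what $(\ast\ast)$ is designed to guarantee, not something you may assume. In case (iii), two distinct building blocks having different $\tau_k$ is unremarkable and does not force switching; only an increase $\tau_k<\tau'_k$ along a single building block does. Finally, note that the paper's own contradiction is not through $(\dagger\dagger)$ at all: after establishing switching it applies $(\ast\ast)$ and contradicts the algorithm's history (an unassigned departing function would make $\gamma_k$ non-skippable, while an assigned one would force the shiny function to have been assigned earlier). One could in principle finish with $(\dagger\dagger)$ after doing all of that work, but the shortcut you propose does not close the argument.
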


\begin{proposition}
\label{Prop:GammaA}
The loops $\gamma_z, \gamma_b, \gamma_{b'},$ and $\gamma_{z'+2}$ are all non-skippable, and no permissible function is shiny on any of them.
\end{proposition}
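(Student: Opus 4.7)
The plan is to prove Proposition~\ref{Prop:GammaA} by generalizing the partition-enumeration argument of Lemma~\ref{lem:nonew}. For each of the four loops $\gamma_z$, $\gamma_b$, $\gamma_{b'}$, $\gamma_{z'+2}$, the defining condition (e.g., that $\lambda'_z$ contains exactly six boxes in its first two rows while $\lambda_z$ does not) tightly constrains the slope vector $s_{k+1}(\Sigma)$ and its jump from $s_k(\Sigma)$. I would treat each loop as a separate case following a uniform template: enumerate partitions consistent with the defining condition, use this enumeration to rule out shiny permissible functions, and finally exhibit a departing permissible function forcing non-skippability.

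For the enumeration step, on $\gamma_z$ the six boxes in the first two rows of $\lambda'_z$ form one of four shapes, corresponding to the partitions of $6$ into parts of size at most $2$; each shape produces one of the four candidate slope vectors already tabulated in Lemma~\ref{lem:nonew}. The analogous enumerations apply to $\gamma_b$, $\gamma_{b'}$, $\gamma_{z'+2}$ for row-counts $7$, $8$, and $10$ respectively. In each case the columns with nonzero entries in the relevant rows sit in a specific set of column indices, so the possible values of $s_{k+1}[i]$ are tightly controlled.

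For the shiny analysis, if $\psi = \varphi + \varphi' \in \cB$ is permissible and shiny on $\gamma_k$, then writing $\tau = \tau_k(\varphi)$ and $\tau' = \tau_k(\varphi')$, we have $s_k[\tau] + s_k[\tau'] < s_k(\theta) \leq s_{k+1}(\varphi + \varphi')$. By Definition~\ref{def:buildingblock}, the outgoing slope $s_{k+1}(\varphi)$ can exceed $s_k[\tau]$ only by the switching increment at $\gamma_k$ or $\beta_k$, so the gap $s_k(\theta) - s_k[\tau] - s_k[\tau']$ is tightly bounded. Running through each partition case shows this gap cannot be absorbed, producing a contradiction that directly generalizes "no $j$ with $s_{z+1}(\varphi_{ij}) = 4$" from Lemma~\ref{lem:nonew}. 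For non-skippability, the defining property that $z$, $b$, $b'$, or $z'+2$ is the \emph{largest} index at which the specified row-count has not yet been attained forces the partition to grow by one box in a specific row at that loop, producing an unassigned departing permissible sum $\varphi + \varphi'$; this witnesses non-skippability via the second bullet of Definition~\ref{Def:Skippable} (the presence of an unassigned departing permissible function prevents the loop from being skippable).

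The main obstacle will be careful bookkeeping when a switching loop or switching bridge appears at or near one of these four loops. In such configurations $\tau_k(\varphi)$ and $\tau'_k(\varphi)$ may differ, and one must invoke property $(\ast)$ to control the equivalence class of $\varphi$ on $\gamma_k$ and properties $(\ast\ast)$, $(\dagger\dagger)$ to rule out alternative configurations that might otherwise produce an additional shiny function. Handling the boundary subcase in which one of these loops is the first loop of its own block (the analogue of $b = z+1$ in Lemma~\ref{lem:nonew}) will require separate treatment, as in the vertex-avoiding setting. The bound $\rho \leq 2$ keeps the total multiplicity of switching loops and bridges bounded, ensuring that the case analysis remains finite.
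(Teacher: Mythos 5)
Your plan for non-skippability does not work as stated. You propose to exhibit an unassigned departing permissible function on each of $\gamma_z, \gamma_b, \gamma_{b'}, \gamma_{z'+2}$, but the defining property of these loops (the partition gains a box in the specified rows, i.e.\ there is an $i$ with $s_k[i]<s'_k[i]$) does not produce such a function. Its existence would require a complementary index $j$ with $s'_k[i]+s'_k[j]=s'_k(\theta)+1$, which is neither part of nor implied by the choice of $z,b,b',z'$; for instance, for the tableau with rows $1\text{--}7$, $8\text{--}14$, $15\text{--}21$ one has $b=z+1$ and there is no departing permissible function on $\gamma_b$ at all (this is exactly the boundary subcase you defer but never resolve). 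Moreover, even when a departing permissible sum exists, you must show it is \emph{unassigned} when the algorithm reaches $\gamma_k$: a function permissible on $\gamma_k$ may have been assigned to an earlier loop, and controlling this requires an argument about the algorithm's history (and risks circularity with the cohort count in Theorem~\ref{Thm:ExtremalConfig}, which cites Proposition~\ref{Prop:GammaA}). The paper avoids all of this by arguing by contradiction: assuming $\gamma_k$ skippable, Lemmas~\ref{Lem:Skippable1} and~\ref{Lem:Skippable2} (whose proofs use property $(\ast\ast)$ and the assignment history) pin down the slopes of the offending function $\varphi+\varphi'$, forcing $\tau'_k(\varphi)=i$ and hence the existence of $j$ with $s'_k[i]+s'_k[j]=s'_k(\theta)$, contradicting the choice of these four loops.

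The shiny half also has a genuine gap: your key estimate, that $s_{k+1}(\varphi)$ can exceed $s_k[\tau_k(\varphi)]$ only by a switching increment at $\gamma_k$ or $\beta_k$, is false. The excess typically arises because $D+\ddiv(\varphi)$ has no point on $\gamma_k\smallsetminus\{v_k\}$ (equivalently the slope vector itself jumps, $s'_k[i]=s_k[i]+1$, which is precisely what happens at these four loops), and requires no switching; indeed shiny functions occur throughout the vertex-avoiding case, where there is no switching whatsoever. Consequently "the gap cannot be absorbed" does not follow from your bound, and the partition enumeration alone does not close the argument. The paper's proof instead uses Proposition~\ref{Prop:Shiny} to force one summand to have degree-zero restriction to $\gamma_k\smallsetminus\{v_k\}$, then the same arithmetic fact (no $j$ with $s'_k[i]+s'_k[j]=s'_k(\theta)$) to force $D+\ddiv(\varphi')$ to contain $w_k$, so that the shiny function agrees with $2\varphi$ and $s'_k(\theta)=s_{k+1}(2\varphi)-1$ is odd; this places $\gamma_k$ in the middle block with $s'_k(\theta)=3$, and the contradiction is finally obtained from the first-row versus second/third-row dichotomy built into the definitions of $b$ and $b'$. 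These steps, in particular the parity argument and the row-dichotomy, are missing from your outline and are not recoverable from the switching-increment bound you propose.
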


The proofs of these propositions rely heavily on property $(\ast\ast)$, and use the following two technical lemmas about permissible functions on skippable loops.

\begin{lemma}
\label{Lem:Skippable1}
Let $\gamma_k$ be a skippable loop and let $\psi = \varphi + \varphi' \in \cB$ be an unassigned permissible function on $\gamma_k$.  Then
\begin{enumerate}
\item  $s_{k+1} (\psi) = s'_k (\theta)$,
\item  $s_{k+1} (\varphi) = s'_k [\tau'_k (\varphi)]$, and
\item  $s_{k+1} (\varphi') = s'_k [\tau'_k (\varphi')]$.
\end{enumerate}
\end{lemma}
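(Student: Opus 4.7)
The plan is to first establish (1), and then to derive (2) and (3) by contradiction using property $(\ast\ast)$.

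For (1), I would exploit the fact that $\psi$ cannot be a departing permissible function on $\gamma_k$: the definition of skippability forbids unassigned departing permissible functions on $\gamma_k$, so the unassigned function $\psi$ must be non-departing, giving $s_{k+1}(\psi) \leq s_{k+1}(\theta)$. Combined with the permissibility bound $s_{k+1}(\psi) \geq s_k(\theta)$, this forces $\gamma_k$ not to be the last loop of a block (otherwise $s_{k+1}(\theta) = s_k(\theta) - 1$ would contradict the pair of inequalities). Hence $s'_k(\theta) = s_k(\theta) = s_{k+1}(\theta)$, and squeezing yields $s_{k+1}(\psi) = s'_k(\theta)$, proving (1).

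For (2), I argue by contradiction. Suppose $s_{k+1}(\varphi) < s'_k[\tau'_k(\varphi)]$. The definition of $\tau'_k(\varphi)$ then forces $d_{w_k}(\varphi) > 0$, so $2D + \ddiv(\psi)$ contains $w_k$. Property $(\ast\ast)$ now produces $\psi^* \in \cB$ agreeing with $\psi$ on $\Gamma_{[1,k]}$ with $s_{k+1}(\psi^*) > s_{k+1}(\psi) = s_{k+1}(\theta)$. In particular $\psi^*$ is permissible on $\gamma_k$ (by agreement of slopes with $\psi$ on $\Gamma_{[1,k]}$, and the strict inequality supplying conditions (2) and (3)) and departing there. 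Skippability of $\gamma_k$ rules out $\psi^*$ being unassigned at $\gamma_k$, so it was assigned at some earlier loop $\gamma_{k_0}$ with $k_0 < k$. It cannot have been assigned as a departing function at $\gamma_{k_0}$ via Step 4, since a function departing at $\gamma_{k_0}$ fails permissibility on every later loop, contradicting permissibility of $\psi^*$ on $\gamma_k$. So $\psi^*$ was assigned via Step 2 or Step 6, each of which assigns all permissible functions in the equivalence class of $\psi^*$ on $\gamma_{k_0}$. Since $\psi$ agrees with $\psi^*$ on $\gamma_{k_0}$ (via their agreement on $\Gamma_{[1,k]}$), it lies in this equivalence class. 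Thus it suffices to verify that $\psi$ is permissible on $\gamma_{k_0}$, which would force $\psi$ to have been assigned there, contradicting the hypothesis that $\psi$ is unassigned.

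To verify permissibility of $\psi$ on $\gamma_{k_0}$: conditions (1) and (2) are inherited from $\psi^*$ via the agreement on $\Gamma_{[1,k_0+1]}$. For condition (3), any witness $\ell > k_0$ with $s_\ell(\psi) < s_\ell(\theta)$ splits into two subcases. For $k_0 < \ell \leq k$, the equality $s_\ell(\psi) = s_\ell(\psi^*)$ lets us invoke condition (3) for $\psi^*$ on $\gamma_{k_0}$, which produces an intermediate $k'$ with $s_{k'}(\psi) = s_{k'}(\psi^*) > s_{k'}(\theta)$; for $\ell > k$, condition (3) for $\psi$ on $\gamma_k$ itself supplies the intermediate index. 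This completes the contradiction, so $s_{k+1}(\varphi) \geq s'_k[\tau'_k(\varphi)]$. Statement (3) follows from the symmetric argument applied to $\varphi'$, and the case $s_{k+1}(\varphi) > s'_k[\tau'_k(\varphi)]$ is excluded by invoking (1): the equality $s_{k+1}(\varphi) + s_{k+1}(\varphi') = s'_k(\theta)$ would then force the strict inequality $s_{k+1}(\varphi') < s'_k[\tau'_k(\varphi')]$ (once the analogous lower bound $s_{k+1}(\varphi') \geq s'_k[\tau'_k(\varphi')]$ is established by the same argument), closing the loop.

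The main obstacle will be the careful bookkeeping required to verify condition (3) of permissibility for $\psi$ on $\gamma_{k_0}$, where one must combine the permissibility data of $\psi^*$ on $\gamma_{k_0}$ with that of $\psi$ on $\gamma_k$ by leveraging their agreement on $\Gamma_{[1,k]}$.
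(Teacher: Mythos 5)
Your treatment of (1) and the core contradiction for (2)--(3) follows the paper's own route: on a skippable loop there is no unassigned departing permissible function, so permissibility squeezes $s_{k+1}(\psi)$ to $s'_k(\theta)$; and, assuming $s_{k+1}(\varphi) < s'_k[\tau'_k(\varphi)]$, you invoke property $(\ast\ast)$ to produce $\psi^*$ agreeing with $\psi$ on $\Gamma_{[1,k]}$ with larger slope on $\beta_k$, then rule out $\psi^*$ being unassigned (it would be departing, contradicting skippability) or assigned to an earlier loop (in which case $\psi$, agreeing with it and permissible there, would have been assigned as well). The extra bookkeeping you supply is correct and fills in steps the paper leaves implicit: the strict inequality forces $d_{w_k}(\varphi)>0$, so $2D+\ddiv(\psi)$ contains $w_k$ and the hypothesis of $(\ast\ast)$ is met; and your verification that $\psi$ is permissible on $\gamma_{k_0}$, splitting condition (3) between the range $\ell\le k$ (inherited from $\psi^*$) and $\ell>k$ (inherited from permissibility of $\psi$ on $\gamma_k$), is exactly what is needed to justify ``$\psi$ would have been assigned there too.''

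The step that does not hold up is your final maneuver to exclude $s_{k+1}(\varphi) > s'_k[\tau'_k(\varphi)]$. From (1) you know only that $s_{k+1}(\varphi)+s_{k+1}(\varphi') = s'_k(\theta)$; nothing relates $s'_k(\theta)$ to $s'_k[\tau'_k(\varphi)]+s'_k[\tau'_k(\varphi')]$, so the claim that the sum identity together with $s_{k+1}(\varphi) > s'_k[\tau'_k(\varphi)]$ forces $s_{k+1}(\varphi') < s'_k[\tau'_k(\varphi')]$ is a non sequitur. The paper never argues this case: it takes the inequality $s_{k+1}(\varphi) \le s'_k[\tau'_k(\varphi)]$ for granted, and this is what actually holds for the building blocks in the collections $\cA$ used later. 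Their slopes along $\beta_k$ are entries of the slope vectors: either $s_{k+1}(\varphi) = s'_k(\varphi) = s'_k[i]$ for some $i$, whence $\tau'_k(\varphi)\ge i$ and $s'_k[\tau'_k(\varphi)]\ge s_{k+1}(\varphi)$; or the bridge slope is $s_{k+1}[i] < s'_k[i]$ and the deficit appears as a bend at $w_k$, so $s'_k(\varphi)+d_{w_k}(\varphi)\ge s'_k[i]$, again giving $s'_k[\tau'_k(\varphi)]\ge s'_k[i] > s_{k+1}(\varphi)$. Thus the only possible failure of equality in (2) is the strict inequality your $(\ast\ast)$ argument already handles; replace the sum-identity step with this observation (or with an explicit hypothesis on $\cA$ to that effect) and your proof is complete.
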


\begin{proof}
By definition, no unassigned permissible function is departing on a skippable loop.  Therefore,
\[
s_{k+1} (\psi) = s'_k (\theta).
\]
It remains to show that $s_{k+1}(\varphi) = s'_k[\tau'_k(\varphi)]$.  Suppose not.  Then
\[
s_{k+1} (\varphi) < s'_k [\tau'_k (\varphi)],
\]
and, by property $(\ast\ast)$, there is a function $\psi' \in \cB$ that agrees with $\psi$ on $\Gamma_{[1,k]}$, with the property that
\[
s_{k+1} (\psi') > s_{k+1} (\psi) = s'_k (\theta).
\]
We claim that this is impossible.  Indeed, if $\psi'$ is unassigned on $\gamma_k$, then it would be a departing function, contradicting the hypothesis that $\gamma_k$ is skippable.  On the other hand, if $\psi'$ is assigned to a previous loop then $\psi$ would have been assigned to that loop as well.

We conclude that $s_{k+1} (\varphi) = s'_k [\tau'_k (\varphi)]$, and, similarly,  $s_{k+1} (\varphi') = s'_k [\tau'_k (\varphi')]$, as required.
\end{proof}

\begin{lemma}
\label{Lem:Skippable2}
Let $\gamma_k$ be a skippable loop, and suppose there is a building block $\phi$ such that $s_{k+1} (\phi) > s_k [\tau_k (\phi)]$.  Then there is a permissible function $\psi = \varphi + \varphi' \in \cB$ such that
\begin{enumerate}
\item  $s_{k+1} (\varphi) = s_k (\varphi) + 1$,
\item  $s_{k+1} (\varphi') = s_k (\varphi') - 1$, and
\item $D + \ddiv(\varphi')$ contains either $w_k$, a point of $\gamma_k$ whose distance from $w_k$ is not an integer multiple of $m_k$, or two points of $\gamma_k \smallsetminus \{ v_k \}$.
\end{enumerate}
\end{lemma}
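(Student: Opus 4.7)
The plan is to produce the required $\psi = \varphi + \varphi' \in \cB$ by carefully combining the information from skippability of $\gamma_k$ with the existence of the building block $\phi$. Since $\gamma_k$ is skippable, Definition~\ref{Def:Skippable} supplies an unassigned permissible function $\psi_0 = \phi_1 + \phi_2 \in \cB$ satisfying one of three divisor conditions on $\gamma_k$; Lemma~\ref{Lem:Skippable1} then forces $s_{k+1}(\phi_j) = s'_k[\tau'_k(\phi_j)]$ for $j=1,2$ and $s_{k+1}(\psi_0) = s'_k(\theta)$. Meanwhile, the hypothesis $s_{k+1}(\phi) > s_k[\tau_k(\phi)]$, together with the converse direction of Remark~\ref{Rem:Shiny}, shows that $D + \ddiv(\phi)$ restricted to $\gamma_k \smallsetminus \{v_k\}$ has degree zero and that the slope of $\phi$ strictly increases across $\gamma_k$.

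My first step would be to pass from $\phi$ to a building block $\varphi$ with the same slope index sequence but whose divisor on $\gamma_k$ has no chip at $v_k$, shifting any chips of $D + \ddiv(\phi)$ off $v_k$ along the loop; this $\varphi$ then satisfies $s_{k+1}(\varphi) = s_k(\varphi) + 1$, and $D + \ddiv(\varphi)$ has a single chip on $\gamma_k$ located away from $v_k$. Next I would identify the partner $\varphi'$ by matching $s_{k+1}(\varphi')$ to the value forced by $s_{k+1}(\varphi) + s_{k+1}(\varphi') = s'_k(\theta)$; property $(\ast)$ pins down $\varphi'$ up to agreement on $\gamma_k$ with whichever of $\phi_1, \phi_2$ has the matching outgoing slope index. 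Having fixed this candidate pair, taking $\psi = \varphi + \varphi'$ yields condition (i) directly, and condition (ii) follows by subtracting the identity $s_{k+1}(\varphi) = s_k(\varphi) + 1$ from the slope balance $s_{k+1}(\psi) = s_k(\psi)$, which holds because $\psi$ is non-departing (skippability) and agrees with $\psi_0$ on $\gamma_k$.

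For (iii), I would examine each of the three skippability cases separately. In case (iii) of Definition~\ref{Def:Skippable}, the choice of $\varphi'$ inherits the required two chips on $\gamma_k \smallsetminus \{v_k\}$ directly from the matching $\phi_j$. In cases (i) and (ii), the chip of $2D + \ddiv(\psi_0)$ at $w_k$ (respectively at a point whose distance from $w_k$ is not an integer multiple of $m_k$) cannot come from $\varphi$, since by construction the unique chip of $D + \ddiv(\varphi)$ on the loop lies neither at $w_k$ nor at a non-lattice point; it must therefore lie on the divisor of $\varphi'$, giving the required nontrivial structure.

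The main obstacle will be the bookkeeping needed to guarantee membership in $\cB$, rather than merely in the larger set of pairwise sums in $\cA$. Property $(\ast\ast)$ is the essential mechanism for this: whenever a naive candidate pair fails to lie in $\cB$, it provides a substitute in $\cB$ with strictly larger outgoing slope on $\beta_k$ that agrees with the candidate on $\Gamma_{[1,k]}$, and we may transfer the divisor analysis to this substitute. Combined with property $(\ast)$ on $\cA$ and Lemma~\ref{Lem:SameSlope} to control which building blocks agree on $\gamma_k$, together with the classification of switching loops and bridges from Section~\ref{Sec:Loops} to rule out pathological slope index jumps, this should close the argument; the trickiest sub-case is likely when both summands of the skippability witness have strictly decreasing slope indices across $\gamma_k$, forcing several rounds of $(\ast\ast)$ substitution.
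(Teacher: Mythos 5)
There is a genuine gap. The heart of the paper's proof is not the construction of a new pair at all: the required $\psi=\varphi+\varphi'$ is the skippability witness itself from Definition~\ref{Def:Skippable}, and the entire difficulty is to rule out the case in which \emph{neither} summand of that witness has smaller slope on $\beta_k$ than on $\beta_{k-1}$. (If one summand does decrease, conditions (i)--(iii) fall out immediately from permissibility, $s_{k+1}(\psi)\geq s_k(\psi)$, and the fact that a building block's slope increases by at most $1$ across a loop.) In the bad case, the hypothesis on $\phi$ is used to show that the point of $D$ on $\gamma_k$ lies at an integer multiple of $m_k$ from $w_k$, so the witness's extra chip must sit at $w_k$; one then shows $\gamma_k$ switches slope $\tau_k(\phi)$ and derives a contradiction from property $(\ast\ast)$ together with the witness being unassigned and non-departing. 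Your proposal never engages with this case: you assume you can attribute the witness's chip at $w_k$ (or at a non-lattice point) to a slope-matched partner $\varphi'$ with $s_{k+1}(\varphi')=s_k(\varphi')-1$, but in the bad case no summand of the witness decreases, agreement on $\gamma_k$ via $(\ast)$ does not control the chips at $v_k$ and $w_k$, and the chip can sit on a summand whose slope is constant across the loop, so your attribution argument for (iii) breaks down exactly where the lemma has content. Relatedly, your route to (ii) via the ``slope balance'' $s_{k+1}(\psi)=s_k(\psi)$ is unjustified: permissibility plus non-departing gives only $s_{k+1}(\psi)\geq s_k(\psi)$ (indeed $s_{k+1}(\psi)=s'_k(\theta)\geq s_k(\theta)\geq s_k(\psi)$), not equality.

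A second gap is membership in $\cB$. Your constructed pair is built from a modification of $\phi$ (which need not even lie in $\cA$, since $\cA$ is a fixed finite set chosen case-by-case in Section~\ref{Sec:Generic}) together with a slope-matched partner, and you propose to repair membership using property $(\ast\ast)$. But $(\ast\ast)$ is a hypothesis about functions \emph{already in} $\cB$; it does not convert outside pairs into members of $\cB$, and the substitute it provides has strictly larger slope on $\beta_k$, which is incompatible with the slopes required in (i)--(ii). In the paper $(\ast\ast)$ plays the opposite role: it produces a would-be departing function agreeing with the witness on $\Gamma_{[1,k]}$, contradicting either skippability or the witness being unassigned. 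Finally, your intermediate construction is internally inconsistent: if $s_{k+1}(\varphi)=s_k(\varphi)+1$ then $(D+\ddiv\varphi)|_{\gamma_k}$ has degree $0$, so $\varphi$ cannot carry ``a single chip on $\gamma_k$ located away from $v_k$,'' and merely moving a chip off $v_k$ within the loop does not change the bridge slopes, so it does not produce the slope increase you claim.
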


\begin{proof}
Since $\gamma_k$ is skippable, there is an unassigned permissible function $\psi = \varphi + \varphi' \in \cB$ such that $2D+\ddiv(\psi)$ contains either $w_k$, a point whose distance from $w_k$ is a non-integer multiple of $m_k$, or two points of $\gamma_k \smallsetminus \{ v_k \}$.

We first consider the case where one of the two functions $\varphi, \varphi'$ has smaller slope on $\beta_k$ than on $\beta_{k-1}$.  Suppose without loss of generality that
\[
s_{k+1} (\varphi') < s_k (\varphi').
\]
Since $\psi$ is permissible, we must have $s_{k+1} (\psi) \geq s_k (\psi)$.  It follows that
\[
s_{k+1} (\varphi) > s_k (\varphi).
\]
Since the slope of a building block can increase by at most 1 from one bridge to the next, we therefore see that
\[
s_{k+1} (\varphi) = s_k (\varphi) + 1 \mbox{ and } s_{k+1} (\varphi') = s_k (\varphi') -1.
\]
It follows that the restriction of $D + \ddiv(\varphi)$ to $\gamma_k$ is zero, and $D + \ddiv(\varphi')$ contains either $w_k$, a point of $\gamma_k$ whose distance from $w_k$ is not an integer multiple of $m_k$, or two points of $\gamma_k \smallsetminus \{ v_k \}$.

To complete the proof, we will rule out the possibility that neither function $\varphi, \varphi'$ has smaller slope on $\beta_k$ than on $\beta_{k-1}$.  Assume that
\[
s_{k+1} (\varphi) \geq s_k (\varphi) \mbox{ and } s_{k+1} (\varphi') \geq s_k (\varphi').
\]
Note that this immediately rules out the possibility that $D + \ddiv(\varphi')$ contains more than one point of $\gamma_k$.  We will reach a contradiction by showing that $\gamma_k$ is a switching loop and then applying property $(\ast\ast)$.  As a first step in this direction, we claim that $2D+ \ddiv \psi$ contains $w_k$.  Since $s_{k+1} (\phi) > s_k [\tau_k (\phi)]$, the restriction of $D+\ddiv(\phi)$ to $\gamma_k \smallsetminus \{ v_k \}$ has degree 0 (see, e.g., Remark~\ref{Rem:Shiny}).  It follows that the shortest distance from the point of $D$ on $\gamma_k$ to $w_k$ is an integer multiple of $m_k$.  Combined with our assumption that the slopes of $\varphi$ and $\varphi'$ do not decrease from $\beta_{k-1}$ to $\beta_k$, we see that the shortest distances from $w_k$ to the point of $D+\ddiv(\varphi)$ and the point of $D+\ddiv(\varphi')$ on $\gamma_k$ are integer multiples of $m_k$ as well.  Therefore, $2D+\ddiv(\psi)$ cannot contain a point whose shortest distance from $w_k$ is a non-integer multiple of $m_k$, and must therefore contain $w_k$, as claimed.

Without loss of generality, we assume that $D+\ddiv(\varphi)$ contains $w_k$.  Since the restriction of $D+\ddiv(\varphi)$ to $\gamma_k$ has degree at most 1, we see that this restriction is equal to $w_k$.  It follows that
\[
s_{k+1} (\varphi) = s_k (\varphi),
\]
and $\varphi$ agrees with $\phi$ on $\gamma_k$, but
\[
s_{k+1} (\varphi) = s_{k+1}(\phi) - 1.
\]

We now show that $\gamma_k$ switches slope $\tau_k (\phi)$.  By assumption, $s_{k+1} (\phi) > s_k [\tau_k (\phi)]$.  Combining this with the two equations above, we see that
\[
s_k (\varphi) \geq s_k [\tau_k (\phi)].
\]
This implies $\tau_k (\varphi) \geq \tau_k (\phi)$.  By Lemma~\ref{Lem:Skippable1}, however, we have
\[
s_{k+1} (\varphi) = s'_k [\tau'_k (\varphi)],
\]
so $\tau'_k (\varphi) < \tau'_k (\phi)$.  Combining these inequalities with the fact that slope index sequences are nondecreasing, we see that
\[
\tau_k (\phi) \leq \tau_k (\varphi) \leq \tau'_k (\varphi) < \tau'_k (\phi).
\]
Since $\phi$ is a building block, by Definition~\ref{def:buildingblock}(i), it follows that $\gamma_k$ switches slope $\tau_k (\phi)$.

We now apply property $(\ast\ast)$ again, in a similar way to the beginning of the proof.  Specifically, there is a function $\psi' \in \cB$ that agrees with $\psi$ on $\Gamma_{[1,k]}$, with the property that
\[
s_{k+1} (\psi') > s_{k+1} (\psi) = s'_k (\theta).
\]
If $\psi'$ has not been assigned to a previous loop, then it is an unassigned departing function on $\gamma_k$, and for this reason $\gamma_k$ is not skippable.  If $\psi'$ has been assigned to a previous loop, then since $\psi$ agrees with $\psi'$ on this previous loop, we see that $\psi$ must have been assigned to this previous loop as well.  We therefore arrive at a contradiction, which rules out the possibility that neither $\varphi$ nor $\varphi'$ has smaller slope on $\beta_k$ than on $\beta_{k-1}$ and completes the proof of the lemma.
\end{proof}

\begin{proof}[Proof of Proposition~\ref{Prop:Lingering}]
By Proposition~\ref{Prop:Shiny}, any shiny permissible function on $\gamma_k$ has a summand $\phi \in \cA$ satsifying $s_{k+1} (\phi) > s_k [\tau_k (\phi)]$.  We will assume that such a function $\phi$ exists, and consider unassigned permissible functions of the form $\phi + \phi' \in \cB$.  Since $\phi$ exists, by Lemma~\ref{Lem:Skippable2}, there is an unassigned permissible function $\varphi + \varphi' \in \cB$ such that
\[
s_{k+1} (\varphi) = s_k (\varphi) + 1, \ \ \ \   s_{k+1} (\varphi') = s_k (\varphi') -1,
\]
and $D + \ddiv(\varphi')$ contains either $w_k$, or a point of $\gamma_k$ whose distance from $w_k$ is not an integer multiple of $m_k$, or two points of $\gamma_k \smallsetminus \{ v_k \}$.

Both $D+\ddiv(\phi)$ and $D+\ddiv(\varphi)$ contain no points of $\gamma_k \smallsetminus \{ v_k \}$.  Any two building blocks with this property agree, and have the same slope along the bridge $\beta_k$.  It follows that $s_{k+1} (\varphi) = s_{k+1} (\phi)$.  By Lemma~\ref{Lem:Skippable1}, we also have
\[
s_{k+1} (\varphi) + s_{k+1} (\varphi') = s_{k+1} (\phi) + s_{k+1} (\phi') = s'_k (\theta).
\]
Subtracting, we see that
\[
s_{k+1} (\varphi') = s_{k+1} (\phi').
\]
Moreover, by Lemma~\ref{Lem:Skippable1}, we have
\[
s_{k+1} (\varphi') = s'_k [\tau'_k (\varphi')] \mbox{ and } s_{k+1} (\phi') = s'_k [\tau'_k (\phi')],
\]
so $\tau'_k (\varphi') = \tau'_k (\phi')$, which, by property $(\ast)$, implies that $\varphi'$ and $\phi'$ agree on $\gamma_k$.

Since $\varphi'$ and $\phi'$ agree on $\gamma_k$ and their slopes on $\beta_k$ are equal, the difference between $\ddiv \varphi'$ and $\ddiv \phi'$ on $\gamma_k$ must be supported at $v_k$.  Now the restriction of $D+\ddiv(\varphi')$ to $\gamma_k$ has degree 2 and, since $\phi + \phi'$ is shiny, the restriction of $2D+\ddiv(\phi+\phi')$ to $\gamma_k \smallsetminus \{ v_k \}$ has degree at most 1.  It follows that $D+\ddiv(\varphi')$ contains $v_k$.  This forces $D+\ddiv(\varphi')$ to be supported at points whose shortest distance to $w_k$ is an integer multiple of $m_k$.  Recall, however, that $\varphi'$ was chosen so that $D + \ddiv(\varphi')$ contains either $w_k$ or a point of $\gamma_k$ whose distance from $w_k$ is not an integer multiple of $m_k$.  We therefore see that $D+\ddiv(\varphi')$ contains $w_k$, and hence
\[
\left[ D+\ddiv(\varphi') \right] \vert_{\gamma_k} = v_k + w_k .
\]
Thus, $\varphi'$ agrees with $\varphi$ and $\phi$ on $\gamma_k$, but since $D+\ddiv(\varphi')$ contains $v_k$ and $D+\ddiv(\varphi)$ does not, we have
\[
s_k (\varphi) < s_k (\varphi').
\]
It follows that $\tau_k (\varphi) \leq \tau_k (\varphi')$.  Similarly, since $D+\ddiv(\varphi')$ contains $w_k$ and $D+\ddiv(\varphi)$ does not, we have
\[
s_{k+1} (\varphi) > s_{k+1} (\varphi').
\]
By Lemma~\ref{Lem:Skippable1}, however, we have
\[
s_{k+1} (\varphi) = s'_k [\tau'_k (\varphi)] \mbox{ and } s_{k+1} (\varphi') = s'_k [\tau'_k (\varphi')]
\]
Thus, $\tau'_k (\varphi) > \tau'_k (\varphi')$.  Combining these inequalities with the fact that slope index sequences are nondecreasing, we see that
\[
\tau_k (\varphi) \leq \tau_k (\varphi') \leq \tau'_k (\varphi') < \tau'_k (\varphi).
\]
Since $\varphi$ is a building block, by Definition~\ref{def:buildingblock}(i), it follows that $\gamma_k$ switches slope $\tau_k (\varphi)$.

By property $(\ast\ast)$, there is a function $\psi' \in \cB$ that agrees with $\phi+\phi'$ on $\Gamma_{[1,k]}$, with the property that
\[
s_{k+1} (\psi') > s_{k+1} (\phi + \phi') = s'_k (\theta).
\]
If $\psi'$ has not been assigned to a previous loop, then it is an unassigned departing function on $\gamma_k$, and for this reason $\gamma_k$ is not skippable.  If $\psi'$ has been assigned to a previous loop, then since $\phi + \phi'$ agrees with $\psi'$ on this previous loop, we see that $\phi+\phi'$ must have been assigned to this previous loop as well.  This contradicts our assumption that $\phi+\phi'$ was unassigned.  We conclude that there are no shiny functions on $\gamma_k$, as required.
\end{proof}

\begin{proof}[Proof of Proposition~\ref{Prop:GammaA}]
Let $k \in \{ z,b,b',z'+2 \}$, and note that the choice of these four loops $\gamma_k$ guarantees that there is an index $i$ such that $s_k [i] < s'_k [i]$.  We must show that $\gamma_k$ is not skippable, and that no permissible function is shiny on $\gamma_k$.  We begin by showing that $\gamma_k$ is not skippable.

Suppose $\gamma_k$ is skippable.  Then there is an unassigned permissible function $\psi = \varphi + \varphi'$ on $\gamma_k$ such that $2D + \ddiv(\varphi + \varphi')$ contains either $w_k$ or a point whose distance from $w_k$ is not an integer multiple of $m_k$, or $D + \ddiv(\varphi')$ contains two points of $\gamma_k \smallsetminus \{ v_k \}$.  By Lemma~\ref{Lem:Skippable1}, we have
\[
s_{k+1} (\varphi) + s_{k+1}(\varphi') = s'_k [\tau'_k (\varphi)] + s'_k [\tau'_k (\varphi')] = s'_k (\theta).
\]
Moreover, by Lemma~\ref{Lem:Skippable2}, we have
\[
s_{k+1} (\varphi) = s_k (\varphi) + 1,
\]
hence the restriction of $D+\ddiv(\varphi)$ to $\gamma_k$ must have degree 0, which forces $\tau'_k (\varphi) = i$.  As in Lemma~\ref{lem:nonew}, the choice of $z$, $b$, $b'$, and $z'$ ensures that there does not exist a value of $j$ such that $s'_k [i] + s'_k [j] = s'_k (\theta)$.  Thus, $\gamma_k$ cannot be a skippable loop.

It remains to show that there are no shiny permissible functions on $\gamma_k$.  Let $\varphi$ be a function satisfying $s_k (\varphi) = s_k [i]$ and $s_{k+1} (\varphi) = s'_k [i]$.  Any function that is shiny on $\gamma_k$ must agree with a function of the form $\psi = \varphi + \varphi'$ with $s_{k+1} (\varphi + \varphi') = s'_k (\theta)$.  Again, because there does not exist a value of $j$ such that $s'_k [i] + s'_k [j] = s'_k (\theta)$, we see that $s_{k+1}(\varphi')$ cannot equal $s'_k[j]$ for any $j$.  This means that
\[
s_{k+1} (\varphi') < s'_k [\tau'_k(\varphi')],
\]
and hence, $D+\ddiv(\varphi')$ contains $w_k$.  Since $\psi$ is shiny, the restriction of $2D+\ddiv(\psi)$ to $\gamma_k \smallsetminus \{ v_k \}$ has degree at most 1, and hence this restriction is exactly $w_k$.  It follows that $\psi$ agrees with $2\varphi$ on $\gamma_k$, and
\[
s_{k+1} (\psi) = s'_k (\theta) = s_{k+1} (2\varphi) -1.
\]
This implies that $s'_k (\theta)$ is odd, so $\gamma_k$ is contained in the middle block, and $s'_k(\theta) = 3$.  However, $b$ and $b'$ were chosen so that $s_{k+1}(\varphi)$ is at most 1 if the box contained in $\lambda'_k$ but not $\lambda_k$ is in the first row, and $s_{k+1}(\varphi)$ is at least 3 if this box is contained in the second or third row.
\end{proof}

We now analyze the output of the algorithm.  We first note the following.

\begin{lemma}
\label{Lem:OneEqLoop}
Suppose that $\varphi + \varphi'$ and $\phi + \phi'$ are assigned to the same loop $\gamma_k$.  Then, after possibly reordering the summands, $\varphi$ agrees with $\phi$ and $\varphi'$ agrees with $\phi'$ on $\gamma_k$.  Moreover,
\[
\tau_k (\varphi) = \tau_k (\phi) \mbox{ and } \tau_k (\varphi') = \tau_k (\phi'),
\]
\[
\tau'_k (\varphi) = \tau'_k (\phi) \mbox{ and } \tau'_k (\varphi') = \tau'_k (\phi').
\]
\end{lemma}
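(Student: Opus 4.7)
The plan is to deduce the conclusion in two phases: first, I would show that the two sums must agree on $\gamma_k$; second, I would upgrade this sum-level agreement to summand-level agreement after a suitable reordering, from which the slope-index equalities follow via Remark~\ref{Rem:Equivalent}.

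For the first phase, I would inspect the three points in the loop subroutine where more than one function can be assigned to the same loop $\gamma_k$, namely Steps 2, 4, and~6. In Step~2, all assigned functions are new permissible functions in a single equivalence class by construction. In Step~6, the entire equivalence class selected via Proposition~\ref{Prop:NewThreeShape} is assigned at once, so agreement is immediate. In Step~4, Lemma~\ref{Lem:OneNewFunction} shows that any two departing functions agree on $\gamma_k$ whenever $\gamma_k$ is not the last loop of a block; the block-terminal case is separate and is handled by observing that the coefficients of the departing functions were all initialized in Step~3 (or at the start of the block) so that they share a common value at $w_k$, forcing the restrictions to $\gamma_k$ to differ by a constant. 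Thus $\varphi + \varphi'$ and $\phi + \phi'$ agree on $\gamma_k$ in every case.

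For the second phase, I would split into three subcases according to the type of assignment. If both sums are shiny on $\gamma_k$, then Lemma~\ref{Lem:ShinyEq} gives, after reordering, $\tau_k(\varphi) = \tau_k(\phi)$ and $\tau_k(\varphi') = \tau_k(\phi')$, and its proof in fact shows that the summands agree on $\gamma_k$. If both sums are departing, then exactly one summand of each, say $\varphi$ and $\phi$ after reordering, has strictly increasing slope from $\beta_{k-1}$ to $\beta_k$; by Remark~\ref{Rem:Shiny} the restriction of $D + \ddiv(\cdot)$ to $\gamma_k \setminus \{v_k\}$ has degree zero for both, their outgoing slopes on $\beta_k$ must coincide (since the sum slopes do), and property $(\ast)$ then forces $\varphi$ to agree with $\phi$ on $\gamma_k$; agreement of $\varphi'$ with $\phi'$ is then inherited from the sum-level agreement. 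The remaining case, in which both sums are assigned in Step~6 but are neither shiny nor departing, is treated by the same kind of slope bookkeeping used in Lemma~\ref{Lem:Skippable1}, deducing $s_{k+1}(\varphi) = s'_k[\tau'_k(\varphi)]$ for each summand and matching outgoing slope indices, after which property $(\ast)$ again yields the desired agreement. The slope-index equalities $\tau_k(\varphi) = \tau_k(\phi)$, $\tau_k(\varphi') = \tau_k(\phi')$, $\tau'_k(\varphi) = \tau'_k(\phi)$, $\tau'_k(\varphi') = \tau'_k(\phi')$ are then immediate from Remark~\ref{Rem:Equivalent}.

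The main obstacle is the departing subcase on loops that are the last of a block, or that sit at switching loops or adjacent to switching bridges, where the catalogue of possible equivalence classes is richest. The delicate point is ruling out cross-class assignments, where $\varphi + \varphi'$ and $\phi + \phi'$ would be departing on $\gamma_k$ but lie in genuinely different equivalence classes; this is exactly the kind of obstruction that properties $(\ast\ast)$ and $(\dagger\dagger)$ are designed to preclude, and I expect the argument to reduce, as in Lemma~\ref{Lem:Skippable2}, to producing via $(\ast\ast)$ a function that would have forced one of the two sums to be assigned to an earlier loop, contradicting the hypothesis.
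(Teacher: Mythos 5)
Your proposal does not follow the paper's argument and, as written, it has genuine gaps. The paper's proof is short and uniform: sum-level agreement on $\gamma_k$ is taken as given by the construction of the algorithm, and the whole content is a degree count. Agreement of $\varphi+\varphi'$ with $\phi+\phi'$ means the restrictions of $2D+\ddiv(\varphi+\varphi')$ and $2D+\ddiv(\phi+\phi')$ to $\gamma_k \smallsetminus \{v_k,w_k\}$ coincide; if the four divisors $D+\ddiv(\varphi)$, $D+\ddiv(\varphi')$, $D+\ddiv(\phi)$, $D+\ddiv(\phi')$ each have degree at most $1$ there, matching chips forces $\varphi$ to agree with $\phi$ (after reordering), hence $\varphi'$ with $\phi'$, and Remark~\ref{Rem:Equivalent} gives the slope-index equalities; if one of them has degree $2$ there, then by Definition~\ref{Def:Skippable} either $\gamma_k$ would be skippable (impossible, since functions were assigned to it) or the assigned functions are departing, in which case the restrictions of the sums to $\gamma_k$ have degree at most $1$, a contradiction. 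None of the machinery you invoke in phase 2 (Lemma~\ref{Lem:ShinyEq}, Lemma~\ref{Lem:Skippable1}, properties $(\ast\ast)$ and $(\dagger\dagger)$) is needed.

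The concrete problems with your version are these. In phase 1, for departing functions at the last loop of a block you argue that because their coefficients were adjusted to share a common value at $w_k$, ``the restrictions to $\gamma_k$ differ by a constant''; this is a non sequitur, since agreement in the sense of the paper is a statement about where the functions bend on the loop (equivalently, about the restriction of $2D+\ddiv(\cdot)$ to $\gamma_k$), and equality of values at the single point $w_k$ implies nothing of the sort. In the departing subcase of phase 2, the claim that exactly one summand of each sum has strictly increasing slope from $\beta_{k-1}$ to $\beta_k$ is false in general (a departing sum can gain slope $2$ across $\gamma_k$, with both summands gaining $1$), and the step ``their outgoing slopes on $\beta_k$ must coincide (since the sum slopes do)'' does not follow: equality of the two sums' slopes on $\beta_k$ does not pair off the summands' slopes, and property $(\ast)$ requires equality of the indices $\tau'_k$, which you have not derived. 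Finally, the case you yourself identify as the main obstacle (departing assignments at block ends or near switching loops/bridges) is only conjectured to follow from $(\ast\ast)$ and $(\dagger\dagger)$ ``as in Lemma~\ref{Lem:Skippable2}''; no argument is given, whereas in the paper this situation is disposed of by the same two-line degree computation as everything else.
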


\begin{proof}
Since both functions are assigned to the loop $\gamma_k$, we see that $\varphi + \varphi'$ agrees with $\phi + \phi'$ on $\gamma_k$.  It suffices to show that $\varphi$ agrees with $\phi$ on $\gamma_k$.  Indeed, if $\varphi$ agrees with $\phi$ on $\gamma_k$, then the fact that $\varphi + \varphi'$ agrees with $\phi + \phi'$ will then imply that $\varphi'$ agrees with $\phi'$, and Remark~\ref{Rem:Equivalent} shows the equality of slope indices.

The fact that $\varphi + \varphi'$ agrees with $\phi + \phi'$ is equivalent to the statement that the restrictions of $2D + \ddiv(\varphi + \varphi')$ and $2D + \ddiv(\phi + \phi')$ to $\gamma_k \smallsetminus \{ v_k , w_k \}$ are the same.  Thus, if $D+\ddiv(\varphi)$ contains a point $v \in \gamma_k \smallsetminus \{ v_k , w_k \}$, then one of $D+\ddiv(\phi)$ or $D+\ddiv(\phi')$ must contain $v$ as well.  If $v$ is the only point of $\gamma_k \smallsetminus \{ v_k , w_k \}$ contained in both $D+\ddiv(\varphi)$ and $D+\ddiv(\phi)$, then $\varphi$ and $\phi$ agree on $\gamma_k$.  It follows that, if the restrictions of $D+\ddiv(\varphi), D+\ddiv(\varphi'), D+\ddiv(\phi)$, and $D+\ddiv(\phi')$ to $\gamma_k \smallsetminus \{ v_k , w_k \}$ all have degree at most 1, then the conclusion holds.

The other possibility is that the restriction of one of these 4 divisors to $\gamma_k \smallsetminus \{ v_k , w_k \}$ has degree 2.  However, this implies that either $\gamma_k$ is skippable, or the assigned functions are departing.  If both $\varphi + \varphi'$ and $\phi + \phi'$ are departing, then the restrictions to $\gamma_k$ of $2D+\ddiv(\varphi+\varphi')$ and $2D+\ddiv(\phi+\phi')$ have degree at most 1, contradicting our assumption that one of them has degree 2.  Since $\varphi + \varphi'$ and $\phi + \phi'$ are assigned to $\gamma_k$, the loop cannot be skippable.
\end{proof}

Recall that, in the vertex avoiding case, Corollary~\ref{cor:counting} says that the number of permissible functions on a block is exactly 1 more than the number of non-lingering loops in that block.  This was shown by counting the number of permissible functions on the first loop of the block, and then observing that there is at most one new permissible function on every non-lingering loop.  Since there are no new permissible functions on lingering loops, the same observation shows that the number of unassigned permissible functions never increases, when proceeding from one loop $\gamma_k$ to the next in a block, and that it decreases by one when $k \in \{z,b,b',z'+2 \}$.

In the general case, we may assign several functions to the same loop, so instead of counting individual unassigned permissible functions, we count collections of such functions, which we call \emph{cohorts} and define as follows.

\begin{definition}
We say that a function $\psi \in \cB$ \emph{leaves its shine} on the last loop $\gamma_k$ satisfying:
\begin{enumerate}
\item  $\psi$ is shiny or new on $\gamma_k$, and
\item  $\psi$ is not assigned to a loop $\gamma_{\ell}$ with $\ell < k$.
\end{enumerate}
Two unassigned permissible functions are in the same \emph{cohort} on $\gamma_\ell$ if they both leave their shine and agree on some loop $\gamma_k$, with $k \leq \ell$.
\end{definition}

\noindent Every function is new on the first loop where it is permissible.  Then, eventually, there is a loop where it leaves its shine and joins a cohort.  On any loop that is not the first loop in a block, all shiny or new functions agree, so at most one new cohort is created. This is one way in which new cohorts behave like the new permissible functions in Section~\ref{sec:counting}; there may be several new cohorts on the first loop of a block, and then at most one new cohort on each subsequent loop.  Furthermore, there are no shiny functions on $\gamma_k$, for $k \in \{z,b,b',z'+2 \}$, so no new cohorts are formed on these loops.  The next proposition says that, on each loop where a new cohort is created, and also on the special loops $\gamma_k$, for $k \in \{z,b,b',z'+2 \}$, an entire cohort is assigned.  This is essential for the proof of Theorem~\ref{Thm:ExtremalConfig}, where we bound the number of cohorts on each loop while moving from left to right across a block to show that every function in $\cB$ is assigned to some loop or bridge.

\begin{remark}
On a non-skippable loop where no new cohort is created, the functions that are assigned typically form a proper subset of a cohort.  In this way, cohorts may lose members as we move from left to right across a block, but the number of cohorts on each loop behaves just as predictably as the number of unassigned permissible functions in the vertex avoiding case.

In the vertex avoiding case, each cohort consists of a single unassigned permissible function, and the argument for counting cohorts in the proof of Theorem~\ref{Thm:ExtremalConfig} specializes to the argument for counting permissible functions in Section~\ref{sec:counting}.
\end{remark}

\begin{proposition}
\label{Prop:Counting}
Suppose that some function leaves its shine on $\gamma_\ell$, or $\ell \in \{ z,b,b',z'+2 \}$.  If $\psi \in \cB$ is assigned to $\gamma_{\ell}$, then any other function in the same cohort on $\gamma_{\ell}$ is also assigned to $\gamma_{\ell}$.
\end{proposition}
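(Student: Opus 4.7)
The plan is to show two things: first, that every assignment step in the loop subroutine assigns a complete equivalence class of functions (where equivalence means agreement on $\gamma_\ell$), and second, that any cohort on $\gamma_\ell$ is contained in a single equivalence class. Together these imply that once one member of a cohort is assigned, the entire cohort is assigned.

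For the first part, I would go through the cases of the loop subroutine. Step 2 assigns a full equivalence class of new permissible functions by construction. In Step 4, all departing functions assigned to $\gamma_\ell$ agree on $\gamma_\ell$: when $\gamma_\ell$ is not the last loop in a block this is Lemma~\ref{Lem:OneNewFunction}, and otherwise the combination with Lemma~\ref{Lem:AtMostThree} gives the same conclusion. Step 6 invokes Proposition~\ref{Prop:NewThreeShape} to select a single equivalence class. So in every case, assignment occurs in equivalence classes on $\gamma_\ell$.

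For the second part, the key is to propagate agreement. Let $\psi_1, \psi_2 \in \cB$ be two functions in the same cohort on $\gamma_\ell$, agreeing on some $\gamma_k$ with $k \leq \ell$, both having left their shine by $\gamma_\ell$. Write $\psi_i = \varphi_i + \varphi'_i$. By an argument parallel to Lemma~\ref{Lem:OneEqLoop}, after reordering we may assume $\varphi_1$ agrees with $\varphi_2$ and $\varphi'_1$ with $\varphi'_2$ on $\gamma_k$; by Remark~\ref{Rem:Equivalent} these pairs of building blocks have matching slope indices at $\gamma_k$. I would then induct on $j$ from $k$ up to $\ell$, showing that the slope indices remain matched on each intermediate loop. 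Since the slope index sequence of a building block is nondecreasing and increases only at switching loops or bridges (Definition~\ref{def:buildingblock}), and since neither $\psi_1$ nor $\psi_2$ is shiny or newly permissible between $\gamma_k$ and $\gamma_\ell$ (as follows from the definition of leaving shine and the fact that neither has been assigned in between), property $(\ast\ast)$ rules out the scenarios in which the slope indices of corresponding summands could diverge. Lemma~\ref{Lem:SameSlope} then yields agreement on each intermediate loop, and in particular on $\gamma_\ell$.

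The case $\ell \in \{z, b, b', z'+2\}$ is handled separately via Proposition~\ref{Prop:GammaA}: there are no shiny permissible functions on these loops, so no new cohorts are created, and any cohort present on $\gamma_\ell$ must have formed strictly earlier and been preserved by the propagation argument above. The main obstacle I anticipate is verifying the propagation across skippable loops, where the delicate structure identified in Lemmas~\ref{Lem:Skippable1} and~\ref{Lem:Skippable2} must be used to control how the slope indices of the individual summands $\varphi_i$ and $\varphi'_i$ evolve; in particular, one must check that a summand of $\psi_1$ cannot undergo a slope-index jump at a switching loop or bridge without the corresponding summand of $\psi_2$ doing the same, which is precisely where property $(\ast\ast)$ applied to $\psi_1$ and $\psi_2$ does the heavy lifting.
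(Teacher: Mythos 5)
There is a genuine gap, and it sits exactly at the point the proposition's hypothesis is designed to handle. Your first claim --- that every assignment step assigns a complete equivalence class with respect to agreement on $\gamma_\ell$ --- is false for Step 4 of the algorithm: when a departing function is assigned, only the \emph{departing} functions are assigned, while a non-departing permissible function that agrees with them on $\gamma_\ell$ is left unassigned (the paper notes this explicitly at the end of the algorithm section: ``It is possible, however, for a function $\psi \in \cB$ to be assigned to the loop $\gamma_k$ while another function $\psi'$ that agrees with it on $\gamma_k$ is not. This is the case, for example, if $\psi$ is departing, but $\psi'$ is not.''). This is precisely the dangerous scenario for Proposition~\ref{Prop:Counting}: two cohort-mates whose slope indices never diverge can still split into a departing $\psi$ (assigned) and a non-departing $\psi'$ (not assigned). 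Your argument, which never invokes the hypothesis ``some function leaves its shine on $\gamma_\ell$,'' would therefore prove an unconditional statement --- and that unconditional statement is false; cohorts really do lose members at such loops. The paper's proof is a contrapositive: assuming $\psi$ is assigned and its cohort-mate $\psi'$ is not, it shows the hypothesis must fail, i.e.\ $\ell \notin \{z,b,b',z'+2\}$ and no function leaves its shine on $\gamma_\ell$. The heart of the argument is the analysis of the departing-versus-non-departing case: one shows that any candidate shiny function $\eta+\eta'$ on $\gamma_\ell$ either would have to agree with a departing function (ruled out by a slope computation) or remains shiny on $\gamma_{\ell+1}$ and so does not leave its shine on $\gamma_\ell$. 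Nothing in your proposal replaces this analysis, and your treatment of $\ell \in \{z,b,b',z'+2\}$ via Proposition~\ref{Prop:GammaA} does not touch it either, since the departing/non-departing split can occur with no shiny functions anywhere in sight.

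A secondary inaccuracy: in your propagation step you assert that property $(\ast\ast)$ prevents the slope indices of corresponding summands from diverging between $\gamma_{k_0}$ and $\gamma_\ell$. That is not the mechanism. Slope indices of building blocks can and do jump at switching loops and bridges (Definition~\ref{def:buildingblock}); what the paper proves (Claim 1 of Lemma~\ref{lem:slopeindexchange}) is that if the index \emph{sets} of $\psi$ and $\psi'$ diverge at some loop $\gamma_k$ with $k_0 \le k \le \ell$, then one of the two becomes departing and is assigned at $\gamma_k$, which removes it from the cohort --- contradicting that both are unassigned cohort-mates on $\gamma_\ell$. Divergence at $\gamma_{\ell+1}$ is a separate case, and there Lemma~\ref{lem:slopeindexchange} again yields the contrapositive conclusion ($\ell \notin \{z,b,b',z'+2\}$ and no shine is left on $\gamma_\ell$) rather than agreement. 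So even the ``easy'' half of your plan needs Lemma~\ref{lem:slopeindexchange}'s assignment dichotomy, not $(\ast\ast)$, and the hard half --- the departing case --- is missing entirely.
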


\noindent To prove Proposition~\ref{Prop:Counting}, we will use $(\dagger\dagger)$ along with a preliminary lemma.

\begin{lemma}  \label{lem:slopeindexchange}
Let $\psi = \varphi + \varphi'$ and $\psi' = \phi + \phi'$ be functions that leave their shine and agree on $\gamma_{k_0}$.  Suppose $k$ is the smallest integer such that  $k \geq k_0$ and the sets of slope indices $\{ \tau_{k+1}(\varphi), \tau_{k+1}(\varphi') \}$ and $\{ \tau_{k+1}(\phi), \tau_{k+1}(\phi') \}$ are different.  Suppose, furthermore, that $\psi$ and $\psi'$ are unassigned and permissible when we arrive at $\gamma_k$.  Then
\begin{enumerate}
\item either $\gamma_k$ is a switching loop or $\beta_{k}$ is a switching bridge,
\item $k \notin \{ z,b,b',z'+2 \}$, and
\item either $\psi$ or $\psi'$ is assigned to $\gamma_k$.
\end{enumerate}
Furthermore, if one of $\psi$, $\psi'$ is assigned to $\gamma_k$ and the other is not, then no function is shiny on $\gamma_k$.
\end{lemma}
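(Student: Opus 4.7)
The plan is to trace slope-index sequences from $\gamma_{k_0}$ to $\gamma_k$, identify the switching phenomenon forced by the asymmetry, and then use the algorithm's case analysis combined with property $(\dagger\dagger)$ to force one of $\psi, \psi'$ to be assigned.

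First, for (1), since $\psi$ and $\psi'$ agree on $\gamma_{k_0}$, Remark~\ref{Rem:Equivalent} lets me reorder $\phi, \phi'$ so that $\tau_{k_0}(\varphi) = \tau_{k_0}(\phi)$ and $\tau_{k_0}(\varphi') = \tau_{k_0}(\phi')$. By induction on $j$, using the minimality of $k$ together with Lemma~\ref{Lem:SameSlope}, the equalities $\tau_j(\varphi) = \tau_j(\phi)$ and $\tau_j(\varphi') = \tau_j(\phi')$ propagate through $k_0 \leq j \leq k$. Since the multisets at $\tau_{k+1}$ differ, some slope index must increase asymmetrically across $\gamma_k$ or $\beta_k$; clauses (i) and (ii) of Definition~\ref{def:buildingblock} then force $\gamma_k$ to be a switching loop or $\beta_k$ to be a switching bridge. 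For (2), I would assume $k \in \{z, b, b', z'+2\}$ and derive a contradiction. The proof of Proposition~\ref{Prop:GammaA} shows these loops are chosen so that no pair of outgoing slopes $s'_k[i] + s'_k[j]$ equals $s'_k(\theta)$ with $i$ corresponding to the switched slope. The permissibility of both $\psi$ and $\psi'$ combined with the asymmetry from (1) produces exactly such a forbidden pair, a contradiction.

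For (3), I would rule out the possibility that neither $\psi$ nor $\psi'$ is assigned on $\gamma_k$ by a case analysis of the loop subroutine. If $\gamma_k$ is skippable, Lemmas~\ref{Lem:Skippable1} and~\ref{Lem:Skippable2} constrain the slope behavior of the summands so tightly that the asymmetric slope-index increase from (1) cannot occur, a contradiction. Otherwise Step 6 applies: by Lemma~\ref{Lem:AtMostThree} there are at most three equivalence classes of permissible functions on $\gamma_k$, and Proposition~\ref{Prop:NewThreeShape} ensures one class achieves the minimum uniquely at some point of $\gamma_k$ and is therefore assigned. A careful comparison of the slopes along $\beta_k$ using (1), together with the common-ancestor cohort structure underlying Proposition~\ref{Prop:Counting}, shows that the assigned class must contain either $\psi$ or $\psi'$. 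For the final clause, if exactly one of $\psi, \psi'$ is assigned, the existence of two distinct equivalence classes on $\gamma_k$, each descended from a common cohort on $\gamma_{k_0}$, furnishes a pair of permissible functions that agree on $\gamma_k$ and differ in slope on $\beta_k$, either drawn from the split cohort itself or constructed by swapping summands as in $\chi := \phi + \varphi'$. Combined with the switching identified in (1), this pair satisfies the hypothesis of $(\dagger\dagger)$, and the conclusion that no permissible function is shiny on $\gamma_k$ follows immediately.

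The hardest part will be (3), and specifically two subtleties: first, in the Step 6 scenario, verifying that the unique-minimum equivalence class picked via Proposition~\ref{Prop:NewThreeShape} coincides with $\psi$'s or $\psi'$'s class rather than a third competing class; second, exhibiting the pair of functions needed to invoke $(\dagger\dagger)$ in the final clause when $\psi$ and $\psi'$ themselves may fail to agree on $\gamma_k$ because the asymmetry already occurs on the loop. Both steps will require careful combinatorial bookkeeping of cohorts, permissibility, and slope-index configurations in the spirit of Lemma~\ref{lem:nonew} and Proposition~\ref{Prop:Counting}, together with the classification of switching loops and bridges of multiplicity at most two from Section~\ref{Sec:Loops}.
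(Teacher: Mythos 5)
Your step (1) follows the paper's argument (Lemma~\ref{Lem:ShinyEq}, Lemma~\ref{Lem:SameSlope}, and Definition~\ref{def:buildingblock}) and is fine, but the core of (3) is missing. The mechanism the paper uses is not Step~6 of the algorithm at all: suppose neither $\psi$ nor $\psi'$ is assigned to $\gamma_k$. Since both left their shine on $\gamma_{k_0}\leq\gamma_k$, neither is shiny on $\gamma_{k+1}$, which pins their bridge slopes to the slope vectors, $s_{k+1}(\varphi)=s_{k+1}[\tau_{k+1}(\varphi)]$, etc.; summing and using the asymmetry from (1) gives $s_{k+1}(\psi)>s_{k+1}(\psi')\geq s_k(\theta)$, so $\psi$ is an unassigned \emph{departing} permissible function on $\gamma_k$, and the algorithm always assigns departing functions (and such a loop is not skippable by Definition~\ref{Def:Skippable}) --- contradiction. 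Your route cannot replace this: in Step~6 the class assigned via Proposition~\ref{Prop:NewThreeShape} is whichever achieves the minimum uniquely, and nothing forces it to contain $\psi$ or $\psi'$; and appealing to ``the cohort structure underlying Proposition~\ref{Prop:Counting}'' is circular, since Proposition~\ref{Prop:Counting} is proved \emph{from} this lemma. Your skippable-loop subcase is likewise not needed once the departing observation is in place.

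The final clause has a second genuine gap. You propose to always manufacture the hypothesis of $(\dagger\dagger)$, e.g.\ via the cross sum $\phi+\varphi'$, but such a function need not lie in $\cB$ (in the later cases $\cB$ is a proper subset of pairwise sums, with specific sums deliberately excluded), need not be permissible, and when $\psi$ and $\psi'$ fail to agree on $\gamma_k$ there is no reason the pair you produce agrees on $\gamma_k$ with different slopes on $\beta_k$. The paper invokes $(\dagger\dagger)$ only in the easy case where $\psi$ and $\psi'$ agree on $\gamma_k$ (one departing, one not); in the non-agreeing case it instead proves directly that $\gamma_k$ is a decreasing loop switching slope $\tau_k(\phi)$, and that any shiny $\eta+\eta'$ would force $\gamma_k$ to switch a second (or impossible) slope --- that direct slope-index contradiction is absent from your plan. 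Finally, your justification of (2) misreads Proposition~\ref{Prop:GammaA}: the ``no complementary pair'' property there concerns the index $i$ with $s_k[i]<s'_k[i]$ (the added box), not the switched slope, so permissibility of $\psi,\psi'$ does not yield the forbidden pair you describe; in the paper (2) is deduced from (1) together with the defining property of the loops $\gamma_z,\gamma_b,\gamma_{b'},\gamma_{z'+2}$ (a slope increases there, which, for $\rho\leq 2$, is incompatible with the classification of switching loops and bridges), not from a permissibility count.
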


\begin{proof}
By assumption, $\psi$ agrees with $\psi'$ on $\gamma_{k_0}$.  Also, by Lemma~\ref{Lem:ShinyEq}, we have that the sets of slope indices $\{ \tau_{k_0}(\varphi), \tau_{k_0}(\varphi') \}$ and $\{ \tau_{k_0}(\phi), \tau_{k_0}(\phi') \}$ are the same. Lemma~\ref{Lem:SameSlope} then says that  $\psi$ agrees with $\psi'$ on $\gamma_t$ for all $t$ in the range $k_0 \leq t < k$.

After possibly relabeling the functions, we may assume that $\tau_k(\varphi) = \tau_k(\phi)$ and $\tau_{k}(\varphi') = \tau_{k}(\phi')$, and suppose
\[
\tau_{k+1}(\varphi) > \tau_{k+1}(\phi).
\]
Since slope indices of building blocks only change due to switching (Definition~\ref{def:buildingblock}), it follows that either $\gamma_k$ switches slope $\tau_k(\phi)$ or $\beta_k$ switches slope $\tau'_k(\phi)$.  It follows that $k \notin \{ z,b,b',z'+2 \}$.
Since a switching loop or bridge can switch at most one slope, we also have
\[
\tau_{k+1}(\varphi') \geq \tau_{k+1}(\phi'),
\]
with equality if $\tau_k (\varphi) \neq \tau_k (\varphi')$.

\medskip

 \emph{Claim 1:  Either $\psi$ or $\psi'$ is assigned to $\gamma_k$.}  Suppose that neither $\psi$ nor $\psi'$ is assigned to $\gamma_k$.  Then $\psi$ and $\psi'$ are both permissible and not shiny on $\gamma_{k+1}$. Therefore,
\[
s_{k+1} (\varphi) = s_{k+1} [\tau_{k+1}(\varphi)], s_{k+1} (\varphi') = s_{k+1} [\tau_{k+1}(\varphi')],
\]
\[
s_{k+1} (\phi) = s_{k+1} [\tau_{k+1}(\phi)], s_{k+1} (\phi') = s_{k+1} [\tau_{k+1}(\phi')].
\]
Summing these, we obtain
\[
s_{k+1} (\psi) = s_{k+1} [\tau_{k+1}(\varphi)] + s_{k+1} [\tau_{k+1}(\varphi')] > s_{k+1} [\tau_{k+1} (\phi)] + s_{k+1} [\tau_{k+1}(\phi')] = s_{k+1} (\psi').
\]
It follows that $\psi$ is departing on $\gamma_k$.  This contradicts the supposition that neither $\psi$ nor $\psi'$ is assigned to $\gamma_k$ and proves the claim.

\medskip

It remains to show that if one of $\psi$, $\psi'$ is not assigned to $\gamma_k$, then no function is shiny on $\gamma_k$.

\medskip

\emph{Claim 2: If $\psi$ and $\psi'$ agree on $\gamma_k$, then no function is shiny.}  This is straightforward.  Indeed, if $\psi$ and $\psi'$ agree and one is assigned while the other is not, then the one that is assigned is departing and the other is not.  In this case, no function is shiny on $\gamma_k$ by property $(\dagger\dagger)$.

\medskip

For the remainder of the proof, we assume $\psi$ and $\psi'$ do not agree on $\gamma_k$, and show that no function is shiny.

\medskip

\emph{Claim 3: The functions $\varphi$ and $\phi$ do not agree on $\gamma_k$.}  Since $\psi$ and $\psi'$ do not agree on $\gamma_k$, either $\varphi$ and $\phi$ do not agree on $\gamma_k$, or $\varphi'$ and $\phi'$ do not agree on $\gamma_k$.  By property $(\ast)$, if $\varphi'$ and $\phi'$ do not agree, then $\tau'_k (\varphi') \neq \tau'_k (\phi')$.  This implies that $\gamma_k$ switches slope $\tau'_k (\varphi')$.  Since a switching loop can switch at most one slope, it follows that $\varphi'$ agrees with $\varphi$ on $\gamma_k$, $\tau_k (\varphi') = \tau_k (\varphi)$, and $\tau_{k+1} (\varphi') = \tau_{k+1} (\varphi)$.  In this case, we may relabel $\varphi$ and $\varphi'$ without loss of generality, and the result follows.

\medskip

\emph{Claim 4: The loop $\gamma_k$ is a decreasing loop and switches slope $\tau_k(\phi)$.}  To see this, note that neither $\psi$ nor $\psi'$ is shiny on $\gamma_k$, hence
\[
s_k (\varphi) = s_k [\tau_k(\varphi)], s_k (\varphi') = s_k [\tau_k(\varphi')],
\]
\[
s_k (\phi) = s_k [\tau_k(\phi)], s_k (\phi') = s_k [\tau_k(\phi')].
\]
It follows that
\[
s_k (\varphi) = s_k (\phi) \mbox{ and } s_k (\varphi') = s_k (\phi').
\]
Recall that, on a loop, every divisor of degree 1 is equivalent to a unique effective divisor.  Thus, since $\varphi$ and $\phi$ have the same incoming slope,
if the restrictions of $D+\ddiv(\varphi)$ and $D+\ddiv(\phi)$ to $\gamma_k \smallsetminus \{ w_k \}$ each have degree at most 1, then $\varphi$ agrees with $\phi$ on $\gamma_k$.  Because we showed, in the previous claim, that $\varphi$ and $\phi$ do not agree on $\gamma_k$, the restriction of $D+\ddiv(\phi)$ to $\gamma_k \smallsetminus \{ w_k \}$ has degree 2.  Equivalently,
\[
s_{k+1} (\phi) = s_k (\phi) - 1,
\]
so $\gamma_k$ is a decreasing loop and hence has positive multiplicity.  We already showed that either $\gamma_k$ switches slope $\tau_k(\phi)$ or $\beta_k$ is a switching bridge.  Since switching bridges have multiplicity 2 and the sum of all multiplicities of loops and bridges is at most 2, it follows that $\gamma_k$ must in fact switch slope $\tau_k (\phi)$, as claimed.

\medskip

We now complete the proof that no function is shiny on $\gamma_k$.  Suppose $\eta + \eta'$ is shiny on $\gamma_k$.  By Proposition~\ref{Prop:Shiny}, after possibly relabeling, the restriction of $D+\ddiv(\eta)$ to $\gamma_k \smallsetminus \{ v_k \}$ has degree 0.  Because $\psi'$ is permissible on $\gamma_k$, and $s_{k+1} (\phi) < s_k (\phi)$, as shown above, we must have
\[
s_{k+1} (\phi') = s_k (\phi') + 1.
\]
Thus the restriction of $D + \ddiv(\phi')$ to $\gamma_k \smallsetminus \{ v_k \}$ also has degree 0, $\eta$ agrees with $\phi'$ on $\gamma_k$, and
\[
s_{k+1} (\eta') = s_{k+1} (\phi').
\]
Since $\eta + \eta'$ is permissible on $\gamma_k$, we also have
\[
s_{k+1} (\eta) \geq s_{k+1} (\phi).
\]
At the same time, since $\eta + \eta'$ is shiny,
\[
s_k [\tau_k (\eta)] + s_k [\tau_k (\eta')] < s_k (\theta) = s_k [\tau_k (\phi)] + s_k [\tau_k (\phi')].
\]
It follows that either $\tau_k (\eta) < \tau_k (\phi)$, and $\gamma_k$ switches slope $\tau_k (\eta)$, or $\tau_k (\eta') < \tau_k (\phi')$, and $\gamma_k$ switches slope $\tau_k (\eta')$.  However, we will show that neither of this is possible.  Indeed, the first is impossible because $\gamma_k$ switches slope $\tau_k (\phi)$, and a loop can switch at most 1 slope.  The second requires
\[
\tau_k (\eta') = \tau_k (\phi')-1 = \tau_k (\phi).
\]
However, since
\[
s_k [\tau_k (\eta') < s_k [\tau_k (\phi')] < s'_k [\tau'_k (\phi')],
\]
we see that $s_k [\tau_k (\eta')] \leq s'_k [\tau_k (\eta')+1]+2$.  Since the slope of a function in $R(D)$ can increase by at most one from the left side of $\gamma_k$ to the right side, we see that there is no function $\eta''$ with $s_k (\eta'') \leq s_k [\tau_k (\eta')]$ and $s'_k (\eta'') \geq s'_k [\tau_k (\eta')+1]$.  This shows that it is impossible for $\gamma_k$ to switch slope $\tau_k (\phi') - 1$, and completes the proof of the lemma.
\end{proof}

We now prove Proposition~\ref{Prop:Counting}.

\begin{proof}[Proof of Proposition~\ref{Prop:Counting}]
Suppose that $\psi$ and $\psi'$ are in the same cohort on $\gamma_\ell$, and that the functions assigned to $\gamma_\ell$ include $\psi$ but not $\psi'$.  We must show that $\ell \not \in \{ z,b,b',z'+2 \}$ and no function leaves its shine on $\gamma_\ell$.

Let $\gamma_{k_0}$ be the loop where $\psi$ and $\psi'$ leave their shine.  By Lemma~\ref{lem:slopeindexchange}, if the set of slope indices $\{ \tau_{k}(\varphi), \tau_{k}(\varphi') \}$ and $\{ \tau_{k}(\phi), \tau_{k}(\phi') \}$ are different for some $k_0 \leq k \leq \ell$, then one of $\psi$ or $\psi'$ is assigned to $\gamma_k$, contradicting our assumption that they are in the same cohort on $\gamma_{\ell}$.  Furthermore, if $\{ \tau_{\ell+1}(\varphi), \tau_{\ell+1}(\varphi') \}$ and $\{ \tau_{\ell+1}(\phi), \tau_{\ell+1}(\phi') \}$ are different, then by Lemma~\ref{lem:slopeindexchange}, $\ell \notin \{ z,b,b',z'+2 \}$ and no function leaves its shine on $\gamma_\ell$.  We may therefore assume that the sets of slope indices $\{ \tau_{k}(\varphi), \tau_{k}(\varphi') \}$ and $\{ \tau_{k}(\phi), \tau_{k}(\phi') \}$ are the same for $k_0 \leq k \leq \ell + 1$.   By Lemma~\ref{Lem:SameSlope}, this implies that
\[
\tau'_{\ell}(\varphi) = \tau'_{\ell}(\phi) \mbox{ and } \tau'_{\ell}(\varphi') = \tau'_{\ell}(\phi')
\]
and both pairs of functions agree on $\gamma_{\ell}$ by property $(\ast)$.

Since $\psi$ and $\psi'$ agree on $\gamma_\ell$, and the functions assigned to $\gamma_\ell$ include $\psi$ but not $\psi'$, we see that $\psi$ is a departing function, but $\psi'$ is not.  In this case, since $\psi'$ leaves the shine on $\gamma_{k_0}$, we see that $\psi'$ is not shiny on $\gamma_{\ell+1}$, hence we must have
\[
s_{\ell+1} (\phi) = s_{\ell+1} [\tau_{\ell+1}(\phi)] \mbox{ and } s_{\ell+1} (\phi') = s_{\ell+1} [\tau_{\ell+1}(\phi')].
\]
We show that $\ell \notin \{ z,b,b',z'+2 \}$.  If $\gamma_{\ell}$ is a switching loop or $\beta_{\ell}$ is a switching bridge, then $\ell \notin \{ z,b,b',z'+2 \}$, hence we may assume that $\tau_{\ell} = \tau'_{\ell} = \tau_{\ell+1}$ for each of the functions $\varphi, \varphi', \phi,$ and $\phi'$.  Because $\psi$ is departing, we have either
\[
s'_{\ell} [\tau'_{\ell} (\varphi)] > s_{\ell} [\tau_{\ell} (\varphi)] \mbox{ or } s'_{\ell} [\tau'_{\ell} (\varphi)] > s_{\ell} [\tau_{\ell} (\varphi)].
\]
Assume without loss of generality that the first inequality holds.  If $s_{\ell+1} [\tau_{\ell+1}(\varphi)] < s'_{\ell} [\tau'_{\ell}(\varphi)]$, then $\lambda_{\ell+1}$ is contained in $\lambda_{\ell}$, hence $\ell \notin \{ z,b,b',z'+2 \}$.   On the other hand, suppose that $s_{\ell+1} [\tau_{\ell+1}(\varphi)] = s'_{\ell} [\tau'_{\ell}(\varphi)]$.  Since $\psi'$ is not departing, we have
\[
s_{\ell} (\theta) = s'_{\ell} (\psi') = s'_{\ell} [\tau'_{\ell+1}(\varphi)] + s'_{\ell} [\tau'_{\ell}(\varphi')].
\]
But $z,b,b',$ and $z'$ are chosen so that there is no integer $j$ such that $s_{\ell} (\theta) = s'_{\ell} [\tau'_{\ell}(\phi)] + s'_{\ell} [j]$, so again $\ell \notin \{ z,b,b',z'+2 \}$.

Because $\psi$ is departing, either $\varphi$ or $\varphi'$ must have higher slope on $\beta_{\ell}$ than on $\beta_{\ell-1}$.  Without loss of generality we may assume that $s_{\ell+1} (\varphi) > s_{\ell} (\varphi)$.  Our assumption that the slope indices of $\varphi$ and $\varphi'$ agree with those of $\phi$ and $\phi'$ implies that either
\[
s_{\ell+1} (\varphi) > s_{\ell+1} [\tau_{\ell+1}(\varphi)] \mbox{ or } s_{\ell+1} (\varphi') > s_{\ell+1} [\tau_{\ell+1}(\varphi')].
\]
In other words, either
\[
s'_{\ell} [\tau'_{\ell} (\varphi)] > s_{\ell+1} [\tau'_{\ell} (\varphi)] \mbox{ or } s'_{\ell} [\tau'_{\ell} (\varphi')] > s_{\ell+1} [\tau'_{\ell} (\varphi')].
\]

Now, assume that $\eta + \eta' \in \cB$ is a shiny function on $\gamma_{\ell}$.  In order to show that $\eta + \eta'$ does not leave the shine on $\gamma_{\ell}$, we first show that $\eta+\eta'$ cannot agree with a departing function on $\gamma_{\ell}$.  Any function that is both departing and shiny on $\gamma_{\ell}$ agrees with $2\varphi$, and such a function only exists if $s_{\ell+1} (2\varphi) = s'_{\ell} (\theta)+1$.  From this we see that both
\[
s_{\ell+1} (\varphi) > s_{\ell+1} [\tau_{\ell+1}(\varphi)] \mbox{ and } s_{\ell+1} (\varphi') > s_{\ell+1} [\tau_{\ell+1}(\varphi')].
\]
But, because
\[
s_{\ell+1} (\phi) = s_{\ell+1} [\tau_{\ell+1}(\phi)] \mbox{ and } s_{\ell+1} (\phi') = s_{\ell+1} [\tau_{\ell+1}(\phi')],
\]
we see that $s_{\ell+1} (\phi+\phi') = s_{\ell} (\theta) -1$.  This implies that $\phi+\phi'$ is not permissible on $\gamma_{\ell}$, a contradiction.

We now show that $\eta + \eta'$ does not leave the shine on $\gamma_{\ell}$.  To see this, we will prove by case analysis that one of the following holds:
\[
s_{\ell+1} (\eta) < s'_{\ell} [\tau'_{\ell}(\eta)], s_{\ell+1} (\eta') < s'_{\ell} [\tau'_{\ell}(\eta')],
\]
\[
s_{\ell+1} (\eta ) > s_{\ell+1} [\tau_{\ell+1}(\eta)], \mbox{ or } s_{\ell+1} (\eta') > s_{\ell+1} [\tau_{\ell+1}(\eta')].
\]
To see that the claim follows, note that if one of the first two inequalities holds, then $2D+\ddiv(\eta+\eta')$ contains $w_{\ell}$, hence $\eta+\eta'$ agrees with a departing function on $\gamma_{\ell}$, a contradiction.  If one of the second two inequalities holds, we see that $\eta+\eta'$ is shiny on $\gamma_{\ell+1}$, and therefore does not leave the shine on $\gamma_{\ell}$.

It therefore remains to show that one of the inequalities above holds.  By Proposition~\ref{Prop:Shiny}, we may assume that the restriction of $D+\ddiv(\eta)$ to $\gamma_{\ell} \smallsetminus \{ v_{\ell} \}$ has degree 0.  It follows that $\eta$ agrees with $\varphi$ on $\gamma_{\ell}$, and $s_{\ell+1} (\eta) = s_{\ell+1} (\varphi)$.  Since $\eta+\eta'$ is not departing on $\gamma_{\ell}$, we have $s_{\ell+1} (\eta') = s_{\ell+1} (\varphi')-1$.  By property $(\dagger\dagger)$, the bridge $\beta_{\ell}$ is not a switching bridge, so $\tau_{\ell+1} (\eta) = \tau'_{\ell} (\eta)$ and $\tau_{\ell+1} (\eta') = \tau'_{\ell} (\eta')$.

We now consider several cases.  First, suppose that $s_{\ell+1} (\varphi) > s_{\ell+1} [\tau_{\ell+1} (\varphi)]$.  If $\tau'_{\ell} (\eta) > \tau'_{\ell} (\varphi)$, then
\[
s'_{\ell} [\tau'_{\ell} (\eta)] > s'_{\ell} [\tau'_{\ell} (\varphi)] \geq s_{\ell+1} (\varphi) = s_{\ell+1} (\eta).
\]
On the other hand, if $\tau'_{\ell} (\eta) \leq \tau'_{\ell} (\varphi)$, then
\[
s_{\ell+1} (\eta) = s_{\ell+1} (\varphi) > s_{\ell+1} [\tau_{\ell+1} (\varphi)] \geq s_{\ell+1} [\tau_{\ell+1} (\eta)].
\]
Next, suppose that $s_{\ell+1} (\varphi') > s_{\ell+1} [\tau_{\ell+1} (\varphi')]$.  If $\tau'_{\ell} (\eta') \geq \tau'_{\ell} (\varphi')$, then
\[
s'_{\ell} [\tau'_{\ell} (\eta')] \geq s'_{\ell} [\tau'_{\ell} (\varphi')] \geq s_{\ell+1} (\varphi') > s_{\ell+1} (\eta').
\]
On the other hand, if $\tau'_{\ell} (\eta') < \tau'_{\ell} (\varphi')$, then
\[
s_{\ell+1} (\eta') = s_{\ell+1} (\varphi')-1 > s_{\ell+1} [\tau_{\ell+1} (\varphi')] - 1 \geq s_{\ell+1} [\tau_{\ell+1} (\eta')].
\]
\end{proof}

\begin{proof}[Proof of Theorem~\ref{Thm:ExtremalConfig}]
If $\psi \in \cB$ is not permissible on any loop, then either $s_1 (\psi) >4$ or $s_{g+1} (\psi) < 2$.  By construction, we see that $\psi$ achieves the minimum either on the first or last bridge.

We argue that every permissible function on the first block $\Gamma_{[1,z]}$ is assigned to a loop or bridge in the first block.  The other blocks follow by a similar argument.  We first consider the case where every non-skippable loop in $\Gamma_{[1,z]}$ has at least one assigned function.  For each loop $\gamma_k$ in the block, we consider the number of cohorts on $\gamma_k$ with the property that some function $\psi$ in the cohort is not assigned to $\gamma_k$.  We will show by induction that, for $k<z$, the number of such cohorts on $\gamma_k$ is at most 2.  By our enumeration of the possible slope vectors $s'_0(\Sigma)$, described in the algorithm under the first bridge, we see that there are at most three cohorts on $\gamma_1$, and at most two if $\gamma_1$ is skippable.  If $\gamma_1$ is not skippable, then there is a function $\psi \in \cB$ that is assigned to $\gamma_1$.  This function leaves the shine on $\gamma_1$, so by Proposition~\ref{Prop:Counting}, any function in the same cohort as $\psi$ is also assigned to $\gamma_1$.  It follows that the number of cohorts such that some function $\psi$ in the cohort is not assigned to $\gamma_1$ is at most 2.

As we proceed from left to right across the block, every time we reach a new loop, there are two possibilities.  One possibility is that no function leaves the shine on $\gamma_k$, in which case by definition there are no more cohorts on $\gamma_k$ than there are on $\gamma_{k-1}$.  The other possibility is that some function $\psi$ leaves the shine on $\gamma_k$.  In this case, by assumption, there are permissible functions on $\gamma_k$, and $\gamma_k$ is not skippable, so some function is assigned to $\gamma_k$.  By Proposition~\ref{Prop:Counting}, any function in the same cohort as $\psi'$ is also assigned to $\gamma_k$.  It follows that the number of cohorts on $\gamma_k$ such that some function in the cohort is not assigned to $\gamma_k$ is equal to the number of cohorts on $\gamma_{k-1}$ such that some function in the cohort is not assigned to $\gamma_{k-1}$.  Specifically, as we proceed from $\gamma_{k-1}$ to $\gamma_k$, we introduce the cohort of $\psi$, but we remove the cohort of $\psi'$.  By induction, therefore, the number of cohorts on $\gamma_k$ with the property that some function in the cohort is not assigned to $\gamma_k$ is at most 2.

By Proposition~\ref{Prop:GammaA}, no function is shiny on $\gamma_z$, and $\gamma_z$ is not skippable.  Combining this with our enumeration of cohorts in the preceding paragraph, we see that there are at most 2 cohorts on $\gamma_z$. By assumption, there is a function $\psi \in \cB$ that is assigned to $\gamma_z$, and by Proposition~\ref{Prop:Counting}, any function in the same cohort on $\gamma_z$ is also assigned to $\gamma_z$.  After assigning this cohort, there is at most one cohort left.  Also by Lemma~\ref{lem:slopeindexchange}, if $\varphi + \varphi'$ and $\phi + \phi'$ are in the remaining cohort, then the sets of slope indices $\{ \tau'_z (\varphi) , \tau'_z (\varphi') \}$ and $\{ \tau'_z (\phi) , \tau'_z (\phi') \}$ are the same, hence $\varphi + \varphi'$ and $\phi + \phi'$ agree on $\gamma_z$.  It follows that everything in the remaining cohort is assigned to the bridge $\beta_z$.

On the other hand, suppose that there is a non-skippable loop with no assigned function, and let $\gamma_k$ be the first such loop.  If $\psi$ is a function that was permissible on an earlier loop but not permissible on $\gamma_k$, then there is a $k'<k$ such that $\psi$ is a departing permissible function on $\gamma_{k'}$.  By construction, this function must be assigned to loop $\gamma_{k'}$, or an earlier loop.  It follows that all functions $\psi$ that are permissible on loops $\gamma_{k'}$ for $k'<k$ have been assigned.  Now, for each non-skippable loop $\gamma_{k'}$ with $k'>k$, there is at most one equivalence class of new permissible function on $\gamma_{k'}$, and on skippable loops, there are none.  By construction, since there is only one equivalence class of unassigned permissible functions on $\gamma_{k'}$, this equivalence class is assigned to the loop $\gamma_{k'}$.  In this way, every function that is permissible on the block is assigned to some loop.
\end{proof}

Theorem~\ref{Thm:ExtremalConfig} shows that, if two functions are assigned to the same loop $\gamma_k$ or the same bridge $\beta_k$, then they agree on $\gamma_k$.  In fact, slightly more is true.

\begin{lemma}
\label{Lem:OneEq}
Suppose that $\varphi + \varphi'$ and $\phi + \phi'$ are assigned to the same loop $\gamma_k$ or bridge $\beta_k$.  Then, after possibly reordering $\varphi$ and $\varphi'$, we have that $\varphi$ agrees with $\phi$ on $\gamma_k$ and $\varphi'$ agrees with $\phi'$ on $\gamma_k$.  Moreover, we have
\[
\tau'_k (\varphi) = \tau'_k (\phi) \mbox{ and } \tau'_k (\varphi') = \tau'_k (\phi').
\]
\end{lemma}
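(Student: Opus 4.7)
The plan is to split into two cases according to whether $\varphi + \varphi'$ and $\phi + \phi'$ are both assigned to the loop $\gamma_k$ or both to the bridge $\beta_k$. The loop case should reduce immediately to Lemma~\ref{Lem:OneEqLoop}, which already yields agreement of the summands (after reordering) on $\gamma_k$ together with equality of both $\tau_k$ and $\tau'_k$ indices. So the real content of the lemma is the bridge case, and this is where the argument from the proof of Theorem~\ref{Thm:ExtremalConfig} needs to be extracted and repackaged.

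For the bridge case, the strategy is to use the fact that functions are only assigned to a bridge $\beta_k$ in the algorithm when $\gamma_k$ is the last loop of a block, and in this situation, by the cohort-counting argument in the proof of Theorem~\ref{Thm:ExtremalConfig}, both $\varphi + \varphi'$ and $\phi + \phi'$ must lie in the single \emph{remaining cohort} on $\gamma_k$. I will then invoke Lemma~\ref{lem:slopeindexchange} as used there: since both functions are in the same cohort and neither is assigned to any loop $\leq k$, the sets of slope indices
\[
\{ \tau'_k(\varphi), \tau'_k(\varphi') \} \ \text{and} \ \{ \tau'_k(\phi), \tau'_k(\phi') \}
\]
must coincide, for otherwise Lemma~\ref{lem:slopeindexchange} would force one of them to have been assigned to $\gamma_k$ (or to an earlier loop), contradicting that both survive to be assigned to $\beta_k$.

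After matching up the pairs by relabeling so that $\tau'_k(\varphi) = \tau'_k(\phi)$ and $\tau'_k(\varphi') = \tau'_k(\phi')$, property $(\ast)$ in Definition~\ref{def:property*} applies directly: any two functions in $\cA$ with the same outgoing slope index at $\gamma_k$ agree on $\gamma_k$. This yields the claimed agreements $\varphi \sim \phi$ and $\varphi' \sim \phi'$ on $\gamma_k$ and the equality of $\tau'_k$ indices simultaneously, which is exactly the content of the lemma.

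The step I expect to be the main obstacle is justifying rigorously that in the bridge case the functions really do lie in the same remaining cohort, since the notion of ``remaining cohort on the last loop of a block'' is introduced somewhat implicitly in the proof of Theorem~\ref{Thm:ExtremalConfig} rather than stated as a separate lemma. This will require tracking the bookkeeping: the algorithm only writes to bridge coefficients in the subroutine ``Proceeding to the Next Block'' and at ``The Last Bridge,'' and in each of these cases the functions assigned share the same value of $s_{k+1}(\psi)$ and have already been forced through Lemma~\ref{lem:slopeindexchange} to have matching slope-index multisets at $\gamma_k$. Once that administrative point is cleanly established, everything else is an application of already-proved results, with property $(\ast)$ doing the final work of promoting equality of $\tau'_k$ indices to actual agreement on $\gamma_k$.
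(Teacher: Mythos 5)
Your proposal is correct and follows essentially the same route as the paper: the loop case is disposed of by Lemma~\ref{Lem:OneEqLoop}, and in the bridge case one extracts from the proof of Theorem~\ref{Thm:ExtremalConfig} that both functions lie in the single remaining cohort on $\gamma_k$, deduces that the multisets $\{\tau'_k(\varphi),\tau'_k(\varphi')\}$ and $\{\tau'_k(\phi),\tau'_k(\phi')\}$ coincide (you cite Lemma~\ref{lem:slopeindexchange}, the paper cites Proposition~\ref{Prop:Counting}, but it is the same mechanism), and concludes via property $(\ast)$. The bookkeeping point you flag is exactly what the paper also handles by appealing to the proof of Theorem~\ref{Thm:ExtremalConfig}, so no new idea is missing.
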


\begin{proof}
In the case where the two functions are assigned to the same loop, this is simply Lemma~\ref{Lem:OneEqLoop}.  It therefore suffices to consider the case where they are assigned to the same bridge.   We see from the proof of Theorem~\ref{Thm:ExtremalConfig} that, if $\varphi + \varphi'$ and $\phi + \phi'$ are assigned to the bridge $\beta_k$, then they are in the same cohort on $\gamma_k$.  By Proposition~\ref{Prop:Counting}, if the set of slope indices $\{ \tau'_k (\varphi), \tau'_k (\varphi') \}$ is different from $\{ \tau'_k (\varphi), \tau'_k (\varphi') \}$, then one of the two functions is assigned to the loop $\gamma_k$.  It follows that, if both functions are assigned to the bridge $\beta_k$, then these two sets of slope indices are the same, and the conclusion holds by property $(\ast)$.
\end{proof}

\section{Proof of Theorem~\ref{Thm:MainThm}}
\label{Sec:Generic}

Recall that the master template $\theta$, constructed in the previous section, is only an intermediary step in our argument.  Our goal is to construct a maximally independent configuration of 28 pairwise sums of functions in $\Sigma$, which may not contain the building blocks.

We proceed by considering several cases, depending on the number of switching loops and bridges.  Recall that switching loops have positive multiplicity, as do decreasing loops and bridges, and switching bridges have multiplicity at least 2.  Moreover, the sum of all multiplicities of loops and bridges is bounded above by $\rho$, which we assume to be at most 2.  In particular, the existence of a switching bridge precludes the existence of a switching loop, and in any case there can be at most two switching loops.  Therefore, $\Sigma$ falls into one of the following cases:
\begin{enumerate}
\item  There are no switching loops or bridges.
\item  There is a switching bridge.
\item  There is one switching loop.
\item  There are two switching loops.
\end{enumerate}
In the first and third cases, we consider subcases depending on the number of decreasing bridges.  The case of two switching loops is the most delicate, and we consider subcases depending on the relationship between the two switching loops.

Our basic strategy is the same throughout.  We first identify a collection $S$ of 7 or more functions in $\Sigma$.  All of these functions are either  building blocks or have a particularly simple expression as a tropical linear combination of building blocks.  There will often be helpful relations between these functions, which we highlight.  We then identify a collection $\cA$ of building blocks satisfying property $(\ast)$, and a collection $\cB$ of pairwise sums of elements of $\cA$ satisfying $(\ast\ast)$ and $(\dagger\dagger)$.  This allows us to run the algorithm from Section~\ref{Sec:Construction} to construct a master template $\theta$.  Then, we identify a collection of 28 pairwise sums of functions in $S$, form a tropical linear combination by fitting each of these functions to the master template, and verify that this combination is maximally independent.  The process of fitting a function to the master template may be thought of as a ``best approximation from above."  The following lemma captures this idea.

\begin{lemma}
\label{Lem:Replace}
Let $\theta = \min_{\psi \in \cB} \{ \psi + c_\psi \}$, and assume that each $\psi \in \cB$ achieves the minimum at some point of $\Gamma$.  Then, for any subset $\cB' \subset \cB$ and any $\varphi = \min_{\psi'  \in \cB'} \{ \psi' + a_{\psi'} \}$, there is some $b$ such that $\varphi + b \geq \theta$, with equality on the entire region where some $\psi' \in \cB'$ achieves the minimum in $\theta$.
\end{lemma}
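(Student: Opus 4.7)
The plan is a short, explicit construction of $b$ together with two pointwise estimates. Since every collection of pairwise sums coming from a set satisfying property $(\ast)$ is finite (see the remark after Definition~\ref{def:property*}), the set $\cB'$ is finite, and I can set
\[
b := \max_{\psi' \in \cB'}\bigl(c_{\psi'} - a_{\psi'}\bigr),
\]
fixing some $\psi^* \in \cB'$ that attains this maximum.

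For the global inequality $\varphi + b \ge \theta$, I would argue pointwise: for every $v \in \Gamma$ and every $\psi' \in \cB'$, the chain
\[
\psi'(v) + a_{\psi'} + b \;\ge\; \psi'(v) + c_{\psi'} \;\ge\; \theta(v)
\]
holds, using the definition of $b$ for the first inequality and $\cB' \subseteq \cB$ together with $\theta = \min_{\psi \in \cB}\{\psi + c_\psi\}$ for the second. Taking the minimum over $\psi' \in \cB'$ yields $\varphi(v) + b \ge \theta(v)$, as desired.

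For equality on the target region, I would show that at any point $v$ where $\psi^* \in \cB'$ achieves the minimum in $\theta$, the chain
\[
\theta(v) \;=\; \psi^*(v) + c_{\psi^*} \;=\; \psi^*(v) + a_{\psi^*} + b \;\ge\; \varphi(v) + b \;\ge\; \theta(v)
\]
collapses to equality, since $\psi^*(v) + a_{\psi^*} + b$ appears as one of the terms in the tropical minimum defining $\varphi(v) + b$, and the final inequality is the global bound from the previous paragraph. The key design choice is taking $b$ to be the \emph{maximum} of $c_{\psi'} - a_{\psi'}$ over $\cB'$ rather than some other combination: this simultaneously forces the global bound (every term in $\varphi + b$ lies above $\theta$) and the saturation $a_{\psi^*} + b = c_{\psi^*}$ that witnesses ``some $\psi' \in \cB'$'' in the lemma's conclusion. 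There is no essential obstacle beyond identifying this correct $b$; both properties then drop out of the same two-line calculation.
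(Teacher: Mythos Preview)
Your proof is correct and follows essentially the same approach as the paper: both take $b = \max_{\psi' \in \cB'}(c_{\psi'} - a_{\psi'})$, pick a maximizer $\psi^*$, and verify the global inequality and the equality on the region where $\psi^*$ achieves the minimum in $\theta$. Your write-up is simply a more detailed version of the paper's two-line argument.
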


\begin{proof}
Let $b$ be the maximum of $c_{\psi} - a_{\psi}$, and choose $\psi' \in \cB'$ such that $b = c_{\psi'} - a_{\psi'}$.  Then $\varphi + b \geq \min \{ c_{\psi} + \varphi_{\psi} \} = \theta$, with equality at points where $\psi'$ achieves the minimum in $\theta$.
\end{proof}

\subsection{Case 1:  no switching loops or bridges}

Suppose there are no switching loops or bridges.  By Corollary~\ref{Cor:GenericFns}, for each $i$ there is a function $\varphi_i \in \Sigma$ such that
\[
s_k (\varphi_i) = s_k [i] \mbox{ and } s'_k (\varphi_i) = s'_k [i], \mbox{ for all } k.
\]
We will show that the set of 28 functions $\{ \varphi_i + \varphi_j \}$ is maximally independent.

\medskip

\subsubsection{Subcase 1a:  no decreasing bridges}  This includes the case where $D$ is vertex avoiding.

\begin{lemma}
\label{Lem:BB}
In this subcase, each of the functions $\varphi_i$ is a building block.  The set $\cA = \{ \varphi_i \}$ satisfies property $(\ast)$, and the set $\cB = \{ \varphi_i + \varphi_j \}$ satisfies properties $(\ast\ast)$ and $(\dagger\dagger)$.
\end{lemma}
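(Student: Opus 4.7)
The plan is to establish each of the three claims in sequence, with the bulk of the work going into verifying that the $\varphi_i$ can be taken to be building blocks in the sense of Definition~\ref{def:buildingblock}. Once that is set up, properties $(\ast)$, $(\ast\ast)$, and $(\dagger\dagger)$ will follow essentially for free, since their nontrivial hypotheses all involve switching data that does not occur here.

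For the first step, I would begin from the functions $\varphi_i \in \Sigma$ supplied by Corollary~\ref{Cor:GenericFns}, which satisfy $s_k(\varphi_i) = s_k[i]$ and $s'_k(\varphi_i) = s'_k[i]$ for all $k$. Since no bridge is decreasing, $s'_k[i] = s_{k+1}[i]$, so $\varphi_i$ already has constant slope $s_k[i]$ along every bridge $\beta_k$. On each loop $\gamma_k$, the functions $\varphi_i$ still have some freedom, so I would replace $\varphi_i$ on each loop (if necessary) by a representative in the same equivalence class with no bend at $v_k$ or $w_k$, i.e.\ with $d_{v_k}(\varphi_i) = d_{w_k}(\varphi_i) = 0$. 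Such a representative exists because the prescribed boundary slopes at $v_k$ and $w_k$ are compatible with a bend-free extension across the loop (the restriction to $\gamma_k$ is just any function in a suitable degree-$1$ linear series on the loop, and at least one such function avoids $v_k$ and $w_k$). With this choice, the formula defining $\tau_k(\varphi_i)$ gives $\tau_k(\varphi_i) = i$ and similarly $\tau'_k(\varphi_i) = i$, so the slope index sequence is constantly $i$. The two non-trivial conditions in Definition~\ref{def:buildingblock} reduce to $\tau'_k = \tau_k$ (holds since there are no switching loops) and $\tau_{k+1} = \tau'_k$ (holds since there are no switching bridges).

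For property $(\ast)$: after the above normalization, two functions $\varphi_i,\varphi_j \in \cA$ with $\tau'_k(\varphi_i) = \tau'_k(\varphi_j)$ satisfy $i = j$, so agreement on $\gamma_k$ is automatic, and distinct $\varphi_i$'s differ by non-constant functions because they have different slopes on $\beta_0$. For property $(\ast\ast)$: the hypothesis requires either $\gamma_k$ to switch slope $\tau_k(\varphi)$ or $s_{k+1}(\varphi) < s'_k[\tau'_k(\varphi)]$; the first is ruled out since there are no switching loops, and the second since by construction $s_{k+1}(\varphi_i) = s'_k[i] = s'_k[\tau'_k(\varphi_i)]$ and summands of $\varphi_i + \varphi_j$ inherit this. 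For property $(\dagger\dagger)$: the hypothesis explicitly asks that $\gamma_k$ be a switching loop or $\beta_k$ a switching bridge, neither of which can occur in this subcase.

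The only genuinely delicate point in the argument is the normalization of $\varphi_i$ at step one — arguing that the members of $\Sigma$ produced by Corollary~\ref{Cor:GenericFns} can be replaced, within $\Sigma$, by functions that are bend-free at every $v_k, w_k$. This is really a statement about the restriction to individual loops, since modifying $\varphi_i$ on $\gamma_k$ by adding a function that vanishes outside $\gamma_k$ and has the right boundary slopes keeps it in $\Sigma$ (by permissibility on a loop it is just the choice of an element of a one-dimensional linear series on $\gamma_k$). Once that is settled, the three properties follow mechanically, which should complete the proof in a few lines beyond the setup.
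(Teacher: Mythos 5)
Your outline matches the paper on the easy points (constant slopes on bridges because no bridge is decreasing; $(\ast\ast)$ and $(\dagger\dagger)$ vacuous because there are no switching loops or bridges), but the step you yourself flag as the delicate one is a genuine gap, and it is not a repairable technicality as written. You propose to replace each $\varphi_i$ from Corollary~\ref{Cor:GenericFns}, loop by loop, by a representative with $d_{v_k}(\varphi_i)=d_{w_k}(\varphi_i)=0$, and you justify keeping the result in $\Sigma$ by saying that modifying $\varphi_i$ on $\gamma_k$ by a function supported on the loop with the right boundary slopes ``keeps it in $\Sigma$.'' There is no reason for that: $\Sigma=\trop(V)$ is the tropicalization of an algebraic linear series, and membership in $\Sigma$ is not preserved by local piecewise-linear surgery. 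Controlling which elements of $R(D)$ actually lie in $\Sigma$ is precisely the central difficulty of the paper (Lemma~\ref{Lem:NumberOfSlopes} already shows $\Sigma$ is far smaller than $R(D)$), and the only tools available for producing elements of $\Sigma$ are algebraic ones such as Lemma~\ref{Lemma:Existence}. Moreover, the freedom you invoke on a loop largely does not exist even inside $R(D)$: with incoming slope $s_k[i]$ at $v_k$ and outgoing slope $s'_k[i]$ at $w_k$, the restriction of $\varphi_i$ to $\gamma_k$ lies in the complete series of a divisor of degree at most $1$ on a circle, whose effective representative is unique, so you cannot choose where its chip sits. Finally, the conclusion of the lemma is used downstream with $\vartheta=\theta$ in Theorem~\ref{thm:easycase}, where it is essential that each $\varphi_{ij}$ is a sum of two functions in $\Sigma$; if you swap the $\varphi_i$ for modified functions whose membership in $\Sigma$ is unverified, the main argument of this subcase no longer applies to the linear series at hand.

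The normalization is also unnecessary, and this is where your route diverges from the paper's. The slope indices $\tau_k$ and $\tau'_k$ are defined with the correction terms $d_{v_k}$ and $d_{w_k}$ exactly so that they can be read off from the given functions without arranging bend-free behavior at the vertices: the paper simply observes that, for the $\varphi_i$ furnished by Corollary~\ref{Cor:GenericFns}, the absence of decreasing bridges gives constant slope $s_k[i]$ along each bridge, and the slope index sequence is the constant sequence $i$, so conditions (i) and (ii) of Definition~\ref{def:buildingblock} hold with nothing to check (no loop or bridge switches any slope). Property $(\ast)$ then follows because $\tau'_k(\varphi_i)=i\neq j=\tau'_k(\varphi_j)$ for $i\neq j$, and $(\ast\ast)$ is vacuous because $s_{k+1}(\varphi_i)=s'_k[i]=s'_k[\tau'_k(\varphi_i)]$ for all $k$, with $(\dagger\dagger)$ vacuous as you say. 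So the correct proof works directly with the unmodified $\varphi_i\in\Sigma$; your argument should be rewritten to drop the replacement step and instead verify the building-block conditions for the functions as given.
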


\begin{proof}
Because there are no decreasing bridges, the functions $\varphi_i$ have constant slope along each bridge.  Also, the slope index sequence associated to $\varphi_i$ is the constant sequence $i$.  It follows that each function $\varphi_i$ is a building block.  From this we see that, if $i \neq j$, then $\tau'_k (\varphi_i) \neq \tau'_k (\varphi_j)$ for any $k$, and thus property $(\ast)$ is satisfied.  Since there are no switching loops and
\[
s_{k+1} (\varphi_i) = s'_k [\tau'_k (\varphi_i)] = s'_k[i] \mbox{ for all } k,
\]
property $(\ast\ast)$ is satisfied vacuously.  Since there are no switching loops or bridges, property $(\dagger\dagger)$ is satisfied vacuously as well.
\end{proof}

By Lemma~\ref{Lem:BB}, the set $\cB$ satisfies the hypotheses of Theorem~\ref{Thm:ExtremalConfig}, and hence there is a tropical linear combination $\theta$ of the functions $\varphi_{ij}$ such that each function $\varphi_{ij} = \varphi_i + \varphi_j$ achieves the minimum at some point of the loop or bridge to which it is assigned in the master template algorithm, and all other functions that achieve the minimum at this point agree with $\varphi_{ij}$ on this loop or bridge.

Since each $\varphi_{ij}$ is a sum of two elements of $\Sigma$, we set $\vartheta = \theta$.

\begin{theorem}
\label{thm:easycase}
In this subcase, $\vartheta$ is a maximally independent combination.
\end{theorem}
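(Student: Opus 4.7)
The plan is to leverage Theorem~\ref{Thm:ExtremalConfig} together with the rigidity of slope indices in this subcase. By Lemma~\ref{Lem:BB}, the hypotheses of Theorem~\ref{Thm:ExtremalConfig} are satisfied, so the master template $\theta$ is a tropical linear combination of the functions $\varphi_{ij}$ in which each $\varphi_{ij}$ is assigned to some loop or bridge, achieves the minimum at a point $v$ on that loop or bridge, and every other function $\varphi_{i'j'}$ achieving the minimum at $v$ agrees with $\varphi_{ij}$ on the relevant loop. Since $\vartheta = \theta$, to verify that $\vartheta$ is maximally independent it suffices to show that no two distinct pairs $\{i,j\} \neq \{i',j'\}$ can have $\varphi_{ij}$ and $\varphi_{i'j'}$ agreeing on any loop $\gamma_k$.

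Here is the key observation I would use: in the absence of switching loops and bridges, the slope index sequence of each building block is constant, so $\tau_k(\varphi_i) = \tau'_k(\varphi_i) = i$ for every $k$ and every $i$. The plan is then to apply Lemma~\ref{Lem:OneEq} (which covers both the loop and bridge cases): if $\varphi_{ij}$ and $\varphi_{i'j'}$ are both assigned to the same loop or bridge, then after possibly reordering the summands we have $\tau'_k(\varphi_i)=\tau'_k(\varphi_{i'})$ and $\tau'_k(\varphi_j)=\tau'_k(\varphi_{j'})$. Since slope indices are constant in this subcase, this forces $i=i'$ and $j=j'$, so the two pairs are equal.

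From this, I would conclude as follows. Fix a function $\varphi_{ij}$ and let $v$ be a point of the loop or bridge to which it is assigned where $\varphi_{ij}+c_{ij}$ achieves the minimum in $\vartheta$. Any competing $\varphi_{i'j'}$ achieving the minimum at $v$ must, by Theorem~\ref{Thm:ExtremalConfig}(2), agree with $\varphi_{ij}$ on the loop containing $v$ (or, if $v$ lies on a bridge, on the preceding loop). The previous paragraph then forces $\{i',j'\} = \{i,j\}$, so $\varphi_{ij}$ is the unique function achieving the minimum at $v$. This holds for each of the $28$ functions $\varphi_{ij}$, so $\vartheta = \min_{ij}\{\varphi_{ij}+c_{ij}\}$ is maximally independent.

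I do not expect any serious obstacle in this subcase; the only subtle point is making sure that the agreement statement provided by Theorem~\ref{Thm:ExtremalConfig} is strong enough to invoke Lemma~\ref{Lem:OneEq}, which it is, since two functions achieving the minimum at a common point of a loop automatically agree on that loop (and for points on a bridge, Lemma~\ref{Lem:OneEq} refers to the adjacent loop, which is where the slope-index comparison takes place). The argument is in fact a slight generalization of the proof of Theorem~\ref{thm:vertexavoiding}: the vertex avoiding case is subsumed because there the building blocks $\varphi_i$ coincide with the distinguished functions of Section~\ref{sec:classification} and again have constant slope index sequences.
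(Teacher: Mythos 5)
Your proof is correct and follows essentially the same route as the paper: the paper's proof is exactly the reduction to showing that two functions $\varphi_{ij}$, $\varphi_{i'j'}$ assigned to the same loop or bridge must satisfy $(i,j)=(i',j')$, which it deduces from Lemma~\ref{Lem:OneEq} together with the fact that $\tau'_k(\varphi_i)=i$ for all $i$ and $k$. Your additional closing paragraph merely spells out the sufficiency step (via Theorem~\ref{Thm:ExtremalConfig}) that the paper leaves implicit.
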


\begin{proof}
It suffices to show that if two functions $\varphi_{ij}, \varphi_{i'j'}$ are assigned to the same loop or bridge, then $(i,j) = (i',j')$.  This follows directly from Lemma~\ref{Lem:OneEq}, since $\tau'_k (\varphi_i) = i$ for all $i$ and $k$.
\end{proof}

\medskip

\subsubsection{Subcase 1b: one decreasing bridge, of multiplicity one}  In this subcase the functions $\varphi_i$ are not all building blocks.  To illuminate our strategy, we first consider the situation where there is a bridge $\beta_{\ell}$ of multiplicity 1, and no other decreasing bridges.  In this situation, there is one index $h$ such that the slope of $\varphi_h$ decreases on $\beta_{\ell}$, and it decreases by exactly 1.

Let $v \in \beta_{\ell}$ be the point where the function $\varphi_h$ bends.  Note that $\varphi_h$ can be written as a tropical linear combination of two building blocks, both of which agree with $\varphi_h$ on $\Gamma \smallsetminus \beta_\ell$, but with different slopes on that bridge.  We label these building blocks $\varphi^0_h$ and $\varphi^\infty_h$, so that they have constant slopes $s_{\ell+1} [h]$ and $s'_{\ell} [h] = s_{\ell+1} [h] + 1$, respectively, along the bridge $\beta_{\ell}$.

\begin{lemma}
\label{Lem:BB2}
In this subcase, the set
\[
\cA = \{ \varphi_i \, \vert \, i \neq h \} \cup \{ \varphi^0_h , \varphi^{\infty}_h \}
\]
satisfies property $(\ast)$.  The set $\cB$ of pairwise sums of elements of $\cA$ satisfies properties $(\ast\ast)$ and $(\dagger\dagger)$.
\end{lemma}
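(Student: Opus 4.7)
The plan is to verify the three properties in turn, observing at the outset that in this subcase there are no switching loops or switching bridges: the sole decreasing bridge $\beta_\ell$ has multiplicity one, whereas a switching bridge requires multiplicity at least two. This already makes property $(\dagger\dagger)$ vacuous.

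To establish $(\ast)$, I will compute the slope index sequences of the members of $\cA$. Each $\varphi_i$ with $i \neq h$ is a building block with constant slope index $i$ by the same argument as Lemma~\ref{Lem:BB}. Both $\varphi_h^0$ and $\varphi_h^\infty$ agree with $\varphi_h$ on $\Gamma \setminus \beta_\ell$ and have constant slopes $s_{\ell+1}[h]$ and $s'_\ell[h] = s_{\ell+1}[h] + 1$, respectively, along $\beta_\ell$. The key computation is that both have $\tau'_\ell = h$: for $\varphi_h^\infty$ this is immediate, while for $\varphi_h^0$ the drop in outgoing slope at $w_\ell$ contributes an extra zero, raising $d_{w_\ell}$ by one and exactly compensating for the lost slope. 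Consequently the only pair of distinct members of $\cA$ sharing a slope index at some $k$ is $(\varphi_h^0,\varphi_h^\infty)$, and these agree on every loop $\gamma_k$ since they differ only on the interior of $\beta_\ell$. No two members of $\cA$ differ by a constant because $\varphi_h^0$ and $\varphi_h^\infty$ have different slopes along $\beta_\ell$.

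For $(\ast\ast)$, I will check that the trigger condition $s_{k+1}(\varphi) < s'_k[\tau'_k(\varphi)]$ is satisfied only by $\varphi = \varphi_h^0$ at $k = \ell$; every other building block in $\cA$ satisfies $s_{k+1}(\varphi) = s'_k[\tau'_k(\varphi)]$ everywhere. Given any $\varphi_h^0 + \varphi' \in \cB$ with $2D + \ddiv(\varphi_h^0 + \varphi')$ containing $w_\ell$, the natural witness is $\psi := \varphi_h^\infty + \varphi' \in \cB$: it agrees with $\varphi_h^0 + \varphi'$ on $\Gamma \setminus \beta_\ell$, which contains $\Gamma_{[1,\ell]}$, and its slope along $\beta_\ell$ is one larger, so $s_{\ell+1}(\psi) > s_{\ell+1}(\varphi_h^0 + \varphi')$.

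The main obstacle is the slope index computation at $w_\ell$ for $\varphi_h^0$: without carefully tracking the extra zero contributed at $w_\ell$ by the drop in slope, one might mistakenly find $\tau'_\ell(\varphi_h^0) \neq h$ and conclude $(\ast)$ is satisfied vacuously. Recognizing the shared value $\tau'_\ell(\varphi_h^0) = \tau'_\ell(\varphi_h^\infty) = h$ is precisely what forces the pair $(\varphi_h^0,\varphi_h^\infty)$ to interact nontrivially in $(\ast)$ and what dictates the witness $\varphi_h^\infty$ used to verify $(\ast\ast)$.
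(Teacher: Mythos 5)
Your proposal is correct and follows essentially the same route as the paper: identify the slope index sequences (constant $i$ for $\varphi_i$, constant $h$ for both $\varphi_h^0$ and $\varphi_h^\infty$), note that $\varphi_h^0$ and $\varphi_h^\infty$ agree on every loop to get $(\ast)$, observe that the only failure of $s_{k+1}(\varphi) = s'_k[\tau'_k(\varphi)]$ is $\varphi_h^0$ at $k=\ell$ and use $\varphi_h^\infty + \varphi'$ as the witness for $(\ast\ast)$, with $(\dagger\dagger)$ vacuous in the absence of switching loops or bridges. Your extra remark explaining why $\tau'_\ell(\varphi_h^0)=h$ via the added point of $D+\ddiv(\varphi_h^0)$ at $w_\ell$ is a correct elaboration of a step the paper simply asserts.
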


\begin{proof}
Every function in $\cA$ has constant slope along each bridge.  By construction, the slope index sequence associated to $\varphi_i$ is the constant sequence $i$, and the slope index sequence associated to both $\varphi^0_h$ and $\varphi^{\infty}_h$ is the constant sequence $h$.  In particular, each function in $\cA$ is a building block.  Moreover, since $\varphi^0_h$ and $\varphi^{\infty}_h$ agree on every loop, $\cA$ satisfies property $(\ast)$.

By construction, $\varphi^0_h$ is the only function in $\cA$ with
\[
s_{k+1} (\varphi^0_h) < s'_k [\tau'_k (\varphi^0_h)] = s'_k [h],
\]
and then only when $k = \ell$.  For any $\varphi \in \cA$, the function $\varphi + \varphi^{\infty}_h$ agrees with $\varphi + \varphi^0_h$ on $\Gamma_{[1,\ell]}$, but has higher slope on $\gamma_{\ell}$.  Thus, $\cB$ satisfies property $(\ast\ast)$.  Since there are no switching loops or bridges, property $(\dagger\dagger)$ is satisfied vacuously.
\end{proof}

By Lemma~\ref{Lem:BB2}, the set $\cB$ satisfies the hypotheses of Theorem~\ref{Thm:ExtremalConfig}, so there is a tropical linear combination $\theta$ such that each function $\psi \in \cB$ achieves the minimum at some point on its assigned loop or bridge of $\Gamma$, and all other functions that achieve the minimum at this point agree with $\psi$ on this loop or bridge. Roughly speaking, we construct the required maximally independent combination $\vartheta$ from $\theta$ by replacing the pair of functions $\varphi^0_h$, $\varphi^{\infty}_h$ with the single function $\varphi_h$, wherever these appear as summands.  More precisely, by Lemma~\ref{Lem:Replace}, since $\varphi_{hj}$ is a tropical linear combination of $\varphi^0_h + \varphi_j$ and $\varphi^{\infty}_h + \varphi_j$, we may set the coefficient of $\varphi_{hj}$ so that it dominates $\theta$, while agreeing with one of $\varphi^0_h + \varphi_j$ or $\varphi^{\infty}_h + \varphi_j$ on the region where this function achieves the minimum in the master template.

\begin{theorem}
\label{Thm:BendOnBridge}
In this subcase, $\vartheta$ is a maximally independent combination.
\end{theorem}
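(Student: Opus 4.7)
The plan is to build $\vartheta$ by fitting each of the $28$ functions $\varphi_{ij}$ to the master template $\theta$ via Lemma~\ref{Lem:Replace}, then transfer the maximal independence from $\theta$ to $\vartheta$. The crucial observation is that the $28$ pairs $\varphi_{ij}$ group the $\binom{8}{2}+8=36$ building-block sums in $\cB$ into exactly $28$ families:
\begin{itemize}
\item for $i,j \neq h$, the pair $\varphi_{ij}$ corresponds to the single sum $\varphi_i+\varphi_j$;
\item for $j \neq h$, the pair $\varphi_{hj}$ corresponds to $\{\varphi_h^0+\varphi_j,\ \varphi_h^\infty+\varphi_j\}$;
\item and $\varphi_{hh}$ corresponds to $\{\varphi_h^0+\varphi_h^0,\ \varphi_h^0+\varphi_h^\infty,\ \varphi_h^\infty+\varphi_h^\infty\}$.
\end{itemize}
Since $\varphi_h$ is the tropical linear combination $\min\{\varphi_h^0+a,\ \varphi_h^\infty+b\}$ for appropriate constants $a,b$, each $\varphi_{ij}$ is a tropical linear combination of the corresponding family $\cB'_{ij}\subset\cB$, and the families partition $\cB$.

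For each pair $(i,j)$, apply Lemma~\ref{Lem:Replace} to the family $\cB'_{ij}$ to choose a constant $c_{ij}$ so that $\varphi_{ij}+c_{ij} \geq \theta$ everywhere, with equality on the entire region where some element of $\cB'_{ij}$ achieves the minimum in $\theta$. Define $\vartheta=\min_{(i,j)}\{\varphi_{ij}+c_{ij}\}$. Because the families $\cB'_{ij}$ partition $\cB$ and every point of $\Gamma$ is contained in the region where some $\psi \in \cB$ achieves the minimum in $\theta$, we get $\vartheta=\theta$ pointwise.

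To verify maximal independence, fix a pair $(i,j)$ and a $\psi \in \cB'_{ij}$; by Theorem~\ref{Thm:ExtremalConfig}, there is a point $v$ on the assigned loop or bridge of $\psi$ where $\psi+c_\psi$ achieves the minimum in $\theta$. At this point $\vartheta(v)=\theta(v)=\varphi_{ij}(v)+c_{ij}$. Suppose some other pair $(i',j')$ also achieved the minimum at $v$, i.e.\ $\varphi_{i'j'}(v)+c_{i'j'}=\theta(v)$. Then some $\psi'\in\cB'_{i'j'}$ would also achieve the minimum in $\theta$ at $v$. By Lemma~\ref{Lem:OneEq}, after reordering, the multiset of outgoing slope indices $\{\tau'_k(\cdot),\tau'_k(\cdot)\}$ of the summands of $\psi$ and $\psi'$ coincide on the relevant loop $\gamma_k$. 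Because each $\varphi_i$ with $i \neq h$ has constant slope index $i$ and both $\varphi_h^0,\varphi_h^\infty$ have constant slope index $h$, matching multisets of slope indices forces $\psi$ and $\psi'$ to lie in the same family $\cB'_{ij}=\cB'_{i'j'}$, so $\{i,j\}=\{i',j'\}$.

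The main obstacle is the bookkeeping: we must check that whenever two building-block sums $\psi,\psi'\in\cB$ are assigned to the same loop by the master template algorithm (and hence can both achieve the minimum at a common point), their slope-index multisets match in a way that forces them into the same family $\cB'_{ij}$. The argument above uses only that $\cA$ consists of functions with constant slope index sequences and that the two functions replacing $\varphi_h$ share the same slope index $h$; in this subcase, no switching loops or bridges appear, so the slope index sequences really are constant, and Lemma~\ref{Lem:OneEq} applies cleanly. This will serve as the template for later, more intricate cases where replacements analogous to $\varphi_h^0,\varphi_h^\infty$ involve switching.
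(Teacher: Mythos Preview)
Your approach is essentially the same as the paper's: group the building-block sums in $\cB$ into $28$ families indexed by the pairs $(i,j)$, fit each $\varphi_{ij}$ to the master template via Lemma~\ref{Lem:Replace}, and use the constancy of slope indices to show that any collision at a point $v$ forces the colliding functions into the same family.

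There is one genuine slip. The assertion that $\vartheta=\theta$ pointwise does not follow from Lemma~\ref{Lem:Replace} and is in general false: that lemma only gives $\varphi_{ij}+c_{ij}=\theta$ on the region where \emph{one particular} $\psi'\in\cB'_{ij}$ (the one maximizing $c_{\psi'}-a_{\psi'}$) achieves the minimum, not on the region where every element of $\cB'_{ij}$ does. For instance, on the loop to which $\varphi_h^0+\varphi_j$ is assigned, it may well happen that $\varphi_{hj}+c_{hj}>\theta$ strictly. Fortunately your argument never actually uses the global equality; it only uses that $\vartheta(v)=\theta(v)$ at the specific point $v$ coming from Theorem~\ref{Thm:ExtremalConfig} applied to that particular $\psi'$, and this does hold (since $\varphi_{ij}(v)+c_{ij}=\theta(v)$ forces $\vartheta(v)\leq\theta(v)$, while $\vartheta\geq\theta$ everywhere). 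You should delete the sentence claiming $\vartheta=\theta$.

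A second, smaller imprecision: you invoke Lemma~\ref{Lem:OneEq} for $\psi$ and $\psi'$ that both achieve the minimum in $\theta$ at $v$, but that lemma's hypothesis is that both are \emph{assigned to} the same loop or bridge, which you have not established for $\psi'$. The correct chain is: Theorem~\ref{Thm:ExtremalConfig}(2) gives that $\psi'$ agrees with $\psi$ on $\gamma_k$; then, because in this subcase every building block has $(D+\ddiv\varphi)|_{\gamma_k\smallsetminus\{v_k,w_k\}}$ of degree at most $1$, the divisor-matching argument underlying Lemma~\ref{Lem:OneEqLoop} applies directly and yields matching slope-index multisets. The paper phrases this step slightly differently (first classifying which pairs can share an assigned loop, then arguing the replacements respect that classification), but the content is the same.
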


\begin{proof}
By Lemma~\ref{Lem:OneEq}, if two functions $\psi, \psi' \in \cB$ are assigned to the same loop, then these two functions must be either
\[
\psi = \varphi^0_h + \varphi , \psi' = \varphi^{\infty}_h + \varphi \text{ for some } \varphi \in \cA, \text{ or}
\]
\[
\psi = 2\varphi^0_h , \psi' = 2\varphi^{\infty}_h.
\]
Thus, in the master template $\theta$, for each function $\varphi_{ij}$ with $i,j \neq h$, there is a point $v \in \Gamma$ where $\varphi_{ij}$ is the only function to achieve the minimum.  By Lemma~\ref{Lem:Replace}, the region where $\varphi_{hj}$ achieves the minimum in $\vartheta$ contains the region where one of $\varphi^0_h + \varphi_j, \varphi^{\infty}_h + \varphi_j$ achieves the minimum in the master template $\theta$.  Similarly, the region where $\varphi_{hh}$ achieves the minimum in $\vartheta$ contains the region where one of $2\varphi^0_h, \varphi^0_h + \varphi^{\infty}_h, 2\varphi^{\infty}_h$ achieves the minimum in the master template $\theta$.

Thus, if $\varphi_{ij}$ with $i,j \neq h$ is the only function to achieve the minimum at some point $v$ in the master template $\theta$, it continues to be the only function to achieve the minimum at $v$ in the best approximation $\vartheta$.  Similarly, there is a point $v \in \Gamma$ where $\varphi_{hj}$ is the only function to achieve the minimum.  This point $v$ is contained either in the loop or bridge to which $\varphi^0_h + \varphi_j$ is assigned in the master template, or in the loop or bridge to which $\varphi^{\infty}_h + \varphi_j$ is assigned in the master template.
\end{proof}

\begin{remark}
Note that the master template constructed in Theorem~\ref{Thm:ExtremalConfig} is independent of the point $v \in \beta_{\ell}$ where $\varphi_h$ bends.  The dependence on $v$ appears in Theorem~\ref{Thm:BendOnBridge} when we use the master template to obtain a maximally independent combination of the functions $\varphi_{ij}$.  Consider the case where there is a point $w \in \beta_{\ell}$ where $\varphi^0_h + \varphi_j$ and $\varphi^{\infty}_h + \varphi_j$ cross.  In this case, as we move $v$ from one side of $w$ to the other, our maximally independent combination transitions between two combinatorial types.  When $v$ is to the left of $w$, $\varphi_{hj}$ achieves the minimum where $\varphi^0_h + \varphi_j$ does, and dominates $\varphi^{\infty}_h + \varphi_j$.  Similarly, when $v$ is to the right of $w$, $\varphi_{hj}$ achieves the minimum where $\varphi^{\infty}_h + \varphi_j$ does, and dominates $\varphi^0_h + \varphi_j$. This situation is illustrated in Figure~\ref{Fig:PhaseTransition}.
\end{remark}

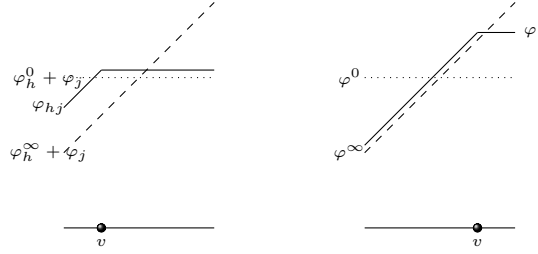
\begin{figure}[H]
\begin{tikzpicture}

\draw (0,0)--(2,0);
\draw [dotted](0,2)--(2,2);
\draw [dashed](0,1)--(2,3);
\draw (0,1.6)--(0.5,2.1);
\draw (0.5,2.1)--(2,2.1);
\draw [ball color=black] (0.5,0) circle (0.55mm);
\draw (0.5,-0.2) node {{\tiny $v$}};
\draw (-0.2,2) node {{\tiny $\varphi^0_h + \varphi_j$}};
\draw (-0.2,1.6) node {{\tiny $\varphi_{hj}$}};
\draw (-0.2,1) node {{\tiny $\varphi^{\infty}_h + \varphi_j$}};

\draw (4,0)--(6,0);
\draw [dotted](4,2)--(6,2);
\draw [dashed](4,1)--(6,3);
\draw (5.5,2.6)--(6,2.6);
\draw (4,1.1)--(5.5,2.6);
\draw [ball color=black] (5.5,0) circle (0.55mm);
\draw (5.5,-0.2) node {{\tiny $v$}};
\draw (3.8,2) node {{\tiny $\varphi^0$}};
\draw (6.2,2.6) node {{\tiny $\varphi$}};
\draw (3.8,1) node {{\tiny $\varphi^{\infty}$}};

\end{tikzpicture}
\caption{As we vary the point $v \in \beta_{\ell}$, the maximally independent combination transitions from one combinatorial type to another.  Specifically, when $v$ is to the left of $w$, $\varphi_{hj}$ achieves the minimum on the same region as $\varphi^0_h + \varphi_j$, whereas if $v$ is to the right of $w$, $\varphi_{hj}$ achieves the minimum on the same region as $\varphi^{\infty}_h + \varphi_j$.}
\label{Fig:PhaseTransition}
\end{figure}

\medskip

\subsubsection{Case 1c:  remaining cases without switching loops or bridges}

The remaining possibilities are that there may be two decreasing bridges of multiplicity 1, or one decreasing bridge of multiplicity 2.  We choose the set $\cA$ in a similar way to the previous case.  Specifically, we define a function $\varphi \in R(D)$ to be in $\cA$ if there is an $i$ such that:
\begin{enumerate}
\item  on every loop $\gamma_k$, $\varphi$ agrees with $\varphi_i$, and
\item  on every bridge $\beta_k$, there is a tangent vector $\zeta$ such that $\varphi$ has constant slope $s_{\zeta} (\varphi_i)$ on $\beta_k$.
\end{enumerate}

\begin{lemma}
\label{Lem:BB3}
In this subcase, the set $\cA$ satisfies property $(\ast)$.  The set $\cB$ of pairwise sums of elements of $\cA$ satisfies properties $(\ast\ast)$ and $(\dagger\dagger)$.
\end{lemma}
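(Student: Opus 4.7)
The plan is to run the same strategy used in Lemmas~\ref{Lem:BB} and~\ref{Lem:BB2}, generalized to accommodate several decreasing bridges of multiplicity one, or a single decreasing bridge of multiplicity two. First I would verify that every $\varphi \in \cA$ is a building block with constant slope index sequence $i$. Since $\varphi$ has constant slope on each bridge by construction, and agrees with $\varphi_i \in \Sigma$ on every loop, membership in $R(D)$ follows from a local computation at each $w_k$ and $v_{k+1}$: the constant slope $c$ chosen on $\beta_k$ equals $s_\zeta(\varphi_i)$ for some tangent vector $\zeta$ in $\beta_k$, and since $\varphi_i$ only bends downward on bridges, $s_{k+1}(\varphi_i) \leq c \leq s'_k(\varphi_i)$. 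Hence the bend contributions to $\ord_{w_k}(\varphi)$ and $\ord_{v_{k+1}}(\varphi)$ are nonnegative relative to those of $\varphi_i$, so $\ddiv(\varphi) + D \geq 0$. The same local check yields $\tau_k(\varphi) = \tau'_k(\varphi) = i$ on every loop, and in the absence of switching loops or bridges this slope index sequence is admissible by Definition~\ref{def:buildingblock}.

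Properties $(\ast)$ and $(\dagger\dagger)$ are then immediate. For $(\ast)$, two elements of $\cA$ with different slope indices $i \neq j$ satisfy $\tau'_k(\varphi) \neq \tau'_k(\phi)$ at every loop, so the hypothesis of $(\ast)$ is vacuous; two elements sharing the same slope index $i$ agree with $\varphi_i$ on every loop, hence with each other; and two distinct elements of $\cA$ must differ on at least one bridge and therefore cannot differ by a constant. Property $(\dagger\dagger)$ holds vacuously since there are no switching loops or bridges.

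For property $(\ast\ast)$, the hypothesis that $\gamma_k$ switches slope $\tau_k(\varphi)$ is never triggered, so only the case $s_{k+1}(\varphi) < s'_k[\tau'_k(\varphi)]$ needs attention. This forces $\beta_k$ to be a decreasing bridge at index $i = \tau'_k(\varphi)$, with $\varphi$ taking a constant slope strictly less than $s'_k[i]$ on $\beta_k$. I would then define $\phi \in \cA$ to be the element with slope index $i$ agreeing with $\varphi$ on all bridges $\beta_{k'}$ for $k' < k$, taking the maximal available slope $s'_k[i]$ on $\beta_k$, and taking any admissible slope choices on the bridges to the right. Then $\phi$ agrees with $\varphi$ on $\Gamma_{[1,k]}$, so $\psi = \phi + \varphi' \in \cB$ agrees with $\varphi + \varphi'$ on $\Gamma_{[1,k]}$ and satisfies $s_{k+1}(\psi) > s_{k+1}(\varphi + \varphi')$, as required.

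The principal obstacle is the case analysis verifying admissibility of $\phi$ in this last step: when a single bridge decreases by two (potentially as two separate bends of size one, producing an intermediate slope), or when two distinct indices decrease simultaneously on one bridge, one must check that every combination of bridge-slope choices drawn from the slopes actually taken by $\varphi_i$ yields a valid building block with the correct constant slope index $i$. Fortunately, this is handled uniformly by the local slope computation in the first step, so the case analysis reduces to routine checking that the chosen slope $s'_k[i]$ indeed occurs as some $s_\zeta(\varphi_i)$ on $\beta_k$.
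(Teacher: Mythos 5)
Your proof is correct and follows essentially the same route as the paper: $(\ast)$ via the observation that all slope index sequences are constant and functions with the same index agree with $\varphi_i$ on every loop, $(\dagger\dagger)$ vacuously, and $(\ast\ast)$ by swapping the bridge slope on $\beta_k$ for the maximal available slope $s'_k[i]$ to get another element of $\cA$ agreeing with $\varphi$ to the left of the bridge. Your extra local computation verifying that any slope between $s_{k+1}(\varphi_i)$ and $s'_k(\varphi_i)$ yields a function in $R(D)$ is just making explicit what the paper leaves implicit by analogy with Lemma~\ref{Lem:BB2}.
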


\begin{proof}
The proof is very similar to that of Lemma~\ref{Lem:BB2}.  Note that, as in the previous cases, since there are no switching loops or bridges, the only valid slope index sequences are the constant sequences.  By definition, if two functions in $\cA$ have the same slope index $i$, then both agree with $\varphi_i$ on every loop.  Thus, $\cA$ satisfies property $(\ast)$.

If $\varphi \in \cA$ has constant slope index sequence $i$ and $s_{k+1} (\varphi) < s'_k [i]$,
then there is some $\varphi' \in \cA$ that agrees with $\varphi$ on each component of $\Gamma \smallsetminus \beta_k$, with
$
s_{k+1} (\varphi') = s'_k [i].
$
Thus, $\cB$ satisfies property $(\ast\ast)$.  Since there are no switching loops or bridges, property $(\dagger\dagger)$ is satisfied vacuously.
\end{proof}

By Lemma~\ref{Lem:BB3}, the set $\cB$ satisfies the hypotheses of Theorem~\ref{Thm:ExtremalConfig}, so there is a tropical linear combination $\theta$ of the functions $\psi \in \cB$ such that each function $\psi$ achieves the minimum at some point of its assigned loop or bridge, and all other functions that achieve the minimum at this point agree with $\psi$ on this loop or bridge.  Now, by Lemma~\ref{Lem:Replace}, there is a tropical linear combination $\vartheta$ of the functions $\varphi_{ij}$ that dominates the master template, such that each function $\varphi_{ij}$ equals the master template at some point.

In any of these cases, the functions $\varphi_i$ are tropical linear combinations of the building blocks.  Moreover, for $i \neq j$, the set of building blocks needed to construct $\varphi_i$ as a tropical linear combination is disjoint from the set of building blocks needed to construct $\varphi_j$.  For this reason, the argument of Theorem~\ref{Thm:BendOnBridge} carries through essentially without change.

\begin{theorem}
\label{Thm:BendOnBridge2}
In this subcase, $\vartheta$ is a maximally independent combination.
\end{theorem}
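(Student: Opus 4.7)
The plan is to adapt the argument of Theorem~\ref{Thm:BendOnBridge}, exploiting the key structural property highlighted in the text: for $i \neq j$, the building blocks assembling $\varphi_i$ as a tropical minimum are disjoint from those assembling $\varphi_j$.  Concretely, since there are no switching loops or bridges, every element of $\cA$ has a constant slope-index sequence, so $\cA$ partitions as $\cA = \bigsqcup_i \cA_i$, where $\cA_i$ is the set of elements with slope-index $i$.  By construction of $\cA$, each $\psi \in \cA_i$ agrees with $\varphi_i$ on every loop and takes one of the constant slopes of $\varphi_i$ on each bridge; hence $\varphi_i$ is itself a tropical linear combination of $\cA_i$, and $\varphi_{ij}$ is a tropical linear combination of $\{\psi+\psi' : \psi \in \cA_i, \psi' \in \cA_j\} \subseteq \cB$.

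The first main step is to invoke Lemma~\ref{Lem:OneEq} applied to the master template $\theta$ from Theorem~\ref{Thm:ExtremalConfig}: whenever two functions in $\cB$ are assigned to the same loop or bridge by the algorithm, then (after reordering their summands) the first summands have equal slope index and so do the second summands.  Thus every loop or bridge in the assignment corresponds to a single unordered pair $\{i,j\}$, and the functions in $\cB$ assigned there all contribute to the construction of the single function $\varphi_{ij}$.

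The second step is to apply Lemma~\ref{Lem:Replace} once for each pair $\{i,j\}$ to obtain a shift $b_{ij}$ with $\varphi_{ij} + b_{ij} \geq \theta$, with equality on the union of regions where some element of $\cB$ with slope-index pair $\{i,j\}$ achieves the minimum in $\theta$.  Setting $\vartheta = \min_{i \leq j}\{\varphi_{ij} + b_{ij}\}$ then yields $\vartheta \geq \theta$ with equality everywhere, since Theorem~\ref{Thm:ExtremalConfig} guarantees every $\psi \in \cB$ is assigned somewhere.  To verify maximal independence, pick for each $\{i,j\}$ a witness point $v_{ij}$ and function $\psi_0 \in \cB$ with slope indices $\{i,j\}$ that achieves the minimum in $\theta$ at $v_{ij}$; Theorem~\ref{Thm:ExtremalConfig} guarantees that any other $\psi' \in \cB$ achieving the minimum at $v_{ij}$ agrees with $\psi_0$ on the assigned loop, hence shares the slope-index pair $\{i,j\}$.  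Thus $\varphi_{ij} + b_{ij}$ equals $\theta(v_{ij})$ at $v_{ij}$, and for any other pair $\{i',j'\} \neq \{i,j\}$ the defining combination of $\varphi_{i'j'} + b_{i'j'}$ draws only from $\cA_{i'} \times \cA_{j'}$, which is disjoint from $\cA_i \times \cA_j$ in the relevant coordinate; so no summand of $\varphi_{i'j'} + b_{i'j'}$ can attain the minimum at $v_{ij}$, forcing strict inequality $\varphi_{i'j'} + b_{i'j'} > \theta(v_{ij})$ there.

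The main obstacle is precisely this last strict-inequality argument: Lemma~\ref{Lem:Replace} only gives nonstrict domination, so one must leverage the disjointness $\cA_i \cap \cA_{i'} = \emptyset$ for $i \neq i'$ to promote $\geq$ to $>$ at witness points.  The combinatorial bookkeeping is slightly more involved than in Subcase 1b because $\varphi_i$ may now bend on two bridges (or on a single bridge with multiplicity two), so $\cA_i$ can contain up to four elements rather than two; but since the partition $\cA = \bigsqcup_i \cA_i$ is still indexed cleanly by slope index, the argument of Theorem~\ref{Thm:BendOnBridge} transfers mutatis mutandis.
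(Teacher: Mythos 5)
Your proposal is correct and follows essentially the same route as the paper's proof: since there are no switching loops or bridges all slope index sequences are constant, each $\varphi_{ij}$ is fitted to the master template of Theorem~\ref{Thm:ExtremalConfig} via Lemma~\ref{Lem:Replace}, and Lemma~\ref{Lem:OneEq} together with the disjointness of the building blocks having distinct (constant) slope indices yields a point where $\varphi_{ij}$ achieves the minimum uniquely. One small caveat: Lemma~\ref{Lem:Replace} guarantees equality with $\theta$ only on the region of the \emph{maximizing} summand, not on the union over all summands with slope-index pair $\{i,j\}$ (and hence not "equality everywhere" for $\vartheta$), so the witness point $v_{ij}$ should be taken on the loop or bridge assigned to that particular summand, which exists by Theorem~\ref{Thm:ExtremalConfig}; with that choice your strict-inequality argument is exactly the paper's.
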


\begin{proof}
The proof in this case is very similar to that of Theorem~\ref{Thm:BendOnBridge}.  Recall that, because there are no switching loops or bridges, the only valid slope index sequences are constant sequences.  For each $i$, the function $\varphi_i$ is a tropical linear combination of the building blocks with slope index sequence $i$.  Thus, in the tropical linear combination $\vartheta$, the function $\varphi_{ij}$ achieves the minimum on the same region as a function $\varphi + \varphi'$, where $\varphi$ has constant slope index sequence $i$ and $\varphi'$ has constant slope index sequence $j$.  Second, by Lemma~\ref{Lem:OneEq}, if another function $\phi + \phi' \in \cB$ is assigned to the same loop or bridge as $\varphi + \varphi'$, then $\phi$ must have constant slope index sequence $i$, and $\phi'$ must have constant slope index sequence $j$.  It follows that there is a point $v$ in this loop or bridge where $\varphi_{ij}$ achieves the minimum uniquely.
\end{proof}

\subsection{Case 2:  a switching bridge}  \label{Sec:SwitchBridgeCase}

In this section, we consider the case where there is a switching bridge $\beta_{\ell}$.  By Lemma~\ref{Lem:BridgeSwitch}, there is a unique index $h$ such that
\[
s'_{\ell} [h+1] = s'_{\ell} [h] +1 = s_{\ell+1} [h+1] + 1 = s_{\ell+1} [h] + 2 .
\]
Moreover, there is a point $x \in \beta_{\ell}$ where both slopes decrease.  For each $i$, the incoming slope at this point $s_x[i]$ is equal to the slope $s'_\ell[i]$ at the beginning of the bridge.  Similarly, the outgoing slope $s'_x[i]$ is equal to the slope $s_{\ell + 1}[i]$ at the end of the brige.  Note that, since switching bridges have multiplicity 2, no other bridge or loop has positive multiplicity.

By Lemma~\ref{Lem:GenericFns}, for all $j \notin \{h,h+1\}$, there is a function $\varphi_j \in \Sigma$ with
\[
s_k (\varphi_j) = s_k [j] \mbox{ and } s'_k (\varphi_j) = s'_k [j] \mbox{ for all } k.
\]
Because $\beta_{\ell}$ is the only decreasing bridge, these functions $\varphi_j$ are building blocks.

Our first goal is to identify a suitable collection of functions in $\Sigma$.  This collection will consist of the functions $\varphi_j$ for $j \notin \{h,h+1\}$, plus three more functions that we define in Proposition~\ref{Prop:BridgeFns}.  These three functions are very similar to those constructed in Example~\ref{Ex:Interval}. We find it helpful to illustrate this with a picture, which provides a ``zoomed out" view in which the chain of loops looks like an interval.

Figure~\ref{Fig:BridgeDependence} depicts the essential properties of the three functions appearing in Proposition~\ref{Prop:BridgeFns}.  We present one copy of the chain of loops, depicted as an interval, with markings at the switching loops and bridges.  These markings break the interval into regions, and the regions are labeled with the relevant slope indices.  For example, if a region is labeled with the value $h$ in the copy of the interval corresponding to a function $\varphi$, this indicates that $\varphi$ has slope $s_k (\varphi) = s_k [h]$ for all $k$ in the given region.  We include similar schematic illustrations in all subsequent cases.

\begin{figure}[H]
\begin{tikzpicture}

\draw (-0.2,1) node {{\tiny $\varphi_C$}};
\draw (0,1)--(6,1);
\draw [ball color=white] (3,1) circle (0.55mm);
\draw (1.5,1.2) node {{\tiny $h+1$}};
\draw (4.5,1.2) node {{\tiny $h$}};

\draw (-0.2,3) node {{\tiny $\varphi_A$}};
\draw (0,3)--(6,3);
\draw [ball color=white] (3,3) circle (0.55mm);
\draw [ball color=black] (4.5,3) circle (0.55mm);
\draw (1.5,3.2) node {{\tiny $h$}};
\draw (3.75,3.2) node {{\tiny $h+1$}};
\draw (5.25,3.2) node {{\tiny $h$}};

\draw (-0.2,2) node {{\tiny $\varphi_B$}};
\draw (0,2)--(6,2);
\draw [ball color=white] (3,2) circle (0.55mm);
\draw [ball color=black] (1.5,2) circle (0.55mm);
\draw (0.75,2.2) node {{\tiny $h+1$}};
\draw (2.25,2.2) node {{\tiny $h$}};
\draw (4.5,2.2) node {{\tiny $h+1$}};

\draw (0,-.5)--(6,-.5);
\draw [ball color=white] (3,-.5) circle (0.55mm);
\draw [ball color=black] (1.5,-.5) circle (0.55mm);
\draw [ball color=black] (4.5,-.5) circle (0.55mm);
\draw (0.75,-0.3) node {{\tiny $BC$}};
\draw (3,-0.3) node {{\tiny $AB$}};
\draw (5.25,-0.3) node {{\tiny $AC$}};

\end{tikzpicture}
\caption{A schematic depiction of the three functions $\varphi_A$, $\varphi_B$, $\varphi_C$, and the dependence between them.  The white dot represents the point $x$, and the black dots are located at undetermined points on either side of $x$.}
\label{Fig:BridgeDependence}
\end{figure}
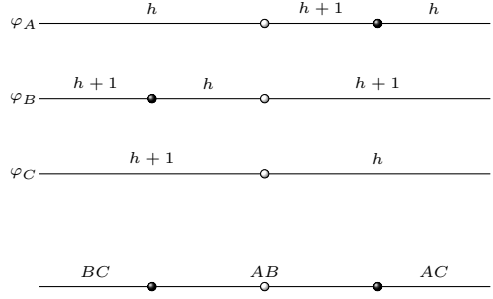

\begin{proposition}
\label{Prop:BridgeFns}
There are functions $\varphi_A , \varphi_B,$ and $\varphi_C$ in $\Sigma$ with the following properties:
\begin{enumerate}
\item  $s_x (\varphi_A) = s_x [h]$, and $s'_k (\varphi_A) = s'_k [h]$ for all $k \leq \ell$.
\item  $s'_x (\varphi_B) = s'_x [h+1]$, and $s_k (\varphi_B) = s_k[h+1]$ for all $k > \ell$.
\item  $s'_k (\varphi_C) = s'_k [h+1]$ for all $k \leq \ell$, and $s_k (\varphi_C) = s_k [h]$ for all $k > \ell$.
\end{enumerate}
Moreover, these functions can be chosen such that $s_k (\varphi) \in \{ s_k [h], s_k [h+1]\}$ for all $k$.
\end{proposition}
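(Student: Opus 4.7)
The plan is to construct the three functions $\varphi_A$, $\varphi_B$, $\varphi_C$ by applying Lemma~\ref{Lemma:Existence} at carefully chosen pairs of tangent vectors, and then using the hypothesis that $\beta_\ell$ is the unique loop or bridge of positive multiplicity (switching slope $h$) to rigidify the resulting slope data into the exact shapes demanded by (1), (2), and (3).

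For $\varphi_A$, I would apply Lemma~\ref{Lemma:Existence} with $\zeta$ the rightward tangent vector at $w_0$ and $\zeta'$ the rightward tangent vector just to the right of $x$ on $\beta_\ell$, with index $i = h$. This produces $\varphi_A \in \Sigma$ with $s'_0(\varphi_A) \leq s'_0[h]$ and with slope at least $s_{\ell+1}[h] = s'_\ell[h] - 1$ just to the right of $x$. Because there are no switching loops and no other decreasing bridges, the slope index of $\varphi_A$ is constant on the entire region to the left of $x$, equal to some $j_L \leq h$. The key step is ruling out $j_L < h$: in that case the slope of $\varphi_A$ on the left of $x$ would satisfy $s'_\ell[j_L] \leq s'_\ell[h-1]$, and since slopes can only decrease across bridges we would need $s'_\ell[h-1] \geq s'_\ell[h] - 1$, forcing $s'_\ell[h-1] = s'_\ell[h] - 1$; but then $s_{\ell+1}[h-1] = s'_\ell[h-1] = s'_\ell[h] - 1 = s_{\ell+1}[h]$, contradicting the strict monotonicity of $s_{\ell+1}$. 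Hence $j_L = h$, which gives property (1). The function $\varphi_B$ is built by the mirror-image construction, applying Lemma~\ref{Lemma:Existence} to the rightward tangent vector just to the left of $x$ on $\beta_\ell$ and the rightward tangent vector just to the left of $v_{g+1}$, with index $i = h+1$.

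The main obstacle will be constructing $\varphi_C$, since no single application of Lemma~\ref{Lemma:Existence} can simultaneously force slope index $h+1$ on the left of $x$ and slope index $h$ on the right: the required slope drop of $2$ at $x$ exceeds the one-unit gap afforded by the lemma. Following the construction of $\psi_C$ in Example~\ref{Ex:Interval}, I would obtain $\varphi_C$ as the pointwise minimum, after suitable additive shifts, of two functions in $\Sigma$ that agree at $x$: the first having slope $s'_\ell[h+1]$ just to the left of $x$, and the second having slope $s_{\ell+1}[h]$ just to the right of $x$. Each of these exists by applying Lemma~\ref{Lemma:Existence} separately on each half of $\Gamma$, with appropriate index on each side. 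Once the two summands are rescaled to agree at $x$, the minimum has slope $s'_\ell[h+1]$ on the left of $x$ and slope $s_{\ell+1}[h]$ on the right, and the slopes of $\varphi_C$ elsewhere are rigidified by the same slope-index argument used for $\varphi_A$, giving property (3). Finally, the ``moreover'' claim that $s_k(\varphi) \in \{s_k[h], s_k[h+1]\}$ for each of the three functions follows because the slope index of any function in $\Sigma$ can change only at $\beta_\ell$, and only by the switching slope $h$; so once the index is confined to $\{h, h+1\}$ at any single point, it lies in $\{h, h+1\}$ throughout.
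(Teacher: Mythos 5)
Your constructions of $\varphi_A$ and $\varphi_B$ are close in spirit to the paper's (which applies Lemma~\ref{Lemma:Existence} with the two anchors at $w_0$ and $v_{g+1}$ rather than at $w_0$ and $x$), and your numerical observation that $s'_\ell[h-1] < s'_\ell[h]-1$, because the multiplicity-$2$ budget of $\beta_\ell$ is exhausted by the indices $h$ and $h+1$, is exactly the fact needed to pin the incoming slope at $x$. But your argument rests on the assertion that the slope index of a function in $\Sigma$ ``can change only at $\beta_\ell$,'' and that is false: the absence of other switching loops and bridges only prevents the index from \emph{increasing} from left to right; it can decrease at any loop or bridge (e.g.\ for pointwise minima of functions in $\Sigma$). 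For your $\varphi_A$ this is a repairable imprecision — a monotonicity argument (index nonincreasing rightward, $\leq h$ at $w_0$, $=h$ at $x^-$) recovers property (1) — but it is fatal for the ``moreover'' clause: since you anchor $\varphi_A$ only at $w_0$ and just right of $x$, and $\varphi_B$ only near $x$ and at $v_{g+1}$, nothing controls either function on the far side of $x$, and its slope index there can drop (for $\varphi_A$) or start arbitrarily high near $w_0$ (for $\varphi_B$). The ``moreover'' clause is not decorative; it is what later allows $\varphi_A,\varphi_B,\varphi_C$ to be written as tropical combinations of $\varphi^0_h,\varphi^0_{h+1},\varphi^\infty_h$ governed by the single parameter $t$, so it needs the two-sided anchoring.

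The more serious gap is $\varphi_C$. Your two summands are each controlled on only one side of $x$: the left summand has slope $s_x[h+1]$ at $x^-$ but its slope at $x^+$ may be any admissible value $\leq s_{\ell+1}[h+1]$, and the right summand has slope $s'_x[h]$ at $x^+$ but its slope at $x^-$ may be $s'_\ell[h+2]$ or larger. After rescaling so they agree at $x$, the pointwise minimum just to the \emph{left} of $x$ follows whichever summand has the larger slope there (possibly the right summand), and just to the \emph{right} of $x$ it follows whichever has the smaller slope there (possibly the left summand); in either case the minimum tracks the uncontrolled function and (3) fails, and no ``same slope-index argument as for $\varphi_A$'' can rescue it because there is no constraint to propagate. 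The missing idea is the paper's use of the algebraic pencil: take $f_A,f_B\in V$ tropicalizing to $\varphi_A,\varphi_B$ and work inside $\trop$ of the pencil they span. By Lemma~\ref{Lem:NumberOfSlopes} applied to this rank-one series there are exactly two slopes at every tangent vector, and the endpoint constraints force both to lie on the $h$- and $(h+1)$-tracks everywhere; choosing both summands of $\varphi_C$ from this pencil (one with $s_x=s_x[h+1]$, one with $s'_x=s'_x[h]$) makes them globally sandwiched, so after matching values at $x$ the first is minimal on all of the left part and the second on all of the right part, exactly as for $\psi_C$ in Example~\ref{Ex:Interval}. This pencil step — which also delivers the ``moreover'' clause for free — is the key ingredient your proposal omits, and one-sided applications of Lemma~\ref{Lemma:Existence} cannot substitute for it.
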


\begin{proof}
By Lemma~\ref{Lemma:Existence}, there is a function $\varphi_A \in \Sigma$ such that $s'_0 (\varphi_A) \leq s'_0 [h]$, and $s_{g+1} (\varphi_A) \geq s_{g+1} [h]$.  Since $\beta_{\ell}$ is the only switching bridge, and there are no switching loops, we see that $s'_k (\varphi_A) \leq s'_k [h]$ for all $k\leq \ell$, and $s_k (\varphi_A) \geq s_k [h]$ for all $k > \ell$.  In particular, $s_{\ell+1} (\varphi_A) \geq s_{\ell+1} [h]$, which forces $s_x (\varphi_A) \geq s_x [h]$.  Again, since $\beta_{\ell}$ is the only switching bridge, this forces $s'_k (\varphi_A) \geq s'_k [h]$ for all $k \leq \ell$, and $s_k (\varphi_A) \leq s_k [h+1]$ for all $k > \ell$.

Similarly, by Lemma~\ref{Lemma:Existence}, there is a function $\varphi_B \in \Sigma$ such that $s'_0 (\varphi_B) \leq s'_0 [h+1]$, and $s_{g+1} (\varphi_B) \geq s_{g+1} [h+1]$.  The facts about the slopes of $\varphi_B$ on the various bridges follow from an analysis analogous to the one above.

Suppose that $f_A , f_B \in V$ are functions tropicalizing to $\varphi_A , \varphi_B$, respectively.  If $f$ is a function in the pencil spanned by $f_A$ and $f_B$, then we see that $s'_0 (\trop (f)) \leq s'_0 [h+1]$ and $s_{g+1} (\trop (f)) \geq s_{g+1} [h]$.  It follows from the analysis above that $s_k (\trop (f))$ is equal to either $s_k [h]$ or $s_k [h+1]$ for any given $k$.  Moreover, for any $k$ there is a function $f$ in this pencil such that $s_k (\trop (f)) = s_k [h]$, and there is a function $f$ in this pencil such that $s_k (\trop (f)) = s_k [h+1]$.  Let $\varphi$ be the tropicalization of a function in this pencil such that $s_x (\varphi) = s_x [h+1]$.  Notice that this forces $s'_k (\varphi) = s'_k [h+1]$ for all $k \leq \ell$.  Similarly, let $\varphi'$ be the tropicalization of a function in this pencil such that $s'_x (\varphi') = s'_x [h]$.  Notice that this forces $s_k (\varphi') = s_k [h]$ for all $k > \ell$.  Finally, by adding a scalar to $\varphi'$, we may assume that $\varphi$ and $\varphi'$ agree at $x$, and let $\varphi_C = \min \{ \varphi, \varphi' \}$.
\end{proof}

We now enumerate the building blocks that we will use.  Our set $\cA$ will consist of the functions $\varphi_j$ for $j \notin \{h,h+1\}$, plus three more functions, defined as follows.

\begin{definition}
There are building blocks $\varphi^0_h, \varphi^0_{h+1},$ and $\varphi^{\infty}_h$ with the following slopes:
\begin{enumerate}
\item  $s_k (\varphi^0_h) = s_k [h]$ for all $k$;
\item  $s_k (\varphi^0_{h+1}) = s'_{k-1} [h+1]$ for all $k$;
\item  $s_k (\varphi^{\infty}_h) = s_k[h]$ for all $k < \ell$, and $s_k (\varphi^{\infty}_h) = s_k [h+1]$ for all $k \geq \ell$.
\end{enumerate}
\end{definition}

\noindent These building blocks are unique up to additive constants.  Note that the function $\varphi^0_h$ cannot be an element of $\Sigma$, because $s'_{\ell} (\varphi^0_h) \notin s'_{\ell} (\Sigma)$.  Similarly, the function $\varphi^0_{h+1}$ cannot be an element of $\Sigma$ either.  However, the functions $\varphi_A$, $\varphi_B$, and $\varphi_C$ can be written as tropical linear combinations of these building blocks.  Specifically, the function $\varphi_A \in \Sigma$ is a tropical linear combination of the functions $\varphi^0_h$ and $\varphi^{\infty}_h$, where the two functions simultaneously achieve the minimum at a point of distance $t$, measured along the bridges and bottom edges, to the right of $x$.  Similarly, the function $\varphi_B$ is a tropical linear combination of the functions $\varphi^0_{h+1}$ and $\varphi^{\infty}_h$, where the two functions simultaneously achieve the minimum at a point of distance $t'$ to the left of $x$.  The function $\varphi_C$ is a tropical linear combination of the functions $\varphi^0_h$ and $\varphi^0_{h+1}$, where the two functions simultaneously achieve the minimum at $x$.

In particular, the function $\varphi_A$ is determined by the parameter $t$, and the function $\varphi_B$ is determined by the parameter $t'$.  As in Example~\ref{Ex:Interval}, each of these parameters controls the other.

\begin{proposition}
\label{Prop:BridgeDependence}
The distance $t'$ is determined by the distance $t$.
\end{proposition}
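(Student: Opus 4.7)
\medskip

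\noindent\textbf{Plan.} The proof will follow the same outline as the tropical dependence computation at the end of Example~\ref{Ex:Interval}, but now carried out on the chain of loops. First, I would observe that by the construction in the proof of Proposition~\ref{Prop:BridgeFns}, all three of $\varphi_A,\varphi_B,\varphi_C$ are tropicalizations of rational functions in the two-dimensional pencil spanned by lifts $f_A,f_B\in V$ of $\varphi_A$ and $\varphi_B$. Any three elements of a two-dimensional $K$-vector space are linearly dependent, so \cite[Lemma~3.2]{tropicalGP} gives that $\{\varphi_A,\varphi_B,\varphi_C\}$ is tropically dependent.

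\medskip

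Next, I would pin down the combinatorial type of this dependence using the slope information collected in Proposition~\ref{Prop:BridgeFns} and displayed in Figure~\ref{Fig:BridgeDependence}. On each maximal stretch of $\Gamma$ where all three functions have constant slope index, exactly two of them share that index, so exactly that pair has locally constant difference and must therefore simultaneously achieve the minimum on that region, with the third strictly larger. Reading off the three regimes---left of the break point of $\varphi_B$ (at distance $t'$ left of $x$), between the two break points, and right of the break point of $\varphi_A$ (at distance $t$ right of $x$)---uniquely forces the $BC$--$AB$--$AC$ pattern in the schematic of Figure~\ref{Fig:BridgeDependence}, with all three functions coinciding at each of the two boundary points.

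\medskip

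Finally, I would compute $\varphi_C-\varphi_A$ by integrating slope differences along a path from the $AC$ region leftward through $x$ into the $BC$ region, exactly as in the last paragraph of Example~\ref{Ex:Interval}. After normalizing so that $\varphi_C-\varphi_A=0$ on the $AC$ region, the slope index of $\varphi_A$ minus that of $\varphi_C$ equals $+1$ on the stretch of length $t$ from the break point of $\varphi_A$ back to $x$, and equals $-1$ on the stretch of length $t'$ from $x$ back to the break point of $\varphi_B$, so $\varphi_C-\varphi_A$ equals $t$ at $x$ and then $t-t'$ at the break point of $\varphi_B$. Because that break point lies in the closure of the $BC$ region, all three functions coincide there, and the single linear equation $t-t'=0$ determines $t'$ in terms of $t$ (in fact $t'=t$).

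\medskip

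The main bookkeeping obstacle I anticipate is that $\varphi_A,\varphi_B,\varphi_C$ need not bend only on the bridge $\beta_\ell$: the building blocks $\varphi^0_h,\varphi^\infty_h,\varphi^0_{h+1}$ have different divisors on the loops flanking $\beta_\ell$, so the break points may sit on adjacent loops and the integration path must be specified. This is exactly why the definition measures $t$ and $t'$ along bridges and bottom edges; since on every loop the relevant pair of functions agree in slope along each edge up to the same constant, the slope-difference integral depends only on this prescribed distance, and the one-dimensional calculation above goes through verbatim.
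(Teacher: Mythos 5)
Your overall route is the same as the paper's: the three functions are tropicalizations of members of a pencil, hence tropically dependent; the slope data forces the $BC$--$AB$--$AC$ combinatorial type, so all three functions must simultaneously achieve the minimum at the break point of $\varphi_A$ (distance $t$ to the right of $x$) and at the break point of $\varphi_B$ (distance $t'$ to the left of $x$); and this linkage is what determines $t'$ from $t$. Up to that point your argument coincides with the paper's proof.

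The gap is in your final computation, where you assert that the relevant slope differences are exactly $\pm 1$ along the whole path and conclude $t'=t$. This conflates slope \emph{indices} with actual slopes: away from the switching bridge $\beta_\ell$, the consecutive slopes $s_k[h]$ and $s_k[h+1]$ (equivalently, the slopes of $\varphi^\infty_h$ versus $\varphi^0_h$ to the right of $x$, or of $\varphi^0_{h+1}$ versus $\varphi^0_h$ to the left) can differ by $2$. So if $t$ is large enough that the break point of $\varphi_A$ lies beyond $\gamma_{\ell+1}$, the equation produced by the dependence equates lengths weighted by these slope differences, and the weights need not be $1$; the identity $t'=t$ then fails. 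This is precisely why the proposition claims only that $t'$ is determined by $t$, and why $t'=t$ is proved separately in Corollary~\ref{Cor:BoundOnT}, under the hypothesis that the subinterval of $\beta_\ell$ on which $\varphi_A$ has slope $s'_\ell[h]$ has length less than $m_\ell$ (compare Corollary~\ref{Cor:SecondBoundOnT}, where the analogous weights can equal $2$ and one only obtains $t'<2m_\ell$). The determination statement itself still follows from your first two steps, since the forced dependence yields a monotone piecewise-linear relation between $t$ and $t'$; but the asserted identity $t'=t$, and the closing claim that the one-dimensional calculation ``goes through verbatim,'' are not justified on the chain of loops.
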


\begin{proof}
The three functions $\varphi_A , \varphi_B, \varphi_C$ are tropicalizations of functions in a pencil, and are therefore tropically dependent.  If we consider the point of distance $t$ to the right of $x$, measured along the bridges and bottom edges, at which the function $\varphi_A$ equals $\varphi^{\infty}_h$ to the left and with $\varphi^0_h$ to the right, we see that locally in a neighborhood of this point, $\varphi_B$ agrees with $\varphi^{\infty}_h$ and $\varphi_C$ agrees with $\varphi^0_h$.  Thus, in the tropical dependence between these three functions, all three must achieve the minimum at this point.  This determines the other point, to the left of $x$, where all three achieve the minimum, which by the same reasoning is of distance $t'$ from $x$.
\end{proof}

Thus, these functions depend on the single parameter $t$.  In the proof of Theorem~\ref{Thm:SwitchingBridge}, we will use the following estimate.

\begin{corollary}
\label{Cor:BoundOnT}
If the subinterval of $\beta_{\ell}$ where $\varphi_A$ has slope $s'_{\ell}[h]$ has length less than $m_{\ell}$, then $t'=t$.
\end{corollary}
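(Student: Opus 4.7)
The plan is to transport the slope-comparison argument at the end of Example~\ref{Ex:Interval} to the bridge $\beta_\ell$: under the length hypothesis, the slope picture of $\varphi_A$, $\varphi_B$, $\varphi_C$ on $\beta_\ell$ mirrors that of $\psi_A$, $\psi_B$, $\psi_C$ in the example, and the tropical dependence from Proposition~\ref{Prop:BridgeDependence} forces $t'=t$ by the same balancing computation.

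First, I would observe that the subinterval on which $\varphi_A$ has slope $s'_\ell[h]$ extends from $w_\ell$ rightward to the meeting point of the decomposition $\varphi_A=\min\{\varphi^0_h+a,\varphi^\infty_h+b\}$, which by construction lies at distance $t$ right of $x$. The hypothesis then gives $\mathrm{dist}(w_\ell,x)+t<m_\ell\ll n_\ell$, so this rightward meeting point lies strictly inside $\beta_\ell$. By Proposition~\ref{Prop:BridgeDependence}, the tropical dependence among $\varphi_A$, $\varphi_B$, $\varphi_C$ has the three functions simultaneously achieving the minimum at this rightward meeting point and at a leftward meeting point at distance $t'$ left of $x$. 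I would then verify that this leftward meeting point also lies in $\beta_\ell$, by comparing the slopes of $\varphi_A$, $\varphi_B$, $\varphi_C$ on the bottom edge of $\gamma_\ell$ and earlier portions of $\Gamma$---where they are translates of the building blocks $\varphi^0_h$, $\varphi^0_{h+1}$, $\varphi^\infty_h$---and applying the admissibility inequalities of Definition~\ref{Def:Admissible} to rule out the alternative in which the $BC\to AB\to AC$ pattern of Figure~\ref{Fig:IntervalDependence} is forced to close up outside $\beta_\ell$.

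Once both meeting points are confined to $\beta_\ell$, I would compute $\varphi_C-\varphi_A$ on $\beta_\ell$: its slope is $s'_\ell[h+1]-s'_\ell[h]=+1$ on the segment left of $x$, $s_{\ell+1}[h]-s'_\ell[h]=-1$ on the segment from $x$ to the rightward meeting point, and $0$ thereafter, exactly matching the pattern in Example~\ref{Ex:Interval}. Since $\varphi_A$ and $\varphi_C$ simultaneously achieve the minimum at both meeting points of the dependence, $\varphi_C-\varphi_A$ takes the same value at both. Equating the two displacements of this value from its value at $x$ yields $t'=t$.

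The main obstacle is the confinement step. This is where the short-subinterval hypothesis is genuinely being used, via the admissibility inequalities on edge lengths, to rule out the combinatorial possibility that the dependence extends into $\gamma_\ell$ or further left. After this confinement is established, the closing slope computation is direct, essentially identical to the one at the end of Example~\ref{Ex:Interval}.
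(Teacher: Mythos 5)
Your overall strategy is the paper's: use the tropical dependence of Proposition~\ref{Prop:BridgeDependence} and the balancing computation from Example~\ref{Ex:Interval}, after noting that the hypothesis forces the point at distance $t$ to the right of $x$ into the bridge $\beta_\ell$ (otherwise $\varphi_A$ would have slope $s'_\ell[h]$ on all of $\beta_\ell$, whose length is $n_\ell\gg m_\ell$). The gap is in your confinement step. The point at distance $t'$ to the left of $x$ need \emph{not} lie in $\beta_\ell$, and the admissibility inequalities of Definition~\ref{Def:Admissible} cannot rule this out: writing $\delta$ for the distance from $w_\ell$ to $x$, the hypothesis only says $\delta+t<m_\ell$, so when $\delta$ is small and $t>\delta$ the left meeting point (which turns out to be at distance $t'=t>\delta$ from $x$) sits on the bottom edge of $\gamma_\ell$. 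That edge has length exactly $m_\ell$, the same scale as $t$ and $t'$, so the separation-of-scales inequalities $\ell_{k+1}\ll m_k\ll \ell_k\ll n_k$ give you nothing here; they could at best push the left meeting point off $\beta_{\ell-1}$ and earlier loops, not off the bottom edge of $\gamma_\ell$. So the ``alternative'' you propose to exclude is a genuine case, not an artifact.

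The paper keeps this as a second case and disposes of it with a multiplicity argument rather than an edge-length argument: since the switching bridge $\beta_\ell$ has multiplicity $2=\rho$, the loop $\gamma_\ell$ has multiplicity zero, and consequently $\varphi^0_{h+1}$ has slope exactly one greater than $\varphi^0_h$ along the bottom edge of $\gamma_\ell$. Hence the difference you track (say $\varphi_C-\varphi_A$ between the two meeting points of the dependence) still rises and falls at rate $1$ per unit length on the bottom edge as well as on the bridge, and equating its values at the two meeting points again gives $t'=t$. Your computation for the in-bridge case is fine, since there the slope gaps $s'_\ell[h+1]-s'_\ell[h]=s_{\ell+1}[h+1]-s_{\ell+1}[h]=1$; to complete the proof you need to add the bottom-edge case with this $\mu(\gamma_\ell)=0$ slope-gap observation, instead of appealing to Definition~\ref{Def:Admissible}.
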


\begin{proof}
If the point of distance $t$ to the right of $x$ is not contained in the bridge $\beta_{\ell}$, then $\varphi_A$ has slope $s'_{\ell}[h]$ on the entire bridge $\beta_{\ell}$.  The assumption therefore implies that this point is contained in the bridge $\beta_{\ell}$.  The point of distance $t'$ to the left of $x$ is contained either in the bridge $\beta_{\ell}$ or the bottom edge of the loop $\gamma_{\ell}$.  We consider the case where this point is contained in the bridge first.  Examining the tropical dependence constructed in Proposition~\ref{Prop:BridgeDependence}, we see that
\[
(s'_{\ell}[h+1]- s'_{\ell} [h])t' = (s_{\ell+1} [h+1]- s_{\ell+1} [h])t.
\]
But
\[
s'_{\ell}[h+1]- s'_{\ell} [h] = s_{\ell+1} [h+1]- s_{\ell+1} [h] = 1,
\]
and the result follows.

In the case where the point of distance $t'$ to the left of $x$ is contained in the bottom edge of the loop $\gamma_{\ell}$, we note that, because $\mu_{\ell} = 0$, $\varphi^0_{h+1}$ has slope one greater than $\varphi^0_h$ along this edge.  The result then follows by the same argument as the previous case.
\end{proof}

We now begin to verify the hypotheses of Theorem~\ref{Thm:ExtremalConfig}.

\begin{lemma}
\label{Lem:BridgeAst}
The set
\[
\cA = \{ \varphi_i \, \vert \, i \neq h,h+1 \} \cup \{ \varphi^0_h , \varphi^0_{h+1} , \varphi^{\infty}_h \}
\]
satisfies property $(\ast)$.
\end{lemma}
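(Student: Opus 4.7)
The plan is to verify the two parts of property $(\ast)$ for $\cA$: first, that any two functions with matching outgoing slope index $\tau'_k$ at some loop $\gamma_k$ agree on $\gamma_k$; second, that no two functions in $\cA$ differ by an additive constant. The strategy is to tabulate the slope index sequences, identify which pairs share an index at which loops, and then verify agreement loop-by-loop, paying special attention to $\gamma_\ell$ and $\gamma_{\ell+1}$ which sit adjacent to the switching bridge.

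For $i \notin \{h, h+1\}$, the function $\varphi_i$ has constant slope index $\tau'_k(\varphi_i) = i$: the only switching or decreasing structure in $\Gamma$ is $\beta_\ell$, which switches only slope $h$, so the slope index sequence of any building block at any index different from $h$ must be constant across $\Gamma$. The building blocks $\varphi^0_h$ and $\varphi^0_{h+1}$ have constant slope index sequences $h$ and $h+1$ by their defining slope specifications. The function $\varphi^\infty_h$ has $\tau'_k = h$ for $k \leq \ell$ and $\tau'_k = h+1$ for $k > \ell$, with the transition happening precisely at $\beta_\ell$ where it switches slope $h$.

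From this tabulation, the only pairs in $\cA$ with matching slope index at some loop are $(\varphi^0_h, \varphi^\infty_h)$ on $\gamma_k$ for $k \leq \ell$ and $(\varphi^0_{h+1}, \varphi^\infty_h)$ on $\gamma_k$ for $k > \ell$. For loops $\gamma_k$ with $k < \ell$ or $k > \ell + 1$, both members of each pair have the same incoming and outgoing slopes on the adjacent bridges and no chips on $\gamma_k$, so they agree on $\gamma_k$. The delicate cases are $\gamma_\ell$ and $\gamma_{\ell+1}$. On $\gamma_\ell$, the switching relation $s'_\ell[h] = s_{\ell+1}[h+1]$ from Lemma~\ref{Lem:BridgeSwitch} shows that $\varphi^\infty_h$ has equal slopes on $\beta_{\ell-1}$ and $\beta_\ell$, and hence is constant on $\gamma_\ell$; by contrast, the building block $\varphi^0_h$ has adjacent bridge slopes $s_\ell[h]$ and $s_{\ell+1}[h] = s_\ell[h] - 1$, forcing $D + \ddiv(\varphi^0_h)$ to contain a point of $\gamma_\ell$ (because a building block has constant slope on each bridge). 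Unwinding the defining formula $\tau'_\ell(\varphi^0_h) = h$ in terms of $d_{w_\ell}(\varphi^0_h)$ forces this point to be $w_\ell$, so $\varphi^0_h$ is in fact constant on the interior of $\gamma_\ell$ with the same slope as $\varphi^\infty_h$, establishing agreement. A symmetric computation on $\gamma_{\ell+1}$ locates the chip of $\varphi^0_{h+1}$ at $v_{\ell+1}$ and yields agreement with $\varphi^\infty_h$ on $\gamma_{\ell+1}$.

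For the second part of $(\ast)$, the three functions $\varphi^0_h$, $\varphi^0_{h+1}$, and $\varphi^\infty_h$ have pairwise distinct incoming slopes at $v_{\ell+1}$, and the functions $\varphi_i$ for $i \notin \{h, h+1\}$ have incoming slopes distinct from each other and from these three on every bridge; so no two functions in $\cA$ can differ by an additive constant. The main obstacle is pinning down the forced chip positions on $\gamma_\ell$ and $\gamma_{\ell+1}$, which requires combining the slope specifications of the special building blocks with the switching-bridge slope relations and the degree bookkeeping implicit in the definitions of $\tau_k$ and $\tau'_k$.
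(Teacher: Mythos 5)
Your overall skeleton is the same as the paper's: tabulate the slope index sequences (constant $i$ for $\varphi_i$, constant $h$ and $h+1$ for $\varphi^0_h$ and $\varphi^0_{h+1}$, and $h$ then $h+1$ for $\varphi^{\infty}_h$ with the jump across $\beta_\ell$), observe that the only coincidences are $(\varphi^0_h,\varphi^{\infty}_h)$ for $k\leq\ell$ and $(\varphi^0_{h+1},\varphi^{\infty}_h)$ for $k>\ell$, and then check agreement on loops. The paper is terse exactly where you try to be careful, but your detailed analysis at $\gamma_\ell$ contains a genuine error. You assert that the switching relation $s'_\ell[h]=s_{\ell+1}[h+1]$ of Lemma~\ref{Lem:BridgeSwitch} gives $\varphi^{\infty}_h$ equal slopes on $\beta_{\ell-1}$ and $\beta_\ell$, i.e.\ $s_\ell[h]=s'_\ell[h]$, and correspondingly that $\varphi^0_h$ drops slope by one across $\gamma_\ell$ ($s_{\ell+1}[h]=s_\ell[h]-1$). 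The switching relation only compares the two ends of $\beta_\ell$; the comparison of $s_\ell[h]$ with $s'_\ell[h]$ is governed by the loop $\gamma_\ell$, which has multiplicity zero, so exactly one slope index increases by one across it, and that index may perfectly well be $h$. In that subcase $s'_\ell[h]=s_\ell[h]+1$, $\varphi^{\infty}_h$ does not have equal slopes on the two adjacent bridges, and $\varphi^0_h$ has equal (not decreasing) slopes there, so the configuration your argument describes is exactly inverted and the degree bookkeeping you run does not apply as written. Separately, the claims that $\varphi^{\infty}_h$ is ``constant on $\gamma_\ell$'' and that $\varphi^0_h$ is ``constant on the interior'' are false in general: $D$ is a break divisor with a chip on every loop, so these restrictions typically bend in the interior; the relevant conclusion is only that the two restrictions differ by an additive constant.

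The good news is that the decisive idea you identify is the right one and repairs the argument uniformly: since there are no switching loops and $\beta_\ell$ switches only slope $h$, the building block conditions force $\tau'_\ell(\varphi^0_h)=h$, and since $s'_\ell(\varphi^0_h)=s_{\ell+1}[h]=s'_\ell[h]-1$, the definition of $\tau'_\ell$ gives $d_{w_\ell}(\varphi^0_h)\geq 1$, i.e.\ $D+\ddiv(\varphi^0_h)$ contains $w_\ell$. After removing that chip at $w_\ell$, the restrictions of $D+\ddiv(\varphi^0_h)$ and $D+\ddiv(\varphi^{\infty}_h)$ to $\gamma_\ell$ lie in the same divisor class of degree $s_\ell[h]-s'_\ell[h]+1\in\{0,1\}$ on a circle, which has a unique effective representative; this yields agreement on $\gamma_\ell$ in both subcases ($s'_\ell[h]=s_\ell[h]$ and $s'_\ell[h]=s_\ell[h]+1$), without any constancy claim. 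The same correction applies to your symmetric treatment of $\gamma_{\ell+1}$ (chip of $\varphi^0_{h+1}$ at $v_{\ell+1}$ via $\tau_{\ell+1}=h+1$), and your ``no chips on $\gamma_k$'' phrasing for the easy loops should likewise be replaced by the degree-at-most-one uniqueness argument. With these repairs your proof is essentially the paper's, with the loop-by-loop agreement made explicit rather than asserted.
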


\begin{proof}
Note that the slope index sequence of $\varphi^0_h$ is the constant sequence $h$ and the slope index sequence of $\varphi^0_{h+1}$ is the constant sequence $h+1$.  We then have $\tau_k(\varphi^{\infty}_h) = \tau'_k(\varphi^\infty_h) = h$ for $k \leq \ell$, and $\tau_k(\varphi^{\infty}_h) = \tau'_k(\varphi^\infty_h) = h+1$ for $k > \ell$.  Thus, if two different functions $\varphi, \varphi' \in \cA$ satisfy $\tau'_k (\varphi) = \tau'_k (\varphi')$, then $\varphi = \varphi^{\infty}_h$.  Morevoer, either $\varphi' = \varphi^0_h$ and $k \leq \ell$, or $\varphi' = \varphi^0_{h+1}$ and $k > \ell$.  In either case, we see that $\varphi$ agrees with $\varphi'$ on $\gamma_k$, so $\cA$ satisfies property $(\ast)$.
\end{proof}

Unlike the previous cases, here we sometimes choose $\cB$ to be a proper subset of the set of all pairwise sums of functions in $\cA$.  We do this in order to satisfy property $(\dagger\dagger)$.  If
\[
s_{\ell}[h] < s'_{\ell}[h],
\]
and if there are two functions $\varphi, \varphi' \in \cA$ such that
\[
s'_{\ell}(\theta) = s_{\ell+1} [h] + s_{\ell+1} (\varphi) = s_{\ell+1} [h] + s_{\ell+1} (\varphi') +1,
\]
then one of the two functions $\varphi^0_h + \varphi$ or $\varphi^{\infty}_h + \varphi'$ will be excluded from the set $\cB$.  Our choice of which function to exclude depends on the length of the subinterval of $\beta_{\ell}$ on which $\varphi_A$ has slope $s'_{\ell}[h]$.  Notice that, if this subinterval is strictly contained in $\beta_{\ell}$, then its length is $t$ plus the distance from $w_{\ell}$ to $x$.  If this length is less than $m_{\ell}$, we choose the set $\cB$ to be the set of all pairwise sums of functions in $\cA$ other than $\varphi^{\infty}_h + \varphi'$.  If this length is greater than or equal to $m_{\ell}$, we choose the set $\cB$ to be the set of all pairwise sums of functions in $\cA$ other than $\varphi^0_h + \varphi$.  Note that, because the functions in $\cA$ have distinct slopes along $\beta_{\ell}$, there is at most one sum of two functions in $\cA$ that is not contained in $\cB$.

\begin{lemma}
\label{Lem:BBOnBridge}
In this subcase, the set $\cB$ satisfies properties $(\ast\ast)$ and $(\dagger\dagger)$.
\end{lemma}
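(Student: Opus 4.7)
The plan is to verify properties $(\ast\ast)$ and $(\dagger\dagger)$ separately.  Since $\beta_\ell$ is the unique switching bridge and there are no switching loops in Case~2, the hypotheses of both properties can only be triggered at $k=\ell$, so in each case I will reduce the verification to a single loop/bridge and then analyze the finite list of sums involving the distinguished summands $\varphi^0_h$, $\varphi^0_{h+1}$, and $\varphi^\infty_h$.

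For property $(\ast\ast)$, the first step is to identify precisely which building blocks $\varphi\in\cA$ ever satisfy $s_{k+1}(\varphi)<s'_k[\tau'_k(\varphi)]$ for some $k$.  A direct slope computation, using that $\beta_\ell$ is the only bridge with $s'_\ell[i]\neq s_{\ell+1}[i]$ and that this discrepancy occurs only for $i=h$ (with $s'_\ell[h]=s_{\ell+1}[h]+1$), rules out every summand except $\varphi=\varphi^0_h$ at $k=\ell$.  Thus I only need to verify $(\ast\ast)$ for sums of the form $\varphi^0_h+\eta\in\cB$.  Since $\varphi^0_h$ and $\varphi^\infty_h$ have the same slope index sequence on $\Gamma_{[1,\ell]}$ and agree on $\gamma_\ell$ by property $(\ast)$, the candidate $\psi=\varphi^\infty_h+\eta$ agrees with $\varphi^0_h+\eta$ on $\Gamma_{[1,\ell]}$ and satisfies $s_{\ell+1}(\psi)=s_{\ell+1}(\varphi^0_h+\eta)+1$.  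So the task reduces to checking that $\varphi^\infty_h+\eta\in\cB$.  In Case~B (length $\geq m_\ell$), only $\varphi^0_h+\varphi$ is excluded, so every candidate $\varphi^\infty_h+\eta$ lies in $\cB$.  In Case~A (length $<m_\ell$), only $\varphi^\infty_h+\varphi'$ is excluded, so the candidate is in $\cB$ unless $\eta=\varphi'$; in that remaining subcase I will either locate an alternative $\psi\in\cB$ agreeing on $\Gamma_{[1,\ell]}$, or show that the hypotheses of $(\ast\ast)$ do not apply to $\varphi^0_h+\varphi'$ because the particular choice of $\varphi,\varphi'$ together with the length condition $t+\mathrm{dist}(x,w_\ell)<m_\ell$ prevents both this sum and its companion from coexisting as permissible, shiny, and $w_\ell$-containing.

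For property $(\dagger\dagger)$, I first classify the possible pairs of permissible functions in $\cB$ that agree on $\gamma_\ell$ but differ in slope on $\beta_\ell$.  By property $(\ast)$ and the structure of $\cA$, the only building blocks that agree on $\gamma_\ell$ while having distinct slopes on $\beta_\ell$ are $\varphi^0_h$ versus $\varphi^\infty_h$ and $\varphi^0_{h+1}$ versus $\varphi^\infty_h$.  Hence any such pair of permissible sums must arise from replacing one of $\varphi^0_h, \varphi^0_{h+1}$ by $\varphi^\infty_h$ in a summand, and the two sums must agree on $\gamma_\ell$ (so their second summands agree there and share $\tau'_\ell$).  The candidate shiny functions $\varphi+\varphi'$ on $\gamma_\ell$ with the correct total slope $s_\ell(\theta)$ can be enumerated from the definition of shiny together with Proposition~\ref{Prop:Shiny}, which forces one summand to have a specific restriction to $\gamma_\ell\setminus\{v_\ell\}$.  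The argument then shows that whenever the pair of agreeing sums with different bridge slopes exists in $\cB$ (i.e., the relevant sum is not the one excluded by our definition), the length condition on $\varphi_A$, together with Corollary~\ref{Cor:BoundOnT} equating $t$ and $t'$, forces the putative shiny function to produce a configuration inconsistent with $\varphi_A,\varphi_B\in\Sigma$.

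The main obstacle is balancing the two properties simultaneously through a single choice of exclusion from $\cB$.  Case~A and Case~B of the definition of $\cB$ exclude different sums, depending on whether the subinterval of $\beta_\ell$ on which $\varphi_A$ has slope $s'_\ell[h]$ is shorter or longer than $m_\ell$; this length, via Corollary~\ref{Cor:BoundOnT}, is essentially the parameter $t'$ that controls $\varphi_B$.  The delicate step is showing that in each of Cases~A and~B the single exclusion both supplies the alternative $\psi$ needed for $(\ast\ast)$ in every triggered instance, and eliminates the only potential obstruction to $(\dagger\dagger)$.  I expect the hardest subcase to be Case~A with $\eta=\varphi'$, where the natural candidate for $\psi$ is the excluded sum; resolving it will require using the inequality $t+\mathrm{dist}(x,w_\ell)<m_\ell$ to rule out the relevant $w_\ell$-containing chip configuration on $\varphi^0_h+\varphi'$, thereby showing the hypothesis of $(\ast\ast)$ is vacuous in that instance.
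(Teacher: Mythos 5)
Your outline for property $(\ast\ast)$ follows the paper's route (the only triggering summand is $\varphi^0_h$ at $k=\ell$, and $\varphi^\infty_h+\eta$ agrees with $\varphi^0_h+\eta$ on $\Gamma_{[1,\ell]}$ with larger slope on $\beta_\ell$), but you stop exactly where the work is: in your ``Case~A with $\eta=\varphi'$'' you only announce two possible strategies (``either locate an alternative $\psi\in\cB$\ldots or show that the hypotheses of $(\ast\ast)$ do not apply'') and execute neither, so the statement is not proved. Worse, the escape you sketch is aimed at the wrong target: the hypotheses of $(\ast\ast)$ are only that $\varphi+\varphi'\in\cB$, that $2D+\ddiv(\varphi+\varphi')$ contains $w_\ell$, and a slope/switching condition; permissibility and shininess do not appear in $(\ast\ast)$ at all (they belong to $(\dagger\dagger)$), so ``ruling out coexistence as permissible, shiny, and $w_\ell$-containing'' does not discharge the hypothesis you need to address.

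For $(\dagger\dagger)$ your proposed mechanism is genuinely different from the paper's and is both unsubstantiated and misdirected. The paper's verification is purely combinatorial and independent of the parameter $t$: first, if the hypothesis of $(\dagger\dagger)$ holds, a departing function $\varphi^\infty_h+\varphi$ forces the existence of $\varphi\in\cA$ with $s'_\ell(\theta)=s_{\ell+1}[h]+s_{\ell+1}(\varphi)$; then one splits on whether $s_{\ell+1}(\varphi^\infty_h)\leq s_\ell(\varphi^\infty_h)$ (equivalently, whether $\gamma_\ell$ fails to increase slope $h$). In the first case any shiny candidate must have a summand with $s_{\ell+1}$ equal to $s_{\ell+1}(\varphi^0_h)$, hence agreeing with $\varphi^0_h$ on $\gamma_\ell$ by $(\ast)$, and since $\mu(\gamma_\ell)=0$ and $s_{\ell+1}(\varphi^0_h)<s_\ell(\varphi^0_h)$ the restriction of $D+\ddiv(\varphi^0_h)$ to $\gamma_\ell\smallsetminus\{v_\ell\}$ has degree $2$, contradicting shininess; in the second case the shiny candidates are exactly of the form $\varphi^\infty_h+\varphi'$, and the exclusion built into the definition of $\cB$ removes either that function or its departing companion $\varphi^0_h+\varphi$, so $(\dagger\dagger)$ holds whichever sum was omitted. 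Your plan instead invokes Corollary~\ref{Cor:BoundOnT}, the equality $t=t'$, and a contradiction with $\varphi_A,\varphi_B\in\Sigma$; you never produce that contradiction, and it cannot cover the first case, where $s_\ell[h]=s'_\ell[h]$, no exclusion is made, and no length condition is in play. (The length of the slope-$s'_\ell[h]$ subinterval of $\varphi_A$ and Corollary~\ref{Cor:BoundOnT} enter only later, in the proof of Theorem~\ref{Thm:SwitchingBridge}, when the functions of $\Sigma$ are fitted to the master template; they are not what makes $(\dagger\dagger)$ true.) So both halves of the lemma remain open in your write-up: $(\ast\ast)$ because the flagged subcase is unresolved, and $(\dagger\dagger)$ because the claimed argument is neither carried out nor structured around the correct case dichotomy.
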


\begin{proof}
By construction, if two building blocks agree on $\gamma_k$ and one has higher slope on $\beta_k$, then $k=\ell$, and the two building blocks are $\varphi^0_h$ and $\varphi^{\infty}_h$.  But $\varphi^{\infty}_h$ agrees with $\varphi^0_h$ on all of $\Gamma_{[1,\ell]}$, so $\cB$ satisfies property $(\ast\ast)$.

Now, suppose that $\varphi^{\infty}_h + \varphi$ is departing on $\gamma_{\ell}$ for some $\varphi \in \cA$.  Then either
\[
s_{\ell+1} (\varphi^{\infty}_h + \varphi) = s_{\ell+1}[h] + s_{\ell+1}(\varphi) + 1 = s'_{\ell} (\theta)+1,
\]
or $\varphi = \varphi^{\infty}_h$, and
\[
s_{\ell+1} (\varphi^{\infty}_h + \varphi) = s_{\ell+1}[h] + s_{\ell+1}(\varphi) + 1 = s'_{\ell} (\theta)+2.
\]
By replacing $\varphi$ with $\varphi^0_h$ in the latter case, we see that, in either case, there exists $\varphi \in \cA$ with
\[
s'_{\ell}(\theta) = s_{\ell+1} [h] + s_{\ell+1} (\varphi).
\]

We now show that, if
\[
s_{\ell+1} (\varphi^{\infty}_h) \leq s_{\ell} (\varphi^{\infty}_h),
\]
then no function is shiny on $\gamma_{\ell}$.  This is because any shiny function on $\gamma_{\ell}$ must be of the form $\varphi + \varphi'$, where $s_{\ell+1} (\varphi') = s_{\ell+1} (\varphi^0_h)$.  Since $\beta_{\ell}$ only switches slope $h$, by property $(\ast)$ this implies that $\varphi'$ agrees with $\varphi^0_h$ on $\gamma_{\ell}$.  But, since $\mu (\gamma_{\ell}) = 0$ and
\[
s_{\ell+1} (\varphi^0_h) < s_{\ell} (\varphi^0_h),
\]
we see that the restriction of $D+\ddiv(\varphi^0_h)$ to $\gamma_{\ell} \smallsetminus \{ v_{\ell} \}$ has degree 2.  Thus, any function of this form cannot be shiny.

We may therefore consider the case where
\[
s_{\ell+1} (\varphi^{\infty}_h) = s_{\ell} (\varphi^{\infty}_h)+1.
\]
This implies that any shiny function on $\gamma_{\ell}$ must be of the form $\varphi^{\infty}_h + \varphi'$.  By construction, either this function or $\varphi^0_h + \varphi$ is omitted from $\cB$.  Thus, $\cB$ satisfies property $(\dagger\dagger)$.
\end{proof}

By Lemma~\ref{Lem:BBOnBridge}, the set $\cB$ satisfies the hypotheses of Theorem~\ref{Thm:ExtremalConfig}, so there is a tropical linear combination $\theta$ such that each function $\psi \in \cB$ achieves the minimum at some point of the loop or bridge to which it is assigned, and any other function that achieves the minimum at this point agrees with $\psi$ on this loop or bridge.  We now describe how to construct a maximally independent combination $\vartheta$ of 28 pairwise sums of functions in $\Sigma$ from the master template $\theta$.  We start by using the functions in $\cB$ that are already pairwise sums of functions in $\Sigma$.  Then, for each $j$, we fit a subset of the functions $\{ \varphi_A + \varphi_j, \varphi_B + \varphi_j, \varphi_C + \varphi_j \}$ to the master template, using Lemma~\ref{Lem:Replace}.  Note that these are tropicalizations of functions in a pencil, so not all three can appear among the 28 in our maximally independent combination, and in fact we will use exactly two.  Our choice of which one to exclude depends on the combinatorial type of the master template $\theta$, and hence on the parameter $t$.

\medskip

\textbf{Step 1:  if $i,j \notin \{ h,h+1\}$.}  The function $\varphi_{ij}$ is in $\cB$, and we set its coefficient in the tropical linear combination $\vartheta$ to be equal to its coefficient in the master template $\theta$.

\medskip

We now describe how to replace the functions $\varphi^0_h + \varphi_j, \varphi^0_{h+1,j}$, and $\varphi^{\infty}_h + \varphi_j$ when $j \notin \{ h,h+1\}$.

\medskip

\textbf{Step 2:  if both $\varphi^0_h + \varphi_j$ and $\varphi^{\infty}_h + \varphi_j$ are contained in $\cB$.}  By Lemma~\ref{Lem:Replace}, we may set the coefficient of $\varphi_C + \varphi_j$ so that it dominates the master template $\theta$, and equals $\theta$ where either $\varphi^0_h + \varphi_j$ or $\varphi^0_{h+1,j}$ achieves the minimum.  We note that it is possible that $\varphi_C + \varphi_j$ equals \emph{both}, but in our construction of the master template $\theta$, if we perturb the coefficients of all functions in $\cB$ that are assigned to the same loop or bridge by some small value $\epsilon$, this does not change the conclusion of Theorem~\ref{Thm:ExtremalConfig}.  We may therefore assume that, if $\varphi_C + \varphi_j$ achieves the minimum on the loop or bridge where $\varphi^0_h + \varphi_j$ is assigned, then it does not achieve the minimum on the loop or bridge where $\varphi^0_{h+1,j}$ is assigned, and vice-versa.  If $\varphi_C + \varphi_j$ achieves the minimum where $\varphi^0_h + \varphi_j$ is assigned, then using Lemma~\ref{Lem:Replace}, we set the coefficient of $\varphi_B + \varphi_j$ so that it dominates the master template $\theta$, and equals $\theta$ where either $\varphi^0_{h+1,j}$ or $\varphi^{\infty}_h + \varphi_j$ achieves the minimum.  In this case, we exclude $\varphi_A + \varphi_j$ from the tropical linear combination $\vartheta$.  Similarly, if $\varphi_C + \varphi_j$ achieves the minimum where $\varphi^0_{h+1,j}$ is assigned, then using Lemma~\ref{Lem:Replace}, we set the coefficient of $\varphi_A + \varphi_j$ so that it dominates the master template $\theta$ and equals $\theta$ where either $\varphi^0_h + \varphi_j$ or $\varphi^{\infty}_h + \varphi_j$ achieves the minimum.  And in this case, we exclude $\varphi_B + \varphi_j$

\medskip

We now turn to the cases where there is a sum of two functions in $\cA$ that is not contained in $\cB$.  The basic idea of these cases is as follows.  If the subinterval of $\beta_{\ell}$ on which $\varphi_A$ has slope $s'_{\ell}[h]$ has length less than $m_{\ell}$, then we are in some sense ``very close'' to the case where there is no switching bridge, and we set the coefficients as though there is no switching bridge.  If the length of this subinterval is greater than or equal to $m_{\ell}$, then we set the coefficients of the relevant functions so that they achieve the minimum to the left of $\beta_{\ell}$, and use the fact that they have larger slope than $\theta$ on a large subinterval of $\beta_{\ell}$ to show that they cannot achieve the minimum to the right of $\beta_{\ell}$.

\medskip

\textbf{Step 3:  if $j \notin \{ h, h+1\}$ and $\varphi^{\infty}_h + \varphi_j \notin \cB$.}  We set the coefficient of $\varphi_C + \varphi_j$ as in Step 2.  We can do this because both $\varphi^0_h + \varphi_j$ and $\varphi^0_{h+1,j}$ are contained in $\cB$.  If $\varphi_C + \varphi_j$ achieves the minimum on the loop or bridge to which $\varphi^0_{h+1,j}$ was assigned, we then set the coefficient of $\varphi_A + \varphi_j$ so that it equals $\varphi^0_h + \varphi_j$ on the loop or bridge where it is assigned.  Our assumption that $\varphi_A$ has slope $s'_{\ell}[h]$ on a small subinterval of $\beta_{\ell}$ implies that the two functions do in fact agree on this loop or bridge.  Indeed, this loop or bridge is to the right of $\gamma_{\ell}$, because
\[
s_{\ell+1} (\varphi^0_h + \varphi_j) < s'_{\ell} (\theta),
\]
and because $\rho \leq 2$,
\[
s_k (\varphi^0_h + \varphi_j) \leq s_{\ell} (\varphi^0_h + \varphi_j) + 1 \mbox{ for all } k \leq \ell.
\]

Similarly, if $\varphi_C + \varphi_j$ achieves the minimum on the loop or bridge to which $\varphi^0_h + \varphi_j$ was assigned, we then set the coefficient of $\varphi_B + \varphi_j$ so that it equals $\varphi^0_{h+1,j}$ on the loop or bridge where this function is assigned.  The slope of this function implies that this bridge or loop is to the left of $\gamma_{\ell}$, and the fact that $t=t'$ implies that $\varphi_B$ equals $\varphi^0_{h+1}$ on this loop or bridge.

\medskip

\textbf{Step 4:  if $\varphi^0_h + \varphi_j \notin \cB$.}  In this case, because
\[
s_{\ell} (\varphi^0_{h+1,j}) > s_{\ell} (\theta),
\]
we see that $\varphi^0_{h+1,j}$ is assigned to a loop $\gamma_k$ with $k<\ell$.  We set the coefficient of $\varphi_C + \varphi_j$ so that it equals $\varphi^0_{h+1,j}$ on the loop where this function is assigned.  Note that, because
\[
s_{\ell+1} (\varphi^{\infty}_h + \varphi_j) > s'_{\ell} (\theta),
\]
we see that $\varphi^{\infty}_h + \varphi_j$ is assigned to a loop $\gamma_{k'}$ with $k < k' \leq \ell$.  We then set the coefficient of $\varphi_A + \varphi_j$ so that it equals $\varphi^{\infty}_h + \varphi_j$ on the loop where it is assigned.

\medskip

\textbf{Step 5:  if $j = h$ or $h+1$.}  A similar construction can be applied to the case where $j=h$ or $h+1$, and we provide only a brief sketch.  We first use Lemma~\ref{Lem:Replace} to set the coefficient of $\varphi_C + \varphi_C$.  Depending on where this achieves the minimum, we then set the coefficient of either $\varphi_A + \varphi_C$ or $\varphi_B + \varphi_C$.  Finally, depending on where this second function achieves the minimum, we then choose one of $\varphi_A + \varphi_A$, $\varphi_A + \varphi_B$, or $\varphi_B + \varphi_B$, and use Lemma~\ref{Lem:Replace} to set its coefficient.

\begin{theorem}
\label{Thm:SwitchingBridge}
In this case, $\vartheta$ is a maximally independent combination of 28 functions.
\end{theorem}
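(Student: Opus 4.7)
The plan is to verify that each of the 28 functions appearing in $\vartheta$ achieves the minimum uniquely at some point of $\Gamma$. First, I would carry out a bookkeeping check: Steps~1--5 produce $\binom{5}{2}+5 = 15$ functions of the form $\varphi_i + \varphi_j$ with $i, j \notin \{h, h+1\}$, plus $5\cdot 2 = 10$ functions of the form $\varphi_X + \varphi_j$ with $X \in \{A,B,C\}$ and $j \notin \{h, h+1\}$ (two out of three sums are chosen for each $j$), plus the $3$ sums among $\{\varphi_A, \varphi_B, \varphi_C\}$ selected in Step~5, for a total of $28 = \binom{8}{2}$, matching $\dim \Sym^2 H^0(X, D_X)$.

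For the Step~1 functions $\varphi_{ij}$, I would observe that $\varphi_{ij} \in \cB$ and apply Lemma~\ref{Lem:OneEq}: any other element of $\cB$ assigned to the same loop or bridge as $\varphi_{ij}$ in the master template must share the set of outgoing slope indices $\{i, j\}$. Since $\varphi_m$ for $m \notin \{h, h+1\}$ has constant outgoing slope index $m$, while $\varphi^0_h$, $\varphi^0_{h+1}$, and $\varphi^\infty_h$ all have outgoing slope indices in $\{h, h+1\}$, no other element of $\cB$ can have outgoing slope indices $\{i, j\}$. Hence $\varphi_{ij}$ uniquely achieves the minimum in $\theta$ at its designated point, and because Lemma~\ref{Lem:Replace} only raises coefficients when passing from $\theta$ to $\vartheta$, uniqueness survives at that same point.

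For Steps~2 and~5, the key observation is that each $\varphi_X$ with $X \in \{A,B,C\}$ is a tropical linear combination of exactly two of the three building blocks $\varphi^0_h, \varphi^0_{h+1}, \varphi^\infty_h$. Applying Lemma~\ref{Lem:Replace} to $\varphi_X + \varphi_j$ yields a function that coincides with $\theta$ precisely on the union of the two assigned loops or bridges corresponding to those two building-block sums. After the small $\epsilon$-perturbation described in Step~2 (which does not alter the conclusion of Theorem~\ref{Thm:ExtremalConfig}), the two chosen functions from $\{\varphi_A+\varphi_j, \varphi_B+\varphi_j, \varphi_C+\varphi_j\}$ between them cover the three assigned loops/bridges so that each is uniquely covered by exactly one chosen function; an analogous cascade handles Step~5, where each of $\varphi_C+\varphi_C$, one sum from $\{\varphi_A+\varphi_C, \varphi_B+\varphi_C\}$, and one sum from $\{\varphi_A+\varphi_A, \varphi_A+\varphi_B, \varphi_B+\varphi_B\}$ is pinned to a distinct assigned region in $\theta$.

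The main obstacle I anticipate is Steps~3 and~4, where a single building-block sum is excluded from $\cB$ itself, so three assigned loops/bridges must be covered by only two $\Sigma$-functions. Here I would invoke Corollary~\ref{Cor:BoundOnT}: when $\varphi_A$ has slope $s'_\ell[h]$ on a subinterval of $\beta_\ell$ of length less than $m_\ell$, the relation $t = t'$ forces $\varphi_A$ to agree identically with $\varphi^0_h$ on every loop or bridge to the right of $\gamma_\ell$ reachable by $\varphi^0_h + \varphi_j$ in the master template, and forces $\varphi_B$ to agree identically with $\varphi^0_{h+1}$ on every loop or bridge to the left of $\gamma_\ell$ reachable by $\varphi^0_{h+1} + \varphi_j$. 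Combined with the slope comparison $s_k(\varphi^0_h + \varphi_j) \leq s_\ell(\varphi^0_h + \varphi_j) + 1$ for $k \leq \ell$ and the admissibility of edge lengths (Definition~\ref{Def:Admissible}), this rules out stray minima on the wrong side of $\gamma_\ell$. The symmetric case, when the subinterval has length at least $m_\ell$, is handled by interchanging the roles of $\varphi_A$ and $\varphi_B$.
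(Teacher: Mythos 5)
Your overall route is the same as the paper's: count the 28 sums, isolate the functions $\varphi_{ij}$ with $i,j\notin\{h,h+1\}$ via Lemma~\ref{Lem:OneEq}, and fit $\varphi_A+\varphi_j$, $\varphi_B+\varphi_j$, $\varphi_C+\varphi_j$ to the master template using Lemma~\ref{Lem:Replace}. One inaccuracy in your Step~2/5 discussion: Lemma~\ref{Lem:Replace} does \emph{not} give that the fitted function coincides with $\theta$ on the union of the two assigned regions of its building-block summands; it only guarantees equality on the region of the summand maximizing $c_\psi-a_\psi$. The $\epsilon$-perturbation is used for the opposite purpose from how you deploy it: it ensures $\varphi_C+\varphi_j$ meets $\theta$ on \emph{exactly one} of the regions of $\varphi^0_h+\varphi_j$ and $\varphi^0_{h+1}+\varphi_j$, and the choice of which of $\varphi_A+\varphi_j$, $\varphi_B+\varphi_j$ to retain is dictated by which region that is. The uniqueness then rests on two facts you assert but do not argue: $\varphi_C$ never agrees with $\varphi^{\infty}_h$ on any loop (so $\varphi_C+\varphi_j$ cannot tie on the region of $\varphi^{\infty}_h+\varphi_j$), and $\varphi_A+\varphi_j$ can only tie with $\varphi_C+\varphi_j$ at its witness point when that point lies in the region of $\varphi^0_h+\varphi_j$, which is precisely when $\varphi_B+\varphi_j$ is used instead.

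The genuine gap is your last sentence. The case where the subinterval of $\beta_\ell$ on which $\varphi_A$ has slope $s'_\ell[h]$ has length at least $m_\ell$ (so the excluded element of $\cB$ is $\varphi^0_h+\varphi$) is not the mirror image of the short-subinterval case, and ``interchanging the roles of $\varphi_A$ and $\varphi_B$'' does not produce a proof. First, Corollary~\ref{Cor:BoundOnT} is only available under the hypothesis that this length is less than $m_\ell$, so $t=t'$ cannot be invoked. Second, in this case $\varphi_C+\varphi_j$ and $\varphi_A+\varphi_j$ are pinned to loops at or to the left of $\gamma_\ell$ (where $\varphi^0_{h+1}+\varphi_j$ and $\varphi^{\infty}_h+\varphi_j$ are assigned), and to the right of $\beta_\ell$ they can dip strictly below the corresponding building-block sums ($\varphi_A<\varphi^{\infty}_h$ and $\varphi_C<\varphi^0_{h+1}$ at some points), so domination of $\theta$ is not automatic and cannot be obtained from agreement with a summand. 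What is needed is a different estimate: $\varphi_A+\varphi_j$ has slope greater than $s'_\ell(\theta)$ along a subinterval of $\beta_\ell$ of length at least $m_\ell$, hence cannot achieve the minimum to the right of $\gamma_\ell$, and $\varphi_C+\varphi_j$ has slope greater than $s'_{\ell-1}(\theta)$ along $\beta_{\ell-1}$, hence cannot achieve the minimum to the right of $\gamma_{\ell-1}$. Without an argument of this kind, your plan does not rule out stray minima to the right of the switching bridge in this case.
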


\begin{proof}
To see that there are exactly 28 functions in the tropical linear combination $\vartheta$, we note that $\vartheta$ involves exactly as many functions as there are pairs $(i,j)$, $0 \leq i \leq j \leq 6$.  Specifically, for $i,j \notin \{ h,h+1\}$, we have the function $\varphi_{ij}$.  For $j \notin \{ h,h+1\}$, the two pairs $(h,j), (h+1,j)$ correspond to two functions, one of which is $\varphi_C + \varphi_j$, and the other of which is either $\varphi_A + \varphi_j$ or $\varphi_B + \varphi_j$.  Similarly, the three pairs $(h,h), (h,h+1), (h+1,h+1)$ correspond to three functions.

We first consider the case where the set $\cB$ consists of all pairwise sums of elements of $\cA$.  By construction, each of these 28 functions achieves the minimum on a region where one of the functions in $\cB$ achieves the minimum in the master template.  By Lemma~\ref{Lem:OneEq}, if two functions $\psi, \psi' \in \cB$ are assigned to the same loop $\gamma_k$ or bridge $\beta_k$, then $\psi = \varphi^{\infty}_h + \varphi$ for some $\varphi \in \cA$, and either
\[
\psi' = \varphi^0_h + \varphi, k \leq \ell, \mbox{ or } \psi' = \varphi^0_{h+1} + \varphi, k > \ell .
\]
Assume for simplicity that $\varphi = \varphi_j$ for some $j$.  The other cases are similar.  The function $\varphi_C + \varphi_j$ achieves the minimum at a point $v$ on the bridge or loop to which either $\varphi^0_h + \varphi_j$ or $\varphi^0_{h+1,j}$ is assigned, but not both.  Note that $\varphi_C$ does not agree with $\varphi^{\infty}_h$ on any loop, so $\varphi_C + \varphi_j$ does not achieve the minimum on the loop or bridge to which $\varphi^{\infty}_h + \varphi_j$ is assigned.  Note that $\varphi_A + \varphi_j$ can achieve the minimum at $v$ only if $v$ is contained in the bridge or loop to which $\varphi^0_h + \varphi_j$ is assigned.  But it is precisely in this case that we use the function $\varphi_B + \varphi_j$ in the tropical linear combination $\vartheta$, rather than $\varphi_A + \varphi_j$.  Thus, $\varphi_C + \varphi_j$ is the only function to achieve the minimum at $v$.  If $v$ is contained in the bridge or loop to which $\varphi^0_h + \varphi_j$ is assigned, then $\varphi_C + \varphi_j$ does not achieve the minimum on the loop or bridge to which either $\varphi^0_{h+1,j}$ or $\varphi^{\infty}_h + \varphi_j$ is assigned.  It follows that there is a point where $\varphi_B + \varphi_j$ is the only function to achieve the minimum.  Otherwise, there is a point where $\varphi_A + \varphi_j$ is the only function to achieve the minimum.

We now turn to the cases where some function is omitted from the set $\cB$.  First, suppose that the length of the subinterval of $\beta_{\ell}$ on which $\varphi_A$ has slope $s'_{\ell}[h]$ is less than $m_{\ell}$.  We will consider the case where the tropical linear combination $\vartheta$ involves the function $\varphi_A + \varphi'$.  The case where it involves $\varphi_B + \varphi'$ is similar.  For any point $v$ to the left of $\beta_{\ell}$, we have
\[
\varphi_A (v) \leq \varphi^0_h (v),
\]
so we must check that $\varphi_A + \varphi'$ does not achieve the minimum at any such point.  By assumption, however, we have
\[
\varphi_A (v) \geq \varphi^0_h (v) - m_{\ell}.
\]
Since
\[
s_{\ell+1} (\varphi^0_h + \varphi') < s'_{\ell} (\theta),
\]
we see that
\[
\varphi^0_h (v) + \varphi' (v) \gg \theta(v),
\]
so $\varphi_A + \varphi'$ cannot achieve the minimum at $v$.

Now, suppose that the length of the subinterval of $\beta_{\ell}$ on which $\varphi_A$ has slope $s'_{\ell}[h]$ is greater than or equal to $m_{\ell}$.  We must show that the functions $\varphi_A + \varphi$ and $\varphi_C + \varphi$ dominate the master template.  In particular, there are points $v$ to the right of $\beta_{\ell}$ where $\varphi_A (v) < \varphi^{\infty}_h (v)$, and similarly where $\varphi_C (v) < \varphi^0_{h+1} (v)$.  Because $\varphi_A + \varphi$ has slope greater than $s'_{\ell} (\theta)$ on a subinterval of $\beta_{\ell}$ of length greater than $m_{\ell}$, however, we see that it cannot achieve the minimum to the right of $\gamma_{\ell}$.  Similarly, because $\varphi_C + \varphi$ has slope greater than $s'_{\ell-1}(\theta)$ on $\beta_{\ell-1}$, we see that it cannot achieve the minimum to the right of $\gamma_{\ell-1}$.
\end{proof}

\subsection{Case 3: one switching loop}
\label{Sec:Switch}
In this section, we consider the case where there is only one switching loop $\gamma_{\ell}$, which switches slope $h$, leaving the case of two switching loops for the final section.

As in the previous section, our first goal is to identify a suitable collection of functions in $\Sigma$.  By Lemma~\ref{Lem:GenericFns}, for all $j \notin \{ h,h+1\}$, there is a function $\varphi_j \in \Sigma$ with
\[
s_k (\varphi_j) = s_k [j] \mbox{ and } s'_k (\varphi_j) = s'_k [j] \mbox{ for all } k.
\]
Unlike in the previous section, in this case there may also be a decreasing bridge, so the functions $\varphi_j$ are not necessarily building blocks.  Since there is at most 1 decreasing bridge, and at most 1 slope decreases on it, at most 1 of the functions $\varphi_j$ is not a building block.

The functions we wish to consider in $\Sigma$ will be the functions $\varphi_j$ for $j \notin \{ h,h+1\}$, plus three more functions that we define in Proposition~\ref{Prop:BridgeFns}.  These three functions are very similar to those constructed in the previous section, and the proof of Proposition~\ref{Prop:LoopFns} is virtually identical to that of Proposition~\ref{Prop:BridgeFns}.  We again recommend the reader to compare these functions to those constructed in Example~\ref{Ex:Interval}, keeping in mind that the chain of loops looks like an interval when viewed from far away.  These functions are illustrated in Figure~\ref{Fig:Dependence}.

\begin{figure}[H]
\begin{tikzpicture}

\draw (-0.2,1) node {{\tiny $\varphi_C$}};
\draw (0,1)--(6,1);
\draw [ball color=white] (3,1) circle (0.55mm);
\draw (1.5,1.2) node {{\tiny $h+1$}};
\draw (4.5,1.2) node {{\tiny $h$}};

\draw (-0.2,3) node {{\tiny $\varphi_A$}};
\draw (0,3)--(6,3);
\draw [ball color=white] (3,3) circle (0.55mm);
\draw [ball color=black] (4.5,3) circle (0.55mm);
\draw (1.5,3.2) node {{\tiny $h$}};
\draw (3.75,3.2) node {{\tiny $h+1$}};
\draw (5.25,3.2) node {{\tiny $h$}};

\draw (-0.2,2) node {{\tiny $\varphi_B$}};
\draw (0,2)--(6,2);
\draw [ball color=white] (3,2) circle (0.55mm);
\draw [ball color=black] (1.5,2) circle (0.55mm);
\draw (0.75,2.2) node {{\tiny $h+1$}};
\draw (2.25,2.2) node {{\tiny $h$}};
\draw (4.5,2.2) node {{\tiny $h+1$}};

\draw (0,-.5)--(6,-.5);
\draw [ball color=white] (3,-.5) circle (0.55mm);
\draw [ball color=black] (1.5,-.5) circle (0.55mm);
\draw [ball color=black] (4.5,-.5) circle (0.55mm);
\draw (0.75,-0.3) node {{\tiny $BC$}};
\draw (3,-0.3) node {{\tiny $AB$}};
\draw (5.25,-0.3) node {{\tiny $AC$}};

\end{tikzpicture}
\caption{A schematic depiction of the three functions $\varphi_A$, $\varphi_B$, $\varphi_C$, and the dependence between them.  The white dot represents the switching loop $\gamma_{\ell}$, and the black dots are located at undetermined points on either side of $\gamma_{\ell}$.}
\label{Fig:Dependence}
\end{figure}
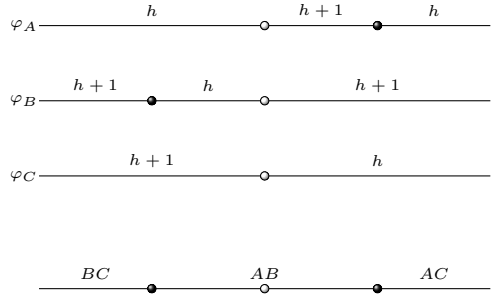

\begin{proposition}
\label{Prop:LoopFns}
There exist functions $\varphi_A , \varphi_B,$ and $\varphi_C \in \Sigma$ with the following properties:
\begin{enumerate}
\item  $s_k (\varphi_A) = s_k [h]$ for all $k\leq\ell$.
\item  $s_k (\varphi_B) = s_k [h+1]$ for all $k > \ell$.
\item  $s_k (\varphi_C) = s_k [h+1]$ for all $k\leq\ell$, and $s_k (\varphi_C) = s_k [h]$ for all $k > \ell$.
\end{enumerate}
Moreover, these functions can be chosen such that $s_k (\varphi) \in \{ s_k [h], s_k [h+1]\}$, for all $k$.
\end{proposition}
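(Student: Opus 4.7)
The statement is the exact analogue of Proposition~\ref{Prop:BridgeFns} with the switching bridge $\beta_\ell$ replaced by the switching loop $\gamma_\ell$, and my plan is to adapt that proof essentially verbatim. Since $\gamma_\ell$ is the unique switching loop and there are no switching bridges (the coexistence of both is ruled out by $\rho \leq 2$ together with the multiplicity bounds), the key structural fact is that the slope indices of functions in $\Sigma$ can only increase across a loop or bridge at a switching event, and only by one, for the specific slope that is switched.

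First I will construct $\varphi_A$: by Lemma~\ref{Lemma:Existence} there exists $\varphi_A \in \Sigma$ with $s'_0(\varphi_A) \leq s'_0[h]$ and $s_{g+1}(\varphi_A) \geq s_{g+1}[h]$. Propagating the first bound rightward and the second bound leftward, using that $\gamma_\ell$ provides the only mechanism for the slope index to jump upward, forces $s_k(\varphi_A) = s_k[h]$ for all $k \leq \ell$ by a sandwich argument identical to the one in Proposition~\ref{Prop:BridgeFns}. The symmetric construction yields $\varphi_B \in \Sigma$ with $s_k(\varphi_B) = s_k[h+1]$ for $k > \ell$.

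For $\varphi_C$, I will lift $\varphi_A$ and $\varphi_B$ to sections $f_A, f_B \in V$ and analyze the pencil they span. Any nonzero element of this pencil has tropicalization satisfying $s'_0 \leq s'_0[h+1]$ and $s_{g+1} \geq s_{g+1}[h]$, so the same propagation argument shows its slope at every tangent vector lies in $\{s_k[h], s_k[h+1]\}$, establishing the moreover clause. Varying the pencil parameter, I can select an element $\varphi$ whose slope index is $h+1$ on $\Gamma_{[0, \ell]}$ and another $\varphi'$ whose slope index is $h$ on $\Gamma_{[\ell+1, g+1]}$. After shifting by a constant so that $\varphi$ and $\varphi'$ agree at some point of $\gamma_\ell$, the function $\varphi_C := \min\{\varphi, \varphi'\}$ lies in $\Sigma$, since $\Sigma$ is closed under pointwise minimum, and has the desired slopes.

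The step requiring the most care is the propagation argument: one must rule out the possibility that the slope index of $\varphi_A$ dips below $h$ somewhere in $\Gamma_{[0, \ell]}$ in the presence of an additional decreasing loop or bridge permitted by $\rho \leq 2$. Such decreasing features only allow slope indices to decrease, and since $\gamma_\ell$ provides a single upward jump of exactly one across slope $h$, any premature drop would be unrecoverable against the endpoint condition $s_{g+1}(\varphi_A) \geq s_{g+1}[h]$. This is precisely what Proposition~\ref{Prop:LingeringLatticePath} and the classification of switching loops in Section~\ref{Sec:Loops} were designed to handle, so adapting Proposition~\ref{Prop:BridgeFns} presents no essential new difficulty.
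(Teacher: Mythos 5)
Your proposal is correct and follows the same route as the paper: the paper's own proof of this proposition is literally ``the argument is identical to the proof of Proposition~\ref{Prop:BridgeFns},'' and your adaptation --- constructing $\varphi_A$ and $\varphi_B$ via Lemma~\ref{Lemma:Existence} plus the propagation/sandwich argument (valid since a switching bridge would need multiplicity $2$ and a possible decreasing bridge cannot switch slopes), then obtaining $\varphi_C$ as a shifted pointwise minimum of two tropicalized elements of the pencil spanned by $f_A$ and $f_B$ --- is exactly that argument transplanted from the switching bridge to the switching loop.
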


\begin{proof}
The argument is identical to the proof of Proposition~\ref{Prop:BridgeFns}.
\end{proof}

Also, as in the previous section, the functions $\varphi_A, \varphi_B$, and $\varphi_C$ can be written as tropical linear combinations of simpler functions in $R(D)$, characterized as follows.

\begin{definition}
We define functions $\varphi^0_h, \varphi^0_{h+1},$ and $\varphi^{\infty}_h$ with the following slopes:
\begin{enumerate}
\item  $s_k (\varphi^0_h) = s_k [h]$ and $s'_k (\varphi^0_h) = s'_k[h]$ for all $k$;
\item  $s_k (\varphi^0_{h+1}) = s_k [h+1]$ and $s'_k (\varphi^0_{h+1}) = s'_k [h+1]$ for all $k$;
\item  $s_k (\varphi^{\infty}_h) = s_k [h]$, $s'_{k-1} (\varphi^{\infty}_h) = s'_{k-1} [h]$ for all $k\leq\ell$, and $s_k (\varphi^{\infty}_h) = s_k [h+1]$, $s'_{k-1} (\varphi^{\infty}_h) = s'_{k-1} [h+1]$ for all $k > \ell$.
\end{enumerate}
\end{definition}

\noindent Unlike the previous section, in this case there may be a decreasing bridge, so the functions $\varphi^0_h, \varphi^0_{h+1}$, and $\varphi^{\infty}_h$ are not necessarily building blocks.  Since there is at most 1 decreasing bridge, and at most 1 slope decreases on it, at most 2 of these 3 functions are not building blocks.

Just as in the previous section, the functions $\varphi_A, \varphi_B,$ and $\varphi_C$ are tropical linear combinations of the functions $\varphi^0_h, \varphi^0_{h+1},$ and $\varphi^{\infty}_h$.  Specifically, the function $\varphi_A \in \Sigma$ is a tropical linear combination of the functions $\varphi^0_h$ and $\varphi^{\infty}_h$, where the two functions simultaneously achieve the minimum at a point of distance $t$ to the right of $\gamma_{\ell}$.  Similarly, the function $\varphi_B$ is a tropical linear combination of the functions $\varphi^0_{h+1}$ and $\varphi^{\infty}_h$, where the two functions simultaneously achieve the minimum at a point of distance $t'$ to the left of $\gamma_{\ell}$.  The function $\varphi_C$ is a tropical linear combination of the functions $\varphi^0_h$ and $\varphi^0_{h+1}$, where the two functions simultaneously achieve the minimum on the loop $\gamma_{\ell}$ where they agree.

The function $\varphi_A$ is determined by the parameter $t$, and the function $\varphi_B$ is determined by the parameter $t'$.  As in the previous section, it may therefore appear that the possibilities for $\Sigma$ form a 2-parameter family.  Just as in Proposition~\ref{Prop:BridgeDependence}, however, the distance $t'$ is determined by the distance $t$.  In the proof of Theorem~\ref{Thm:Switching}, we will use the following estimate.

\begin{corollary}
\label{Cor:SecondBoundOnT}
If $t<m_{\ell}$, then $t'<2m_{\ell}$.
\end{corollary}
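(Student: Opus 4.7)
The plan is to mimic the strategy of Proposition~\ref{Prop:BridgeDependence} and Corollary~\ref{Cor:BoundOnT}, but to track carefully how the switch of slope now takes place on the loop $\gamma_\ell$ rather than at a single point of a bridge. By Proposition~\ref{Prop:LoopFns}, the functions $\varphi_A, \varphi_B, \varphi_C$ are tropicalizations of three functions in a single pencil inside $\cL(D_X)$, so by \cite[Lemma~3.2]{tropicalGP} there exist real scalars $\alpha_A, \alpha_B, \alpha_C$ such that $\min\{\varphi_A + \alpha_A,\, \varphi_B + \alpha_B,\, \varphi_C + \alpha_C\}$ achieves its minimum at least twice at every point of $\Gamma$.

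First I would read off the combinatorial shape of this dependence from the slope descriptions in Proposition~\ref{Prop:LoopFns}, exactly as in Figure~\ref{Fig:Dependence}. To the left of the point at distance $t'$ from $v_\ell$, the two functions $\varphi_B$ and $\varphi_C$ have slope $s_k[h+1]$ while $\varphi_A$ has the smaller slope $s_k[h]$, so only $\varphi_B$ and $\varphi_C$ can simultaneously achieve the minimum; this forces $\varphi_B + \alpha_B = \varphi_C + \alpha_C$ there. A symmetric argument on the right gives $\varphi_A + \alpha_A = \varphi_C + \alpha_C$ beyond distance $t$ from $w_\ell$, and in the middle region containing $\gamma_\ell$ the common agreement is $\varphi_A + \alpha_A = \varphi_B + \alpha_B$. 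At the two transition points, all three functions agree, so the difference $\varphi_C - \varphi_B$ equals the same constant $\alpha_B - \alpha_C$ at both transition points. Hence the net change of $\varphi_C - \varphi_B$ from the left transition point to the right transition point is zero.

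Next I would compute that net change as the sum of three contributions. On the bridge segment of length $t'$ inside $\beta_{\ell-1}$, we have $\varphi_B = \varphi^\infty_h + b_2$ with slope $s'_{\ell-1}[h]$ and $\varphi_C = \varphi^0_{h+1} + c_2$ with slope $s'_{\ell-1}[h] + 1$, so $\varphi_C - \varphi_B$ increases by exactly $t'$. Symmetrically, on the bridge segment of length $t$ inside $\beta_\ell$, the roles of $\varphi^0_h$ and $\varphi^0_{h+1}$ are reversed and the difference decreases by exactly $t$. Setting the total change equal to zero forces the change of $\varphi_C - \varphi_B$ across $\gamma_\ell$ to be exactly $t - t'$.

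Finally, I would bound $|t - t'|$ by $m_\ell$. On $\gamma_\ell$ the function $\varphi_B$ is still $\varphi^\infty_h + b_2$, while $\varphi_C$ is $\min\{\varphi^0_h + c_1,\, \varphi^0_{h+1} + c_2\}$, switching once on the loop where the two summands agree. Using the classification of switching loops of multiplicity 1 in Figure~\ref{Fig:SwitchingLoops} together with the fact that each of the building blocks $\varphi^0_h, \varphi^0_{h+1}, \varphi^\infty_h$ has prescribed incoming slope at $v_\ell$ and outgoing slope at $w_\ell$, with all bending on the loop occurring at the chip $x_\ell$ of $D|_{\gamma_\ell}$ (and, in the case of $\varphi^\infty_h$, at the additional controlled point determined by the switching condition), one sees that $\varphi_C - \varphi_B$ is linear on each piece of $\gamma_\ell$ cut out by the finitely many bends, and each piece has length bounded above by $m_\ell$ (the admissible edge length conditions $\ell_{k+1} \ll m_k \ll \ell_k$ making $m_\ell$ the relevant short length, because the slopes around the loop are such that the effective distance where $\varphi_C - \varphi_B$ can vary is confined to segments of length at most $m_\ell$). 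Therefore $|t - t'| \leq m_\ell$, so the hypothesis $t < m_\ell$ gives $t' \leq t + m_\ell < 2m_\ell$.

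The main obstacle I expect is the last step: translating the loop-level analysis of the building blocks' bending behavior into the clean bound $m_\ell$. This requires carefully matching the classification of switching loop types to the specific slope index sequences $\tau_\bullet, \tau'_\bullet$ assigned to $\varphi^0_h, \varphi^0_{h+1}, \varphi^\infty_h$ on $\gamma_\ell$, checking in each case that the variation of $\varphi_C - \varphi_B$ on the loop is at most $m_\ell$, rather than the much larger $\ell_\ell$.
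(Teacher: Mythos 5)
Your overall strategy---exploit the tropical dependence among $\varphi_A,\varphi_B,\varphi_C$ and balance the change of a difference between the two transition points---is the same as the paper's, but the quantitative input is wrong at the decisive step. You assert that on the segment of length $t'$ the slopes of $\varphi_C$ and $\varphi_B$ differ by exactly $1$ (i.e.\ that $s_\ell[h+1]=s_\ell[h]+1$), and likewise on the right that $s_{\ell+1}[h+1]=s_{\ell+1}[h]+1$. That is exactly what holds at a switching \emph{bridge} (Lemma~\ref{Lem:BridgeSwitch}), and it is why Corollary~\ref{Cor:BoundOnT} yields the equality $t=t'$ in that case. At a switching \emph{loop} the relevant gaps need not be $1$: the classification of switching loops of multiplicity at most two (Figure~\ref{Fig:SwitchingLoops}) allows, for instance, $s_\ell[h+1]-s_\ell[h]=1$ while $s'_\ell[h+1]-s'_\ell[h]=2$, in which case the dependence forces $t'=2t$. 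The paper's proof derives the balancing relation $(s_{\ell}[h+1]-s_{\ell}[h])\,t' = (s_{\ell+1}[h+1]-s_{\ell+1}[h])\,t$ and concludes $t'\le 2t<2m_\ell$ precisely because each gap lies between $1$ and $2$; the factor $2$ in the statement is this \emph{ratio} of slope gaps, not an additive error of size $m_\ell$ picked up on the loop, which is where your plan locates it.

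Because your rates are wrong, the identity ``change of $\varphi_C-\varphi_B$ across $\gamma_\ell$ equals $t-t'$'' is unjustified; with the correct rates $a,b\in\{1,2\}$ it becomes $a\,t'+\Delta=b\,t$, and even granting the loop bound $|\Delta|\le m_\ell$ that you yourself flag as the unresolved main obstacle, you would only obtain $t'\le 2t+m_\ell<3m_\ell$, which is weaker than the statement. So the proposal as structured cannot deliver $t'<2m_\ell$ without the slope-gap observation. The postponed loop estimate is also not innocuous: on $\gamma_\ell$ the top edge has length $\ell_\ell\gg m_\ell$ under the admissible edge-length conditions, so ``each piece has length at most $m_\ell$'' requires an actual argument (e.g.\ computing the change of the difference from $v_\ell$ to $w_\ell$ along the bottom edge and controlling the slope discrepancy there). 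The paper sidesteps all of this: when $t<m_\ell$ both bend points lie on the adjacent bridges, and the bound is read off directly from the displayed balancing relation together with $1\le s_\ell[h+1]-s_\ell[h]\le 2$ and $1\le s_{\ell+1}[h+1]-s_{\ell+1}[h]\le 2$.
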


\begin{proof}
If $t<m_{\ell}$, then the point of distance $t$ to the right of $\gamma_{\ell}$ is contained in the bridge $\beta_{\ell}$, and the point of distance $t'$ to the left of $\gamma_{\ell}$ is contained in the bridge $\beta_{\ell-1}$.  Examining the tropical dependence constructed in Proposition~\ref{Prop:BridgeDependence}, it follows that
\[
(s_{\ell}[h+1]- s_{\ell} [h])t' = (s_{\ell+1} [h+1]- s_{\ell+1} [h])t'.
\]
Both $1 \leq s_{\ell} [h+1] - s_{\ell} [h]) \leq 2$ and $1 \leq s_{\ell+1} [h+1] - s_{\ell+1} [h]) \leq 2$.  The result follows.
\end{proof}

\medskip

\subsubsection{Subcase 3a:  no decreasing bridges}

For simplicity, we first consider the case where there are no decreasing bridges.  This case is very similar to case 2.  We begin by verifying the hypotheses of Theorem~\ref{Thm:ExtremalConfig}.

\begin{lemma}
\label{Lem:LoopAst}
In this subcase, the set
\[
\cA = \{ \varphi_i \, \vert \, i \neq h,h+1 \} \cup \{ \varphi^0_h , \varphi^0_{h+1} , \varphi^{\infty}_h \}
\]
consists of building blocks, and satisfies property $(\ast)$.
\end{lemma}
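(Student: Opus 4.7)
The plan is to mirror the proof of Lemma~\ref{Lem:BridgeAst}, which treated the structurally analogous situation of a switching bridge. First I verify that each function in $\cA$ is a building block in the sense of Definition~\ref{def:buildingblock}. Each $\varphi_j$ with $j \neq h, h+1$, provided by Lemma~\ref{Lem:GenericFns}, satisfies $s_k(\varphi_j) = s_k[j]$ and $s'_k(\varphi_j) = s'_k[j]$ for all $k$. Because there are no decreasing bridges in Subcase 3a, $s'_k[j] = s_{k+1}[j]$, so the slope along each bridge is constant; the slope index sequence is then the constant sequence $j$, which trivially satisfies the nondecreasing and switching conditions. The functions $\varphi^0_h$ and $\varphi^0_{h+1}$ are handled identically, with constant slope index sequences $h$ and $h+1$. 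The function $\varphi^\infty_h$ has slope index $h$ on all bridges to the left of $\gamma_\ell$ and up to $v_\ell$, and slope index $h+1$ from $w_\ell$ onward; the single jump from $h$ to $h+1$ occurs at $\tau'_\ell$, which is allowed by Definition~\ref{def:buildingblock}(i) precisely because $\gamma_\ell$ switches slope $h = \tau_\ell(\varphi^\infty_h)$.

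For the second part of property $(\ast)$, I verify that no two functions in $\cA$ differ by a constant. This is immediate because the slopes along bridges distinguish the $\varphi_j$ from one another and from the triple $\{\varphi^0_h, \varphi^0_{h+1}, \varphi^\infty_h\}$, and within the triple the slopes on the bridges adjacent to $\gamma_\ell$ are pairwise distinct. For the first part of property $(\ast)$, I enumerate pairs that could share an outgoing slope index. The constant slope indices of the $\varphi_j$ are distinct from $h$, $h+1$, and from each other, so these contribute no coincidences. The only possible coincidences within $\{\varphi^0_h, \varphi^0_{h+1}, \varphi^\infty_h\}$ are $\tau'_k(\varphi^0_h) = \tau'_k(\varphi^\infty_h) = h$ for $k < \ell$, and $\tau'_k(\varphi^0_{h+1}) = \tau'_k(\varphi^\infty_h) = h+1$ for $k \geq \ell$. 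For $k \neq \ell$, both functions in each such pair have identical incoming and outgoing bridge slopes at $\gamma_k$, and since $\gamma_k$ is neither switching nor decreasing in this range (all multiplicity is concentrated at $\gamma_\ell$), the restriction of a building block to $\gamma_k$ with the prescribed boundary slope data is determined up to an additive constant by $D|_{\gamma_k}$, so the two restrictions agree.

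The main obstacle, and the only genuinely new step compared to Lemma~\ref{Lem:BridgeAst}, is the switching loop $\gamma_\ell$ itself: I must show that the restrictions of $\varphi^0_{h+1}$ and $\varphi^\infty_h$ to $\gamma_\ell$ can be chosen to agree, even though their incoming slopes $s_\ell[h+1]$ and $s_\ell[h]$ differ. The plan is to exploit the combinatorial classification of switching loops in Figure~\ref{Fig:SwitchingLoops}, choosing the restrictions so that they have the same distribution of slopes on the top and bottom edges of $\gamma_\ell$; since both functions already share their outgoing slope, outgoing slope index, and the effective-divisor constraint imposed by $D|_{\gamma_\ell}$, the remaining freedom in choosing the restriction of each building block allows us to align them. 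The existence of such compatible choices is precisely the content of the claim, and it is verified case by case according to which of the four configurations of Figure~\ref{Fig:SwitchingLoops} applies to $\gamma_\ell$.
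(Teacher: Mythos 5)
Your reduction to the shape of Lemma~\ref{Lem:BridgeAst} is the same route the paper takes, and the easy parts are fine: constant bridge slopes because there are no decreasing bridges, valid slope index sequences (with the single jump of $\varphi^\infty_h$ licensed by the switch at $\gamma_\ell$), distinctness of the functions, and agreement on loops $\gamma_k$ with $k\neq\ell$ where the prescribed incoming and outgoing slopes coincide. But at the one step you yourself single out as the new content --- agreement on the switching loop $\gamma_\ell$ itself --- you do not give a proof: you say the restrictions ``can be chosen'' to have the same slopes and that this ``is verified case by case,'' without carrying out any case. Moreover, the mechanism you invoke is wrong in the principal case. On a switching loop of multiplicity $1$ one has $s'_\ell[i]=s_\ell[i]$ and $s_\ell[h+1]=s_\ell[h]+1$, so $(D+\ddiv\varphi^\infty_h)|_{\gamma_\ell}$ has degree $0$ and $(D+\ddiv\varphi^0_{h+1})|_{\gamma_\ell}$ has degree $1$; both restrictions are therefore uniquely determined by the slope data, and there is no ``remaining freedom'' to align them. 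What must be checked is that the two forced shapes coincide, and this is exactly where the switching condition enters: the chip of $D$ on $\gamma_\ell$ sits at distance $x_\ell=(s_\ell[h]+1)m_\ell$ from $v_\ell$, and a short computation on the circle shows that the unique effective representative of the class of $(D+\ddiv\varphi^0_{h+1})|_{\gamma_\ell}$ is the point $v_\ell$ itself (and for $\varphi^0_h$ it is $w_\ell$), so that these functions restrict to $\gamma_\ell$ with the same slopes and differ only in their bridge slopes. That computation, or some substitute for it, is what your proposal is missing.

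A secondary point: your enumeration of coincidences of outgoing slope indices is incomplete at $k=\ell$. Since $\tau'_k$ is defined with the correction term $d_{w_k}(\varphi)=\deg_{w_k}(D+\ddiv\varphi)$, and $(D+\ddiv\varphi^0_h)|_{\gamma_\ell}$ is supported at $w_\ell$ in the multiplicity-one configuration, one also gets $\tau'_\ell(\varphi^0_h)=h+1$, so property $(\ast)$ additionally requires $\varphi^0_h$ to agree with $\varphi^0_{h+1}$ and $\varphi^\infty_h$ on $\gamma_\ell$ --- true, by the same computation, but invisible to your case list. (Your parenthetical ``all multiplicity is concentrated at $\gamma_\ell$'' is also not forced when $\rho=2$ and $\gamma_\ell$ has multiplicity $1$: a lingering or decreasing loop elsewhere is possible, which affects only how the functions in $\cA$ are chosen there, not the truth of the lemma.) For comparison, the paper's own proof simply notes that the functions are building blocks because their bridge slopes are constant and refers to Lemma~\ref{Lem:BridgeAst} for $(\ast)$; you correctly noticed that the loop case carries content at $\gamma_\ell$ that the bridge case does not, but the proposal stops short of supplying it.
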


\begin{proof}
Because there are no decreasing bridges, we see that each function in $\cA$ has constant slope along the bridges.  It follows that the functions in $\cA$ are building blocks.  The proof that $\cA$ satisfies property $(\ast)$ is identical to the proof of Lemma~\ref{Lem:BridgeAst}.
\end{proof}

We choose the set $\cB$ in a similar way to the previous case, in order to satisfy property $(\dagger\dagger)$.  If there exist two values $j,j'$ such that
\[
s'_{\ell}(\theta) = s'_{\ell} [h] + s'_{\ell} [j] = s'_{\ell} [h] + s'_{\ell} [j'] +1,
\]
then one of the two functions $\varphi^0_h + \varphi_j$ or $\varphi^{\infty}_h + \varphi_{j'}$ will be excluded from the set $\cB$, depending on $t$.  If $t<m_{\ell}$, we exclude $\varphi^{\infty}_{h} + \varphi_{j'}$, and if $t \geq m_{\ell}$, we exclude $\varphi^0_h + \varphi_j$.

\begin{lemma}
\label{Lem:BBOnLoop}
In this subcase, the set $\cB$ satisfies properties $(\ast\ast)$ and $(\dagger\dagger)$.
\end{lemma}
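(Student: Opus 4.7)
The plan is to verify the two properties separately, exploiting the fact that $\gamma_\ell$ is the unique switching loop and that no bridge is switching or decreasing.

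For property $(\ast\ast)$: Since no bridge decreases, each $\varphi \in \cA$ has constant slope along every bridge, so $s_{k+1}(\varphi) = s'_k(\varphi)$ throughout. A direct check using the definitions of $\tau_k, \tau'_k$ and the slope vectors on bridges then gives $s'_k(\varphi) = s'_k[\tau'_k(\varphi)]$ for every $\varphi \in \cA$ and every $k$, so the second disjunct in the hypothesis of $(\ast\ast)$ never fires. Since no bridge switches and $\gamma_\ell$ is the only switching loop, the first disjunct ``$\gamma_k$ switches slope $\tau_k(\varphi)$'' reduces to $k = \ell$ and $\tau_\ell(\varphi) = h$, forcing $\varphi \in \{\varphi^0_h, \varphi^\infty_h\}$. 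For $\varphi = \varphi^0_h$, the candidate $\psi = \varphi^\infty_h + \varphi'$ agrees with $\varphi + \varphi'$ on $\Gamma_{[1,\ell]}$ (since $\varphi^\infty_h$ and $\varphi^0_h$ have the same restriction there) and has strictly greater slope $s_{\ell+1}[h+1] + s_{\ell+1}(\varphi')$ on $\beta_\ell$. For $\varphi = \varphi^\infty_h$, I would show that the hypothesis ``$2D + \ddiv(\varphi^\infty_h + \varphi')$ contains $w_\ell$'' fails: the slope data of $\varphi^\infty_h$ on $\gamma_\ell$ dictates where the function bends, and a restricted-degree calculation on $\gamma_\ell$ combined with the fact that any other $\phi \in \cA$ with $\tau_\ell(\phi) \neq h$ has $s_{\ell+1}(\phi) = s_{\ell+1}[\tau'_\ell(\phi)]$ shows that $D + \ddiv(\varphi^\infty_h)$ and $D + \ddiv(\varphi')$ cannot both have a chip at $w_\ell$ under the additional permissibility constraint.

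For property $(\dagger\dagger)$: The analysis reduces to $k = \ell$, as no bridge switches. By property $(\ast)$ (Lemma~\ref{Lem:LoopAst}), distinct elements of $\cA$ agreeing on $\gamma_\ell$ must share $\tau'_\ell$, except that the pair $\{\varphi^0_h, \varphi^\infty_h\}$ agrees on $\gamma_\ell$ while having distinct $\tau'_\ell$ (and hence distinct slopes on $\beta_\ell$). Consequently, two sums in $\cB$ agreeing on $\gamma_\ell$ with different slopes along $\beta_\ell$ must, after relabelling, take the form $\varphi^0_h + \varphi_j$ and $\varphi^\infty_h + \varphi_{j'}$. A short slope computation combined with permissibility of both on $\gamma_\ell$ yields the equality
\[
s'_\ell(\theta) = s'_\ell[h] + s'_\ell[j] = s'_\ell[h] + s'_\ell[j'] + 1,
\]
which is precisely the trigger of the exclusion rule defining $\cB$. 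Since the exclusion removes one of these two sums from $\cB$, the hypothesis of $(\dagger\dagger)$ is voided whenever a shiny function exists on $\gamma_\ell$.

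The main obstacle will be verifying, in the $\varphi = \varphi^\infty_h$ case of $(\ast\ast)$, that the hypothesis involving $w_\ell$ never holds. This requires combining the permissibility inequality, property $(\ast)$, and a chip-counting argument on $\gamma_\ell$ using the formula for the restricted degree of $D + \ddiv(\varphi^\infty_h)$: one must rule out the scenario where the $+1$ slope change of $\varphi^\infty_h$ across $\gamma_\ell$ is concentrated at $w_\ell$ and $\varphi'$ simultaneously contributes a chip at $w_\ell$. Once this case is dispatched, both verifications are straightforward bookkeeping driven by the explicit structure of $\cA$.
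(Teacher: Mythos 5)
Your treatment of $(\ast\ast)$ splits into the cases $\varphi = \varphi^0_h$ and $\varphi = \varphi^\infty_h$, and for the second case you propose to show that the hypothesis of $(\ast\ast)$ is vacuous, i.e.\ that $2D + \ddiv(\varphi^\infty_h + \varphi')$ never contains $w_\ell$. This is the step you flag as the ``main obstacle,'' and it is a genuine gap: the claim you want to prove is false. Since $\gamma_\ell$ switches slope $h$, we have $x_\ell \equiv (s_\ell[h]+1)m_\ell$, which says exactly that $D|_{\gamma_\ell} + s_\ell[h]v_\ell - s_\ell[h]w_\ell \sim w_\ell$; hence the unique chip of $D + \ddiv(\varphi^0_h)$ on $\gamma_\ell$ sits precisely at $w_\ell$. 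Taking $\varphi' = \varphi^0_h$, the sum $\varphi^\infty_h + \varphi^0_h$ (which in general lies in $\cB$) satisfies the hypothesis of $(\ast\ast)$ with $\varphi = \varphi^\infty_h$, so no chip-counting argument can dispatch this case by vacuity. The correct resolution is the one the paper imports from the switching-bridge case: whenever the hypothesis of $(\ast\ast)$ fires at $\gamma_\ell$, the chip at $w_\ell$ forces one of the two summands to be $\varphi^0_h$ (no other element of $\cA$ can have its chip at $w_\ell$, by the same congruence on $x_\ell$), and replacing that summand by $\varphi^\infty_h$ produces the required $\psi$, because $\varphi^\infty_h$ agrees with $\varphi^0_h$ on all of $\Gamma_{[1,\ell]}$ and has strictly larger slope on $\beta_\ell$. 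In particular the pair $(\varphi^\infty_h,\varphi^0_h)$ is handled by the same replacement as the pair $(\varphi^0_h,\varphi')$, rather than being excluded.

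Your verification of $(\dagger\dagger)$ follows the paper's mechanism (identify the possible agreeing pair with different slopes on $\beta_\ell$, identify the only possible shiny function, and observe that the exclusion rule defining $\cB$ removes one or the other), so that part is fine in outline, though two details are off: the functions $\varphi^0_h$ and $\varphi^\infty_h$ do \emph{not} have distinct outgoing slope indices at $\gamma_\ell$ --- since $d_{w_\ell}(\varphi^0_h)=1$ one computes $\tau'_\ell(\varphi^0_h) = \tau'_\ell(\varphi^\infty_h) = h+1$, consistent with property $(\ast)$ and with their agreement on $\gamma_\ell$; what distinguishes them is their slope on $\beta_\ell$. Also, in the paper's argument the agreeing pair has the \emph{same} second summand ($\varphi^0_h + \varphi_j$ and $\varphi^\infty_h + \varphi_j$), while the shiny candidate is $\varphi^\infty_h + \varphi_{j'}$ with $s'_\ell[j'] = s'_\ell[j]-1$; your version conflates these two roles. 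Neither of these affects the overall dichotomy, but the $(\ast\ast)$ case above does need to be repaired along the lines indicated.
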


\begin{proof}
The argument that $\cB$ satisfies property $(\ast\ast)$ is identical to the proof of Lemma~\ref{Lem:BBOnBridge}.  Also, as in the proof of Lemma~\ref{Lem:BBOnBridge}, if two functions in $\cB$ agree on $\gamma_k$ and only one of the two is departing, then $k=\ell$, these two functions must be $\varphi^{\infty}_h + \varphi_j$ and $\varphi^0_h + \varphi_j$ for some $j$, and any shiny function on $\gamma_{\ell}$ must be $\varphi^{\infty}_{h} + \varphi_{j'}$, where
\[
s'_{\ell}(\theta) = s'_{\ell} [h] + s'_{\ell} [j] = s'_{\ell} [h] + s'_{\ell} [j'] +1.
\]
By construction, either $\varphi^0_h + \varphi_j$ or $\varphi^{\infty}_{h} + \varphi_{j'}$ is omitted from $\cB$.  Thus, $\cB$ satisfies property $(\dagger\dagger)$.
\end{proof}

Our construction of the ``best approximation'' $\vartheta$ is now identical to the previous case, as is the proof that it is a maximally independent combination of 28 functions in $\Sigma$.

\begin{theorem}
\label{Thm:Switching}
In this subcase, $\vartheta$ is a maximally independent combination of 28 functions.
\end{theorem}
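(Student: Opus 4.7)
The plan is to follow the same template that produced Theorem~\ref{Thm:SwitchingBridge}, replacing the role of the switching bridge $\beta_\ell$ by the switching loop $\gamma_\ell$. First, I would invoke Theorem~\ref{Thm:ExtremalConfig} on the pair $(\cA, \cB)$ of Lemmas~\ref{Lem:LoopAst} and~\ref{Lem:BBOnLoop} to obtain the master template $\theta = \min\{\psi + c_\psi \,\vert\, \psi \in \cB\}$, in which every $\psi \in \cB$ achieves the minimum somewhere on its assigned loop or bridge, and anything else that achieves the minimum at that point agrees with $\psi$ there. The 28 target functions then partition into three groups: the $\binom{6}{2} = 15$ sums $\varphi_i + \varphi_j$ with $i, j \notin \{h, h+1\}$; ten sums of the form $\varphi_\star + \varphi_j$ with $\star \in \{A, B, C\}$ and $j \notin \{h, h+1\}$, two of the three $\star$ chosen per $j$; and three further sums among $\{\varphi_A, \varphi_B, \varphi_C\}$.

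Next, I would set the coefficients of these 28 functions by literally running Steps 1--5 from the construction that preceded Theorem~\ref{Thm:SwitchingBridge}. For Step 1, take $\varphi_{ij}$ with the coefficient it already has in $\theta$. For Step 2, apply Lemma~\ref{Lem:Replace} to fit $\varphi_C + \varphi_j$ on top of $\theta$; then, after a small generic perturbation of the $\theta$-coefficients so that $\varphi_C + \varphi_j$ equals $\theta$ on exactly one of the two loops to which $\varphi^0_h + \varphi_j$ and $\varphi^0_{h+1} + \varphi_j$ are assigned, use Lemma~\ref{Lem:Replace} once more to fit either $\varphi_B + \varphi_j$ or $\varphi_A + \varphi_j$ to pick up the remaining region. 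Steps 3 and 4 handle the cases where one of $\varphi^0_h + \varphi_j$ or $\varphi^\infty_h + \varphi_j$ is omitted from $\cB$, and Step 5 handles the diagonal case $i, j \in \{h, h+1\}$ by iterating the same trick three times on $\varphi_C + \varphi_C$, then one of $\varphi_A + \varphi_C$ or $\varphi_B + \varphi_C$, then one of $\varphi_A + \varphi_A$, $\varphi_A + \varphi_B$, $\varphi_B + \varphi_B$. Uniqueness of the minimum on the assigned loop or bridge then follows from Lemma~\ref{Lem:OneEq}: any competitor assigned to the same loop or bridge must be one of the two summand variants that $\varphi_A$, $\varphi_B$ or $\varphi_C$ was built to combine, and the perturbation argument rules out the remaining collision.

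The main obstacle is the case where property $(\dagger\dagger)$ forces us to omit one of the two sums $\varphi^0_h + \varphi_j$, $\varphi^\infty_h + \varphi_{j'}$ from $\cB$. Here one has to verify that the chosen $\varphi_A + \varphi'$ or $\varphi_B + \varphi'$ truly dominates $\theta$ on the entire graph, not merely on the target loop. When $t < m_\ell$ and we retain $\varphi_A + \varphi'$, I would combine Corollary~\ref{Cor:SecondBoundOnT}, which gives $t' < 2m_\ell$, with the strict inequality $s_{\ell+1}(\varphi^0_h + \varphi') < s'_\ell(\theta)$ and the admissibility inequalities $m_\ell \ll \ell_\ell \ll n_\ell$ from Definition~\ref{Def:Admissible}; this forces $\varphi_A(v) + \varphi'(v) \gg \theta(v)$ at every point $v$ to the left of $\gamma_\ell$, exactly as in the last paragraph of the proof of Theorem~\ref{Thm:SwitchingBridge}. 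When $t \geq m_\ell$ and $\varphi_A + \varphi$ is retained on a loop to the left of $\gamma_\ell$, the fact that $\varphi_A + \varphi$ has slope strictly greater than $s'_\ell(\theta)$ along a subinterval of $\beta_\ell$ of length at least $m_\ell$, together with $m_k \ll n_\ell$ for $k > \ell$, rules out any minimum to the right. An entirely symmetric estimate, applied on the other side of $\gamma_\ell$, handles the analogous case for $\varphi_C + \varphi$; the verification of Step 5 is just a diagonal variant of these same two estimates.
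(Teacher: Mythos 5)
Your proposal follows exactly the route the paper takes: the paper's proof of Theorem~\ref{Thm:Switching} is literally a reduction to Theorem~\ref{Thm:SwitchingBridge}, and you carry out that same reduction, correctly substituting the inputs for this subcase (the master template via Theorem~\ref{Thm:ExtremalConfig} applied to the $\cA$, $\cB$ of Lemmas~\ref{Lem:LoopAst} and~\ref{Lem:BBOnLoop}, Steps 1--5 with Lemma~\ref{Lem:Replace}, uniqueness via Lemma~\ref{Lem:OneEq}, and Corollary~\ref{Cor:SecondBoundOnT} in place of Corollary~\ref{Cor:BoundOnT} for the omitted-function estimates). This is correct and essentially identical to the paper's argument, just spelled out in more detail.
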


\begin{proof}
The proof is identical to that of Theorem~\ref{Thm:SwitchingBridge}.
\end{proof}

\medskip

\subsubsection{Subcase 3b: a decreasing bridge}

We now consider the case where there is both a switching loop $\gamma_{\ell}$ and also a decreasing bridge $\beta_{\ell'}$.  In this case, since the bridge $\beta_{\ell'}$ has multiplicity 1, there is a unique value $h'$ such that
\[
s'_{\ell'} [h'] = s_{\ell'+1} [h']+1.
\]
The basic idea here is that we combine the constructions from subcases 1b and 3a to ``fix" the decreasing bridge and the switching loop, respectively.

As in subcase 3a, we define
\[
\cA = \{ \varphi_i \, \vert \, i \neq h,h+1 \} \cup \{ \varphi^0_h , \varphi^0_{h+1} , \varphi^{\infty}_h \}.
\]
We then define $\cA'$ to be the set of functions $\varphi \in R(D)$ with the following properties:
\begin{enumerate}
\item  there is a function $\varphi' \in \cA$ such that $\varphi$ agrees with $\varphi'$ on each connected component of $\Gamma \smallsetminus \beta_{\ell'}$, and
\item  $\varphi$ has constant slope on $\beta_{\ell'}$, equal to either $s'_{\ell'} [\tau'_{\ell'} (\varphi')]$ or $s_{\ell'+1} [\tau_{\ell'+1} (\varphi')]$.
\end{enumerate}
The functions in $\cA'$ are building blocks, and by combining the proofs of Lemmas~\ref{Lem:BB2} and~\ref{Lem:LoopAst}, we see that $\cA'$ satisfies property $(\ast)$.
Note that each function in $\cA$ is a tropical linear combination of functions in $\cA'$.  The functions $\varphi_A , \varphi_B$, and $\varphi_C$ are themselves tropical linear combinations of the functions in $\cA$.

We define $\cB$, a set of pairwise sums of functions in $\cA$, in the same way as the previous section.  We define $\cB'$ to be the set of pairwise sums of functions in $\cA'$ with the property that the associated pairwise sum of functions in $\cA$ is an element of $\cB$.  Combining Lemmas~\ref{Lem:BB2} and~\ref{Lem:BBOnLoop}, we see that $\cB'$ satisfies properties $(\ast\ast)$ and $(\dagger\dagger)$.

We can now construct the tropical linear combination $\vartheta$.  First, we use Theorem~\ref{Thm:ExtremalConfig} to construct a master template $\theta$ out of the functions in $\cB'$.  We then follow the construction from subcase 1b to build an intermediary tropical linear combination $\vartheta'$ of the functions in $\cB'$, and the construction from subcase 3a to build the tropical linear combination $\vartheta$, replacing the functions $\varphi^0_h, \varphi^0_{h+1}$, and $\varphi^{\infty}_h$ in $\vartheta'$ with the functions $\varphi_A, \varphi_B$, and $\varphi_C$ in $\vartheta$.

\begin{theorem}
In this subcase, $\vartheta$ is a maximally independent combination.
\end{theorem}

\begin{proof}
By Theorem~\ref{Thm:BendOnBridge}, in $\vartheta'$, every function in $\cB$ achieves the minimum on the loop or bridge to which one of its associated building blocks in $\cB'$ was assigned in the master template $\theta$.  By mild abuse of language, we may therefore refer to the loop or bridge to which a function in $\cB$ is assigned in $\vartheta'$.  Also by Theorem~\ref{Thm:BendOnBridge}, if two functions in $\cB$ are assigned to the same loop $\gamma_k$ or bridge $\beta_k$ in $\vartheta'$, then one of the functions is $\varphi^{\infty}_h + \varphi$ for some $\varphi \in \cA$, and the other is either $\varphi^0_h + \varphi$ or $\varphi^0_{h+1} + \varphi$.  This is the setup required for the proof of Theorem~\ref{Thm:Switching}, which then shows that $\vartheta$ is a maximally independent combination.
\end{proof}

\subsection{Case 4: two switching loops}
In this final section, we consider the case where there are two switching loops, $\gamma_{\ell}$ and $\gamma_{\ell'}$, with $\ell < \ell'$.  We write $h$ for the slope that is switched by $\gamma_{\ell}$, and $h'$ for the slope that is switched by $\gamma_{\ell'}$.  Note that both loops must have multiplicity 1.  By our classification of switching loops, we have
\[
s'_{\ell} [i] = s_{\ell} [i] \mbox{ and } s'_{\ell'} [i] = s_{\ell'} [i] \text{ for all } i.
\]
Moreover,
\[
s_{\ell} [h+1] = s_{\ell} [h]+1 \mbox{ and } s_{\ell'} [h'+1] = s_{\ell'} [h']+1.
\]
Note also that since $\rho=2$ and we have two loops with positive multiplicity, by Theorem~\ref{Thm:BNThm} there are no decreasing loops or bridges, and the ramification weights at $w_0$ and $v_{g+1}$ are both zero.  In other words, $s'_0 [j] = s_{g+1} [j] = j$ for all $j$.  Note that this implies that there are only finitely many building blocks.  In this respect, the final case is somewhat simpler than some of the previous cases.

We break our analysis into several subcases, depending on the relationship between $h$ and $h'$.  By Lemma~\ref{Lem:GenericFns}, for all $j \notin \{ h,h+1,h',h'+1\}$, there is a function $\varphi_j \in \Sigma$ with
\[
s_k (\varphi_j) = s_k [j] \mbox{ and } s'_k (\varphi_j) = s'_k [j] \mbox{ for all } k.
\]
Because there are no decreasing bridges, the functions $\varphi_j$ are building blocks.

\medskip

\subsubsection{Subcase 4a:  $h' \notin \{ h-1, h, h+1\}$}

This is the simplest subcase because, roughly speaking, the two switching loops do not interact with one another.  More precisely, there are functions $\varphi_A, \varphi_B$, and $\varphi_C$ in $\Sigma$ with slopes as defined in Proposition~\ref{Prop:LoopFns}, and similarly, replacing $\ell$ with $\ell'$ and $h$ with $h'$, there are functions $\varphi'_A, \varphi'_B$, and $\varphi'_C$ in $\Sigma$ with such slopes.  We may then define building blocks $\varphi^0_{h'}, \varphi^0_{h'+1}$, and $\varphi^{\infty}_{h'}$ as in subcase 3a, and set
\[
\cA = \{ \varphi_i \, \vert \, i \neq h,h+1,h',h'+1 \} \cup \{ \varphi^0_h , \varphi^0_{h+1} , \varphi^{\infty}_h , \varphi^0_{h'} , \varphi^0_{h'+1} , \varphi^{\infty}_{h'} \}.
\]
Our construction of the set $\cB$ and the maximally independent combination $\vartheta$ now follow the exact same steps as in subcase 3a, treating each switching loop separately.

\begin{theorem}
In this subcase, $\vartheta$ is a maximally independent combination.
\end{theorem}

\begin{proof}
The argument is identical to the proof of Theorem~\ref{Thm:Switching}.
\end{proof}

\medskip

\subsubsection{Subcase 4b:  $h' = h$}

We first identify a suitable collection of functions in $\Sigma$.  These are the functions $\varphi_i$ for $i \not \in \{ h,h+1 \}$, together with the functions illustrated in Figure~\ref{Fig:Case2}.

\begin{figure}[H]
\begin{tikzpicture}

\draw (-0.2,4) node {{\tiny $A$}};
\draw (0,4)--(6,4);
\draw [ball color=white] (2,4) circle (0.55mm);
\draw [ball color=white] (4,4) circle (0.55mm);
\draw [ball color=black] (3,4) circle (0.55mm);
\draw [ball color=black] (5,4) circle (0.55mm);
\draw (1,4.2) node {{\tiny $h$}};
\draw (2.5,4.2) node {{\tiny $h+1$}};
\draw (3.5,4.2) node {{\tiny $h$}};
\draw (4.5,4.2) node {{\tiny $h+1$}};
\draw (5.5,4.2) node {{\tiny $h$}};

\draw (-0.2,3) node {{\tiny $B$}};
\draw (0,3)--(6,3);
\draw [ball color=white] (2,3) circle (0.55mm);
\draw [ball color=white] (4,3) circle (0.55mm);
\draw [ball color=black] (1,3) circle (0.55mm);
\draw [ball color=black] (3,3) circle (0.55mm);
\draw (0.5,3.2) node {{\tiny $h+1$}};
\draw (1.5,3.2) node {{\tiny $h$}};
\draw (2.5,3.2) node {{\tiny $h+1$}};
\draw (3.5,3.2) node {{\tiny $h$}};
\draw (5,3.2) node {{\tiny $h+1$}};

\draw (-0.2,2) node {{\tiny $C$}};
\draw (0,2)--(6,2);
\draw [ball color=white] (2,2) circle (0.55mm);
\draw [ball color=white] (4,2) circle (0.55mm);
\draw [ball color=black] (5,2) circle (0.55mm);
\draw (1,2.2) node {{\tiny $h+1$}};
\draw (3,2.2) node {{\tiny $h$}};
\draw (4.5,2.2) node {{\tiny $h+1$}};
\draw (5.5,2.2) node {{\tiny $h$}};

\draw (-0.2,1) node {{\tiny $D$}};
\draw (0,1)--(6,1);
\draw [ball color=white] (2,1) circle (0.55mm);
\draw [ball color=white] (4,1) circle (0.55mm);
\draw [ball color=black] (1,1) circle (0.55mm);
\draw (0.5,1.2) node {{\tiny $h+1$}};
\draw (1.5,1.2) node {{\tiny $h$}};
\draw (3,1.2) node {{\tiny $h+1$}};
\draw (5,1.2) node {{\tiny $h$}};

\draw (-0.2,0) node {{\tiny $E$}};
\draw (0,0)--(6,0);
\draw [ball color=white] (2,0) circle (0.55mm);
\draw [ball color=white] (4,0) circle (0.55mm);
\draw [ball color=black] (3,0) circle (0.55mm);
\draw (1,0.2) node {{\tiny $h+1$}};
\draw (2.5,0.2) node {{\tiny $h+1$}};
\draw (3.5,0.2) node {{\tiny $h$}};
\draw (5,0.2) node {{\tiny $h$}};

\end{tikzpicture}
\caption{The five functions of Proposition~\ref{Prop:Case2Fns}.  The white points represent the two switching loops $\gamma_{\ell}$ and $\gamma_{\ell'}$.  The black points are at indeterminate locations in the regions where they appear.}
\label{Fig:Case2}
\end{figure}
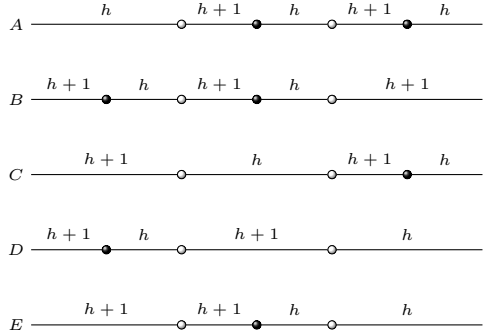

\begin{proposition}
\label{Prop:Case2Fns}
There exist functions $\varphi_A , \varphi_B , \varphi_C , \varphi_D , \varphi_E \in \Sigma$ with the following properties:
\begin{enumerate}
\item  $s_k (\varphi_A) = s_k [h]$ for all $k\leq\ell$.
\item  $s_k (\varphi_B) = s_k [h+1]$ for all $k > \ell'$.
\item  $s_k (\varphi_C) = s_k [h+1]$ for all $k\leq\ell$ and $s_k (\varphi_C) = s_k [h]$ for all $\ell < k \leq \ell'$.
\item  $s_k (\varphi_D) = s_k [h+1]$ for all $\ell < k \leq \ell'$, and $s_k (\varphi_D) = s_k [h]$ for all all $k > \ell'$
\item  $s_k (\varphi_E) = s_k [h+1]$ for all $k\leq\ell$ and $s_k (\varphi_E) = s_k [h]$ for all $k > \ell'$.
\end{enumerate}
Moreover, these functions can be chosen such that $s_k (\varphi) \in \{ s_k [h], s_k [h+1]\}$ for all $k$.
\end{proposition}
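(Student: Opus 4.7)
My plan is to mirror the proof of Proposition~\ref{Prop:BridgeFns} (and its variant Proposition~\ref{Prop:LoopFns}), adapted to the setting of two switching loops. The construction proceeds in three stages: first, construct $\varphi_A$ and $\varphi_B$ directly from Lemma~\ref{Lemma:Existence}; second, pass to the pencil spanned by their algebraic lifts and observe that every tropicalization in this pencil has slopes in $\{s_k[h], s_k[h+1]\}$ on every bridge; and third, realize $\varphi_C$, $\varphi_D$, and $\varphi_E$ as pointwise minima of carefully chosen pencil tropicalizations. For $\varphi_A$, I apply Lemma~\ref{Lemma:Existence} with index $i = h$ and the tangent vectors at $w_0$ and $v_{g+1}$, obtaining $\varphi_A \in \Sigma$ with $s'_0(\varphi_A) \leq h$ and $s_{g+1}(\varphi_A) \geq h$. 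Because $\gamma_\ell$ and $\gamma_{\ell'}$ are the only switching loops, both switching slope $h$, and (since $\rho = 2$ is entirely accounted for by their multiplicities) there are no decreasing loops or bridges, the only way for the slope level of $\varphi_A$ to rise is via one of these two switches. If $s'_0(\varphi_A)$ were strictly less than $h$, no mechanism would restore the level to $h$ by $v_{g+1}$, so $s'_0(\varphi_A) = h$ and, iterating, $s_k(\varphi_A) = h$ for every $k \leq \ell$. The construction of $\varphi_B$ is symmetric, applied right-to-left with $i = h+1$.

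Let $f_A, f_B \in V$ lift $\varphi_A, \varphi_B$, and consider the pencil they span. The same endpoint-and-switching analysis shows that every tropicalization $\trop(f)$ in this pencil has slopes in $\{s_k[h], s_k[h+1]\}$ on every bridge, and that the slope level can only change at the two switching loops or by absorbing the single chip of $D$ on some intermediate loop. For the remaining three functions I follow the template of Proposition~\ref{Prop:BridgeFns}. To build $\varphi_C$, I would pick a pencil tropicalization $\varphi$ with $s_\ell(\varphi) = s_\ell[h+1]$, forcing slope $h+1$ on every left bridge $\beta_0, \ldots, \beta_{\ell-1}$, and a pencil tropicalization $\varphi'$ with $s_{\ell+1}(\varphi') = s_{\ell+1}[h]$, forcing slope $h$ on every middle bridge $\beta_\ell, \ldots, \beta_{\ell'-1}$. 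After adjusting $\varphi'$ by a constant so that $\varphi$ and $\varphi'$ agree at some point of $\gamma_\ell$, I set $\varphi_C = \min(\varphi, \varphi')$, which is in $\Sigma$ because $\Sigma$ is a tropical submodule and inherits the desired slope pattern on the left and middle regions. The constructions of $\varphi_D$ and $\varphi_E$ are completely parallel, with the agreement point placed on $\gamma_{\ell'}$ for $\varphi_D$ and anywhere in the middle region (including either switching loop) for $\varphi_E$. The \emph{moreover} clause is automatic, since every function built from this pencil has all slopes in $\{s_k[h], s_k[h+1]\}$.

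The main obstacle is verifying that these pointwise minima really have the claimed incoming and outgoing slopes at the designated transition loops, i.e., that the combinatorial shape on $\gamma_\ell$ (respectively $\gamma_{\ell'}$) does not spoil the pattern inherited from the pencil constituents on the adjacent bridges. This requires the same sort of local loop analysis carried out in Example~\ref{Ex:Interval} and in the proof of Proposition~\ref{Prop:BridgeFns}, comparing the precise way $\varphi$ and $\varphi'$ cross on the loop. The additional bookkeeping caused by having two switching loops rather than a single switching bridge does not introduce any fundamentally new difficulty, because within each of the three regions the slopes of pencil tropicalizations still take only the two values $s_k[h]$ and $s_k[h+1]$, and the switching structure at each loop is the same as at a single switching bridge, with the extra flexibility of an absorbed chip of $D$ handled exactly as in the loop case of Proposition~\ref{Prop:LoopFns}.
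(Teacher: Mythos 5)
Your first stage is fine: constructing $\varphi_A$ and $\varphi_B$ from Lemma~\ref{Lemma:Existence} applied to all of $\Sigma$, and pinning down their slopes on one side via the fact that the only available index increases are at $\gamma_\ell$ and $\gamma_{\ell'}$, both switching slope $h$, does work (and matches what the paper does in the one-switch cases). The genuine gap is in your second stage, the claim that \emph{every} tropicalization in the pencil spanned by lifts $f_A, f_B$ has slopes in $\{s_k[h], s_k[h+1]\}$ on every bridge. The ``endpoint-and-switching analysis'' controls the endpoint slopes of $\varphi_A$ and $\varphi_B$ individually, but these bounds are not inherited by arbitrary members of the span. If, say, $s'_0(\varphi_A)=s'_0(\varphi_B)=s'_0[h]$ (nothing in your construction prevents this: Lemma~\ref{Lemma:Existence} only gives $s'_0(\varphi_B)\leq s'_0[h+1]$), then by Lemma~\ref{Lem:NumberOfSlopes} applied to the rank-one subseries, the second slope of the pencil at $\beta_0$ is realized by a member whose reduction vanishes to strictly higher order there, and that slope can be $s'_0[j]$ with $j\geq h+2$; symmetrically at $v_{g+1}$ the second slope can drop below $s_{g+1}[h]$. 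Once a member escapes the window, the pinching argument fails for it, and -- more importantly for you -- the statement you need downstream, that at every bridge \emph{both} window slopes $s_k[h]$ and $s_k[h+1]$ are attained by members of the pencil, fails: at such a bridge the two pencil slopes could be $\{s_k[h], s_k[j]\}$ with $j\geq h+2$, and then there is no pencil member with $s_\ell(\varphi)=s_\ell[h+1]$ (resp.\ $s_{\ell+1}(\varphi')=s_{\ell+1}[h]$) from which to build $\varphi_C$, $\varphi_D$, $\varphi_E$ with properties (3)--(5) and the ``moreover'' clause.

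The paper closes exactly this hole by defining the pencil differently: it takes points $p_0, p_{g+1}$ on $X$ specializing to $w_0, v_{g+1}$ and lets $W\subset\cL(D_X)$ be a pencil of sections vanishing to order at least $h$ at $p_{g+1}$ and at least $r-(h+1)$ at $p_0$. These vanishing conditions impose the endpoint slope bounds on \emph{every} member of $W$ (not just two chosen spanning functions), so the two-sided pinching $s_k[h]\leq s_k(\varphi)\leq s_k[h+1]$ holds for all $\varphi\in\trop(W)$, and since the pencil has exactly two slopes at each tangent vector, both window values are attained at every $k$; $\varphi_A$ and $\varphi_B$ are then also taken from $\trop(W)$ rather than from Lemma~\ref{Lemma:Existence}. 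Your argument can be repaired by replacing your spanned pencil with such a $W$ (which also preserves the tropical dependences among the five functions that the subsequent Lemmas~\ref{Lem:Case2Dependence} and~\ref{Lem:ReduceToOne} rely on); as written, though, the window property and hence the existence of the required pencil members is unjustified.
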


\begin{proof}
Let $p_0$ be a point on $X$ specializing to $w_0$ and $p_{g+1}$ a point on $X$ specializing to $v_{g+1}$, and consider a pencil $W$ of functions in $\cL (D_X)$ that vanish to order at least $h$ at $p_{g+1}$ and order at least $r-(h+1)$ at $p_0$.  (Note that this space is at least 2 dimensional, so it contains a pencil.)  We will choose each of the 5 functions to be the tropicalization of a function in this pencil.  Note that, if $\varphi$ is such a function, then $s'_0 (\varphi) \leq s'_0 [h+1]$ and $s_{g+1} (\varphi) \geq s_{g+1} [h]$.  Because $\gamma_{\ell}$ and $\gamma_{\ell'}$ are the only switching loops, this implies that $s_k [h] \leq s_k (\varphi) \leq s_k [h+1]$ for all $k$.  Moreover, since $\varphi$ is the tropicalization of a function in $\cL (D_X)$, $s_k (\varphi)$ must equal $s_k [j]$ for some $j$.  Therefore, for each $k$, $s_k (\varphi)$ is equal to either $s_k [h]$ or $s_k [h+1]$.

Now, since $W$ is a pencil, for each $k$ there are two possible slopes of functions $s_k (\varphi)$ for $\varphi \in \trop (W)$.  It follows that, for each $k$, there is $\varphi \in \trop (W)$ such that $s_k (\varphi) = s_k [h]$, and there is $\varphi' \in \trop (W)$ such that $s_k (\varphi') = s_k [h+1]$.  We let $\varphi_A$ be a function in $\trop (W)$ such that $s'_0 (\varphi_A) = s'_0 (h)$.  Similarly, we let $\varphi_B$ be a function in $\trop (W)$ such that $s_{g+1} (\varphi_B) = s_{g+1} [h+1]$.

To obtain $\varphi_C$, we consider two functions.  There is a function $\varphi \in \trop (W)$ such that $s_{\ell} (\varphi) = s_{\ell} [h+1]$, and there is a function $\varphi' \in \trop (W)$ such that $s'_{\ell} (\varphi') = s'_{\ell} [h]$.  By taking a suitable tropical linear combination of these two functions, we obtain $\varphi_C$.  Similarly, $\varphi_D$ is obtained by taking a suitable combination of a function $\varphi \in \trop (W)$ with $s_{\ell'} (\varphi) = s_{\ell'} [h+1]$ and a function $\varphi' \in \trop (W)$ with $s'_{\ell'} (\varphi') = s'_{\ell'} [h]$.  And $\varphi_E$ is obtained by taking a suitable combination of a function $\varphi \in \trop (W)$ with $s_{\ell} (\varphi) = s_{\ell} [h+1]$ and a function $\varphi' \in \trop (W)$ with $s'_{\ell'} (\varphi') = s'_{\ell'} [h]$.
\end{proof}

We now explore some relations between the functions described in Proposition~\ref{Prop:Case2Fns}.  These relations imply that one of the two functions depicted in Figure~\ref{Fig:TwoOptions} is contained in $\Sigma$.

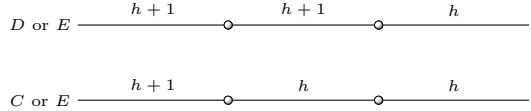
\begin{figure}[H]
\begin{tikzpicture}

\draw (0,4)--(6,4);
\draw (-0.5,4) node {{\tiny $D$ or $E$}};
\draw [ball color=white] (2,4) circle (0.55mm);
\draw [ball color=white] (4,4) circle (0.55mm);
\draw (1,4.2) node {{\tiny $h+1$}};
\draw (3,4.2) node {{\tiny $h+1$}};
\draw (5,4.2) node {{\tiny $h$}};

\draw (0,3)--(6,3);
\draw (-0.5,3) node {{\tiny $C$ or $E$}};
\draw [ball color=white] (2,3) circle (0.55mm);
\draw [ball color=white] (4,3) circle (0.55mm);
\draw (1,3.2) node {{\tiny $h+1$}};
\draw (3,3.2) node {{\tiny $h$}};
\draw (5,3.2) node {{\tiny $h$}};

\end{tikzpicture}
\caption{One of these two functions is in $\trop (W)$.}
\label{Fig:TwoOptions}
\end{figure}

\begin{lemma}
\label{Lem:Case2Dependence}
Both of the following hold:
\begin{enumerate}
\item  Either $s_k (\varphi_D) = s_k [h+1]$ for all $k \leq \ell$, or $s_k (\varphi_E) = s_k [h]$ for all $\ell < k \leq \ell'$.
\item  Either $s_k (\varphi_C) = s_k [h]$ for all $k > \ell'$, or $s_k (\varphi_E) = s_k [h+1]$ for all $\ell < k \leq \ell'$.
\end{enumerate}
\end{lemma}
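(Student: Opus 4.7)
Here is my plan.

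The crucial fact is that $\varphi_C,\varphi_D,\varphi_E$ all lie in $\trop(W)$, where $W \subset \cL(D_X)$ is the two-dimensional pencil constructed in the proof of Proposition~\ref{Prop:Case2Fns}. Since any three elements of a two-dimensional algebraic space are linearly dependent, their tropicalizations must be tropically dependent. Concretely, there exist constants $c_C,c_D,c_E$ such that $\theta := \min\{\varphi_C + c_C,\ \varphi_D + c_D,\ \varphi_E + c_E\}$ achieves its minimum at least twice at every point of $\Gamma$. Away from finitely many triple points, exactly two of the three functions achieve the minimum in an open neighborhood, and in this neighborhood their slopes along every tangent direction must agree (otherwise they would be crossing at an isolated point, not tied on a neighborhood).

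I will prove each part by contradiction, using the tropical dependence. For Part~(1), suppose $s_{k_0}(\varphi_D) = s_{k_0}[h]$ for some $k_0 \leq \ell$ and $s_{k_1}(\varphi_E) = s_{k_1}[h+1]$ for some $\ell < k_1 \leq \ell'$. At $v_{k_0}$ the slope vector of $(\varphi_C,\varphi_D,\varphi_E)$ is $(s[h+1],s[h],s[h+1])$, so the only pair with matching slopes is $\{\varphi_C,\varphi_E\}$, which must be the active pair near $v_{k_0}$; this forces $\delta_{CE} := (\varphi_C + c_C) - (\varphi_E + c_E) = 0$ at $v_{k_0}$. At $v_{k_1}$ the slope vector is $(s[h],s[h+1],s[h+1])$, the active pair is $\{\varphi_D,\varphi_E\}$, and hence $\delta_{DE} := (\varphi_D + c_D) - (\varphi_E + c_E) = 0$ at $v_{k_1}$.

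The argument then parallels the $t=t'$ computation at the end of Example~\ref{Ex:Interval}. On all of Region~1, $\varphi_C$ and $\varphi_E$ share slope $s[h+1]$, so $\delta_{CE}$ has slope zero on bridges throughout Region~1 and is therefore constantly zero there. Using the identity $\delta_{CD} = \delta_{CE} - \delta_{DE}$, I then track $\delta_{DE}$ and $\delta_{CD}$ along the path from $v_{k_0}$ through the switching loop $\gamma_\ell$ into Region~2 and up to $v_{k_1}$. On the portion of Region~2 where $\varphi_E$ has slope $s[h+1]$, the slope of $\delta_{CE}$ is $s[h]-s[h+1]=-1$, while on the complementary portions the slopes of $\delta_{CE}$ and $\delta_{DE}$ are constrained by which pair is active there. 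Requiring $\delta_{CE} = 0$ at $v_{k_0}$ and $\delta_{DE}=0$ at $v_{k_1}$, and tracing the signs forced by which function is strictly above the minimum on each sub-region, yields the same contradiction as the $L_2 < 0$ obstruction one gets when trying to shift a tropical combination of two patterns $(s[h+1], s[h], s[h])$ and $(s[h], s[h+1], s[h])$ to simultaneously dominate in incompatible regions. Part~(2) is proven by the completely symmetric argument, with Region~3 playing the role of Region~1 and $\varphi_C$ playing the role of $\varphi_D$.

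The main obstacle will be bookkeeping: carefully tracking the evolution of $\delta_{CE}$, $\delta_{CD}$, and $\delta_{DE}$ through the switching loops $\gamma_\ell$ and $\gamma_{\ell'}$ (where the slope of an individual function can change by $\pm 1$), and verifying that the various possibilities for the unspecified slopes of $\varphi_C, \varphi_D, \varphi_E$ on the regions where the Proposition leaves them free all lead to the same contradiction. In particular, I expect that an auxiliary case analysis is needed to rule out the possibility that $\varphi_D$ or $\varphi_E$ has its slope transition occurring on a loop rather than on a bridge, using the admissibility of the edge lengths.
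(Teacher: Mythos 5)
Your starting point is the same as the paper's: since $W$ is a pencil, $\varphi_C,\varphi_D,\varphi_E$ are tropically dependent, and the slope data forces $\{\varphi_C,\varphi_E\}$ to be the tied pair just to the left of $v_\ell$ and $\{\varphi_D,\varphi_E\}$ just to the left of $v_{\ell+1}$ (your points $v_{k_0},v_{k_1}$ reduce to these, since slope indices of functions in $\trop(W)$ can only increase at the switching loops). The gap is everything after that. No contradiction can be extracted from bridge slopes alone: on each side of $\gamma_\ell$ separately the configuration you describe is perfectly consistent, and the entire content of the lemma is how the relative values of the three functions propagate \emph{across the switching loop} $\gamma_\ell$. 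This is exactly the step you defer to ``bookkeeping,'' but it is the crux. The paper's proof hinges on the observation that all three functions agree, up to additive constants, on $\gamma_\ell$ (a consequence of the structure of restrictions to a switching loop, where the relevant restricted divisors are forced), so the tie $\varphi_C=\varphi_E\le\varphi_D$ at $v_\ell$ persists to $w_\ell$; then a short argument on $\beta_\ell$ finishes: the minimum must be achieved twice with matching slopes, $\varphi_D$ cannot take the slope $s'_\ell[h]$ anywhere on $\beta_\ell$ because $s_{\ell+1}(\varphi_D)=s_{\ell+1}[h+1]$ by definition, and $\varphi_C$ does take it, so $\varphi_E$ must as well, giving $s_{\ell+1}(\varphi_E)=s_{\ell+1}[h]$ directly. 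Without this loop-agreement statement, or an equivalent precise control of how $\delta_{CE}$ and $\delta_{DE}$ change across $\gamma_\ell$, your sign-tracking has no way to connect the data at $v_\ell$ with the data at $v_{\ell+1}$; the asserted analogy with the $t=t'$ computation of Example~\ref{Ex:Interval} does not supply it, because in that example there are no loops and the difficulty you are postponing never arises.

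A secondary, fixable point: your claim that $\delta_{CE}$ is constant on Region~1 because it has slope zero on the bridges also requires that $\varphi_C$ and $\varphi_E$ agree on every loop $\gamma_k$ with $k<\ell$. This is true (both have incoming and outgoing slope index $h+1$ there, and since those loops carry no multiplicity the restricted divisor has degree at most one, hence a unique effective representative), but it is an argument about restrictions to loops, not about bridge slopes, and it is precisely the kind of argument you would need, in the harder switching case, at $\gamma_\ell$ itself. Once you prove agreement of all three functions on $\gamma_\ell$, your global difference-tracking from $v_{k_0}$ to $v_{k_1}$ becomes unnecessary and the argument collapses to the paper's local analysis on $\gamma_\ell\cup\beta_\ell$ (with the symmetric argument at $w_{\ell'}$ for part (2)). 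As written, the proposal is a plan whose decisive step is missing.
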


\begin{proof}
Because $W$ is a pencil, the functions $\varphi_C, \varphi_D, $ and $\varphi_E$ from Proposition~\ref{Prop:Case2Fns} are tropically dependent.  If $s_{\ell} (\varphi_D) \neq s_{\ell} [h+1]$, then in this dependence the functions $\varphi_C$ and $\varphi_E$ must achieve the minimum at $v_{\ell}$.  All three functions agree on the loop $\gamma_{\ell}$, and since $s_{\ell+1} (\varphi_C) = s_{\ell+1} [h]$, it follows that one of the other two functions must also have slope $s_{\ell+1} [h]$ along the bridge $\beta_{\ell}$.  By definition, this function cannot be $\varphi_D$, so we must have $s_{\ell+1} (\varphi_E) = s_{\ell+1} [h]$.

The second statement follows by an analogous argument, considering the functions that achieve the minimum at $w_{\ell'}$.
\end{proof}

\begin{lemma}
\label{Lem:ReduceToOne}
One of the following holds.  Either
\begin{enumerate}
\item  there is no function $\varphi \in \trop (W)$ with $s_{\ell} (\varphi) = s_{\ell} [h]$ and $s'_{\ell} (\varphi) = s'_{\ell} [h+1]$, or
\item  there is no function $\varphi \in \trop (W)$ with $s_{\ell'} (\varphi) = s_{\ell'} [h]$ and $s'_{\ell'} (\varphi) = s'_{\ell'} [h+1]$.
\end{enumerate}
\end{lemma}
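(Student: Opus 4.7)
The plan is to argue by contradiction. Suppose both statements fail, so there exist $\varphi_1, \varphi_2 \in \trop(W)$ with $s_\ell(\varphi_1) = s_\ell[h]$, $s'_\ell(\varphi_1) = s'_\ell[h+1]$, and $s_{\ell'}(\varphi_2) = s_{\ell'}[h]$, $s'_{\ell'}(\varphi_2) = s'_{\ell'}[h+1]$. First I would show, after possibly replacing $\varphi_1, \varphi_2$ by tropical linear combinations with $\varphi_A$ and $\varphi_B$ from Proposition~\ref{Prop:Case2Fns}, that one may assume $\varphi_1$ has slope $s_k[h]$ for all $k \leq \ell$ and slope $s_k[h+1]$ for all $k > \ell$, while $\varphi_2$ has slope $s_k[h]$ for all $k \leq \ell'$ and slope $s_k[h+1]$ for all $k > \ell'$. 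The key inputs are Lemma~\ref{Lem:NumberOfSlopes} applied to the pencil (giving slopes in $\{s_k[h], s_k[h+1]\}$ at each bridge) together with the fact that $\gamma_\ell$ and $\gamma_{\ell'}$ are the only loops at which a slope index can increase.

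Under these normalizations, $\varphi_1 - \varphi_2$ is a constant $c_1$ on $[w_0, v_\ell]$ and a constant $c_3$ on $[w_{\ell'}, v_{g+1}]$, and is strictly increasing on $[w_\ell, v_{\ell'}]$ because on each bridge $\beta_k$ with $\ell \leq k < \ell'$ the rightward slope difference is $s_k[h+1] - s_k[h] \geq 1$. In particular $c_1 < c_3$, and in fact $c_3 - c_1$ exceeds the length of every individual bridge in this middle region.

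Now lift $\varphi_1, \varphi_2$ to $f_1, f_2 \in W$. For $\lambda \in K^*$ with $c_1 < \val(\lambda) < c_3$, set $f_\lambda = f_1 - \lambda f_2$ and recall that $\trop(f_\lambda) \geq \min(\varphi_1, \val(\lambda) + \varphi_2)$, with equality away from points where the leading parts of $f_1$ and $\lambda f_2$ cancel. The minimum has a single concave-down kink at the point $v_\lambda$ where $\varphi_1 = \val(\lambda) + \varphi_2$, which lies in $[w_\ell, v_{\ell'}]$. Choose $\val(\lambda)$ so that $v_\lambda$ lies in the interior of some bridge $\beta_k$ with $\ell < k < \ell'$; this is possible because $\varphi_1 - \varphi_2$ is strictly increasing on each such bridge. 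On $\beta_k$ the minimum has slope $s_k[h+1]$ to the left of $v_\lambda$ and slope $s_k[h]$ to the right, so $\ord_{v_\lambda}(\min) = s_k[h] - s_k[h+1] < 0$. Since $D$ has no chip on the bridge interior, the minimum fails to lie in $R(D)$, and therefore $\trop(f_\lambda) > \min$ at $v_\lambda$, forcing cancellation of the leading parts of $f_1$ and $\lambda f_2$ at $v_\lambda$.

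This cancellation imposes a single equation on the leading coefficient $\overline{\lambda} \in \kappa^*$, determined by the residue $\overline{f_1/f_2}$ at $v_\lambda$. Since $\kappa$ is infinite, I can pick $\lambda$ with the prescribed valuation but with $\overline{\lambda}$ violating this equation; then $f_\lambda \in W$ satisfies $\trop(f_\lambda) = \min(\varphi_1, \val(\lambda) + \varphi_2) \notin R(D)$, a contradiction. The hard part will be the normalization step at the start --- arranging $\varphi_1, \varphi_2$ to have the clean monotone slope patterns described --- since this requires combining the switching loop constraints with the two-dimensional pencil structure and may require a small case analysis leveraging Lemma~\ref{Lem:Case2Dependence}. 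The final cancellation argument is routine once the slope patterns of $\varphi_1$ and $\varphi_2$ have been pinned down.
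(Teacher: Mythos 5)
There is a genuine gap, and in fact two. The decisive one is the sign of the kink at $v_\lambda$. With your normalization, $\varphi_1$ has the larger slope $s_k[h+1]$ and $\val(\lambda)+\varphi_2$ the smaller slope $s_k[h]$ on the middle bridges, and since $\varphi_1-\varphi_2$ is increasing there, the minimum is $\varphi_1$ to the \emph{left} of $v_\lambda$ and $\val(\lambda)+\varphi_2$ to the \emph{right}. So the rightward slope of the minimum drops from $s_k[h+1]$ to $s_k[h]$, and $\ord_{v_\lambda}$ (the sum of incoming slopes) is $s_k[h+1]-s_k[h]=+1$, not $s_k[h]-s_k[h+1]$. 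This is the allowed, concave type of bend: indeed $R(D)$ is a tropical module, so $\min(\varphi_1,\val(\lambda)+\varphi_2)$ lies in $R(D)$ automatically, the minimum never ``fails to lie in $R(D)$,'' no cancellation of leading terms is forced, and the contradiction you aim for evaporates. (What a pencil genuinely cannot produce is a convex bend, slope increasing left to right at a chip-free point, but a pointwise minimum never creates one, so this mechanism cannot detect the configuration you are trying to rule out.)

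The normalization step is also not merely ``hard'' but likely unavailable. A function in $\trop(W)$ with $s_\ell(\varphi)=s_\ell[h]$ and $s'_\ell(\varphi)=s'_\ell[h+1]$ will in general drop back to slope index $h$ at some uncontrolled finite distance to the right of $\gamma_\ell$, exactly as in Example~\ref{Ex:Interval}, where the clean monotone function $\psi^\infty$ need not belong to $\Sigma$; and tropical linear combinations with $\varphi_A$, $\varphi_B$ cannot repair this, since taking pointwise minima with shifts can only introduce further downward bends, never raise slopes on the right. So the ``$h$ up to $\gamma_\ell$, $h+1$ thereafter'' shape you assume for $\varphi_1$ (and its analogue for $\varphi_2$) is not justified, and without it your middle-region monotonicity claim collapses. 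The paper's proof avoids both issues by exploiting the pencil structure directly: by Lemma~\ref{Lem:Case2Dependence} one of the two functions of Figure~\ref{Fig:TwoOptions} lies in $\trop(W)$, and then the tropical dependence among \emph{three} functions of the pencil (an arbitrary $\varphi$ with $s_\ell(\varphi)=s_\ell[h]$, together with $\varphi_C$ and that function) forces $s'_\ell(\varphi)=s'_\ell[h]$, which is precisely conclusion (1); the other case is symmetric. If you want to salvage a lifting-style argument, it is this three-function dependence (not a two-function minimum) that must carry the load.
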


\begin{proof}
Consider the case where the first function depicted in Figure~\ref{Fig:TwoOptions} is contained in $\Sigma$.  Without loss of generality, assume that this function is $\varphi_E$.  Let $\varphi \in \trop (W)$ be a function with $s_{\ell} (\varphi) = s_{\ell} [h]$.  Because $W$ is a pencil, the functions $\varphi, \varphi_C,$ and $\varphi_E$ are tropically dependent.  Because $s_{\ell} (\varphi) = s_{\ell} [h]$, we see that in this dependence $\varphi_C$ and $\varphi_E$ must achieve the minimum at $w_{\ell}$.  Since $s'_{\ell} (\varphi_C) = s'_{\ell} [h]$, it follows that one of the other two functions must also have slope $s'_{\ell} [h]$ along the bridge $\beta_{\ell}$.  By assumption, this function cannot be $\varphi_E$, so it must be $\varphi$.

The other case, where the second function depicted in Figure~\ref{Fig:TwoOptions} is contained in $\Sigma$, follows by an analogous argument.
\end{proof}

\begin{remark}  \label{Rem:SwitchingPencil}
Note that Lemma~\ref{Lem:ReduceToOne} does not imply that $\gamma_{\ell}$ or $\gamma_{\ell'}$ is not a switching loop for $\Sigma$.  Rather, it implies that one of these loops is not a switching loop for the smaller tropical linear subseries $\trop (W)$.  This is sufficient for our purposes.
\end{remark}

If there is no function $\varphi \in \trop (W)$ with $s_{\ell'} (\varphi) = s_{\ell'} [h]$ and $s'_{\ell'} (\varphi) = s'_{\ell'} [h+1]$, then we construct our maximally independent combination $\vartheta$ as though $\gamma_{\ell'}$ is not a switching loop.  We define the sets $\cA$ and $\cB$ exactly as in case 3a.  The functions $\varphi_A$ and $\varphi_B$ satisfy the same conditions on slopes as in case 3a, and the function described in Lemma~\ref{Lem:Case2Dependence} satisfies the same conditions on slopes as $\varphi_C$ in case 3a.  We may therefore replace the functions $\varphi^0_h$, $\varphi^0_{h+1}$, and $\varphi^{\infty}_h$ in $\cA$ with these three functions, exactly as in case 3a. Similarly, if there is no function $\varphi \in \trop (W)$ with $s_{\ell} (\varphi) = s_{\ell} [h]$ and $s'_{\ell} (\varphi) = s'_{\ell} [h+1]$, then we construct our maximally independent combination $\vartheta$ as though $\gamma_{\ell}$ is not a switching loop.

\begin{theorem}
In this subcase, $\vartheta$ is a maximally independent combination.
\end{theorem}

\begin{proof}
By Lemma~\ref{Lem:ReduceToOne}, one of the two loops $\gamma_{\ell}, \gamma_{\ell'}$ is not a switching loop for $\trop (W)$.  Thus, either none of the functions involved in the tropical linear combination $\vartheta$ switch slope at $\gamma_{\ell}$, or none of them switch slope at $\gamma_{\ell'}$.  This case therefore reduces to Theorem~\ref{Thm:Switching}.
\end{proof}

\medskip

\subsubsection{Subcase 4c:  $h' =h+1$}

We first identify a suitable collection of functions in $\Sigma$.  These are the functions $\varphi_i$, for $i \notin \{ h, h+1, h + 2 \}$, together with those illustrated in Figure~\ref{Fig:Case3}.

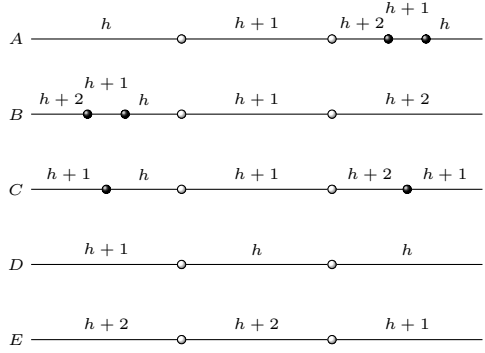
\begin{figure}[H]
\begin{tikzpicture}

\draw (-0.2,4) node {{\tiny $A$}};
\draw (0,4)--(6,4);
\draw [ball color=white] (2,4) circle (0.55mm);
\draw [ball color=white] (4,4) circle (0.55mm);
\draw [ball color=black] (4.75,4) circle (0.55mm);
\draw [ball color=black] (5.25,4) circle (0.55mm);
\draw (1,4.2) node {{\tiny $h$}};
\draw (3,4.2) node {{\tiny $h+1$}};
\draw (4.4,4.2) node {{\tiny $h+2$}};
\draw (5,4.4) node {{\tiny $h+1$}};
\draw (5.5,4.2) node {{\tiny $h$}};

\draw (-0.2,3) node {{\tiny $B$}};
\draw (0,3)--(6,3);
\draw [ball color=white] (2,3) circle (0.55mm);
\draw [ball color=white] (4,3) circle (0.55mm);
\draw [ball color=black] (0.75,3) circle (0.55mm);
\draw [ball color=black] (1.25,3) circle (0.55mm);
\draw (0.4,3.2) node {{\tiny $h+2$}};
\draw (1,3.4) node {{\tiny $h+1$}};
\draw (1.5,3.2) node {{\tiny $h$}};
\draw (3,3.2) node {{\tiny $h+1$}};
\draw (5,3.2) node {{\tiny $h+2$}};

\draw (-0.2,2) node {{\tiny $C$}};
\draw (0,2)--(6,2);
\draw [ball color=white] (2,2) circle (0.55mm);
\draw [ball color=white] (4,2) circle (0.55mm);
\draw [ball color=black] (1,2) circle (0.55mm);
\draw [ball color=black] (5,2) circle (0.55mm);
\draw (0.5,2.2) node {{\tiny $h+1$}};
\draw (1.5,2.2) node {{\tiny $h$}};
\draw (3,2.2) node {{\tiny $h+1$}};
\draw (4.5,2.2) node {{\tiny $h+2$}};
\draw (5.5,2.2) node {{\tiny $h+1$}};

\draw (-0.2,1) node {{\tiny $D$}};
\draw (0,1)--(6,1);
\draw [ball color=white] (2,1) circle (0.55mm);
\draw [ball color=white] (4,1) circle (0.55mm);
\draw (1,1.2) node {{\tiny $h+1$}};
\draw (3,1.2) node {{\tiny $h$}};
\draw (5,1.2) node {{\tiny $h$}};

\draw (-0.2,0) node {{\tiny $E$}};
\draw (0,0)--(6,0);
\draw [ball color=white] (2,0) circle (0.55mm);
\draw [ball color=white] (4,0) circle (0.55mm);
\draw (1,0.2) node {{\tiny $h+2$}};
\draw (3,0.2) node {{\tiny $h+2$}};
\draw (5,0.2) node {{\tiny $h+1$}};

\end{tikzpicture}
\caption{The five functions of Proposition~\ref{Prop:Case3Fns}.  The white points represent the switching loops $\gamma_{\ell}$ and $\gamma_{\ell'}$.  The black points are at indeterminate locations in the regions where they appear.}
\label{Fig:Case3}
\end{figure}

\begin{proposition}
\label{Prop:Case3Fns}
There exist functions $\varphi_A , \varphi_B , \varphi_C , \varphi_D , \varphi_E \in \Sigma$ with the following properties:
\begin{enumerate}
\item  $s_k (\varphi_A) = s_k [h]$ for all $k\leq\ell$.
\item  $s_k (\varphi_B) = s_k [h+2]$ for all $k > \ell'$.
\item  $s_k (\varphi_C) = s_k [h+1]$ for all $\ell < k \leq \ell'$.
\item  $s_k (\varphi_D) = s_k [h+1]$ for all $k \leq \ell$ and $s_k (\varphi_D) = s_k [h]$ for all $k > \ell$.
\item  $s_k (\varphi_E) = s_k [h+2]$ for all $k \leq \ell'$, and $s_k (\varphi_E) = s_k [h+1]$ for all all $k > \ell'$.
\end{enumerate}
Moreover, these functions can be chosen such that $s_k (\varphi) \in \{ s_k [h], s_k [h+1], s_k [h+2]\}$ for all $k$.
\end{proposition}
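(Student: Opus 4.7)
The plan is to adapt the proof of Proposition~\ref{Prop:Case2Fns}, replacing the pencil used there with a three-dimensional linear subseries $W \subset \cL(D_X)$. I choose $K$-points $p_0, p_{g+1} \in X$ specializing to $w_0$ and $v_{g+1}$, and let $W$ be the subspace of sections of $\cL(D_X)$ vanishing to order at least $h$ at $p_{g+1}$ and to order at least $r-(h+2)$ at $p_0$. These vanishing conditions impose codimension at most $h + (r-h-2) = r-2$ on the $(r+1)$-dimensional space $\cL(D_X)$, so $\dim W \geq 3$; shrinking if necessary, we may assume $W$ has rank exactly $2$.

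By the slope formula, every $\varphi \in \trop(W)$ satisfies $s'_0(\varphi) \leq s'_0[h+2] = h+2$ and $s_{g+1}(\varphi) \geq s_{g+1}[h] = h$. Since the only switching loops for $\Sigma$ are $\gamma_\ell$ (switching slope $h$) and $\gamma_{\ell'}$ (switching slope $h+1$), and there are no switching bridges or decreasing loops or bridges (as $\rho \leq 2$ is fully absorbed by the two switching loops), these bounds propagate to $s_k[h] \leq s_k(\varphi) \leq s_k[h+2]$ for every $k$. Because $\varphi \in \Sigma$ forces $s_k(\varphi) \in \{s_k[0], \ldots, s_k[r]\}$, we conclude $s_k(\varphi) \in \{s_k[h], s_k[h+1], s_k[h+2]\}$. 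Applying Lemma~\ref{Lem:NumberOfSlopes} to $W$ then shows all three of these slopes are attained at every tangent vector.

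I now construct the five functions. For $\varphi_A$, I take any $\varphi \in \trop(W)$ with $s'_0(\varphi) = s'_0[h]$; since no loop to the left of $\gamma_\ell$ switches slope $h-1$, the slope is forced to remain at $s_k[h]$ throughout $k \leq \ell$. The function $\varphi_B$ is produced symmetrically, starting from a function with $s_{g+1} = s_{g+1}[h+2]$. For $\varphi_D$, I pick $\varphi \in \trop(W)$ with $s_\ell(\varphi) = s_\ell[h+1]$ and $\varphi' \in \trop(W)$ with $s'_\ell(\varphi') = s'_\ell[h]$; after adding scalars so that they agree at an interior point of $\gamma_\ell$, their pointwise minimum has the prescribed slopes. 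The function $\varphi_E$ is built analogously around $\gamma_{\ell'}$. For $\varphi_C$, I pick $\varphi, \varphi' \in \trop(W)$ with $s_{\ell+1}(\varphi) = s_{\ell+1}[h+1]$ and $s'_{\ell'}(\varphi') = s'_{\ell'}[h+1]$, and take a tropical linear combination that switches from $\varphi$ to $\varphi'$ on the non-switching interval between $\gamma_\ell$ and $\gamma_{\ell'}$.

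The main subtlety lies in the construction of $\varphi_C$, since it must carry slope $s_k[h+1]$ throughout the entire interval $\ell < k \leq \ell'$. The argument requires that the two constituent functions agree at some point of the non-switching region between the two switching loops, which in turn uses the fact that every tangent vector in this region admits the intermediate slope $h+1$ in $\trop(W)$. Once the two functions are aligned, the minimum automatically satisfies the slope constraint $s_k(\varphi_C) \in \{s_k[h], s_k[h+1], s_k[h+2]\}$, since this bound was already verified uniformly for all of $\trop(W)$ and the tropical module $\trop(W)$ is closed under pointwise minimum after scalar shifts.
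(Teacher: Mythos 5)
Your setup (the points $p_0$, $p_{g+1}$ specializing to $w_0$, $v_{g+1}$ and the $3$-dimensional space $W$) matches the paper's, and your constructions of $\varphi_A$ and $\varphi_B$ are sound. The gap lies in $\varphi_C$, $\varphi_D$, $\varphi_E$: you work only with the full space $W$, for which the available slope bounds are $s_k[h] \leq s_k(\varphi) \leq s_k[h+2]$, and these are too weak to pin the slopes of your chosen functions away from the single bridge where you normalize them. The paper instead introduces two pencils $W_1 \subset W$ (vanishing to order at least $r-(h+1)$ at $p_0$) and $W_2 \subset W$ (vanishing to order at least $h+1$ at $p_{g+1}$): functions in $\trop(W_1)$ have slope index at most $h+1$ for all $k \leq \ell'$, functions in $\trop(W_2)$ have slope index at least $h+1$ for all $k > \ell$, and $\varphi_C$ is taken from $\trop(W_1 \cap W_2)$, which is nonzero because two planes in a $3$-space meet; the two one-sided bounds then force $s_k(\varphi_C) = s_k[h+1]$ exactly on $\ell < k \leq \ell'$. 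Your substitute fails: a function $\varphi \in \trop(W)$ with $s_{\ell+1}(\varphi) = s_{\ell+1}[h+1]$ is only known to have index at most $h+1$ on the bridges between the switching loops (it may drop to $h$), and your $\varphi'$ with $s'_{\ell'}(\varphi') = s'_{\ell'}[h+1]$ is only known to have index at least $h+1$ there (it may be $h+2$). Worse, since a larger index means a larger slope, $\varphi' - \varphi$ is non-decreasing across this region, so in any tropical combination the region where $\varphi'$ achieves the minimum lies to the \emph{left} of the region where $\varphi$ does; the minimum therefore carries the slopes of $\varphi'$ near $\gamma_\ell$ and of $\varphi$ near $\gamma_{\ell'}$, precisely where neither is pinned to index $h+1$. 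The fact that $s_k[h+1]$ is attained at every tangent vector by \emph{some} element of $\trop(W)$ (your appeal to Lemma~\ref{Lem:NumberOfSlopes}) does not produce a single function attaining it throughout the interval.

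The same deficiency affects $\varphi_D$ and $\varphi_E$. For $\varphi_D$ you choose $\varphi \in \trop(W)$ with $s_\ell(\varphi) = s_\ell[h+1]$, but inside $\trop(W)$ its index on earlier bridges is bounded above only by $h+2$, so the pointwise minimum you form need not satisfy $s_k(\varphi_D) = s_k[h+1]$ for all $k \leq \ell$; in the paper both constituents of $\varphi_D$ are drawn from the pencil $W_1$, whose extra vanishing at $p_0$ caps the index at $h+1$ everywhere left of $\gamma_{\ell'}$ and makes the single-crossing minimum argument work (symmetrically, $W_2$ handles $\varphi_E$). A repair is possible: either introduce $W_1$, $W_2$, and the element of $W_1 \cap W_2$ as the paper does, or, at least for $\varphi_C$, apply Lemma~\ref{Lemma:Existence} to $W$ with the tangent vectors at $w_0$ and $v_{g+1}$ and the middle index, which yields a single function with index at most $h+1$ before $\gamma_{\ell'}$ and at least $h+1$ after $\gamma_\ell$. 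Some such device supplying complementary one-sided bounds on a \emph{single} function is essential, and your proposal as written lacks it.
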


\begin{proof}
As in the previous case, let $p_0$ be a point on $X$ specializing to $w_0$ and $p_{g+1}$ a point on $X$ specializing to $v_{g+1}$, and let $W$ be a 3-dimensional subspace of $\cL (D_X)$ consisting of function that vanish to order at least $h$ at $p_{g+1}$ and order at least $r-(h+2)$ at $p_0$.  Let $W_1 \subset W$ be a pencil of functions in $\cL (D_X)$ that vanish to order at least $h$ at $p_{g+1}$ and order at least $r-(h+1)$ at $p_0$.  Similarly, let $W_2 \subset W$ be a pencil of functions in $\cL (D_X)$ that vanish to order at least $h+1$ at $p_{g+1}$ and order at least $r-(h+2)$ at $p_{g+1}$.  Each of the 5 functions will be the tropicalization of a function in one of these two pencils.  Note that, if $\varphi \in \trop (W_1)$, then $s_1 (\varphi) \leq s_1 [h+1]$ and $s_{g+1} (\varphi) \geq s_{g+1} [h]$.  Because $\gamma_{\ell}$ and $\gamma_{\ell'}$ are the only switching loops, this implies that $s_k [h] \leq s_k (\varphi) \leq s_k [h+2]$ for all $k$, and $s_k (\varphi) \leq s_k [h+1]$ for all $k \leq \ell'$.  Similarly, if $\varphi \in \trop (W_2)$, then $s_k [h] \leq s_k (\varphi) \leq s_k [h+2]$ for all $k$, and $s_k (\varphi) \geq s_k [h+1]$ for all $k > \ell$.

Let $\varphi_A$ be a function in $\trop (W_1)$ such that $s_1 (\varphi_A) = s_1 [h]$.  Similarly, let $\varphi_B$ be a function in $\trop (W_2)$ such that $s_{g+1} (\varphi_B) = s_{g+1} [h+2]$.  Then, we let $\varphi_C$ be a function in $\trop (W_1 \cap W_2)$; this intersection is nontrivial, because $W_1$ and $W_2$ are 2-dimensional subspaces of the 3-dimensional space $W$.  By the computations above, the choice of $\varphi_C$ forces $s_k (\varphi_C) = s_k [h+1]$ for all $\ell < k \leq \ell'$.

To obtain $\varphi_D$, we consider two functions.  There is some $\varphi \in \trop (W_1)$ such that $s_{\ell} (\varphi) = s_{\ell} [h+1]$, and there is a function $\varphi' \in \trop (W_1)$ such that $s_{\ell+1} (\varphi') = s_{\ell+1} [h]$.  By taking the minimum of these two functions, we obtain $\varphi_D$.  Similarly, $\varphi_E$ is obtained by taking the minimum of a function $\varphi \in \trop (W_2)$ with $s_{\ell'} (\varphi) = s_{\ell'} [h+2]$ and a function $\varphi' \in \trop (W_2)$ with $s_{\ell'+1} (\varphi') = s_{\ell'+1} [h+1]$.
\end{proof}

Note that the three functions $\varphi_A, \varphi_C,$ and $\varphi_D$ in $\trop (W_1)$ are tropically dependent.  They are determined, up to additive constants, by a single parameter $t_1$, just as in case 2 (see Proposition~\ref{Prop:BridgeDependence}).  Similarly, the three functions $\varphi_B, \varphi_C,$ and $\varphi_E$, in $\trop (W_2)$ are tropically dependent and are determined up to additive constants by a single parameter $t_2$.

\begin{lemma}
\label{Lem:Case3Inequivalent}
For any $k$, the functions $\varphi_D$ and $\varphi_E$ do not agree on $\gamma_k$.  Moreover, for any pair $k' \leq k$, with $k \neq \ell$ and $k' \neq \ell'$, one of the three functions $\varphi_A, \varphi_B, \varphi_C$ does not agree with $\varphi_E$ on $\gamma_{k'}$, nor with $\varphi_D$ on $\gamma_k$.
\end{lemma}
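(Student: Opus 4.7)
The plan is to prove both statements by analyzing the slope indices of the functions on each loop, combined with a pigeonhole argument for part (2).

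For part (1), I would first verify that the slope indices of $\varphi_D$ and $\varphi_E$, extracted from the slopes on the bridges adjacent to $\gamma_k$ via Proposition~\ref{Prop:Case3Fns}, strictly differ on every loop. The three cases $k \leq \ell$, $\ell < k \leq \ell'$, and $k > \ell'$ give slope index pairs $(h+1, h+2)$, $(h, h+2)$, and $(h, h+1)$ respectively. For $k \neq \ell, \ell'$ the loop has multiplicity zero, and any function in $R(D)$ with constant slope index $j$ on both adjacent bridges is determined on $\gamma_k$ up to an additive constant by $j$ alone; hence functions with distinct slope indices cannot agree on $\gamma_k$. On the switching loop $\gamma_\ell$, the function $\varphi_E$ behaves as a slope-$(h{+}2)$ building block, whereas $\varphi_D$ genuinely transitions from index $h+1$ to $h$, so the two restrictions have different shapes on $\gamma_\ell$; a symmetric argument handles $\gamma_{\ell'}$.

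For part (2), fix a pair $k' \leq k$ with $k \neq \ell$ and $k' \neq \ell'$, and suppose for contradiction that each of $\varphi_A, \varphi_B, \varphi_C$ agrees with either $\varphi_E$ on $\gamma_{k'}$ or with $\varphi_D$ on $\gamma_k$. By pigeonhole, two of the three functions share the same type of agreement, and therefore must agree with each other on the common loop. I would rule this out by comparing their slope indices, which (per Proposition~\ref{Prop:Case3Fns}) are drawn from $\{h, h+1, h+2\}$ with prescribed transition regions: $\varphi_A$ is index $h$ for $k \leq \ell$; $\varphi_B$ is index $h+2$ for $k > \ell'$; $\varphi_C$ is index $h+1$ for $\ell < k \leq \ell'$. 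Using the same ``building block on a non-switching loop'' uniqueness argument as in (1), two of these functions can agree on a loop only in the regions where their slope indices coincide, which are the loops lying inside the overlapping transition regions of the respective pencils $\trop(W_1)$ and $\trop(W_2)$.

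I would then eliminate these overlap scenarios using the tropical dependencies relating $\varphi_A, \varphi_C, \varphi_D$ inside $\trop(W_1)$ and $\varphi_B, \varphi_C, \varphi_E$ inside $\trop(W_2)$. These dependencies are one-parameter families (as in Proposition~\ref{Prop:BridgeDependence}), so the transition regions of $\varphi_A$ and $\varphi_B$ are each determined by a single parameter measuring the offset from $\gamma_\ell$ and $\gamma_{\ell'}$ respectively. The constraint $k' \leq k$ combined with these parameter relationships rules out the remaining configurations in which two of $\varphi_A, \varphi_B, \varphi_C$ could agree with one another on $\gamma_{k'}$ or on $\gamma_k$.

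The main obstacle will be handling the overlap scenarios in part (2) case by case, since the locations of the transition regions (the black points in Figure~\ref{Fig:Case3}) are not uniquely determined by the slope index data; one must track how the one-parameter tropical dependencies restrict these locations. The boundary cases $k' = \ell$ and $k = \ell'$ deserve special care because the non-switching uniqueness argument cannot be applied directly at those loops, and one must instead use the transition behavior of the functions together with the hypothesis $k' \leq k$.
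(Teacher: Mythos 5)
Your first assertion and its proof sketch are fine and agree with the paper's (implicit) argument: on the three regions $k\le\ell$, $\ell<k\le\ell'$, $k>\ell'$ the slope indices of $(\varphi_D,\varphi_E)$ are $(h+1,h+2)$, $(h,h+2)$, $(h,h+1)$, so they differ on every loop, and on the non-special loops a function with a fixed index is pinned down up to an additive constant.

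The second part, however, has a genuine gap: the contradiction-plus-pigeonhole mechanism cannot work. From the negation you conclude that two of $\varphi_A,\varphi_B,\varphi_C$ agree with each other on a common loop and you plan to rule this out, but that configuration is not absurd: by Proposition~\ref{Prop:Case3Fns} and Figure~\ref{Fig:Case3}, all three of $\varphi_A,\varphi_B,\varphi_C$ carry slope index $h+1$ on every loop strictly between $\gamma_\ell$ and $\gamma_{\ell'}$, so they agree with one another there; worse, to the right of $\gamma_{\ell'}$ both $\varphi_A$ and $\varphi_C$ can carry index $h+1$ on the same loop (between the relevant bend points), where they then both agree with $\varphi_E$, and to the left of $\gamma_\ell$ both $\varphi_B$ and $\varphi_C$ can carry index $h+1$ and agree with $\varphi_D$. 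So neither pigeonhole case can be eliminated, and no contradiction follows. The lemma is really a conjunction about a single function, and it must be verified case by case according to which of the three regions contain $k'$ and $k$ (six ordered possibilities with $k'\le k$); this is exactly the paper's proof. Concretely, independently of where the bends of $\varphi_A,\varphi_B,\varphi_C$ sit: if $k\le\ell'$ then $\varphi_A$ works (its index at $\gamma_{k'}$ lies in $\{h,h+1\}\not\ni h+2$ and at $\gamma_k$ it is $h$ when $k<\ell$ resp.\ $h+1$ when $\ell<k\le\ell'$, differing from $\varphi_D$); if $k>\ell'$ and $k'>\ell$ then $\varphi_B$ works; and if $k'\le\ell<\ell'<k$ then $\varphi_C$ works. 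Note also that the one-parameter tropical dependences (the analogues of $t_1,t_2$) you invoke play no role here and cannot substitute for this check: the statement holds for every value of those parameters precisely because, as above, the choice of witness function can be made uniformly over each pair of regions, and the hypotheses $k\neq\ell$, $k'\neq\ell'$ are only needed to keep the comparisons with $\varphi_D$ and $\varphi_E$ away from the loops where those two functions change index.
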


\begin{proof}
This can be seen from Figure~\ref{Fig:Case3}.  First, identify the regions of the graph containing $k'$ and $k$.  For each of the 6 possibilities, one of the three functions $\varphi_A, \varphi_B, \varphi_C$ disagrees with $\varphi_E$ on the first region and with $\varphi_D$ on the second region.  For example, if $k' \leq k \leq \ell$, then $s'_{k'} (\varphi_A) \neq s'_{k'} (\varphi_E)$, so $\varphi_A$ does not agree with $\varphi_E$ on $\gamma_{k'}$, and $s'_k (\varphi_A) \neq s'_k (\varphi_D)$, so $\varphi_A$ does not agree with $\varphi_D$ on $\gamma_k$.  The other 5 possibilities follow by a similar argument.
\end{proof}

In this case, we let $\cA$ be the set of all building blocks, which is finite because in the two switching loops case there are no decreasing loops.  Note that the restriction of a building block to either $\Gamma_{[1,\ell'-1]}$ or $\Gamma_{[\ell+1,g]}$ agrees with the restriction of some building block from subcase 3a.  These subgraphs cover $\Gamma$, so we see that $\cA$ satisfies property $(\ast)$.

We now describe how to choose the set $\cB$, depending on the parameters $t_1$ and $t_2$, in such way that it satisfies property $(\dagger\dagger)$.  Suppose that there are two indices $j$ and $j'$ such that
\[
s'_{\ell}(\theta) = s'_{\ell} [h] + s'_{\ell} [j] = s'_{\ell} [h] + s'_{\ell} [j'] +1.
\]
If $t_1<m_{\ell}$, we exclude from the set $\cB$ any function of the form $\varphi + \varphi_{j'}$, where $\varphi$ satisfies
\[
s_{\ell+1} (\varphi) = s_{\ell} (\varphi) + 1 = s_{\ell+1} [h+1].
\]
If $t_1 \geq m_{\ell}$, we exclude from the set $\cB$ any function of the form $\varphi + \varphi_j$, where $\varphi$ satisfies
\[
s_{\ell+1} (\varphi) = s_{\ell} (\varphi) = s_{\ell+1} [h].
\]
We similarly exclude functions depending on $t_2$.  The set $\cB$ satisfies properties $(\ast\ast)$ and $(\dagger\dagger)$ by the same argument as the proof of Lemma~\ref{Lem:BBOnLoop}.

Because the set $\cB$ satisfies the hypotheses of Theorem~\ref{Thm:ExtremalConfig}, we may construct the master template $\theta$.  Our strategy for constructing the maximally independent combination $\vartheta$ from the master template is very similar to cases 2 and 3a.  We go through the steps in a similar way to case 2.

\medskip

\textbf{Step 1:  if $i,j \notin \{ h,h+1,h+2\}$.}  The function $\varphi_{ij}$ is in $\cB$, and we set its coefficient in the tropical linear combination $\vartheta$ equal to its coefficient in the master template $\theta$.

\medskip

\textbf{Step 2:  if $\varphi + \varphi_j$ is contained in $\cB$ for all $\varphi \in \cA$.} By Lemma~\ref{Lem:Replace}, we set the coefficient of $\varphi_D + \varphi_j$ so that it dominates the master template $\theta$, and equals $\theta$ on some region.  Similarly, we set the coefficient of $\varphi_E + \varphi_j$ so that it dominates the master template $\theta$, and equals $\theta$ on some region.  By Lemma~\ref{Lem:Case3Inequivalent}, $\varphi_D$ and $\varphi_E$ do not agree on any loop, so there is a loop or bridge where $\varphi_D + \varphi_j$ achieves the minimum and $\varphi_E + \varphi_j$ does not, and vice-versa.  Furthermore, by Lemma~\ref{Lem:Case3Inequivalent}, one of the three functions $\varphi_A + \varphi_j$, $\varphi_B + \varphi_j$, or $\varphi_C + \varphi_j$ does not agree with either of these on these two particular loops or bridges.  We then set the coefficient of this third function so that it dominates the master template, and equals it on some region.

Now, the situations where we have omitted some functions from $\cB$ are handled in a way similar to the corresponding situations in cases 2 and 3a.  Each function that we use is a tropical linear combination of sums of 2 building blocks, only some of which are contained in the set $\cB$.  In each case, we will set the coefficients of the functions so that they agree with the summand in $\cB$ with largest coefficient, in the same way as Step 3 of case 2.  We will specifically address the different possible situations according to the value of $t_1$; the different possible situations according to the value of $t_2$ are handled in a precisely analogous manner.

\medskip

\textbf{Step 3:  if $t_1 < m_{\ell}$.}  We set the coefficients of $\varphi_D + \varphi_j$ and $\varphi_E + \varphi_j$ as in Step 2.  We can do this because they are each tropical linear combinations of functions in $\cB$.  We choose one of the three functions $\varphi_A + \varphi_j$, $\varphi_B + \varphi_j$, or $\varphi_C + \varphi_j$ exactly as in step 2.  This function is a tropical linear combination of sums of 2 building blocks, only some of which are contained in $\cB$.  We set the coefficient of this function so that it equals its summand in $\cB$ with largest coefficient, in the same way as Step 3 of case 2.

\medskip

\textbf{Step 4:  if $t_1 \geq m_{\ell}$.}  We set the coefficient of $\varphi_E + \varphi_j$ as in step 2, and set the coefficients of $\varphi_A + \varphi_j$ and $\varphi_D + \varphi_j$ to agree with its summand in $\cB$ with largest coefficient, as in step 3.

\medskip

\textbf{Step 5:  if $j = h,h+1$ or $h+2$.}  We apply a similar construction.  Specifically, for each sum of two functions in $\cA$, we first use Lemma~\ref{Lem:Replace} to replace the first summand, as in steps 2-4, and then repeat the procedure to replace the second summand.

\begin{theorem}
In this subcase, $\vartheta$ is a maximally independent combination.
\end{theorem}

\begin{proof}
The proof in this subcase is very similar to that of Theorem~\ref{Thm:SwitchingBridge}.  To see that there are exactly 28 functions in the tropical linear combination $\vartheta$, we observe that $\vartheta$ is constructed to involve exactly as many functions as there are pairs $(i,j)$, $0 \leq i \leq j \leq 6$.  Specifically, for $i,j \notin \{ h,h+1,h+2\}$, we have the function $\varphi_{ij}$.  For $j \notin \{ h,h+1,h+2\}$, the three pairs $(h,j), (h+1,j), (h+2,j)$ correspond to three functions, two of which are $\varphi_D + \varphi_j$ and $\varphi_E + \varphi_j$, and the third of which is one of $\varphi_A + \varphi_j, \varphi_B + \varphi_j,$ or $\varphi_C + \varphi_j$.  Similarly, by construction, the six pairs $(i,j)$ with both $i,j \in \{ h,h+1,h+2\}$ correspond to six functions appearing in $\vartheta$.  It remains to verify that $\vartheta$ is maximally independent.

We first discuss the case where we have omitted no building blocks from $\cB$.  By construction, for each $j \notin \{ h,h+1,h+2\}$, the functions $\varphi_D + \varphi_j$ and $\varphi_E + \varphi_j$ are involved in the tropical linear combination $\vartheta$, as is one of $\varphi_A + \varphi_j, \varphi_B + \varphi_j$, or $\varphi_C + \varphi_j$.  Each of these functions achieves the minimum on the same region as one of its summands.  Specifically, there is a function $\psi_D \in \cB$ such that $\varphi_D + \varphi_j$ achieves the minimum on the same region as $\psi_D$, and there is a function $\psi_E \in \cB$ such that $\varphi_E + \varphi_j$ achieves the minimum on the same region as $\psi_E$.  There is a also a function $\psi_A \in \cB$ such that the third function achieves the minimum on the same region as $\psi_A$.  Although the third function shares summands in common with $\varphi_D + \varphi_j$ and $\varphi_E + \varphi_j$, it is chosen so that it does not agree with either $\varphi_D + \varphi_j$ or $\varphi_E + \varphi_j$ on specific loops or bridges where these two functions achieve the minimum.  Thus, $\psi_A$ does not agree with $\psi_D$ on the loop or bridge where $\psi_D$ is assigned, and does not agree with $\psi_E$ on the loop or bridge where $\psi_E$ is assigned.  It follows that the loop or bridge to which $\psi_A$ is assigned is not equal to the loop or bridge to which either $\psi_D$ or $\psi_E$ is assigned, so the third function is the only function to achieve the minimum at a point of the loop or bridge where $\psi_A$ is assigned.

Now, the cases where we have omitted some functions from $\cB$ follow in the same way as Theorem~\ref{Thm:Switching}.  This is because the arguments in that case rely only on the slopes along the bridges $\beta_{\ell}$ and $\beta_{\ell-1}$.  This ``local'' information remains unchanged in our present case, so the result follows.
\end{proof}

\medskip

\subsubsection{Subcase 4d:  $h'=h-1$}  \label{sec:4d}

In the previous three cases, our analysis reduced to the study of the tropicalizations of certain pencils.  This last case is a little different, as it does not appear to reduce to the case of pencils.  Nevertheless, the arguments are of a similar flavor, with a just a few more combinatorial possibilities.  As in the previous cases, we begin by describing several functions that are contained in $\Sigma$ and will be used in our construction of the maximally independent combination $\vartheta$.  These functions are $\varphi_i$, for $i \notin \{h-1, h, h+1 \}$ together with those illustrated in Figure~\ref{Fig:Case4}.

\begin{figure}[H]
\begin{tikzpicture}

\draw (-0.2,4) node {{\tiny $A$}};
\draw (0,4)--(6,4);
\draw [ball color=white] (2,4) circle (0.55mm);
\draw [ball color=white] (4,4) circle (0.55mm);
\draw [ball color=black] (5,4) circle (0.55mm);
\draw (1,4.2) node {{\tiny $h-1$}};
\draw (3,4.2) node {{\tiny $h-1$}};
\draw (4.5,4.2) node {{\tiny $h$}};
\draw (5.5,4.2) node {{\tiny $h-1$}};

\draw (-0.2,3) node {{\tiny $B$}};
\draw (0,3)--(6,3);
\draw [ball color=white] (2,3) circle (0.55mm);
\draw [ball color=white] (4,3) circle (0.55mm);
\draw [ball color=black] (1,3) circle (0.55mm);
\draw (0.5,3.2) node {{\tiny $h+1$}};
\draw (1.5,3.2) node {{\tiny $h$}};
\draw (3,3.2) node {{\tiny $h+1$}};
\draw (5,3.2) node {{\tiny $h+1$}};

\draw (-0.2,2) node {{\tiny $C$}};
\draw (0,2)--(6,2);
\draw [ball color=white] (2,2) circle (0.55mm);
\draw [ball color=white] (4,2) circle (0.55mm);
\draw [ball color=black] (5,2) circle (0.55mm);
\draw (1,2.2) node {{\tiny $h+1$}};
\draw (3,2.2) node {{\tiny $h-1$}};
\draw (4.5,2.2) node {{\tiny $h$}};
\draw (5.5,2.2) node {{\tiny $h-1$}};

\draw (-0.2,1) node {{\tiny $D$}};
\draw (0,1)--(6,1);
\draw [ball color=white] (2,1) circle (0.55mm);
\draw [ball color=white] (4,1) circle (0.55mm);
\draw [ball color=black] (1,1) circle (0.55mm);
\draw (0.5,1.2) node {{\tiny $h+1$}};
\draw (1.5,1.2) node {{\tiny $h$}};
\draw (3,1.2) node {{\tiny $h+1$}};
\draw (5,1.2) node {{\tiny $h-1$}};

\draw (-0.2,0) node {{\tiny $E$}};
\draw (0,0)--(6,0);
\draw [ball color=white] (2,0) circle (0.55mm);
\draw [ball color=white] (4,0) circle (0.55mm);
\draw [ball color=black] (1,0) circle (0.55mm);
\draw [ball color=black] (5,0) circle (0.55mm);
\draw (0.5,0.2) node {{\tiny $h+1$}};
\draw (1.5,0.2) node {{\tiny $h$}};
\draw (3,0.2) node {{\tiny $h$}};
\draw (4.5,0.2) node {{\tiny $h$}};
\draw (5.5,0.2) node {{\tiny $h-1$}};

\end{tikzpicture}
\caption{The five functions of Proposition~\ref{Prop:Case4Fns}.  The black points are at indeterminate locations in the regions where they appear.}
\label{Fig:Case4}
\end{figure}
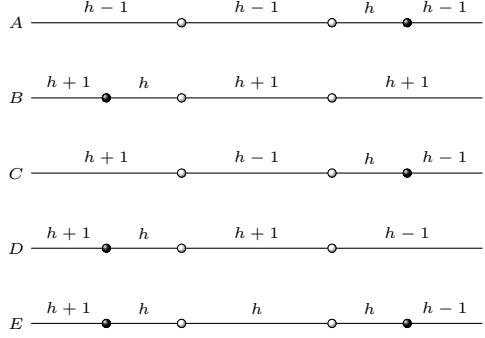

\begin{proposition}
\label{Prop:Case4Fns}
There exist functions $\varphi_A , \varphi_B , \varphi_C , \varphi_D , \varphi_E \in \Sigma$ with the following properties:
\begin{enumerate}
\item  $s_k (\varphi_A) = s_k [h-1]]$ for all $k\leq\ell'$.
\item  $s_k (\varphi_B) = s_k [h+1]$ for all $k > \ell$.
\item  $s_k (\varphi_C) = s_k [h+1]$ for all $k \leq \ell$ and $s_k (\varphi_C) = s_k [h-1]$ for all $\ell < k \leq \ell'$.
\item  $s_k (\varphi_D) = s_k [h+1]$ for all $\ell < k \leq \ell'$, and $s_k (\varphi_D) = s_k [h-1]$ for all all $k > \ell'$.
\item  $s_k (\varphi_E) = s_k [h]$ for all $\ell < k \leq \ell'$.
\end{enumerate}
Moreover, these functions can be chosen such that $s_k (\varphi) \in \{ s_k [h-1], s_k [h], s_k [h+1]\}$, for all $k$.
\end{proposition}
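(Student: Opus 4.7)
The plan is to follow the strategies used in Propositions~\ref{Prop:LoopFns}, \ref{Prop:Case2Fns}, and \ref{Prop:Case3Fns}: select an appropriate subspace $W \subset \cL(D_X)$ whose tropicalization is rich enough to yield all five functions (directly or as pointwise minima), and then construct each $\varphi_\bullet$ individually by combining slope-bound arguments with a careful choice within nested pencils.

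First I would pick points $p_0, p_{g+1} \in X$ specializing to $w_0, v_{g+1}$, and let $W \subset \cL(D_X)$ be the three-dimensional subspace of sections vanishing to order at least $h-1$ at $p_{g+1}$ and to order at least $r-(h+1)$ at $p_0$. For any $\varphi \in \trop(W)$, the bounds $s'_0(\varphi) \leq s'_0[h+1]$ and $s_{g+1}(\varphi) \geq s_{g+1}[h-1]$, together with the fact that $\gamma_\ell$ and $\gamma_{\ell'}$ are the only switching loops (and, by Theorem~\ref{Thm:BNThm}, there are no switching bridges or ramification at the endpoints once the two switching loops account for all of $\rho = 2$), force $s_k(\varphi) \in \{s_k[h-1], s_k[h], s_k[h+1]\}$ for every $k$, with transitions between slope indices $\{h, h+1\}$ possible only across $\gamma_\ell$ and transitions between $\{h-1, h\}$ only across $\gamma_{\ell'}$. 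Inside $W$ I would single out two pencils $W_{\mathrm{up}} = \{ f \in W : \ord_{p_{g+1}}(f) \geq h\}$ and $W_{\mathrm{dn}} = \{ f \in W : \ord_{p_0}(f) \geq r-h\}$, two codimension-one subspaces of $W$ whose intersection is again a pencil. A function in $\trop(W_{\mathrm{up}})$ satisfies $s_k \geq s_k[h]$ for $k > \ell$, and one in $\trop(W_{\mathrm{dn}})$ satisfies $s_k \leq s_k[h]$ for $k \leq \ell'$. Using Lemma~\ref{Lemma:Existence} one then takes $\varphi_A \in \trop(W_{\mathrm{dn}})$ achieving $s_1 = s_1[h-1]$, $\varphi_B \in \trop(W_{\mathrm{up}})$ achieving $s_{g+1} = s_{g+1}[h+1]$, and $\varphi_E$ as the tropicalization of any nonzero element of $W_{\mathrm{up}} \cap W_{\mathrm{dn}}$, whose slope constraints force $s_k(\varphi_E) = s_k[h]$ for $\ell < k \leq \ell'$. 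The function $\varphi_C$ is built as the pointwise minimum, after scalar normalization, of an element of $\trop(W_{\mathrm{dn}})$ with $s_\ell = s_\ell[h+1]$ and an element of $\trop(W_{\mathrm{dn}})$ with $s_{\ell+1} = s_{\ell+1}[h-1]$, with the two summands agreeing on $\gamma_\ell$. The auxiliary statement that $s_k(\varphi) \in \{s_k[h-1], s_k[h], s_k[h+1]\}$ then follows since each $\varphi_\bullet$ is a tropical linear combination of elements of $\trop(W)$.

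The main obstacle I anticipate is the construction of $\varphi_D$, whose slope index jumps by two across $\gamma_{\ell'}$, from $h+1$ on the middle block down to $h-1$ to the right. Such a jump cannot be realized by tropicalizing a single element of $W_{\mathrm{up}}$ (which maintains slope $\geq h$ throughout) nor of $W_{\mathrm{dn}}$ (which forbids slope $h+1$ at $\gamma_{\ell'}$), so $\varphi_D$ must be obtained by tropically combining two elements of $\trop(W)$ drawn from outside either pencil, one that realizes slope $s_{\ell'}[h+1]$ on $\beta_{\ell'-1}$ and one that realizes slope $s_{\ell'+1}[h-1]$ on $\beta_{\ell'}$, normalized so as to agree on $\gamma_{\ell'}$. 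Verifying that the resulting minimum has exactly the slope profile demanded by item~(iv), rather than dropping through the intermediate slope $s_k[h]$ over a nontrivial subinterval, will require a case analysis using the classification of switching loops of multiplicity one and the absence of decreasing loops and bridges in this subcase; the analogous verification for $\varphi_C$ is a direct special case of the argument from Proposition~\ref{Prop:LoopFns}.
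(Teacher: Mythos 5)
Your overall architecture coincides with the paper's proof: the same $3$-dimensional space $W$, the same choice of $\varphi_A$ and $\varphi_B$ via their slopes at the two ends, and the construction of $\varphi_C$ and $\varphi_D$ as suitably normalized minima of two elements of $\trop(W)$ crossing at $\gamma_\ell$, respectively $\gamma_{\ell'}$ (so the "main obstacle" you flag for $\varphi_D$ is resolved exactly as you suggest, symmetrically to $\varphi_C$; since the proposition only constrains the bridge slopes $s_k$, the only point to check is that the crossover can be placed inside the loop). The genuine gap is in $\varphi_E$, and it stems from the slope bounds you attribute to the two sub-pencils. The condition $\ord_{p_{g+1}}(f)\geq h$ only forces $s_k(\trop(f))\geq s_k[h]$ for $k> \ell'$, not for $k>\ell$: moving leftward from $v_{g+1}$, the slope index may drop from $h$ to $h-1$ upon crossing $\gamma_{\ell'}$, precisely because that loop switches slope $h-1$. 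Symmetrically, $\ord_{p_0}(f)\geq r-h$ only forces $s_k(\trop(f))\leq s_k[h]$ for $k\leq \ell$, since the index may jump from $h$ to $h+1$ at $\gamma_\ell$. Because $h'=h-1$, the switches occur in the order $h$, then $h-1$, so the regions controlled by the two endpoint conditions are $k\leq\ell$ and $k>\ell'$; they do not overlap and miss the middle block entirely. (This is exactly what distinguishes subcase 4d from subcase 4c, where the analogous regions do overlap on the middle block and the intersection-of-pencils argument of Proposition~\ref{Prop:Case3Fns} works.) Hence a nonzero element of $W_{\mathrm{up}}\cap W_{\mathrm{dn}}$ could tropicalize to a function whose index is $h+1$ on part or all of the middle block (having jumped at $\gamma_\ell$), or $h-1$ there (having bent down and rejoining index $h$ only at $\gamma_{\ell'}$); nothing in your constraints pins it to $h$ on $\ell<k\leq\ell'$, so item (v) is not established.

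The repair is the paper's construction: apply Lemma~\ref{Lemma:Existence} to the rank-$2$ series $W$ at the two interior tangent vectors bounding the middle block, obtaining $\varphi_E\in\trop(W)$ with $s_{\ell+1}(\varphi_E)\leq s_{\ell+1}[h]$ and $s_{\ell'}(\varphi_E)\geq s_{\ell'}[h]$; since no loop or bridge strictly between $\gamma_\ell$ and $\gamma_{\ell'}$ switches any slope, the index cannot increase there, and these two bounds force $s_k(\varphi_E)=s_k[h]$ for all $\ell<k\leq\ell'$. Two smaller corrections in the same spirit: your first summand for $\varphi_C$ cannot be taken from $\trop(W_{\mathrm{dn}})$, since (by the part of your bound that is correct, namely on $k\leq\ell$) no element of that pencil has $s_\ell=s_\ell[h+1]$; take both summands from $\trop(W)$, where all three slopes occur at every tangent vector by Lemma~\ref{Lem:NumberOfSlopes}. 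Likewise the sub-pencils are unnecessary for $\varphi_A$ and $\varphi_B$: choosing $\varphi_A\in\trop(W)$ with $s_1(\varphi_A)=s_1[h-1]$ already yields (i), because the index is bounded below by $h-1$ throughout and can only increase at $\gamma_\ell$ (which would require index $h$ there) or at $\gamma_{\ell'}$, and symmetrically for $\varphi_B$.
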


\begin{proof}
As before, let $p_0$ be a point on $X$ specializing to $w_0$ and $p_{g+1}$ a point on $X$ specializing to $v_{g+1}$, and let $W$ be a 3-dimensional subspace of $cL (D_X)$ consisting of functions that vanish to order at least $h-1$ at $p_{g+1}$ and order at least $r-(h+1)$ at $p_0$.  Each of the 6 functions will be the tropicalization of a function in $W$.

Now, we let $\varphi_A$ be a function in $\trop (W)$ such that $s_1 (\varphi_A) = s_1 [h-1]$.  Similarly, we let $\varphi_B$ be a function in $\trop (W)$ such that $s_{g+1} (\varphi_B) = s_{g+1} [h+1]$.

To obtain $\varphi_C$, we take a suitable tropical linear combination of two functions $\varphi$ and $\varphi'$ in $\trop(W)$, chosen such that $s_{\ell} (\varphi) = s_{\ell} [h+1]$ and  $s_{\ell+1} (\varphi') = s_{\ell+1} [h-1]$.  By taking the minimum of these two functions, we obtain $\varphi_C$.  Similarly, $\varphi_D$ is obtained by taking a suitable combination of $\phi$ and $\phi'$, chosen from $\trop(W)$ such that $s_{\ell'} (\phi) = s_{\ell'} [h+1]$ and $s_{\ell'+1} (\phi') = s_{\ell'+1} [h-1]$.  Finally, let $\varphi_E$ be a function in $\trop (W)$ with $s_{\ell+1} (\varphi_E) \leq s_{\ell+1} [h]$ and $s_{\ell'} (\varphi_E) \geq s_{\ell'} [h]$.
\end{proof}

\begin{lemma}
\label{Lem:Case4Dependence}
Both of the following hold:
\begin{enumerate}
\item  Either $s_k (\varphi_A) = s_k [h-1]$ for all $k > \ell'$, or $s_k (\varphi_E) = s_k [h-1]$ for all $k > \ell'$.
\item  Either $s_k (\varphi_B) = s_k [h+1]$ for all $k \leq \ell$, or $s_k (\varphi_E) = s_k [h+1]$ for all $k \leq \ell$.
\end{enumerate}
\end{lemma}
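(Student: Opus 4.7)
Proof strategy.

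The plan is to mimic the argument of Lemma~\ref{Lem:Case2Dependence}, exploiting the fact that $W$ is three-dimensional, so any four tropicalizations in $\trop(W)$ lift to linearly dependent functions in $W$ and therefore satisfy a tropical dependence. The essential simplification, just as in previous cases, is to localize the question to a region with no switching or decreasing edges.

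First, I would observe that on the region of $\Gamma$ to the right of $\gamma_{\ell'}$ (for part (1)) or to the left of $\gamma_\ell$ (for part (2)), there are no switching loops, decreasing loops, or decreasing bridges, since all multiplicity in this case is concentrated at the two switching loops $\gamma_\ell$ and $\gamma_{\ell'}$ and $\rho = 2$. By the classification of slope changes in Proposition~\ref{Prop:LingeringLatticePath} together with the Poincar\'e--Lelong formula, this means that every function in $\trop(W)$ has a well-defined constant slope index in $\{h-1, h, h+1\}$ across the relevant region. Consequently, part (1) reduces to the pointwise statement that either $\varphi_A$ or $\varphi_E$ has slope $s_k[h-1]$ at some single point $v_k$ with $k > \ell'$ (say $v_{g+1}$), and symmetrically for part (2) at $v_1$.

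For part (1), I would suppose for contradiction that at $v_{g+1}$, both $\varphi_A$ and $\varphi_E$ have slope strictly greater than $s_{g+1}[h-1]$, i.e., their slope indices are in $\{h, h+1\}$. Consider the tropical dependence
\[
\min \{ \varphi_A + a, \ \varphi_B + b, \ \varphi_D + d, \ \varphi_E + e \}
\]
arising from the linear dependence in $W$. At $v_{g+1}$, $\varphi_D$ is the unique function of the four with slope $s_{g+1}[h-1]$, while $\varphi_B$ has the unique largest slope $s_{g+1}[h+1]$. Just as in the proof of Lemma~\ref{Lem:Case2Dependence}, a function whose slope at a point is strictly smaller than the slopes of all others in the dependence must dominate the minimum on one side of the point unless it is excluded from the dependence entirely; but then the remaining three functions must themselves be tropically dependent, and running the same argument with $\varphi_B$ (which has strictly largest slope) on the other side produces the desired contradiction. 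A parallel argument, using the tropical dependence among $\varphi_A, \varphi_B, \varphi_C, \varphi_E$ and analyzing it at $v_1$, handles part (2).

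The main obstacle I expect is bookkeeping: there are more functions and more possible slope indices than in Lemma~\ref{Lem:Case2Dependence}, so the case analysis of which functions achieve the minimum on which side of $v_{g+1}$ (or $v_1$) has several subcases, one for each compatible assignment of slope indices in $\{h-1,h,h+1\}$ to $\varphi_A$ and $\varphi_E$. Each subcase must be ruled out by matching slopes and using that the tropical dependence requires the minimum to be attained at least twice on a neighborhood of $v_{g+1}$. As in the earlier switching-loop arguments, the crux is that a function whose slope is uniquely extremal in the dependence cannot participate in a two-fold minimum on one side of the point, thereby constraining the slope indices of the remaining functions.
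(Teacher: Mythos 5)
There is a genuine gap, and it is in the very first reduction. You claim that to the right of $\gamma_{\ell'}$ (resp.\ to the left of $\gamma_\ell$) every function in $\trop(W)$ has a constant slope index, so that part (1) reduces to a pointwise statement at $v_{g+1}$. This is false: the absence of switching loops and decreasing bridges only prevents slope indices from \emph{increasing} there; a function in $\Sigma$ is free to bend downward at any point of a bridge or loop, so its slope index can drop anywhere. Indeed this is exactly what happens in this subcase: by Proposition~\ref{Prop:Case4Fns} and Figure~\ref{Fig:Case4}, $\varphi_A$ may jump from index $h-1$ to $h$ at the switching loop $\gamma_{\ell'}$ and then fall back to $h-1$ at an \emph{indeterminate} point to the right of $\gamma_{\ell'}$, and likewise $\varphi_E$ may hold index $h$ for a while past $\gamma_{\ell'}$ before dropping to $h-1$. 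In particular, at $v_{g+1}$ both $\varphi_A$ and $\varphi_E$ typically have slope $s_{g+1}[h-1]$ already (the vanishing condition at $p_{g+1}$ forces $s_{g+1} \geq s_{g+1}[h-1]$, and without switching the index cannot return once it drops), so your pointwise statement at $v_{g+1}$ is essentially automatic and does not imply the lemma, whose whole content is that for at least one of the two functions the drop to index $h-1$ happens \emph{immediately} after $\gamma_{\ell'}$.

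The secondary mechanism you invoke is also shaky: in a four-term tropical dependence, a function whose slope at the right end is uniquely smallest (here $\varphi_D$) can simply fail to achieve the minimum near $v_{g+1}$ with no contradiction, since only two functions need to achieve the minimum at each point; and if the underlying linear dependence in $W$ has all coefficients nonzero, you cannot discard one function and conclude the remaining three are tropically dependent. The paper's proof instead analyzes the dependence among $\varphi_A, \varphi_B, \varphi_D, \varphi_E$ \emph{locally at the switching loop}: on $\beta_{\ell'-1}$ only $\varphi_B$ and $\varphi_D$ have matching slopes, so they are the two achieving the minimum at $v_{\ell'}$; immediately to the right, $\varphi_D$ has slope $s_{\ell'+1}[h-1]$ and a second function must match it, which can only be $\varphi_A$ or $\varphi_E$, and that forces the corresponding function to have index $h-1$ on all of $k > \ell'$ (part (2) is symmetric, using $\varphi_A, \varphi_B, \varphi_C, \varphi_E$ at $\gamma_\ell$). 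If you want to salvage your outline, you must carry out this local analysis at $\gamma_{\ell'}$ and $\gamma_\ell$ rather than at the endpoints, since the endpoints see only the eventual slopes and not where the bends occur.
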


\begin{proof}
Because $W$ is 3-dimensional, the functions $\varphi_A, \varphi_B, \varphi_D, $ and $\varphi_E$ from Proposition~\ref{Prop:Case4Fns} are tropically dependent.  Of these 4, only $\varphi_B$ and $\varphi_D$ have the same slope along $\beta_{\ell'-1}$, thus these two achieve the minimum at $v_{\ell'}$.  Considering $\varphi_D$, it follows that a second function must have slope $s_{\ell'+1} [h-1]$ along $\beta_{\ell'}$.  By construction, this function must be either $\varphi_A$ or $\varphi_E$.

The second statement follows by an analogous argument, using the functions $\varphi_A, \varphi_B, \varphi_C,$ and $\varphi_E$.
\end{proof}

In three of the four situations produced by Lemma~\ref{Lem:Case4Dependence}, $\trop(W)$ only switches one loop.

\begin{lemma}
\label{Lem:Case4ReduceToOne}
If $s_{\ell'+1} (\varphi_A) = s_{\ell'+1} [h-1]$, then there is no function $\varphi \in \trop (W)$ with $s_{\ell'} (\varphi) = s_{\ell'} [h-1]$ and $s_{\ell'+1} (\varphi) = s_{\ell'+1} [h]$.  Similarly, if $s_{\ell} (\varphi_B) = s_{\ell} [h+1]$, then there is no function $\varphi \in \trop (W)$ with $s_{\ell} (\varphi) = s_{\ell} [h]$ and $s_{\ell+1} (\varphi) = s_{\ell+1} [h+1]$.
\end{lemma}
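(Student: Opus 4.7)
The plan is to prove the first assertion; the second follows by a symmetric argument reversing the roles of $\ell$ with $\ell'$ and of the functions $A, D$ with $B$. Arguing by contradiction, suppose both that $s_{\ell'+1}(\varphi_A) = s_{\ell'+1}[h-1]$ and that there exists $\varphi \in \trop(W)$ with $s_{\ell'}(\varphi) = s_{\ell'}[h-1]$ and $s_{\ell'+1}(\varphi) = s_{\ell'+1}[h]$. Since $\dim W = 3$, the four functions $\varphi_A, \varphi_D, \varphi_B, \varphi$ in $\trop(W)$ are tropically dependent, so there exist constants $c_A, c_D, c_B, c_\varphi$ for which
\[
\theta := \min\{\varphi_A + c_A,\; \varphi_D + c_D,\; \varphi_B + c_B,\; \varphi + c_\varphi\}
\]
attains its minimum at least twice at every point of $\Gamma$.

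First I would tabulate the slope index of each of the four functions on every bridge, using Proposition~\ref{Prop:Case4Fns}, the hypothesis on $\varphi_A$, and the absence of switching bridges and decreasing loops in case~4. The crucial observation is that, because the bridges are much longer than the adjacent loops, on each bridge the doubled minimum of $\theta$ must be attained by a pair of the four functions with matching slope there, and when a single pair is the minimum throughout a bridge those two functions must agree as linear functions (after adjustment by their additive constants) on the entire bridge. Applied to $\beta_k$ for $k \geq \ell'+1$, where the slope indices are $h-1, h-1, h+1, h$, the only matching pair is $(\varphi_A, \varphi_D)$, giving $\varphi_A + c_A = \varphi_D + c_D$ on every bridge of the right block. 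Applied to the middle block (indices $h-1, h+1, h+1, h-1$), I would rule out the matching $(\varphi_D, \varphi_B)$ by noting that $\varphi_D$ carries slope index $h$ on a subinterval of the leftmost block (the bend in Figure~\ref{Fig:Case4}) while $\varphi_B$ retains index $h+1$ there; this precludes a global equality $\varphi_D + c_D = \varphi_B + c_B$. The matching pair on the middle block is therefore $(\varphi_A, \varphi)$, and $\varphi_A + c_A = \varphi + c_\varphi$ on every bridge there.

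Because $\varphi_A$ and $\varphi$ both execute the slope-$(h-1)$ switch at the same prescribed point of $\gamma_{\ell'}$, this equality extends across $\gamma_{\ell'}$ onto the initial segment $[w_{\ell'}, v^*] \subset \beta_{\ell'}$, where $v^*$ is the point at which $\varphi_A$ bends back to slope index $h-1$, and by analogous propagation through $\gamma_\ell$ it extends to every bridge $\beta_k$ with $k < \ell'$. At $v^*$, the transition between the two matching regimes forces the triple equality $\varphi_A(v^*) + c_A = \varphi(v^*) + c_\varphi = \varphi_D(v^*) + c_D$, and hence
\[
c_D - c_A = \varphi_A(v^*) - \varphi_D(v^*).
\]

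The main obstacle is the final contradiction. For any point $v$ in the leftmost block,
\[
(\varphi_D + c_D)(v) - (\varphi_A + c_A)(v) = \bigl[\varphi_D(v) - \varphi_D(v^*)\bigr] - \bigl[\varphi_A(v) - \varphi_A(v^*)\bigr],
\]
and the right-hand side equals the accumulated value of $\varphi_A' - \varphi_D'$ along the path from $v$ to $v^*$. On this path $\varphi_A' - \varphi_D'$ is positive only on the short interval $[w_{\ell'}, v^*] \subset \beta_{\ell'}$, and is negative on every other bridge traversed, since $\varphi_A$ has slope index $h-1$ there while $\varphi_D$ has index $h$ or $h+1$. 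Because $n_{\ell'} \ll n_k$ for every $k < \ell'$ by admissibility, the negative contributions from the earlier bridges dominate, forcing $(\varphi_D + c_D)(v) < (\varphi_A + c_A)(v)$. This contradicts that $\varphi_A + c_A$ is part of the minimum $\theta$ on the leftmost block, completing the argument.
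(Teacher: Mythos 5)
Your overall setup is fine: $\varphi_A,\varphi_D,\varphi_B,\varphi$ all lie in $\trop(W)$ with $\dim W=3$, so they are tropically dependent. But the bookkeeping that follows has a gap at each stage. The hypotheses pin down $\varphi$ only at $\gamma_{\ell'}$, i.e.\ $s_{\ell'}(\varphi)=s_{\ell'}[h-1]$ and $s_{\ell'+1}(\varphi)=s_{\ell'+1}[h]$; since slope indices may drop at any loop and only increase at switching loops, $\varphi$ can perfectly well have index $h-1$ on the later bridges of the right block and index $h$ or $h+1$ on most of the middle block. So the claim that on each block exactly one pair of the four functions can share a slope is unjustified, and the elimination of $(\varphi_D,\varphi_B)$ as the minimizing pair on the middle block is a non sequitur in any case: a pair achieving the doubled minimum on a region only has to agree on that region, not globally, and the bends of $\varphi_D$ and $\varphi_B$ in the left block are at indeterminate points to begin with. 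Hence neither $\varphi_A+c_A=\varphi_D+c_D$ on the right block nor $\varphi_A+c_A=\varphi+c_\varphi$ on the middle block follows.

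More fatally, the final contradiction requires $\varphi_A+c_A$ to achieve the minimum somewhere on the left block, which is never established and is not to be expected: in a tropical dependence the minimizing pair changes from region to region, and toward $w_0$ the functions with the larger slopes ($\varphi_B$ and $\varphi_D$, index $h+1$) are the small ones while $\varphi_A$ (index $h-1$) lies strictly above the minimum. A dependence of the schematic type ``$(\varphi_A,\varphi_D)$ on the right block, $(\varphi_B,\varphi_D)$ on the middle and left blocks'' is consistent with all your slope data, so showing $\varphi_D+c_D<\varphi_A+c_A$ on the left block contradicts nothing. The missing ingredient is a function of intermediate slope in the middle block: the paper uses $\varphi_E$ (with $s_k(\varphi_E)=s_k[h]$ for $\ell<k\leq\ell'$) in place of $\varphi_D$, and argues purely locally. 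In the dependence among $\varphi,\varphi_A,\varphi_B,\varphi_E$, only $\varphi$ and $\varphi_E$ can share a slope on $\beta_{\ell'}$, so both must achieve the minimum at $w_{\ell'}$; since $\varphi_A,\varphi_B,\varphi_E$ have three distinct slopes on $\beta_{\ell'-1}$, the second function achieving the minimum there must be $\varphi$, forcing $s_{\ell'}(\varphi)\geq s_{\ell'}[h]$ and contradicting $s_{\ell'}(\varphi)=s_{\ell'}[h-1]$. Without such an intermediate-slope function, the quadruple you chose does not produce a contradiction by slope bookkeeping alone.
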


\begin{proof}
Consider the case where $s_{\ell'+1} (\varphi_A) = s_{\ell'+1} [h-1]$.  Let $\varphi \in \trop (W)$ be a function with $s_{\ell'+1} (\varphi) = s_{\ell'+1} [h]$.  Because $W$ is 3-dimensional, the functions $\varphi, \varphi_A, \varphi_B$ and $\varphi_E$ are tropically dependent.  Because only $\varphi$ and $\varphi_E$ have the same slope on $\beta_{\ell'}$, we see that they must achieve the minimum at $w_{\ell'}$.  Considering $\varphi_E$, we see that the minimum has slope at least $s_{\ell'} [h]$ along $\beta_{\ell'-1}$.  Because this slope must be obtained twice, and the three functions $\varphi_A , \varphi_B,$ and $\varphi_E$ have distinct slopes there, we see that $s_{\ell'} (\varphi) \geq s_{\ell'} [h]$.  The other case follows by an analogous argument.
\end{proof}

\noindent

\begin{remark}
Lemma~\ref{Lem:Case4ReduceToOne} shows that, in three of the four situations produced by Lemma~\ref{Lem:Case4Dependence}, one of the two switching loops for $\Sigma$ is not a switching loop for $W$.  This is similar to the situation highlighted in Remark~\ref{Rem:SwitchingPencil}.
\end{remark}

If $\trop(W)$ only switches one loop, then we can construct the maximally independent combination $\vartheta$ by arguments identical to those in case 3a.

\medskip

We now focus on the remaining situation, where both loops are switching loops for $\trop(W)$.  In this situation, the slopes of $\varphi_E$ are as pictured in Figure~\ref{Fig:LastCase}.

\begin{figure}[H]
\begin{tikzpicture}

\draw (0,4)--(6,4);
\draw [ball color=white] (2,4) circle (0.55mm);
\draw [ball color=white] (4,4) circle (0.55mm);
\draw (1,4.2) node {{\tiny $h+1$}};
\draw (3,4.2) node {{\tiny $h$}};
\draw (5,4.2) node {{\tiny $h-1$}};

\end{tikzpicture}
\caption{Shape of the function $\varphi_E$ when $\trop(W)$ switches both loops.}
\label{Fig:LastCase}
\end{figure}
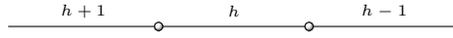

 We define $t_1$ to be the distance from $\gamma_{\ell}$ to the point to the left of $\gamma_{\ell}$ where $\varphi_B$ bends.  Similarly, we define $t_2$ to be the distance from $\gamma_{\ell'}$ to the point to the right of $\gamma_{\ell'}$ where $\varphi_A$ bends.  The functions $\varphi_A$ and $\varphi_B$ are determined by $t_1$ and $t_2$, up to additive constants.  We now describe additional functions in $\trop(W)$, which are similarly determined by $t_1$ and $t_2$.

\begin{proposition}
\label{Prop:MoreFns}
Suppose that $s_{\ell'+1} (\varphi_A) = s_{\ell'+1} [h]$ and $s_{\ell} (\varphi_B) = s_{\ell} [h]$.  Then there exist functions $\varphi_F , \varphi_G , \varphi_H \in \Sigma$ with slopes as depicted in Figure~\ref{Fig:Case4More}.  The locations of the black dots in the figure are determined by $t_1$ and $t_2$ from the dependences illustrated in Figure~\ref{Fig:LastCaseDependence}.
\end{proposition}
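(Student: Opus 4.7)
The plan is to construct each of $\varphi_F, \varphi_G, \varphi_H$ as the tropicalization of a carefully chosen function in $W$, following the template of the proofs of Propositions~\ref{Prop:LoopFns}, \ref{Prop:Case2Fns}, \ref{Prop:Case3Fns}, and \ref{Prop:Case4Fns}. For each target slope profile, I would first apply Lemma~\ref{Lemma:Existence} to $W$ to produce functions in $\trop(W)$ realizing specified extreme slopes at the leftmost and rightmost tangent vectors. Because $\gamma_\ell$ and $\gamma_{\ell'}$ are the only switching loops and, by Theorem~\ref{Thm:BNThm}, there are no switching or decreasing bridges (nor any other decreasing loops), the slope of a function in $\trop(W)$ can only jump upward at $\gamma_\ell$ or $\gamma_{\ell'}$, and each such jump is by exactly one. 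Consequently, any function whose outgoing slope at $w_0$ lies in $\{s'_0[h-1], s'_0[h], s'_0[h+1]\}$ and whose incoming slope at $v_{g+1}$ lies in $\{s_{g+1}[h-1], s_{g+1}[h], s_{g+1}[h+1]\}$ takes slopes in $\{s_k[h-1], s_k[h], s_k[h+1]\}$ on every bridge, and is determined by a short finite list of possibilities indexed by which of $\gamma_\ell, \gamma_{\ell'}$ it crosses with a jump.

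For each of the three desired functions, I expect the slope profile to be piecewise constant on the three regions determined by $\gamma_\ell$ and $\gamma_{\ell'}$, with at most one additional bending point inside a bridge. To realize such a function, I would take two functions in $\trop(W)$ obtained from Lemma~\ref{Lemma:Existence} whose slopes agree on one region and differ by one on the complementary region, rescale one of them by an additive constant so that the two agree at the desired bending point in a bridge, and take their pointwise minimum. This minimum is again in $\trop(W)$ and has the required slope profile, with a bend at the chosen point. This is precisely the construction used for $\varphi_C$, $\varphi_D$, and $\varphi_E$ in Proposition~\ref{Prop:Case4Fns}, and it goes through here without essential change.

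The main obstacle, and the technically substantive content, is verifying that the positions of the bends of $\varphi_F$, $\varphi_G$, $\varphi_H$ are determined by $t_1$ and $t_2$ in the manner prescribed by Figure~\ref{Fig:LastCaseDependence}. Since $W$ is three-dimensional, any four functions in $\trop(W)$ are tropically dependent. For each of $\varphi_F, \varphi_G, \varphi_H$, I would select three auxiliary functions from $\{\varphi_A, \varphi_B, \varphi_C, \varphi_D, \varphi_E\}$ whose slope vectors on the three regions separated by $\gamma_\ell$ and $\gamma_{\ell'}$ pin down the combinatorial type of the tropical dependence uniquely, so that on each bridge exactly two of the four functions achieve the minimum. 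Comparing slopes across such a bridge, as in the proof of Proposition~\ref{Prop:BridgeDependence} and Corollary~\ref{Cor:BoundOnT}, converts the tropical dependence into a linear equation that forces the bending point of the new function to lie at a distance from $\gamma_\ell$ or $\gamma_{\ell'}$ equal to a rational-linear combination of $t_1$ and $t_2$, matching Figure~\ref{Fig:LastCaseDependence}. The bookkeeping splits into a few subcases according to whether each bend falls inside $\beta_\ell$, $\beta_{\ell'}$, or one of the edges of the intermediate loops, handled exactly as in Corollary~\ref{Cor:BoundOnT}.
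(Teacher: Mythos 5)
Your determination step is in the right spirit---it is essentially Proposition~\ref{Prop:BridgeDependence} run inside $\trop(W)$, and the paper does exactly this, using the dependences of $\varphi_B,\varphi_E,\varphi_G$, of $\varphi_A,\varphi_E,\varphi_H$, and of $\varphi_A,\varphi_B,\varphi_E,\varphi_F$---but your existence step has a genuine gap. You propose to build each of $\varphi_F,\varphi_G,\varphi_H$ as a pointwise minimum of two functions ``obtained from Lemma~\ref{Lemma:Existence}'' whose slopes are constant on the regions cut out by $\gamma_\ell$ and $\gamma_{\ell'}$, shifted so that the bend sits ``at the desired bending point.'' Lemma~\ref{Lemma:Existence} only constrains slopes along two chosen tangent vectors; it gives no control over the interior profile, and in the situation at hand (both loops are switching loops for $\trop(W)$) functions of $\trop(W)$ with constant slope index across these regions need not exist at all---this is precisely the phenomenon of Example~\ref{Ex:Interval}, where $\psi_0,\psi_1,\psi^\infty$ may fail to lie in $\Sigma$. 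Concretely, to get $\varphi_G$ by your recipe you would need inputs with slope index $h$ to the left of $\gamma_\ell$ and slope index $h+1$ across the whole middle region; no tropical linear combination of the functions you actually have (e.g.\ $\varphi_B$ and $\varphi_E$, which both have slope $s'_0[h+1]$ near $w_0$) can even have the right slope at $w_0$, so $\varphi_G$ must arise from an algebraic cancellation, not a pointwise minimum of known functions. Moreover, your claim that the minimum has ``a bend at the chosen point'' is self-defeating: if the bend could be placed freely, the second assertion of the proposition---that the black dots are determined by $t_1$ and $t_2$---would be false. The construction as described assumes exactly the flexibility that the proposition rules out, and your earlier assertion that such functions are ``determined by a short finite list of possibilities'' is also too strong, since functions in $\trop(W)$ may bend downward at arbitrary interior points (that is what the black dots record).

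The paper's route avoids these problems: $\varphi_F$ is taken directly from Lemma~\ref{Lemma:Existence} applied to $\trop(W)$, with $s'_0(\varphi_F)\le s'_0[h]$ and $s_{g+1}(\varphi_F)\ge s_{g+1}[h]$, and its profile is then argued to be one of the three shapes in Figure~\ref{Fig:Case4More}; while $\varphi_G$ and $\varphi_H$ are defined as tropicalizations of \emph{algebraic} functions in the pencils spanned by $f_B,f_E$ and by $f_A,f_E$, singled out only by the end conditions $s'_0(\varphi_G)\ne s'_0[h+1]$ and $s_{g+1}(\varphi_H)\ne s_{g+1}[h-1]$ (such members exist because a pencil realizes exactly two slopes along each tangent vector, by Lemma~\ref{Lem:NumberOfSlopes}). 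No bend location is ever chosen; the shapes and the positions of the black dots are consequences of the forced combinatorial types of the tropical dependences in Figure~\ref{Fig:LastCaseDependence}, which is the part of your plan that does work. If you replace your minimum-with-chosen-bend construction by this pencil construction, your dependence analysis can be carried out as you outline; as written, the existence of $\varphi_F,\varphi_G,\varphi_H$ with the stated profiles is not established.
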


\begin{figure}[H]
\begin{tikzpicture}

\draw (-0.4,4) node {{\footnotesize $F$}};
\draw (0,4)--(6,4);
\draw [ball color=white] (2,4) circle (0.55mm);
\draw [ball color=white] (4,4) circle (0.55mm);
\draw [ball color=black] (2.66,4) circle (0.55mm);
\draw [ball color=black] (3.33,4) circle (0.55mm);
\draw (1,4.2) node {{\tiny $h$}};
\draw (2.5,4.2) node {{\tiny $h+1$}};
\draw (3,4.2) node {{\tiny $h$}};
\draw (3.5,4.2) node {{\tiny $h-1$}};
\draw (5,4.2) node {{\tiny $h$}};

\draw (-0.2,3) node {{\tiny or}};
\draw (0,3)--(6,3);
\draw [ball color=white] (2,3) circle (0.55mm);
\draw [ball color=white] (4,3) circle (0.55mm);
\draw [ball color=black] (5,3) circle (0.55mm);
\draw (1,3.2) node {{\tiny $h$}};
\draw (3,3.2) node {{\tiny $h+1$}};
\draw (4.5,3.2) node {{\tiny $h+1$}};
\draw (5.5,3.2) node {{\tiny $h$}};

\draw (-0.2,2) node {{\tiny or}};
\draw (0,2)--(6,2);
\draw [ball color=white] (2,2) circle (0.55mm);
\draw [ball color=white] (4,2) circle (0.55mm);
\draw [ball color=black] (1,2) circle (0.55mm);
\draw (0.5,2.2) node {{\tiny $h$}};
\draw (1.5,2.2) node {{\tiny $h-1$}};
\draw (3,2.2) node {{\tiny $h-1$}};
\draw (5,2.2) node {{\tiny $h$}};

\draw (-0.4,.7) node {{\footnotesize $G$}};
\draw (0,.7)--(6,.7);
\draw [ball color=white] (2,.7) circle (0.55mm);
\draw [ball color=white] (4,.7) circle (0.55mm);
\draw [ball color=black] (3,.7) circle (0.55mm);
\draw (1,.9) node {{\tiny $h$}};
\draw (2.5,.9) node {{\tiny $h+1$}};
\draw (3.5,.9) node {{\tiny $h$}};
\draw (5,.9) node {{\tiny $h-1$}};

\draw (-0.2,-.3) node {{\tiny or}};
\draw (0,-.3)--(6,-.3);
\draw [ball color=white] (2,-.3) circle (0.55mm);
\draw [ball color=white] (4,-.3) circle (0.55mm);
\draw [ball color=black] (5,-.3) circle (0.55mm);
\draw (1,-.1) node {{\tiny $h$}};
\draw (3,-.1) node {{\tiny $h+1$}};
\draw (4.5,-.1) node {{\tiny $h+1$}};
\draw (5.5,-.1) node {{\tiny $h-1$}};

\draw (-0.4,-1.6) node {{\footnotesize $H$}};
\draw (0,-1.6)--(6,-1.6);
\draw [ball color=white] (2,-1.6) circle (0.55mm);
\draw [ball color=white] (4,-1.6) circle (0.55mm);
\draw [ball color=black] (3,-1.6) circle (0.55mm);
\draw (1,-1.4) node {{\tiny $h$}};
\draw (2.5,-1.4) node {{\tiny $h$}};
\draw (3.5,-1.4) node {{\tiny $h-1$}};
\draw (5,-1.4) node {{\tiny $h$}};

\draw (-0.2,-2.6) node {{\tiny or}};
\draw (0,-2.6)--(6,-2.6);
\draw [ball color=white] (2,-2.6) circle (0.55mm);
\draw [ball color=white] (4,-2.6) circle (0.55mm);
\draw [ball color=black] (1,-2.6) circle (0.55mm);
\draw (0.5,-2.4) node {{\tiny $h+1$}};
\draw (1.5,-2.4) node {{\tiny $h-1$}};
\draw (3,-2.4) node {{\tiny $h-1$}};
\draw (5,-2.4) node {{\tiny $h$}};

\end{tikzpicture}
\caption{The five functions of Proposition~\ref{Prop:MoreFns}.  The black points are determined by $t_1$ and $t_2$ from the dependences in Figure~\ref{Fig:LastCaseDependence}.}
\label{Fig:Case4More}
\end{figure}
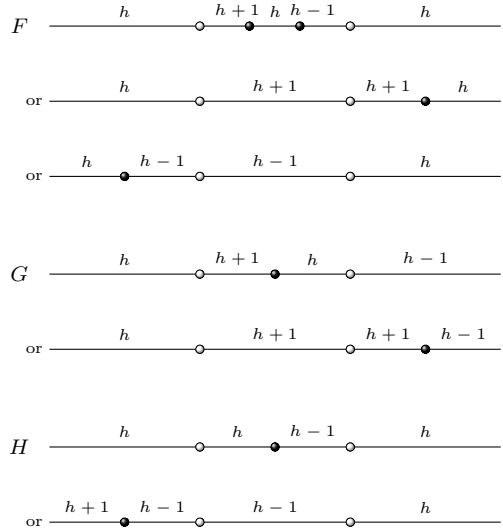

\begin{figure}[H]
\begin{tikzpicture}

\draw (0,4)--(6,4);
\draw [ball color=white] (2,4) circle (0.55mm);
\draw [ball color=white] (4,4) circle (0.55mm);
\draw [ball color=black] (1,4) circle (0.55mm);
\draw [ball color=black] (2.75,4) circle (0.55mm);
\draw [ball color=black] (3.25,4) circle (0.55mm);
\draw [ball color=black] (5,4) circle (0.55mm);
\draw (0.5,4.2) node {{\tiny $BE$}};
\draw (2,4.2) node {{\tiny $BF$}};
\draw (3,4.2) node {{\tiny $EF$}};
\draw (4,4.2) node {{\tiny $AF$}};
\draw (5.5,4.2) node {{\tiny $AE$}};

\draw (-0.2,3) node {{\tiny or}};
\draw (0,3)--(6,3);
\draw [ball color=white] (2,3) circle (0.55mm);
\draw [ball color=white] (4,3) circle (0.55mm);
\draw [ball color=black] (0.75,3) circle (0.55mm);
\draw [ball color=black] (1.25,3) circle (0.55mm);
\draw [ball color=black] (5,3) circle (0.55mm);
\draw (0.35,3.2) node {{\tiny $BE$}};
\draw (1,3.2) node {{\tiny $BF$}};
\draw (3,3.2) node {{\tiny $AF$}};
\draw (5.5,3.2) node {{\tiny $AE$}};

\draw (-0.2,2) node {{\tiny or}};
\draw (0,2)--(6,2);
\draw [ball color=white] (2,2) circle (0.55mm);
\draw [ball color=white] (4,2) circle (0.55mm);
\draw [ball color=black] (1,2) circle (0.55mm);
\draw [ball color=black] (4.75,2) circle (0.55mm);
\draw [ball color=black] (5.25,2) circle (0.55mm);
\draw (0.5,2.2) node {{\tiny $BE$}};
\draw (3,2.2) node {{\tiny $BF$}};
\draw (5,2.2) node {{\tiny $AF$}};
\draw (5.5,2.2) node {{\tiny $AE$}};

\draw (0,.7)--(6,.7);
\draw [ball color=white] (2,.7) circle (0.55mm);
\draw [ball color=white] (4,.7) circle (0.55mm);
\draw [ball color=black] (1,.7) circle (0.55mm);
\draw [ball color=black] (3,.7) circle (0.55mm);
\draw (0.5,.9) node {{\tiny $BE$}};
\draw (2,.9) node {{\tiny $BG$}};
\draw (4,.9) node {{\tiny $EG$}};

\draw (0,-.3)--(6,-.3);
\draw [ball color=white] (2,-.3) circle (0.55mm);
\draw [ball color=white] (4,-.3) circle (0.55mm);
\draw [ball color=black] (3,-.3) circle (0.55mm);
\draw [ball color=black] (5,-.3) circle (0.55mm);
\draw (2,-.1) node {{\tiny $EH$}};
\draw (4,-.1) node {{\tiny $AH$}};
\draw (5.5,-.1) node {{\tiny $AE$}};

\end{tikzpicture}
\caption{Dependences between the functions $\varphi_A , \varphi_B , \varphi_E, \varphi_F, \varphi_G$, and $\varphi_H$.}
\label{Fig:LastCaseDependence}
\end{figure}
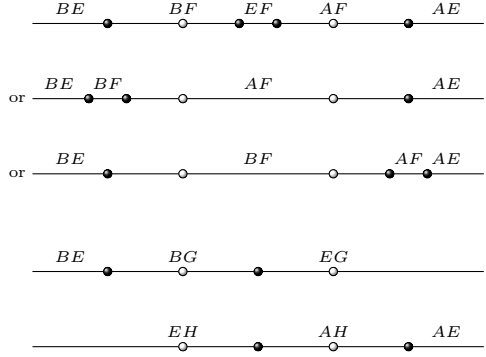

\begin{proof}
We let $\varphi_F$ be a function in $\trop (W)$ such that $s'_0 (\varphi_F) \leq h$ and $s_{g+1} (\varphi_F) \geq h$.  The slopes of $\varphi_F$ fall into one of the three possibilities depicted in Figure~\ref{Fig:Case4More}.  We now explain how to construct $\varphi_G$ and $\varphi_H$.  Let $f_A, f_B, f_E \in W$ be functions tropicalizing to $\varphi_A$, $\varphi_B$, and $\varphi_E$, respectively.  We let $\varphi_G$ be the tropicalization of a function in the pencil spanned by $f_B$ and $f_E$ with the property that $s'_0 (\varphi_G) \neq s'_0 [h+1]$.  Similarly, we let $\varphi_H$ be the tropicalization of a function in the pencil spanned by $f_A$ and $f_E$ with the property that $s_{g+1} (\varphi_H) \neq s_{g+1} [h-1]$.

By construction, the functions $\varphi_B$, $\varphi_E$, and $\varphi_G$ are tropically dependent.  The only possible dependence between them is illustrated in the fourth row of Figure~\ref{Fig:LastCaseDependence}.  Note that the point where $\varphi_G$ bends is determined by the distance $t_1$.  Similarly, the functions $\varphi_A$, $\varphi_E$, and $\varphi_H$ are tropically dependent, and the point where $\varphi_H$ bends is determined by the distance $t_2$, by the dependence shown in the last row of Figure~\ref{Fig:LastCaseDependence}.  Finally, the functions $\varphi_A$, $\varphi_B$, $\varphi_E$, and $\varphi_F$ are tropically dependent, and this dependence comes in one of three combinatorial types, depending on the three possibilities for the slopes of $\varphi_F$, as depicted in Figures~\ref{Fig:Case4More} and~\ref{Fig:LastCaseDependence}.  Again, the locations of the bends are determined by the distances $t_1$ and $t_2$.
\end{proof}

The following lemma will be useful in the proof of this subcase.

\begin{lemma}
\label{Lem:Case4Inequivalent}
The functions $\varphi_A$ and $\varphi_B$ do not agree on $\gamma_k$ for any $k$.  Moreover, for any pair $k' \leq k$ with $k' \neq \ell'$ and $k \neq \ell$, one of the four functions $\varphi_E, \varphi_F, \varphi_G, \varphi_H$ does not agree with $\varphi_B$ on $\gamma_{k'}$, nor with $\varphi_A$ on $\gamma_k$.
\end{lemma}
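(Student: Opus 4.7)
The plan is to prove both assertions by direct inspection of the slope sequences of $\varphi_A, \varphi_B, \varphi_E, \varphi_F, \varphi_G, \varphi_H$ as laid out in Propositions~\ref{Prop:Case4Fns} and~\ref{Prop:MoreFns} together with Figures~\ref{Fig:Case4} and~\ref{Fig:Case4More}. Two functions agree on $\gamma_k$ only if they have matching slopes on both of the bridges $\beta_{k-1}$ and $\beta_k$ incident to $\gamma_k$ (up to possibly different bending behavior on the loop, which plays a real role only at switching loops), so the argument reduces to comparing slope indices on bridges, with extra care exercised at $\gamma_\ell$ and $\gamma_{\ell'}$.

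For part (1), I would observe that the slope index of $\varphi_A$ on any bridge lies in $\{h-1, h\}$, and the index $h$ occurs only on bridges contained in a subinterval of $\Gamma_{[\ell'+1,g]}$ immediately to the right of $\gamma_{\ell'}$ (terminating at distance $t_2$ past $\gamma_{\ell'}$). Dually, the slope index of $\varphi_B$ lies in $\{h, h+1\}$ on every bridge, and the index $h$ occurs only on bridges contained in a subinterval of $\Gamma_{[1,\ell]}$ ending at $\gamma_\ell$ (beginning at distance $t_1$ to the left of $\gamma_\ell$). Since these two subintervals lie in disjoint parts of $\Gamma$, there is no loop $\gamma_k$ on which both $\varphi_A$ and $\varphi_B$ have slope index $h$ on both incident bridges. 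Consequently the slope indices of $\varphi_A$ and $\varphi_B$ differ on at least one incident bridge, precluding agreement on $\gamma_k$.

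For part (2), I would proceed by case analysis on the regions containing $\gamma_{k'}$ and $\gamma_k$, dividing $\Gamma$ into the three regions $\Gamma_{[1,\ell]}$, $\Gamma_{[\ell+1,\ell']}$, and $\Gamma_{[\ell'+1,g]}$. Given $k' \leq k$, $k' \neq \ell'$, and $k \neq \ell$, the pairs fall into a short list of cases. In each case, I would exhibit one of $\varphi_E, \varphi_F, \varphi_G, \varphi_H$ whose slope indices on the bridges incident to $\gamma_{k'}$ differ from those of $\varphi_B$, and whose slope indices on the bridges incident to $\gamma_k$ differ from those of $\varphi_A$. For instance, when $\gamma_{k'}$ and $\gamma_k$ both lie in the middle region $\Gamma_{[\ell+1,\ell']}$, the function $\varphi_E$ has slope index $h$ there, differing from index $h+1$ of $\varphi_B$ and index $h-1$ of $\varphi_A$, and $\varphi_E$ therefore suffices. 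In other region combinations I would exploit the slope patterns of the candidates as displayed in Figure~\ref{Fig:Case4More} together with the locations of their bend points, which are pinned down by $t_1$ and $t_2$ through the tropical dependences in Figure~\ref{Fig:LastCaseDependence}.

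The main obstacle will be handling the subcases where $\gamma_{k'}$ or $\gamma_k$ lies near a bend point of one of the candidate functions, or where $k' = \ell$ or $k = \ell'$, so that the bridge slopes could switch indices and agreement could in principle be achieved by a matching bend on the loop itself. In those subcases I would invoke the tropical dependences of Figure~\ref{Fig:LastCaseDependence} to locate each candidate's bend point precisely in terms of $t_1$ and $t_2$, and then use the analogous determination of the bends of $\varphi_A$ and $\varphi_B$ to rule out simultaneous agreement on both loops. The combinatorial variety of the four candidates is exactly what is needed to cover every case, and the resulting verification, while tedious, reduces to a finite inspection of the slope diagrams.
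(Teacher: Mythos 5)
Your proposal is correct and takes essentially the same route as the paper: the paper's proof likewise argues by inspecting the slope diagrams of $\varphi_A, \varphi_B, \varphi_E, \varphi_F, \varphi_G, \varphi_H$, splitting into cases according to the regions (cut out by the two switching loops and the bend points pinned down by $t_1$ and $t_2$ via the dependences of Figure~\ref{Fig:LastCaseDependence}) containing $\gamma_{k'}$ and $\gamma_k$, and exhibiting one of the four candidate functions in each case. One small caution: differing slopes on the incident bridges do not in general preclude agreement on a loop (a point of $D + \ddiv(\varphi)$ at $v_k$ or $w_k$ can compensate, as with $\varphi^0_h$ and $\varphi^{\infty}_h$ in Case 3), but for the functions and slope gaps occurring here this causes no difficulty, and the paper's own verification is at the same level of detail.
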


\begin{proof}
This can be seen from Figures~\ref{Fig:Case4} and \ref{Fig:Case4More}.  First, identify the regions of the graph containing $k$ and $k'$.  For each possibility, one of the four functions $\varphi_E, \varphi_F, \varphi_G, \varphi_H$ disagrees with $\varphi_B$ on the first region and with $\varphi_A$ on the second region.  For example, if $k'$ is to the left of the point where $\varphi_B$ bends and $k > \ell'$ is to the left of the point where $\varphi_A$ bends, then $\varphi_G$ does not agree with $\varphi_B$ on $\gamma_{k'}$, nor with $\varphi_A$ on $\gamma_k$.  The other possibilities follow by a similar argument.
\end{proof}

We construct the sets $\cA$ and $\cB$ as in the previous case.  Specifically, we let $\cA$ be the set of all building blocks, which, as in the previous subcase, is finite and satisfies property $(\ast)$.  Suppose that there exist two indices $j$ and $j'$ such that
\[
s'_{\ell}(\theta) = s'_{\ell} [h] + s'_{\ell} [j] = s'_{\ell} [h] + s'_{\ell} [j'] +1.
\]
If $t_1<m_{\ell}$, we exclude from the set $\cB$ any function of the form $\varphi + \varphi_{j'}$, where
\[
s_{\ell+1} (\varphi) = s_{\ell} (\varphi) + 1 = s_{\ell} [h+1].
\]
If $t_1 \geq m_{\ell}$, we exclude from the set $\cB$ any function of the form $\varphi + \varphi_j$, where
\[
s_{\ell+!} (\varphi) = s_{\ell} (\varphi) = s_{\ell} [h].
\]
We similarly exclude functions depending on $t_2$.  The set $\cB$ consists of all pairwise sums of elements of building blocks, aside from those excluded, and satisfies properties $(\ast\ast)$ and $(\dagger\dagger)$ by the same argument as the proof of Lemma~\ref{Lem:BBOnLoop}.

The set $\cB$ therefore satisfies the hypotheses of Theorem~\ref{Thm:ExtremalConfig}, and so we construct the master template $\theta$.  Our strategy for constructing the maximally independent combination $\vartheta$ from the master template follows steps similar to those in Case 2.

\medskip

\textbf{Step 1:  if $i,j \notin \{ h-1,h,h+1\}$.}  The function $\varphi_{ij}$ is in $\cB$, and we set its coefficient in the tropical linear combination $\vartheta$ to be equal to its coefficient in the master template $\theta$.

\medskip

\textbf{Step 2:  if $\varphi + \varphi_j$ is contained in $\cB$ for all $\varphi \in \cA$.} By Lemma~\ref{Lem:Replace}, we set the coefficients of $\varphi_A + \varphi_j$ and $\varphi_B + \varphi_j$ so that they each dominate the master template $\theta$, and equal $\theta$ on some region.  By Lemma~\ref{Lem:Case4Inequivalent}, $\varphi_A$ and $\varphi_B$  do not agree on any loop, so there is a loop or bridge where $\varphi_A + \varphi_j$ achieves the minimum and $\varphi_B + \varphi_j$ does not, and vice-versa.  Furthermore, by Lemma~\ref{Lem:Case4Inequivalent}, one of the four functions $\varphi_E + \varphi_j$, $\varphi_F + \varphi_j$, $\varphi_G + \varphi_j$, or $\varphi_H + \varphi_j$  does not agree with either $\varphi_A + \varphi_j$ or $\varphi_B + \varphi_j$ on these two particular loops or bridges.  We then set the coefficient of this third function so that it dominates the master template, and equals it on some region.

\medskip

\textbf{Step 3:  if $t_1 < m_{\ell}$ (or if $t_2 < m_{\ell'}$).}  We set the coefficient of $\varphi_A + \varphi_j$ as in Step 2.  We can do this because this function is a tropical linear combinations of functions in $\cB$.  We set the coefficient of $\varphi_B + \varphi_j$ so that it equals its summand in $\cB$ with largest coefficient.  As in step 2, one of the four functions $\varphi_E + \varphi_j$, $\varphi_F + \varphi_j$, $\varphi_G + \varphi_j$, or $\varphi_H + \varphi_j$  does not agree with either $\varphi_A + \varphi_j$ or $\varphi_B + \varphi_j$ on loops or bridges where these functions achieve the minimum.  We set the coefficient of this function using Lemma~\ref{Lem:Replace}, and  when $t_2 < m_{\ell'}$, we apply an analogous construction.

\medskip

\textbf{Step 4:  if $t_1 \geq m_{\ell}$.}  Regardless of the value of $t_2$, we set the coefficients of $\varphi_A + \varphi_j$ $\varphi_E + \varphi_j$, and $\varphi_G + \varphi_j$ using Lemma~\ref{Lem:Replace}.

\medskip

\textbf{Step 5:  if $t_2 \geq m_{\ell'}$.}  We set the coefficients of $\varphi_A + \varphi_j$, $\varphi_B + \varphi_j$ using Lemma~\ref{Lem:Replace}.  One of $\varphi_E + \varphi_j$ or $\varphi_G + \varphi_j$ does not agree with $\varphi_B + \varphi_j$ on a loop or bridge where it achieves the minimum.  We then set the coefficient of this third function using Lemma~\ref{Lem:Replace}.

\medskip

\textbf{Step 6:  if $j = h-1$, $h,$ or $h+1$.}  A similar construction can be applied to the case where $j=h-1, h$ or $h+1$, and we provide only a brief sketch.  We first use Lemma~\ref{Lem:Replace} to set the coefficients of $\varphi_A + \varphi_A$, $\varphi_A + \varphi_B$, and $\varphi_B + \varphi_B$.  Depending on where $\varphi_A + \varphi_A$ and $\varphi_A + \varphi_B$ achieve the minimum, we then set the coefficients of one of $\varphi_A + \varphi_E$, $\varphi_A + \varphi_F$, $\varphi_A + \varphi_G$, or $\varphi_A + \varphi_H$.  Similarly, depending on where $\varphi_A + \varphi_B$ and $\varphi_B + \varphi_B$ achieve the minimum, we set the coefficients of one of $\varphi_B + \varphi_E$, $\varphi_B+ \varphi_F$, $\varphi_B + \varphi_G$, or $\varphi_B + \varphi_H$.   Finally, depending on where all these functions achieve the minimum, we then choose a sum of two functions, each chosen from $\varphi_E$, $\varphi_F$, $\varphi_G$, and $\varphi_H$, and use Lemma~\ref{Lem:Replace} to set its coefficient.

\begin{theorem}
In this subcase, $\vartheta$ is a maximally independent combination.
\end{theorem}

\begin{proof}
We first discuss the situation where we have omitted no functions from $\cB$.  In this situation, for each $j \notin \{ h-1,h,h+1\}$, the functions $\varphi_A + \varphi_j$ and $\varphi_B + \varphi_j$ are involved in the tropical linear combination $\vartheta$, as is one of $\varphi_E + \varphi_j, \varphi_F + \varphi_j, \varphi_G + \varphi_j$, or $\varphi_H + \varphi_j$.  Each of these functions achieves the minimum on the same region as one of its summands.  Specifically, there is a function $\psi_A \in \cB$ such that $\varphi_A + \varphi_j$ achieves the minimum on the same region as $\psi_A$, and there is a function $\psi_B \in \cB$ such that $\varphi_B + \varphi_j$ achieves the minimum on the same region as $\psi_B$.  There is a also a function $\psi_E \in \cB$ such that the third function achieves the minimum on the same region as $\psi_E$.  Although the third function shares summands in common with $\varphi_A + \varphi_j$ and $\varphi_B + \varphi_j$, it is chosen so that it does not agree with either $\varphi_A + \varphi_j$ or $\varphi_B + \varphi_j$ on specific loops or bridges where these two functions achieve the minimum.  Thus, $\psi_E$ does not agree with $\psi_A$ on the loop or bridge where $\psi_A$ is assigned, and does not agree with $\psi_B$ on the loop or bridge where $\psi_B$ is assigned.  It follows that the loop or bridge to which $\psi_E$ is assigned is not equal to the loop or bridge to which either $\psi_A$ or $\psi_B$ is assigned.  The third function is therefore the only function to achieve the minimum at a point of the loop or bridge where $\psi_E$ is assigned.

The cases where we have omitted some functions from $\cB$ follow in the same way as Theorem~\ref{Thm:Switching}.  This is because the arguments in that case rely only on the slopes of the various functions involved along the bridges $\beta_{\ell}$ and $\beta_{\ell-1}$.  This ``local'' information remains unchanged in our present case, so the result follows.
\end{proof}

\bibliography{math}

\end{document}